\theoremstyle{plain}
\newtheorem{corollary}{Corollary}[section]
\newtheorem{theorem}[corollary]{Theorem}
\newtheorem{lemma}[corollary]{Lemma}
\newtheorem{proposition}[corollary]{Proposition}
\newtheorem*{theorem*}{Theorem}
\newtheorem*{lemma*}{Lemma}
\newtheorem*{definition*}{Definition}
\newtheorem*{corollary*}{Corollary}
\theoremstyle{definition}
\newtheorem{Hypo}{Assumption}
\theoremstyle{remark}
\newtheorem*{remark}{Remark}
\newtheorem*{remarks}{Remarks}
\newtheorem{algorithm}{Algorithm}
\newcommand{\R}{\mathbb{R}}
\newcommand{\Z}{\mathcal{Z}}
\newcommand{\proba}{\mathbb{P}}
\newcommand{\N}{\mathbb{N}}
\newcommand{\E}{\mathbb{E}}
\newcommand{\T}{\mathcal{T}}
\newcommand{\M}{\mathcal{M}}
\renewcommand{\L}{\mathcal{L}}
\newcommand{\D}{\mathcal{D}}
\newcommand{\B}{\mathcal{B}}
\newcommand{\Brown}{\mathfrak{B}}
\newcommand*{\GFF}{\mathfrak{G}}
\newcommand*{\MAX}{\mathfrak{M}}
\newcommand*{\Clown}{\mathfrak{D}}
\newcommand*{\Cont}{\mathfrak{C}}
\newcommand{\Slow}{\mathfrak{S}}
\newcommand{\F}{\mathfrak{F}}
\newcommand{\X}{\mathcal{X}}
\newcommand{\1}{\mathbf{1}}
\renewcommand{\P}{\mathcal{P}}
\newcommand{\p}{{\mathfrak p}}
\newcommand{\e}{\varepsilon}
\newcommand{\ply}{\Rightarrow}
\DeclareMathOperator*{\limit}{\longrightarrow}
\DeclareMathOperator*{\SBB}{SB}
\DeclareMathOperator*{\too}{\lhd}
\DeclareMathOperator*{\diam}{diam}
\newcommand{\Lft}{\curvearrowleft}
\newcommand{\Rgt}{\curvearrowright}
\newcommand{\Fnt}{\triangledown} 
\DeclareMathOperator*{\equal}{=}
\newcommand{\dc}{\mathfrak c}
\newcommand*{\badim}{\underline{\mathfrak d}} 
\newcommand*{\badam}{\overline{\mathfrak d}}
\newcommand{\x}{\alpha}
\newcommand{\y}{\beta}
\newcommand{\z}{\gamma}
\newcommand{\connex}{\mathcal C}
\newcommand{\pT}{\bar{\rho}}
\newcommand{\pL}{\mathring{\rho}} 
\begin{document}
\title{Looptree, Fennec, and Snake of ICRT.}
\author{Arthur Blanc-Renaudie}
\date{\today}
\maketitle
\begin{abstract} We introduce a new theory of plane $\R$-tree, to define plane ICRT (inhomogeneous continuum random tree), and its looptree, fennec (a Gaussian free field on the looptree), and snake. We prove that a.s. the looptree is compact, and that a.s. the fennec and snake are continuous.
 We compute the looptree's fractal dimensions, and the fennec and snake's H\"older exponent. Alongside, we define a Gaussian free field on the ICRT, and prove a condition for its continuity.
  In a companion paper \cite{Dsnake}, we prove that the looptrees, fennecs, and snakes of trees with fixed degree sequence converge toward the looptrees, fennecs and snakes of ICRT.
\end{abstract}
\section{Introduction}
\subsection{Motivations and overview of the results}
We construct the looptree, "fennec" (a gaussian free field on the looptree),  and snake of the ICRT. We also compute the fractal dimensions (Minkowski, Packing, Hausdorff) of the looptree, and the H\"older exponents of the fennec and snake. In a companion paper \cite{Dsnake} we show that these objects are  the scaling limits of the looptrees, fennecs, and snakes of "$\D$-trees" (uniform rooted trees with fixed degree sequence $\D$).

Informally, a looptree, introduced by Curien, Kortchemski \cite{CKloop}, is constructed by replacing each vertex of a tree by a cycle of size proportional to its degree, while keeping the tree structure. Then the fennec (for field+snake) is a Gaussian free field on the looptree. Finally, the snake is the real process obtained by turning around the looptree clockwise, and reading the value of the fennec. Those definitions are made formal in Section \ref{PlanICRT}, using a new theory of plane $\R$-tree. 

Those objects are mainly motivated by scaling limits of maps with fixed face degree sequence. Indeed, the bijections of Bouttier, Di Fransesco, Guitter \cite{BDG} and Janson-Stef\'ansson \cite{JS} yield together a bijection between those maps and $\D$-trees with a discrete fennec. It is now well known that the convergence of the snakes implies the tightness of the maps. However, although Le Gall \cite{LGquad} developed a general approach to prove the universality, this question remains open in general.
In the stable case, this key problem is under active investigation by Curien, Miermont, Riera \cite{CMR}. 
We 
      refer to Marzouk \cite{FixedMAP, FixedMAP2,FixedMAP3} for elaborate discussions on the subject.

Let us already mention, that independently, Marzouk \cite{FixedMAP3} also proves scaling limits of looptrees and snakes of $\D$-trees toward objects constructed from processes with exchangeable increments. We strongly believe that both approaches are useful to study the limits, which thus coincide, from the point of views of processes with exchangeable increments, and stick-breaking constructions. 

In this direction, our theory of plane $\R$-tree builds a bridge between those two points of view. Indeed, we extend many discrete notions to $\R$-trees, to construct several real processes directly from the trees. As a result, those processes can now be studied from stick-breaking constructions. This completes the pioneer ideas of Le Gall (see e.g. \cite{Legall}), which allow to construct and study trees from real process. By analogy with this theory, we construct the height process and Lukasiewicz walk of ICRT, and we will study them in a forthcoming paper \cite{ExcursionStick}.
%
 \subsection{Theory of plane $\R$-tree: an overview}
 A discrete plane  tree is a rooted tree with an ordering of the children of each vertex. Aldous \cite{Aldous3} extended this notion to binary continuum trees, by using signs -/+, to construct the height process. The height process is then constructed by Duquesne \cite{Coding} for general order. 
 
 However an order is not enough to construct several objects, and notably the Lukasiewicz walk. This walk was essential in the work of Le Gall, Le Jan \cite{IntroLevy1,IntroLevy2}, and Le Gall, Duquesne \cite{Duquesne1,Duquesne2} \linebreak[2]to study L\'evy trees. For $\P$-trees and ICRT, Aldous, Miermont, Pitman \cite{ExcICRT,ExcICRT2}, developed a similar theory based on processes with exchangeable increments, but those processes are far less understood. 

 Still their construction of the depth first walk of $\P$-trees led us to two intuitions: First, $\P$-trees are infinite. This is confirmed 
  in \cite{Uniform} since $\P$-trees are the limits of $\D$-trees in the "condensation case" (when the largest degrees have the same order as the total degree, see \cite{Uniform} Theorem 5 (a), 6 (a)). Then, this process can be seen as a Lukasiewicz walk. This interpretation of this "half-discrete-half continuum" tree led us to the theory of plane $\R$-tree below:
 
First we define a notion of angle, which can be seen as numbering the children of each vertex. 
With those angles, we can rewrite the discrete definitions of the looptree and fennec for the ICRT. Then, we define a notion of left and right, which we use to define a contour path in the looptree: morally start at the root, then turn around each cycle clockwise, and stop after a complete circuit.
Finally we construct many processes, by composing this path with some functions on the looptree.
  %
 \subsection{Stick breaking and the chaining method} \label{1.3}
Our approach is based on the stick-breaking construction of the ICRT from \cite{ICRT1}, which is adapted from Aldous, Camarri, Pitman \cite{introICRT1, introICRT2}.  Stick-breaking constructions, first introduced by Aldous \cite{Aldous1}, generate a $\R$-tree and are separated in two steps: 
\begin{compactitem}
\item the line $\R^+$ is first cut into the segments ("sticks" or "branches") $[0, Y_1],\, (Y_1,Y_2], (Y_2,Y_3] \dots$
\item  then for every $i\in \N$ the segment $(Y_i,Y_{i+1}]$ is glued at position $Z_i\leq Y_i$.
\end{compactitem}
 In \cite{ICRT1} we study the compactness and fractal dimensions of ICRT. We now use similar methods, which can be split into simple topological/logical arguments, and many uses of the chaining method.
 
 This method has found many applications in concentration theory (see e.g. Talagrand \cite{Talagrand}, or \cite{Massart} Chapter 13), and to study random metric spaces, and notably stick-breaking constructions (see e.g. Aldous \cite{Aldous1} ; Amini, Devroye, Griffiths, Olver \cite{Amini} ; Curien, Haas \cite{CurienHaas} ; S\'enizergues \cite{Seni}).
 
Let us explain its main principle: The goal is to estimate the max of a function $f$ on a space $S$. To this end, consider a sequence of increasing subspaces\footnote{For more complex algorithms one may want to consider general $(X_i)_{i\in \N}$ and a family of functions $f_i$ on $X_i$.} $(S_i)_{i\in \N}$ of $S$ "approximating" $S$, and for every $i\in \N$, a projection $p_i:S_{i+1}\to S_i$. The main idea is that if $(S_i)_{i\in \N}$ are properly chosen:
\[ \max_{x\in S} f(x)\leq \sum_{i\in \N} \max_{x\in S_{i+1}}(f(x)-f(p_i(x))).\]
As a result, one can decompose a complex estimate into many simpler ones. Moreover, when $(S_i)_{i\in \N}$ are well chosen, it tends to give optimal bounds \footnote{Reverse bounds tends to be much harder to prove. See e.g. \cite{ICRT1} for ICRT in the non compact case.}.

In most of our proofs $S$ is the looptree, and $(S_i)_{i\in \N}$ are the sub-looptrees obtained after gluing a certain number of branches. Then $f$ can be the distance between a vertex and a fixed set to prove compactness, or compute fractal dimensions. $f$ can also be some partial sums to prove uniform convergence, or continuity. In \cite{Uniform}, it was also used to prove the tightness of $\D$-trees. 
  
In this paper we often re-decompose for every $i\in \N$, the estimate of $\max_{x\in S_{i+1}} f(x)-f(p_i(x))$: On the one hand, we estimate the maximum number of branches separating $x\in S_{i+1}$ from $S_i$. On the other hand, we estimate how $f$ vary on each branches. Finally we multiply the worst cases. 

 
\paragraph{Plan of the paper:} In Section \ref{PlanICRT}, we define our objects and state our main results. We also deduce the H\"older continuity of the fennec and snake from our other results. In Section \ref{RecallSec}, we recall several technical results from \cite{ICRT1}. In Section \ref{Compacito}, we prove the compactness of the looptree. In Section \ref{LeftSec}, we study the notions of left and right. 
In Section \ref{ContourPathSec}, we construct the contour path and prove its H\"older continuity. In Section \ref{DimensionsSec},  we compute the fractal dimensions of the looptree. In Section \ref{CompactGFFSec}, we study the Gaussian free field on the ICRT. In Section \ref{CompactFennecSec}, we prove that the fennec is well defined, and extend its definition to other fields. Both Sections \ref{CompactGFFSec}, \ref{CompactFennecSec} can be read right after Section \ref{RecallSec}. 

\paragraph{Acknowledgment} I am grateful to Cyril Marzouk for the interesting discussions we got at CIRM. 

%
\section{Model and main results.} \label{PlanICRT}
\subsection{Basic notions on $\R$-trees and plane $\R$-trees.} \label{RtreeDef}
A {\it Polish space} is a separable, complete, metric space.  A {\it $\R$-tree} is a geodesic, loopless, Polish space (see Le Gall \cite{Legall}). A {\it rooted $\R$-tree} is a $\R$-tree with a distinguished vertex.  

For every $\R$-tree $\T$, and $x,y\in \T$,  let $\llbracket x,y \rrbracket$ denote the geodesic path between $x$ and $y$. 
The {\it closest common ancestor} of $x,y\in \T$ is the vertex  $x\wedge y\in \llbracket \rho,x\rrbracket \cap \llbracket \rho,y\rrbracket  $ which maximizes $d(\rho,z)$. For every $x\in \T$, the {\it degree} $\deg(x)$ of $x$ in $\T$ is the number of connected components of $\T\backslash \{x\}$. 

An {\it angle function} on a rooted $\R$-tree $(\T,d,\rho)$ is a function $u: \T^2 \to [0,1]$ such that:
\begin{compactitem}
\item For all $x \in \T$, $u_{x,\rho}=u_{x,x}=0$. 
\item For all $x\in \T$, $y,z \in \T\backslash\{x\}$, $u_{x,y}=u_{x,z}$ iff $y$ and $z$ are connected in $\T\backslash \{x\}$.
\end{compactitem}
{\it A plane $\R$-tree} is a rooted $\R$-tree equipped with an angle function.

To avoid measurability issues, we further assume in the definition that a plane $\R$-tree is {\it balanced}: for every $x,y\in \T$ if  $\deg(x)=2$ then $u_{x,y}\in \{0,1/2\}$.
\subsection{ICRT, and plane ICRT} \label{ICRTDef}
We first introduce a generic stick breaking construction.  It takes for input two sequences in $\R^+$ called cuts ${\mathbf y}=(y_i)_{i\in \N}$ and glue points ${\mathbf z}=(z_i)_{i\in \N}$, which satisfy
\begin{equation*} \forall i<j,\ \ y_i<y_j \qquad ; \qquad y_i\limit \infty \qquad ; \qquad \forall i\in \N,\ \ z_i\leq y_i, \label{2609} \end{equation*}
and creates an $\R$-tree  by recursively "gluing" segment $(y_i,y_{i+1}]$ on position $z_i$ 
,  or rigorously, by constructing  a consistent sequence of distances $(d_n)_{n\in \N}$ on $([0,y_n])_{n\in \N}$.
\begin{algorithm} \label{Alg1} \emph{Generic stick-breaking construction of $\R$-tree.}
\begin{compactitem}
\item[--] Let $d_0$ be the trivial metric  on $[0,0]$.
\item[--] For each $i\geq 0$ define the metric $d_{i+1}$ on $[0, y_{i+1}]$ such that for each $x\leq y$: 
\[ d_{i+1}(x,y):=
\begin{cases} 
d_{i}(x,y) & \text{if } x,y\in [0, y_i] \\
d_{i}(x,z_i)+|y-y_i| & \text{if } x \in [0, y_i], \, y \in (y_i, y_{i+1}] \\
|x-y| &  \text{if } x,y\in (y_i, y_{i+1}]
\end{cases} \]
where by convention $y_0:=0$ and $z_0:=0$.
\item[--] Let $d$ be the unique metric on $\R^+$ which agrees with $d_i$ on $[0, y_i]$ for each $i\in \N$.
\item[--] Let $\SBB({\mathbf y},{\mathbf z})$ be the completion of $(\R^+,d)$.
\end{compactitem}
\end{algorithm}

Let $\Omega$ be the space of sequences $\{\theta_i\}_{i\in \N}$ in $\R^+$ with $\sum_{i=0}^\infty \theta_i^2=1$ and $\theta_1\geq \theta_2 \geq \dots$ The ICRT of parameter $\Theta\in \Omega$ is the random $\R$-tree constructed via the following algorithm. \pagebreak[2]
\begin{algorithm} \label{ICRT} \emph{Construction of $\Theta$-ICRT (from \cite{ICRT1})}
\begin{compactitem}
\item[-] Let $\mathbf X=(X_i)_{i\in \N}$ a family of independent exponential random variables of parameter $(\theta_i)_{i\in \N}$.
\item[-] Let $\mu$ be the measure on $\R^+$ defined by $\mu=\theta_0^2 dx+\sum_{i=1}^{\infty} \delta_{X_i} \theta_i$. 
\item[-] For each $l\in\R^+$ let $\mu_l$ be the restriction of $\mu$ to $[0,l]$, and let $\p_l:=\mu_l/\mu[0,l]$.
\item[-] Let $\mathbf Y= (Y_i)_{i\in \N}$ be a Poisson point process  on $\R^+$ of rate $\mu[0,l]dl$. \item[-] Let $\mathbf Z=(Z_i)_{i\in \N}$ be a family of independent random variables with laws $(\p_{Y_i})_{i\in \N}$.
\item[-] The $\Theta$-ICRT is defined as $(\T,d_\T)=\SBB(\mathbf Y,\mathbf Z)$ (see Algorithm \ref{Alg1}).
\end{compactitem}
\end{algorithm}
\begin{remarks} $\bullet$ When $\theta_0=1$, the ICRT is the Brownian CRT. 
\\ $\bullet$ When $\theta_0=0$ and $\sum_{i=1}^\infty \theta_i<\infty$, $\T$ "is" a $\P$-tree with a modified distance (see \cite{Uniform} Section 5.2). 
\\ $\bullet$ When $\Theta$ is random and corresponds to the jumps and brownian part of a L\'evy bridge, the ICRT "is" a L\'evy tree. (Equal in GP distribution, by unicity of the limit of $\D$-trees, see \cite{Uniform} Section 8.1.)
\\ $\bullet$ Morally, $(X_i)_{i\in \N}$ and  $(\theta_i)_{i\in \N}$ corresponds to the vertices of highest degrees with their degrees. On the other hand, $\theta_0$ corresponds to vertices with small degrees. 
\end{remarks}
We root ICRT at $0$. Recall that to define a plane $\R$-tree, we need an angle function $U$. This is equivalent to define for each $x\in \T$, and for each connected component $C$ of $\T\backslash \{x\}$ with $0\notin C$, the value of $U_{x,y}\in[0,1]$ for a unique $y\in C$. So, the following algorithm a.s. does well define an angle function on the ICRT. Also, the last line below insure that a.s. $(\T,d_\T,0,U)$ is balanced.

\begin{algorithm} \label{ConstructU} Construction of the uniform angle function $U$ on the ICRT.
\begin{compactitem} 
\item[-] Let $(U_{X,i})_{i\in \N}, (U_{Z,i})_{i\in \N}$ be independent uniform random variables in $[0,1]$.
\item[-] Let $U$ be the unique angle function on $\T$ such that:
\begin{compactitem} 
\item[-] For every $i\in \N$, $U(X_i,Y_{\inf\{a\in \N:Y_a>X_i\}})=U_{X,i}$
\item[-] For every $i\in \N$, $U(Z_i,Y_{i+1})=U_{Z,i}$.
\item[-] For every $x\in \R^+\backslash \bigcup_{i\in \N} \{X_i,Y_i\}$, we have $U(x,Y_{\inf\{a\in \N:Y_a>x\}})=1/2$.
\end{compactitem}
\end{compactitem}
\end{algorithm}
\subsection{The ICRT looptree} \label{LoopDef}
To extend the discrete setting, we want to replace each vertex by a loop. So we define $\mathcal L$ as $\T\times [0,1]$ with a proper pseudo-distance $d_\L$ corresponding to the cycles. 
 
To define $d_\L$ we need the sizes of the cycles, which in the discrete correspond to the degrees. For ICRT, although the degrees are infinite, the only vertices with high degrees are $(X_i)_{i\in\N}$ and their degrees are morally proportional to $(\theta_i)_{i\in \N}$. 
 
 Thus we may define $\L$ by concatenating some cycles of perimeter $(\theta_i)_{i\in \N}$. Actually, this is not enough, since we forget the vertices of small degrees. Morally their degrees corresponds to $\theta_0dl$. Then by concatenating the corresponding cycles we get segments of length $\theta^2_0/4 dl$. (The factor $1/4$ is the mean distance between two points in a cycle of perimeter $1$.) 

So, we formally define the ICRT looptree as follows: Let $\dc$ be the distance in the torus $[0,1]$. Then for every  $x,y\in \T,u\in [0,1]$, let $U_{x,y,u}=U_{x,y}$ if $x\neq y$ and let $U_{x,y,u}=u$ otherwise. We define a pseudo-distance $d_\L$ on $\T\times [0,1]$ such that for all $(x,u),(y,v)\in \T\times [0,1]$ (see Figure \ref{looptreefig3}),
\begin{equation} d_{\L}((x,u),(y,v)):=\frac{\theta^2_0}{4}d_\T(x,y)+ \sum_{i\in \N}\theta_i \dc(U_{X_i,x,u},U_{X_i,y,v}). \label{18/12/15h} \end{equation}
Finally let $(\L,d_\L)$ be the completion of the pseudo-metric space $(\T\times [0,1],d_\L)$.

 \begin{figure}[!h] \label{looptreefig3}
\centering
\includegraphics[scale=0.45]{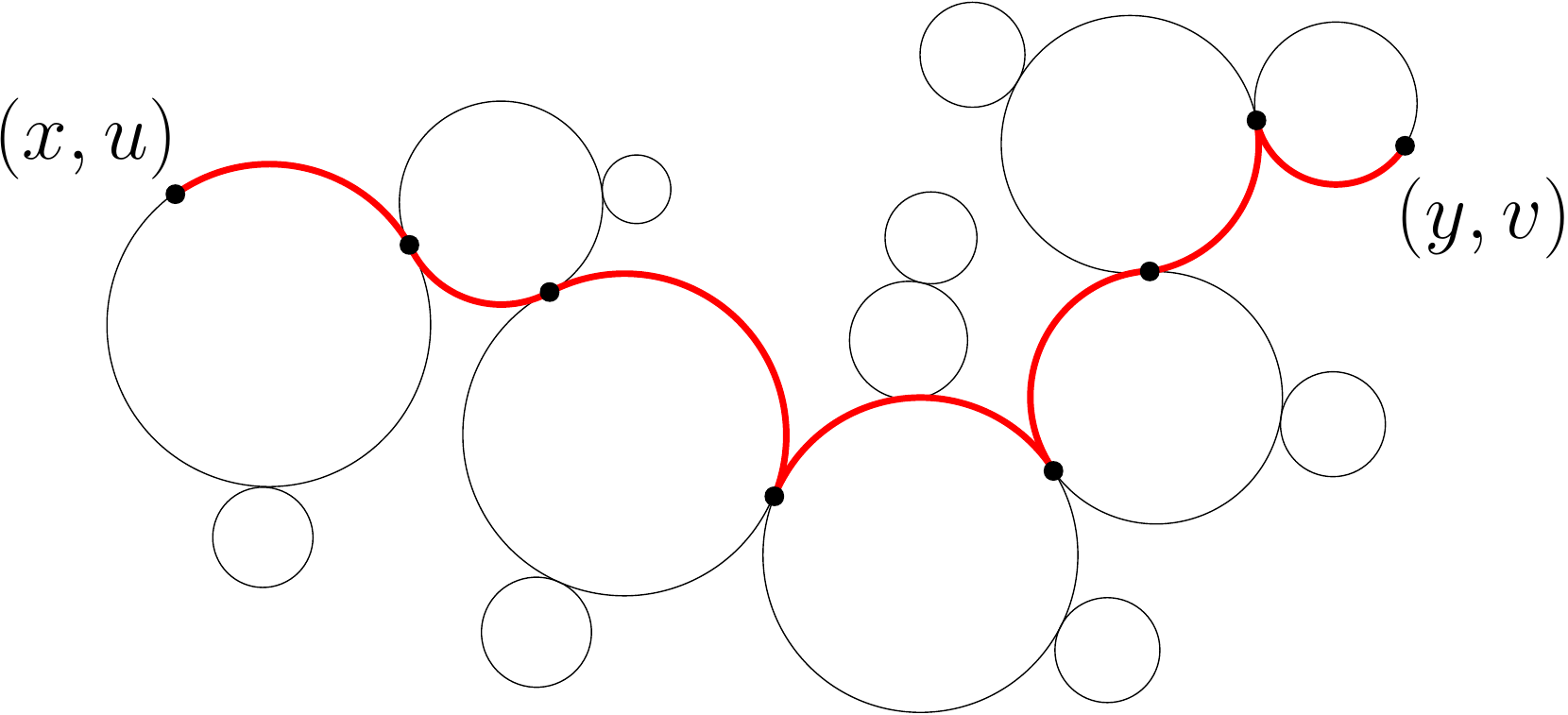}
\caption{A continuum looptree $\L$. The geodesic between $(x,u),(y,v)\in \L$ is red. The distance between $(x,u)$ and $(y,v)$ is the sum of the length of all potential cords that lies in this geodesic. This is a simplified picture, since $\L$ usually has infinitely many cycles which can have size null.
} 
\label{explore1} \label{looptreefig3}
\end{figure}%
\begin{theorem} \label{CompactLooptree} Almost surely $d_\L$ is finite on $\T\times [0,1]$, and $(\L,d_\L)$ is compact.
\end{theorem}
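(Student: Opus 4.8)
I would approximate $\L$ by the sub-looptrees $\L_n$, defined as the closures in $\L$ of the images of $\T_n\times[0,1]$, where $\T_n:=\SBB\big((Y_i)_{i\le n},(Z_i)_{i\le n}\big)$ is the finite stick-breaking tree built from the first $n$ branches. Each $\L_n$ is compact: this is routine from the fact that $\T_n$ is a finite $\R$-tree and that a.s.\ $\sum_{i:X_i\le Y_n}\theta_i<\infty$ (which is built into Algorithm~\ref{ICRT}, since $\mu[0,Y_n]<\infty$) with only finitely many of those $X_i$ of weight $\ge\e$, so the image of the compact space $\T_n\times[0,1]$ is totally bounded. Since $\L$ is complete (being a completion), it then suffices to prove that $d_H^{\L}(\L_n,\L)\to 0$ a.s., where $d_H^{\L}$ is the Hausdorff distance in $\L$: an $\e/2$-net of $\L_n$ for $n$ large is then an $\e$-net of $\L$, so $\L$ is totally bounded, hence compact.

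Reduction to a spinal estimate: fix $(x,u)\in\T\times[0,1]$, let $\pi_n(x)$ be the closest point of $\T_n$ to $x$, and choose $v$ so as to cancel the loop at $\pi_n(x)$ when $\pi_n(x)$ is one of the $X_i$'s. Reading off \eqref{18/12/15h}, and using that $x$ and $\pi_n(x)$ lie in the same component of $\T\setminus\{X_i\}$ whenever $X_i\notin\llbracket\pi_n(x),x\rrbracket$, one gets
\[
\inf_{(y,v)\in\L_n}d_\L\big((x,u),(y,v)\big)\ \le\ \tfrac{\theta_0^2}{4}\,d_\T\big(x,\pi_n(x)\big)\ +\!\!\sum_{\substack{i:\ X_i\in\llbracket\pi_n(x),x\rrbracket\\ X_i\ne\pi_n(x)}}\!\!\theta_i\ \le\ \tfrac{\theta_0^2}{4}\,d_H^{\T}(\T_n,\T)\ +\ R_n,
\]
where $R_n:=\sup_{t\in\T}\sum_{i:\ X_i\in\llbracket 0,t\rrbracket,\ X_i>Y_n}\theta_i$ is the worst ``loop mass'' carried along a spine by branch points not yet in $\T_n$. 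The $d_H^{\T}$-term tends to $0$ a.s.\ since $\T$ is a.s.\ compact; this, together with $\diam\T<\infty$ and the structural estimates on branches used below, is the content of \cite{ICRT1}. Taking $n=0$ in the same inequality gives $d_\L((x,u),(0,0))\le\frac{\theta_0^2}{4}\diam\T+R_0$, so $d_\L$ is a.s.\ finite on $\T\times[0,1]$, hence on $\L$, as soon as $R_0<\infty$.

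The crux is therefore $R_n\to 0$ a.s. This is exactly a ``maximum of a function over the tree'' estimate, handled by the chaining method of Section~\ref{1.3}. Pick an increasing sequence of branch indices $a_0<a_1<\cdots$, adapted to the weight sequence $\Theta$, with projections $\pi_{a_i}:\T_{a_{i+1}}\to\T_{a_i}$. Telescoping the spine $\llbracket\pi_{a_{i^*}}(t),t\rrbracket$ along these projections yields, for $n=a_{i^*}$,
\[
R_n\ \le\ \sum_{i\ge i^*}\ \sup_{t\in\T}\ \sum_{i':\ X_{i'}\in\llbracket\pi_{a_i}(t),\pi_{a_{i+1}}(t)\rrbracket}\theta_{i'},
\]
and each summand is the largest loop mass along a path of $\T$ contained in the fresh part added during the $i$-th batch of branches. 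Following the ``re-decomposition'' recipe of Section~\ref{1.3}, I would bound it by $L_i\,W_i$, where $L_i$ is the maximal length of a chain of branches inside the $i$-th batch (a depth in the stick-breaking tree, $O(\log a_{i+1})$ with overwhelming probability) and $W_i:=\max_{a_i<k\le a_{i+1}}\sum_{i':\ Y_{k-1}<X_{i'}\le Y_k}\theta_{i'}$ is the largest loop mass on a single branch of the batch. A Borel--Cantelli argument over the batches then gives $\sum_i L_iW_i<\infty$ a.s., whence $R_{a_{i^*}}\to 0$, and hence $R_n\to 0$ by monotonicity; assembling the pieces completes the proof.

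The main obstacle is this last step. Unlike the Euclidean branch lengths controlled in \cite{ICRT1} for the compactness of $\T$, the masses $W_i$ couple the deterministic weights $\theta_{i'}$ with the random positions $X_{i'}$ (exponential of parameter $\theta_{i'}$) and the Poisson cuts $\mathbf Y$, and one must show that they remain small enough --- even after taking a maximum over a batch and multiplying by the depth $L_i$ --- to be summable. I expect this to go through once the tail and moment bounds for $\sum_{Y_{k-1}<X_{i'}\le Y_k}\theta_{i'}$ are extracted from the material recalled in Section~\ref{RecallSec}, the key structural fact being that the normalization $\sum_i\theta_i^2=1$ plays for loop masses exactly the role that the corresponding normalization plays for branch lengths, so that $\sum_i L_iW_i$ converges even when $\sum_i\theta_i=\infty$.
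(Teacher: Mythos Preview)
Your approach is essentially the same as the paper's: approximate $\L$ by finite sub-looptrees, show each is compact, and prove Hausdorff convergence by the chaining method, bounding the increment at each scale by (depth of the branch chain) $\times$ (maximal mass on a single branch). The paper implements this with dyadic scales $\L_{2^n}:=[0,2^n]\times[0,1]$ and the single inequality $d_\L((x,u),\pL_l(x,u))\le \mu\rrbracket \pT_l(x),x\rrbracket$ (Lemma~\ref{BAKA}), so that both your $\tfrac{\theta_0^2}{4}d_\T$ term and your $R_n$ are absorbed into $\mu$; the ``main obstacle'' you flag is then precisely Lemma~\ref{Recall icrt}(d), $\mu[Y_{i-1},Y_i]\le \log(Y_i)^2/Y_i$, combined with the depth bound of Lemma~\ref{CountingLemma}, giving $d_H(\L_{2^n},\L_{2^{n+1}})\le 4n^3/2^n$.

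One point needs correcting: you write that ``$\T$ is a.s.\ compact \dots\ is the content of \cite{ICRT1}''. This is false in general; \cite{ICRT1} gives a \emph{condition} for compactness, and Theorem~\ref{CompactLooptree} is strictly stronger in that $\L$ is compact even when $\T$ is not. Your argument is salvageable by a case split you implicitly rely on: if $\theta_0>0$ then $\E[\mu[0,l]]\sim\theta_0^2 l$ so the integral criterion of \cite{ICRT1} holds and $\T$ is compact; if $\theta_0=0$ the term $\tfrac{\theta_0^2}{4}d_H^\T(\T_n,\T)$ vanishes identically (and likewise $\tfrac{\theta_0^2}{4}\diam\T$ in your finiteness argument, so you must not write this as $0\cdot\infty$). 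The paper avoids this split by never isolating the $d_\T$ term: bounding directly by $\mu\rrbracket\cdot,\cdot\rrbracket$ treats the Lebesgue part and the atoms on the same footing, which is cleaner and is the reason the compactness of $\L$ does not depend on that of $\T$.
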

\begin{remark} When $\theta_1=1$, $\L$ is a cycle of size $1$. When $\theta_0=1$, $\L$ is the Brownian CRT.
\end{remark}
Sadly, several of our notions are initially defined in $\R^+\times[0,1]$ or in $\T\times[0,1]$, but not in $\L$. So, to avoid any issues, we extend the above notions to $\L$:
\begin{proposition} \label{Completion} Almost surely the following assertions hold:
\begin{compactitem}
 \item[(a)] $\L$ is the completion of $(\R^+\times[0,1],d_\L)$.
\item[(b)] If $\T$ is compact, then $\L=\T\times[0,1]$, and $p_{\T,\L}:(x,u)\in \L\to x\in \T$ is continuous.
\item[(c)] For every $i\in \N$ with $\theta_i>0$, $\x\in \T\times[0,1] \mapsto  U_{X_i,\x}$ extends to a continuous function from $(\L,d_\L)$ to $([0,1],\dc)$. 
\item[(d)] For every $\x,\y\in \L$, writing when $\theta_0>0$ $d_\T:(x,u),(y,v)\in \L\mapsto d_\T(x,y)$,
\[ d_{\L}(\x,\y)=\frac{\theta^2_0}{4}d_\T(\x,\y)+ \sum_{i\in \N}\theta_i \dc(U_{X_i,\x},U_{X_i,\y}). \]
\end{compactitem}
\end{proposition}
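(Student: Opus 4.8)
The plan is to prove Proposition \ref{Completion} by establishing part (a) first, since (b), (c), (d) all follow from understanding which completion we are working in and how the ingredients of the formula \eqref{18/12/15h} behave on it.

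\textbf{Part (a).} I would argue that $\R^+\times[0,1]$ is $d_\L$-dense in $\T\times[0,1]$. Indeed $\R^+$ is dense in $\T$ for $d_\T$ (by construction, $\T=\SBB(\mathbf Y,\mathbf Z)$ is the completion of $(\R^+,d_\T)$), and the map $(x,u)\mapsto U_{X_i,x,u}$ is, for fixed $u$ and $x$ ranging over a neighborhood not containing $X_i$, locally constant, so small $d_\T$-perturbations of $x$ give small $d_\L$-perturbations once one also controls the tail $\sum_{i\geq N}\theta_i$ uniformly; here $\dc\leq 1/2$ and $\sum\theta_i^2=1$ with $\theta_i\to 0$, and by Theorem \ref{CompactLooptree} the series in \eqref{18/12/15h} is a.s. finite, which gives the needed uniform tail control via dominated convergence along a countable dense set. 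Since $\T\times[0,1]$ is itself $d_\L$-dense in $\L$ by definition, density is transitive, so $\L$ is also the completion of $(\R^+\times[0,1],d_\L)$.

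\textbf{Parts (c) and (d).} For (c), fix $i$ with $\theta_i>0$. From \eqref{18/12/15h} one has $\dc(U_{X_i,\x},U_{X_i,\y})\leq \theta_i^{-1} d_\L(\x,\y)$ on $\T\times[0,1]$, so $\x\mapsto U_{X_i,\x}$ is $\theta_i^{-1}$-Lipschitz from $(\T\times[0,1],d_\L)$ to $([0,1],\dc)$, hence extends uniquely (and Lipschitz) to the completion $\L$. For (d), both sides of the claimed identity are continuous functions of $(\x,\y)\in\L\times\L$: the left side is $d_\L$ itself; on the right side each term $\dc(U_{X_i,\x},U_{X_i,\y})$ is continuous by (c), the series converges uniformly (dominated by $\theta_i\cdot(1/2)$ with $\sum\theta_i\dc\leq 2d_\L<\infty$ on the original space, extended by continuity), and when $\theta_0>0$ the term $d_\T(\x,\y)$ is well-defined and continuous precisely because in that case $p_{\T,\L}$ is continuous — which brings us to (b). Since the identity holds on the dense subset $\T\times[0,1]$ (it is the definition \eqref{18/12/15h} there, with the $d_\T$ term interpreted via the first coordinate), it extends by continuity to all of $\L$.

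\textbf{Part (b).} Assuming $\T$ is compact: then $\T\times[0,1]$ is compact, in particular complete for $d_\L$ (a continuous pseudometric — wait, need that $d_\L$ is an honest metric here, or at least that the quotient identifications are already present). The cleaner route: $(\T\times[0,1],d_\L)$ is the continuous image of the compact space $(\T\times[0,1],d_\T+\dc)$ under the identity map, hence its completion adds no points beyond the Hausdorff quotient; but when $\T$ is compact one checks $d_\L$ separates the points of $\T\times[0,1]$ that are not forced-identified, and the space is already complete, so $\L=\T\times[0,1]$ as sets (more precisely, the natural map is a homeomorphism onto $\L$). Finally $p_{\T,\L}:(x,u)\mapsto x$ is continuous because from the formula, when $\theta_0>0$, $d_\T(x,y)\leq 4\theta_0^{-2}d_\L((x,u),(y,v))$, so $p_{\T,\L}$ is Lipschitz; when $\theta_0=0$ one argues instead that a $d_\L$-convergent sequence has $U_{X_i,\cdot}$ converging for every $i$, which pins down the limit's first coordinate in the compact tree $\T$ since the angles $\{U_{X_i,\cdot}\}_i$ separate points of $\T$ (two distinct points of $\T$ are separated in $\T\setminus\{w\}$ for some branch point $w$, and a.s. the high-degree points $X_i$ are dense enough among branch points — this uses properties of the ICRT recalled in Section \ref{RecallSec}).

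\textbf{Main obstacle.} The delicate point is part (b) in the case $\theta_0=0$: showing $p_{\T,\L}$ is continuous there requires that the family of angle coordinates $(U_{X_i,\cdot})_{i\in\N}$ genuinely determines points of $\T$, i.e. that $d_\L$ does not collapse distinct tree points. This needs a.s. statements about the ICRT — that every point of $\T$ of degree $\geq 3$ is "seen" by some $X_i$, or can be approximated by such — which I expect to invoke from \cite{ICRT1} via Section \ref{RecallSec} rather than reprove. The uniform-tail control of the series $\sum_i\theta_i\dc(\cdots)$, needed throughout for passing to the completion, is the other recurring technical nuisance, but it is routine given $\sum\theta_i^2=1$, $\theta_i\downarrow 0$, and the a.s. finiteness from Theorem \ref{CompactLooptree}.
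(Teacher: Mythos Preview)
Your argument for (c) matches the paper exactly and is correct. Part (a) is morally right but see below for the cleaner route the paper takes. The real problems are in (b) and (d).

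\textbf{Gap in (d).} Your claimed uniform convergence of the tail $\sum_{i\geq n}\theta_i\dc(U_{X_i,\x},U_{X_i,\y})$ is not justified: the bound $\theta_i\cdot\tfrac12$ is useless when $\sum\theta_i=\infty$, and the observation that the full series is $\leq d_\L(\x,\y)<\infty$ pointwise gives no uniform control of the tail. The paper handles this by a projection argument you are missing: on $\L_l=[0,l]\times[0,1]$ one has $\sum_{i\geq n}\theta_i\dc(U_{X_i,\x},U_{X_i,\y})\leq \sum_{i\geq n}\theta_i\1_{X_i\leq l}\to 0$ (since a.s. $\mu$ is locally finite), and then one transfers to all of $\T\times[0,1]$ via the triangle inequality through $\pL_l$ together with $d_H(\L_l,\L)\to 0$ (Proposition \ref{FBLooptreeThm}). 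Without this two-step localization the uniform convergence genuinely fails to follow.

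\textbf{Gap in (b).} You assert that the identity map $(\T\times[0,1],d_\T+\dc)\to(\T\times[0,1],d_\L)$ is continuous, in order to get compactness of the target as a continuous image. This continuity is exactly the nontrivial point and is not automatic: if $x_n\to x$ in $d_\T$, one does not get $d_\L((x_n,u),(x,u))\to 0$ without controlling infinitely many angle terms simultaneously, which runs into the same uniform-tail issue as above. The paper avoids this circularity by proving sequential compactness of $(\T\times[0,1],d_\L)$ directly: given $(x_n,u_n)$, extract so that $x_n\to x$ in $\T$ and $U_{x,x_n,u_n}\to u$; then split on whether $x\in\R^+$ or not and in both cases bound $d_\L((x_n,u_n),(x,u))$ by quantities controlled through the projections $\pL_l$ and $d_H(\L_l,\L)\to 0$. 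Your sketch for the continuity of $p_{\T,\L}$ when $\theta_0=0$ (angles separate points of $\T$ via density of $\{X_i\}$ on $\R^+$) is essentially the paper's contradiction argument and is fine once you invoke the density of $\mu$ on $\R^+$ from \cite{ICRT1}.

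For (a), rather than arguing local-constancy plus tail control (which again needs the uniform-tail machinery), the paper simply quotes $d_H(\L,\R^+\times[0,1])=0$, which was already obtained in the proof of Theorem \ref{CompactLooptree} via Lemma \ref{MuBounded} and Lemma \ref{BAKA}.
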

\begin{remark} If $\T$ is not compact, then $\L$ morally contains  the "ends of the infinite branches" of $\T$ (see Section \ref{ExtendLeftSec}). Those points are dense in $\L$ (cf \cite{ICRT1} Lemma 6.6), so $\p_{\T,\L}$ is nowhere continuous.
\end{remark} 

We now give the fractal dimensions of $\L$, whose definitions are recalled in Section \ref{RecallDefDimSec}. Let,
\begin{equation} \badim:=1+\liminf_{l\to \infty}\frac{\log \E[\mu[0,l]]}{\log l} \quad ; \quad \badam:=1+\limsup_{l\to \infty}\frac{\log \E[\mu[0,l]]}{\log l}.\label{22/02/9h}\end{equation} 
\begin{remark} $l\mapsto l\E[\mu[0,l]$ is an analog of the Laplace exponent $\psi$ for L\'evy processes (see \cite{ExcICRT2,ICRT1}). Also, by Lemma \ref{Recall icrt} (b), $1\leq \badim\leq \badam\leq2$.
\end{remark}
\begin{theorem} \label{DimTHM} Almost surely the upper Minkowski dimension, and Packing dimension of $\L$ are $\badam$. Almost surely the lower Minkowski dimension, and Hausdorff dimension of $\L$ are $\badim$
\end{theorem}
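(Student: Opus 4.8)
The plan is to deduce the theorem from four one-sided bounds, combined with the general comparison inequalities $\dim_H \le \underline{\dim}_M \le \overline{\dim}_M$ and $\dim_H \le \dim_P \le \overline{\dim}_M$ (all meaningful here because $\L$ is compact, by Theorem~\ref{CompactLooptree}) and with $\badim \le \badam$ (Lemma~\ref{Recall icrt}(b)). The four bounds are, almost surely:
\begin{compactitem}
\item[(I)] $\overline{\dim}_M(\L) \le \badam$;
\item[(II)] $\underline{\dim}_M(\L) \le \badim$;
\item[(III)] $\dim_H(\L) \ge \badim$;
\item[(IV)] $\dim_P(\L) \ge \badam$.
\end{compactitem}
Indeed, (III) and (II) force $\dim_H(\L) = \underline{\dim}_M(\L) = \badim$, while (IV) and (I) force $\dim_P(\L) = \overline{\dim}_M(\L) = \badam$, which is the statement (with the definitions of the four dimensions as recalled in Section~\ref{RecallDefDimSec}).

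For the upper bounds (I)--(II), which are estimates on the covering number $N(\L,\e)$, I would use the chaining method of Section~\ref{1.3} with the spaces $S_i = \L^{(l_i)}$ given by the sub-looptrees obtained after gluing all branches up to time $l_i$, for a suitably chosen $l_1 < l_2 < \dots \to \infty$ and the natural projections $p_i\colon \L^{(l_{i+1})} \to \L^{(l_i)}$. One bounds the number of $\e$-separated points of $\L$ by the re-decomposition described in Section~\ref{1.3}: at each level one estimates how many branches and cycles separate a point of $\L^{(l_{i+1})}$ from $\L^{(l_i)}$, and the diameter contributed by each such branch (through its $\theta_0$-length) or cycle (through its perimeter $\theta_j$), then multiplies the worst cases and sums over $i$. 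Using the quantitative estimates recalled in Section~\ref{RecallSec} and proved in Section~\ref{Compacito}, this should give, for all small $\e$,
\[ N(\L,\e) \;\le\; \e^{-1}\,\E[\mu[0,1/\e]]\,(\log(1/\e))^{O(1)}. \]
Since $\badam = 1 + \limsup_{l\to\infty}\log\E[\mu[0,l]]/\log l$, this yields $N(\L,\e)\le\e^{-\badam-\delta}$ eventually, for every $\delta>0$, hence (I). For (II) one runs the same estimate along a sequence $\e_k\downarrow 0$ with $\e_k\asymp 1/l_k$, where $l_k\uparrow\infty$ is chosen from the definition of $\badim$ as a $\liminf$ so that $\E[\mu[0,l_k]]\le l_k^{\badim-1+\delta}$; then $N(\L,\e_k)\le\e_k^{-\badim-2\delta}$, so $\underline{\dim}_M(\L)\le\badim$. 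A single truncation $\L\approx\L^{(l)}$ is too lossy here — it does not capture that the countably many small cycles, and the ends of the infinite branches, cluster into few balls — so the genuinely multi-scale chaining is needed.

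For the lower bounds (III)--(IV), I would use a natural finite measure $\M$ on $\L$ (the ``uniform'' measure, obtained by pushing a suitable measure forward along the stick-breaking and equipping each cycle with its normalized length; cf.\ Section~\ref{RecallSec}). For (III) it suffices, by Frostman's criterion, to show that the $s$-energy $\iint d_\L(\x,\y)^{-s}\,d\M(\x)\,d\M(\y)$ is finite for every $s<\badim$. Writing it as $\E[d_\L(\Xi,\Xi')^{-s}]$ for two independent $\M$-distributed points $\Xi,\Xi'$ and conditioning on the stick-breaking data, the key point is that, given the backbone, $d_\L(\Xi,\Xi') = \tfrac{\theta_0^2}{4}d_\T(\Xi,\Xi') + \sum_j \theta_j\,\dc(U_{X_j,\Xi},U_{X_j,\Xi'})$ is a sum over the cycles met along $\llbracket\Xi,\Xi'\rrbracket$ of essentially independent terms, the $j$-th being at the scale of a uniform variable on $[0,\theta_j/2]$; hence $\proba(d_\L(\Xi,\Xi')<r\mid\text{backbone})$ is at most a product $\prod_j\min(1,2r/\theta_j)$ times $\proba(\tfrac{\theta_0^2}{4}d_\T(\Xi,\Xi')<r)$. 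Bounding this product and integrating over the random times of $\Xi$ and $\Xi'$, finiteness of the energy reduces — via the same scale correspondence $l\leftrightarrow r$ as above — to $s<\badim$, the $\liminf$ entering because the integral is governed by the slowest-growing scales of $\E[\mu[0,l]]$. For (IV) I would use $\dim_P(\L)\ge\dim_P(\M)$, and that $\dim_P(\M)$ equals the $\M$-essential supremum of $\x\mapsto\limsup_{r\to0}\log\M(B(\x,r))/\log r$; so it is enough to show this upper local dimension is $\ge\badam$ for $\M$-a.e.\ $\x$. Fixing $\delta>0$ and choosing $l_k\uparrow\infty$ with $\E[\mu[0,l_k]]\ge l_k^{\badam-1-\delta}$ (possible since $\badam-1$ is a $\limsup$), and after thinning to a sparse subsequence, the same first-moment computation gives $(\M\otimes\M)\{d_\L<r_k\} = \E[\M(B(\Xi,r_k))]\le r_k^{\badam-\delta/2}$ at the matching scales $r_k\downarrow 0$; Markov's inequality together with Borel--Cantelli in $(\L,\M)$ then show that $\M$-a.e.\ $\x$ satisfies $\M(B(\x,r_k))\le r_k^{\badam-\delta}$ for all large $k$, so its upper local dimension is $\ge\badam-\delta$; letting $\delta\to0$ gives (IV).

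I expect the hard part to be the sharp covering bound in (I)--(II): since the naive comparison $\L\approx\L^{(l)}$ is too lossy, one must chain through a whole hierarchy of scales and keep the exponent exact through the slowly varying part of $l\mapsto\E[\mu[0,l]]$, which is precisely where the gap between $\badim$ (a $\liminf$) and $\badam$ (a $\limsup$) must be honored, and where the non-compact-tree case $\theta_0=0$, $\sum_i\theta_i=\infty$ (ends of infinite branches dense in $\L$) demands the most care. The lower bounds are comparatively routine once the first-moment bound on $(\M\otimes\M)\{d_\L<r\}$ is in hand, and that bound is itself a tractable computation thanks to the independence of the angle variables across distinct cycles.
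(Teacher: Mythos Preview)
Your outline is sound, but it diverges from the paper at two key points.

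For the upper bounds (I)--(II), the paper does not chain directly on covers of $\L$. Instead it routes everything through the contour path: since $\Cont:[0,1]\to\L$ is surjective up to $\sim_\L$ (Theorem~\ref{ConstructContThm}), an $\e$-net of $[0,1]$ pushes forward to an $\e$-net of $\L$, and the H\"older bound of Theorem~\ref{HolderContThm} (more precisely, the quantitative version $d_\L(\Cont(s),\Cont(t))\le 13|s-t|\E[\mu[0,2^n]]+13n^6 2^{-n}$) immediately gives $N(\L,\e)\le \e^{-1}\E[\mu[0,1/\e]]$ up to polylogs. So the multi-scale chaining you propose is effectively already packaged inside the construction of $\Cont$; redoing it directly on $\L$ would work but duplicates that effort. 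The subsequence trick for $\underline\dim$ is the same in both approaches.

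For the lower bounds (III)--(IV), the paper uses the local-dimension criterion (Lemma~\ref{Hausdorff}) for \emph{both} the Hausdorff and packing dimensions, not Frostman's energy integral. More importantly, rather than estimating $\proba(d_\L(\Xi,\Xi')<r)$ for two generic $\p_\L$-points as you propose, the paper invokes a \emph{rebranching principle} (Proposition~\ref{Rerooting}): by exchangeability of the leaves $(Y_i)_{i\ge 0}$, the law of $d_\L(\Xi,\Xi')$ coincides with that of $d_\L((0,0),(Y_1,0))$, so it suffices to bound $\proba(d_\L(0,Y_1)<\e)$. This replaces your random geodesic $\llbracket\Xi,\Xi'\rrbracket$ by the fixed first branch $[0,Y_1]$, on which the set $\{j:X_j\in[0,Y_1]\}$ has a simple description, and the product bound you sketch becomes a concrete sum of independent truncated variables handled by Bernstein's inequality (after a truncation $\Delta_n$ to tame the bias from large $\theta_j$). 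Your direct two-point computation should also go through, but controlling which $X_j$'s lie on the random path $\llbracket\Xi,\Xi'\rrbracket$ is genuinely more work; the rebranching shortcut is the main simplification the paper buys here.
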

%
\subsection{The ICRT fennec} \label{The ICRT fennec}
To mimic the discrete setting, we want to define the ICRT fennec $\F$ as a Gaussian free field (a random function) on $\L$. We construct it explicitly by mimicking our construction of the looptree. 

First, to deal with vertices of small degree we construct a Gaussian free field on $\T$. To this end, we adapt Algorithm \ref{Alg1}: Let $\Brown:\R^+\to \R$ be a Brownian motion. Then define inductively $\GFF$ on $\R^+$ such that for every $i\in \N$ and $Y_i<x\leq Y_{i+1}$, 
\begin{equation}  \GFF(x):=\GFF(Z_i)+\Brown(x)-\Brown(Y_i). \label{22/12/00} \end{equation}
To construct the fennec $\F$, we need to show that $\GFF$ extends to a continuous function on $\T$ if $\theta_0>0$. We actually prove the much stronger result:
\begin{theorem} \label{CompactGFF} Almost surely $\GFF$ extends to a continuous function on $\T$ if
\begin{equation} \int^\infty \frac{dl}{l\sqrt{\E[\mu[0,l]]}}<\infty. \label{22/12/9h} \end{equation}
\end{theorem}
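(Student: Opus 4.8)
The plan is to realise $\GFF$ on $\T$ as a uniform limit of continuous functions obtained by stopping the stick-breaking construction, and to control the increments of this sequence by the chaining method of Section~\ref{1.3}, the series that must converge being precisely \eqref{22/12/9h}. Concretely, for $n\in\N$ let $\T_n\subseteq\T$ be the completion of $([0,Y_n],d_n)$ — a finite, hence compact, $\R$-tree — and let $p_n\colon\T\to\T_n$ be the $1$-Lipschitz closest-point projection. An induction on $n$, using the a.s.\ continuity of $\Brown$ together with the consistency that makes \eqref{22/12/00} well posed, shows that $\GFF$ restricted to $\T_n$ is continuous, so $\GFF_n:=\GFF|_{\T_n}\circ p_n$ is continuous on $\T$ and agrees with $\GFF$ on $[0,Y_n]$ (where $p_n=\Id$). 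Since $\bigcup_n[0,Y_n]=\R^+$ is dense in $\T$, it suffices to prove that a.s.\ $(\GFF_n)_n$ is uniformly Cauchy on $\T$: its uniform limit is then a continuous extension of $\GFF$.

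Next I would turn the increments $\GFF_n-\GFF_m$ into Brownian increments. For $x\in\R^+$ write its ancestral chain as $x=a_0$, $a_{j+1}=Z_{k(a_j)}$, where $k(a)$ indexes the branch $(Y_{k(a)},Y_{k(a)+1}]$ containing $a$; the $k(a_j)$ strictly decrease to $0$. Unwinding \eqref{22/12/00} gives $\GFF(a_j)-\GFF(a_{j+1})=\Brown(a_j)-\Brown(Y_{k(a_j)})=:\Delta_j$, the increment of $\Brown$ over $I_j:=(Y_{k(a_j)},a_j]$, while $p_n(x)=a_{j^\ast}$ with $j^\ast=\min\{j:k(a_j)<n\}$; hence, for $m<n$,
\[
\GFF_n(x)-\GFF_m(x)=\sum_{j\,:\,m\le k(a_j)<n}\Delta_j ,
\]
a sum of increments of $\Brown$ over the pairwise \emph{disjoint} intervals $I_j$, each lying in a distinct branch of index in $[m,n)$ and with left endpoint beyond $Y_m$. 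Equivalently, conditionally on the tree $(\mathbf Y,\mathbf Z)$ the field $\GFF$ is centered Gaussian with $\E[(\GFF(x)-\GFF(y))^2\mid\mathbf Y,\mathbf Z]=d_\T(x,y)$, which is why the criterion \eqref{22/12/9h} is the compactness-type criterion of \cite{ICRT1} with $\E[\mu[0,l]]$ replaced by $\sqrt{\E[\mu[0,l]]}$. Along a sequence $n_0<n_1<\cdots$ to be chosen it then remains to bound $\sup_{x\in\T}|\GFF_{n_{k+1}}(x)-\GFF_{n_k}(x)|$ and sum over $k$.

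The core is the per-scale estimate, mirroring the compactness chaining of \cite{ICRT1}. I would take the $n_k$ so that on each scale $[Y_{n_k},Y_{n_{k+1}}]$ the intensity $\mu[0,\cdot]$, equivalently $\E[\mu[0,\cdot]]$, roughly doubles, say to $\sim l_k$; by the Poisson estimates of \cite{ICRT1} (Lemma~\ref{Recall icrt}) all branches of such a scale then have length $\lesssim (\E[\mu[0,l_k]])^{-1}$ up to logarithms and with a controlled number of exceptions, and there are at most polynomially many of them. I would then combine, in the spirit of Section~\ref{1.3}, two bounds: (i) Gaussian tail bounds and a union bound over the branches, giving that $\Brown$ oscillates by $\lesssim (\E[\mu[0,l_k]])^{-1/2}$ over any branch of the scale, up to logarithms; and (ii) the fact that, the $I_j$ being disjoint and the glue points of the scale falling essentially uniformly on a region carrying a bounded fraction of the total mass, a single ancestral chain meets only $O(1)$ — uniformly $O(\log)$ — branches of the scale, and the cancellation between the independent $\Delta_j$ keeps the contribution of order $(\sum_j|I_j|)^{1/2}$ rather than $\sum_j|\Delta_j|$. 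Combining (i)--(ii) gives $\sup_x|\GFF_{n_{k+1}}(x)-\GFF_{n_k}(x)|\lesssim (\E[\mu[0,l_k]])^{-1/2}$ up to logarithmic factors, and by Abel summation the series over $k$ is comparable to $\int^\infty\frac{dl}{l\sqrt{\E[\mu[0,l]]}}$, finite by \eqref{22/12/9h}. Each scale being treated with summable failure probabilities, Borel--Cantelli and Gaussian concentration upgrade this to the a.s.\ bound $\sum_k\sup_{x\in\T}|\GFF_{n_{k+1}}(x)-\GFF_{n_k}(x)|<\infty$, which is what was needed.

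The reduction, the telescoping, and the Borel--Cantelli upgrade are routine; the real obstacle is the per-scale estimate, namely transporting the \cite{ICRT1} compactness chaining from a sum of branch \emph{lengths} to a sum of \emph{Brownian increments} over those branches — exploiting disjointness to gain the square root, and the near-uniformity of the glue points to bound how an ancestral chain revisits a scale — tightly enough that the logarithmic losses are absorbed by the geometric weighting and the series over scales still matches \eqref{22/12/9h} exactly.
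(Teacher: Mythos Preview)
Your overall architecture---define $\GFF_n$ via projection onto a growing subtree, show $(\GFF_n)$ is uniformly Cauchy by bounding per-scale increments, sum over geometrically spaced mass scales, Borel--Cantelli---is exactly the paper's. The telescoping and the reduction are fine.

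The gap is in the per-scale estimate. Your sketch combines (i) a union bound over branches to control each $\Delta_j$ and (ii) a bound on the number of branches an ancestral chain meets. Read literally this gives
\[
\sup_x|\GFF_{n_{k+1}}(x)-\GFF_{n_k}(x)|\ \lesssim\ (\#\text{branches in chain})\times(\text{max branch oscillation})\ \lesssim\ (\log \X_k)^2\,e^{-k/2},
\]
with $\X_k$ the spatial scale at which $\E[\mu[0,\cdot]]\asymp e^k$. But the hypothesis \eqref{22/12/9h} is equivalent to $\sum_k \log(\X_k)\,e^{-k/2}<\infty$ and does \emph{not} imply $\sum_k (\log \X_k)^2 e^{-k/2}<\infty$: take $\E[\mu[0,l]]\asymp(\log l)^3$, for which $\log \X_k\asymp e^{k/3}$, the first series converges and the second diverges. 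The ``cancellation'' you invoke is just the fact that conditionally $\GFF(p_{n_{k+1}}(x))-\GFF(p_{n_k}(x))$ is Gaussian with variance $d_\T(p_{n_{k+1}}(x),p_{n_k}(x))$; that is a \emph{pointwise} statement, and you do not explain how to take the supremum over $x$ without reintroducing the extra logarithms via a union bound over $\asymp \X_k e^k$ points or branches.

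The paper avoids branch-counting entirely. It uses the tail bound from \cite{ICRT1} on $d_\T(x,\pT_{\X_{k-1}}(x))$ to get a pointwise Gaussian tail $\proba(|\GFF(x)-\GFF(\pT_{\X_{k-1}}(x))|>t^2/\sqrt{\mu[0,\X_{k-1}]})\leq 5e^{-t^2/4}$, and then passes from pointwise to uniform by Fubini rather than union: the Lebesgue measure of the ``bad'' set of $x\in[0,\X_k]$ is $O(\X_k^{-3})$, which forces every bad point to lie within $d_\T$-distance $\X_k^{-3}$ of some branch tip $Y_i$ along its geodesic to $[0,\X_{k-1}]$; these short remaining segments are then handled by a direct Brownian estimate. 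This yields the sharp per-scale bound $O(\log(\X_k)\,e^{-k/2})$, whose sum is exactly controlled by \eqref{22/12/9h}. If you want to keep your branch-based approach, you would need to replace ``branch count $\times$ max length'' by a sharp bound on $d_\T(x,\pT(x))$ itself (this is \cite{ICRT1} Lemma~4.7), after which the cancellation you describe does give the right order---but then the Fubini passage to uniformity is still the missing step.
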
 
\begin{remark} By \cite{ICRT1}, $\T$ is almost surely compact iff $\int^\infty dl/(l\E[\mu[0,l]])<\infty$,
 so $\GFF$ is well defined on most compact ICRT. Also, we believe one can adapt \cite{ICRT1} Section 6.3 to show that \eqref{22/12/9h} is necessary. Moreover, in \cite{Duquesne2} Chapter 4.5, Duquesne and Le Gall  prove   the equivalence for L\'evy trees.
 %
\end{remark} 

We now adapt \eqref{18/12/15h}. Beforehand, recall that if $\theta_0>0$, by Proposition \ref{Completion} (b),  that $\L=\T\times[0,1]$ and that $p_{\L,\T}:(x,u)\in \L\to x$ is continuous. 
 Then the fennec is
\begin{equation} \F:\x\in \L\mapsto \frac{\theta_0}{\sqrt{6}}\GFF\circ p_{\L,\T}(\x)+\sum_{i=1}^\infty \sqrt{\theta_i} \Brown_i(U_{X_i,\x}). \label{27/12/18hc} \end{equation}
\begin{theorem} \label{Prop3} \label{CompactFennec} Almost surely the sum in \eqref{27/12/18hc} converges uniformly on $\L$, so $\F$ is continuous.
\end{theorem}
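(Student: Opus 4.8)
The plan is to prove uniform convergence of the series in \eqref{27/12/18hc} by the chaining method on the looptree, exactly as advertised in Section \ref{1.3}. Write $\F_n:\x\in\L\mapsto \sum_{i=1}^n \sqrt{\theta_i}\,\Brown_i(U_{X_i,\x})$ for the partial sums; since each $U_{X_i,\cdot}$ is continuous on $\L$ by Proposition \ref{Completion}(c) and each $\Brown_i$ is a.s.\ continuous on $[0,1]$, every $\F_n$ is continuous on the compact space $\L$. Hence it suffices to show that $(\F_n)_{n\in\N}$ is a.s.\ uniformly Cauchy, and then $\F$ is continuous as a uniform limit; the first term $\frac{\theta_0}{\sqrt6}\GFF\circ p_{\L,\T}$ is continuous by Theorem \ref{CompactGFF} together with Proposition \ref{Completion}(b), and one checks that the relevant integral condition \eqref{22/12/9h} holds whenever $\L$ is compact (Theorem \ref{CompactLooptree}) — in fact this is where the $\theta_0>0$ hypothesis is used, via the remark after Theorem \ref{CompactGFF}. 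So the crux is the tail estimate $\sup_{\x\in\L}\big|\F(\x)-\F_n(\x)\big|\to 0$ a.s.

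The core estimate is for $\max_{\x\in\L}\big|\sqrt{\theta_i}\,\Brown_i(U_{X_i,\x})\big|$ and, more importantly, for the oscillation of the tail across the sub-looptrees $\L_n$ obtained after gluing $n$ branches. Following the re-decomposition described in Section \ref{1.3}: fix the filtration generated by the first $n$ sticks; conditionally, a point $\x\in\L_{n+1}$ is separated from $\L_n$ by a controlled number of branches, and on each such branch the increment of $\F-\F_n$ is governed by the increments of the Brownian motions $\Brown_i$ composed with the angle coordinates $U_{X_i,\cdot}$. The key point is that, by \eqref{27/12/18hc} and the definition \eqref{18/12/15h} of $d_\L$, the process $\F-\F_n$ is, conditionally on the trees and the partition of $[0,1]$-fibres, a Gaussian process whose increment variance between $\x,\y$ is controlled by $\sum_{i> n}\theta_i\,\dc(U_{X_i,\x},U_{X_i,\y})$, i.e.\ by the "tail part" of $d_\L(\x,\y)$. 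So I would apply the standard Gaussian chaining / Dudley bound (Talagrand \cite{Talagrand}, or \cite{Massart} Ch.~13) \emph{on the metric $d_\L$ restricted to its tail piece}: the modulus of continuity of $\F-\F_n$ is bounded in terms of an entropy integral $\int_0^{\diam}\sqrt{\log N(\L,d_\L,\e)}\,d\e$ with the covering numbers already furnished by the compactness proof of Theorem \ref{CompactLooptree} (Section \ref{Compacito}), multiplied by the vanishing factor $\big(\sum_{i>n}\theta_i^{?}\big)^{1/2}$ coming from the truncated sum $\sum_{i>n}\theta_i\to 0$.

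Concretely I would proceed as follows. Step 1: record that $\sum_{i\in\N}\theta_i^2=1$ and, from Lemma \ref{Recall icrt}-type bounds, the quantities $\sum_{i>n}\theta_i\,\dc(\cdot,\cdot)$ are uniformly small tails of $d_\L$; in particular $\sup_{\x,\y}\sum_{i>n}\theta_i\,\dc(U_{X_i,\x},U_{X_i,\y})\le \e_n$ for some deterministic or a.s.-finite $\e_n\to 0$, using that the full series \eqref{18/12/15h} is a.s.\ bounded (Theorem \ref{CompactLooptree}). Step 2: condition on $(\T,U,\mathbf X,\Brown)$ restricted to the data determining the angle coordinates; then $\F-\F_n$ is a centred Gaussian field on $(\L,\rho_n)$ where $\rho_n(\x,\y)^2 = \sum_{i>n}\theta_i\,\dc(U_{X_i,\x},U_{X_i,\y})\le d_\L(\x,\y)$, and apply Dudley's entropy bound to get $\E[\sup_\L |\F-\F_n|\,|\,\cdots]\lesssim \int_0^{\sqrt{\e_n}}\sqrt{\log N(\L,\rho_n,u)}\,du \le \int_0^{\sqrt{\e_n}}\sqrt{\log N(\L,d_\L^{1/2},u)}\,du$. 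Step 3: the covering numbers of $(\L,d_\L)$ were controlled in Section \ref{Compacito} (this is exactly what makes the chaining integral converge); deduce that the right-hand side tends to $0$ as $n\to\infty$, and upgrade the expectation bound to an a.s.\ statement along a subsequence via Borel--Cantelli plus a monotonicity/interpolation argument between consecutive $n$, then fill the gaps. Step 4: conclude that $\F_n\to\F$ uniformly a.s., hence $\F$ is continuous.

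The main obstacle is Step 2--3: making the conditioning rigorous so that $\F-\F_n$ genuinely is a Gaussian process with the claimed intrinsic metric, and then verifying that the entropy integral of $(\L,d_\L)$ — which one only knows to be \emph{finite} from compactness — actually has vanishing tails $\int_0^{\delta}\sqrt{\log N(\L,d_\L,u^2)}\,du\to 0$ as $\delta\to 0$, uniformly enough to kill the $n$-dependence. This is where I expect to reuse the quantitative covering estimates (not merely qualitative compactness) from the proof of Theorem \ref{CompactLooptree}, i.e.\ bounds of the form $N(\L,d_\L,\e)\le \exp(o(1/\e))$ or polynomial-in-$1/\e$ after the deterministic tail cutoff, together with the decay of $\sum_{i>n}\theta_i$. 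A secondary subtlety is handling the Brownian-part term $\GFF\circ p_{\L,\T}$ when $0<\theta_0<1$: one needs $p_{\L,\T}$ continuous, which holds by Proposition \ref{Completion}(b) precisely because compactness of $\L$ forces compactness of $\T$, and then continuity of $\GFF$ on $\T$ from Theorem \ref{CompactGFF} — so this term is genuinely separate from the chaining argument and contributes no new difficulty once the integral condition \eqref{22/12/9h} is checked from $\L$-compactness.
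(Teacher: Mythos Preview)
Your approach is genuinely different from the paper's and, with two corrections, can be made to work for the Brownian-bridge case; but it is worth contrasting the two routes.

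\textbf{What the paper does.} The paper never invokes Dudley's entropy bound or Borell--TIS. Instead it chains \emph{along the stick-breaking structure}: for each branch $(Y_i,Y_{i+1}]$ it controls the oscillation $\Delta_{N,i}$ of the tail $\sum_{j\ge N}\sqrt{\theta_j}\Clown_j(U_{X_j,\cdot})$ across that branch by a moment inequality (Lemma~\ref{DEGUEUX}, using Lemma~\ref{4+e}), then uses Lemma~\ref{Recall icrt}(c),(d) to bound branch masses and counts (Lemma~\ref{fatigue}), and finally Lemma~\ref{CountingLemma} to bound the number of branches separating $\L_{2^{n+1}}$ from $\L_{2^n}$. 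The upshot is an explicit summable bound on $\max_{\R^+}\MAX_N$, with no metric-entropy input at all. This is why the argument extends to non-Gaussian $\Clown_i$ under Assumption~\ref{4+e moment}, which your Dudley route cannot reach.

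\textbf{Gaps in your outline.} First, your justification for $\e_n:=\sup_{\x,\y}\sum_{i>n}\theta_i\,\dc(U_{X_i,\x},U_{X_i,\y})\to 0$ is wrong: a.s.\ boundedness of the full series \eqref{18/12/15h} does not by itself give uniform smallness of tails. What does give it is precisely the uniform convergence established in the proof of Proposition~\ref{Completion}(d) (the $\Delta_{n,m}\to 0$ computation via $d_H(\L_l,\L)\to 0$); you should cite that instead. Second, Section~\ref{Compacito} only proves compactness; it does not furnish quantitative covering numbers. To make $\int_0^{\sqrt{\e_n}}\sqrt{\log N(\L,d_\L,u^2)}\,du\to 0$ rigorous you need the Minkowski upper bound $\overline{\dim}(\L)\le\badam<\infty$, which is proved later in Section~\ref{MinkowskiDimSec} via the contour path (there is no circularity, since that section does not use Theorem~\ref{CompactFennec}). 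With those two fixes, Steps~2--4 go through conditionally on the looptree via Borell--TIS and a conditional Borel--Cantelli along a geometric subsequence.

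\textbf{Trade-off.} Your route is shorter in spirit and reuses the dimension machinery, but it is Gaussian-specific and logically downstream of Sections~\ref{ContourPathSec}--\ref{DimensionsSec}. The paper's branch-wise chaining is self-contained (only Section~\ref{RecallSec} is needed), yields the more general Proposition~\ref{UniformConvergence}, and is what makes the extension to fields on $\D$-trees in \cite{Dsnake} possible.
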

We will actually prove a stronger statement where the functions $(\Brown_i)_{i\in \N}$ are replaced by more general random functions under a moment condition for their maximums (see Section \ref{CompactFennecSec} for details). We believe that this extension may have applications to study more complex fields on $\D$-trees.

A direct corollary of Theorem \ref{Prop3} is that $\F$ is indeed a Gaussian free field on $\L$: 
\begin{proposition} \label{GFF_GOOD} Almost surely, conditionally on $\mathbf X,\mathbf Y,\mathbf Z, U$, for every $\x,\y\in \L$, $\F(\x)-\F(\y)$ is Gaussian with variance 
\[ d'_\L(\x,\y):=\frac{\theta_0^2}{6}d_\T(\x,\y)+\sum_{i\in \N}\theta_i |U_{X_i,\x}-U_{X_i,\y}|(1-|U_{X_i,\x}-U_{X_i,\y}|).\]
\end{proposition}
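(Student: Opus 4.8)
The plan is to deduce Proposition~\ref{GFF_GOOD} directly from Theorem~\ref{CompactFennec}, which guarantees that the series in \eqref{27/12/18hc} converges uniformly (hence pointwise) on $\L$, together with elementary properties of Gaussian random variables. Throughout we work conditionally on $\mathbf X,\mathbf Y,\mathbf Z,U$, so that the objects $p_{\L,\T}$, the values $U_{X_i,\x}$, and the variance $d'_\L$ are all deterministic, and the only randomness left is in the independent processes $\Brown$ (equivalently $\GFF$) and $(\Brown_i)_{i\in \N}$.

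First I would treat a single increment. Fix $\x,\y\in\L$. For the Brownian-CRT part, $\GFF\circ p_{\L,\T}(\x)-\GFF\circ p_{\L,\T}(\y)$ is, by the construction \eqref{22/12/00} of $\GFF$ along sticks, a centered Gaussian variable whose variance is the length $d_\T(\x,\y)$ of the geodesic in $\T$ between the projections — this is the standard fact that $\GFF$ is a Brownian motion indexed by $\T$, i.e. $\GFF(x)-\GFF(y)$ is Gaussian with variance $d_\T(x,y)$; one reads it off from the inductive definition since each stick contributes an independent Brownian increment and the gluing identifies values at the glue points. For each $i\in\N$, the term $\Brown_i(U_{X_i,\x})-\Brown_i(U_{X_i,\y})$ is, for a Brownian bridge $\Brown_i$ of length $1$ on $[0,1]$, centered Gaussian with variance $|U_{X_i,\x}-U_{X_i,\y}|(1-|U_{X_i,\x}-U_{X_i,\y}|)$ — the standard covariance formula for the Brownian bridge. (If the $\Brown_i$ are Brownian bridges on the \emph{torus} $[0,1]$ the same computation applies with $\dc$; in either reading the variance formula above is the one that appears.) Since $\GFF$ and the $\Brown_i$ are mutually independent, the partial sum $S_n:=\frac{\theta_0}{\sqrt6}(\GFF\circ p_{\L,\T}(\x)-\GFF\circ p_{\L,\T}(\y))+\sum_{i=1}^n\sqrt{\theta_i}(\Brown_i(U_{X_i,\x})-\Brown_i(U_{X_i,\y}))$ is centered Gaussian with variance $\frac{\theta_0^2}{6}d_\T(\x,\y)+\sum_{i=1}^n\theta_i|U_{X_i,\x}-U_{X_i,\y}|(1-|U_{X_i,\x}-U_{X_i,\y}|)$.

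Next I would pass to the limit. By Theorem~\ref{CompactFennec} the series converges uniformly on $\L$, so in particular $S_n\to \F(\x)-\F(\y)$ almost surely, hence in distribution. The class of centered Gaussian laws is closed under convergence in distribution, and the variances converge to their (finite) sum; thus $\F(\x)-\F(\y)$ is centered Gaussian with variance exactly $d'_\L(\x,\y)$ as claimed. Finiteness of the limiting variance is automatic: each summand is at most $\theta_i/4$ and $\sum_i\theta_i^2=1$ does not immediately bound $\sum_i\theta_i$, but the convergence of the variance series follows from the a.s.\ convergence of the Gaussian partial sums $S_n$ (a convergent series of independent centered Gaussians has summable variances), which is exactly what Theorem~\ref{CompactFennec} provides. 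This handles a fixed pair $\x,\y$; the statement ``for every $\x,\y\in\L$'' on a single almost-sure event is then obtained by taking the intersection over a countable dense set of pairs and invoking the a.s.\ continuity of $\F$ (Theorem~\ref{CompactFennec}) together with the continuity of $d'_\L$, which itself follows from Proposition~\ref{Completion}(c),(d) since each $\x\mapsto U_{X_i,\x}$ and $\x\mapsto d_\T(\x,\cdot)$ extend continuously to $\L$; a locally uniform limit of centered Gaussians of continuously varying variance is again centered Gaussian with the limiting variance.

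The only genuinely delicate point is the last one — upgrading from ``for each fixed pair, almost surely'' to ``almost surely, for all pairs'' — and it is handled by the density/continuity argument just sketched; everything else is bookkeeping with Gaussian laws and a direct appeal to Theorem~\ref{CompactFennec}. I expect no real obstacle beyond making sure the continuity of $d'_\L$ on $\L\times\L$ is justified cleanly from Proposition~\ref{Completion}, and beyond recording the (standard) variance formulas for Brownian motion indexed by an $\R$-tree and for the Brownian bridge.
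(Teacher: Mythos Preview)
Your proposal is correct and matches the paper's intended argument: the paper states Proposition~\ref{GFF_GOOD} as a direct corollary of Theorem~\ref{CompactFennec} without further details, and your write-up is precisely the natural way to unpack that corollary (Gaussian partial sums with the right variances, independence of $\GFF$ and the $\Brown_i$, closure of centered Gaussians under limits in distribution, then density plus a.s.\ continuity of $\F$ to pass from fixed pairs to all pairs). The only cosmetic point is that the $\Brown_i$ are indeed standard Brownian bridges on $[0,1]$, as forced by the variance formula $|u-v|(1-|u-v|)$; your hedge about the torus is unnecessary.
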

\begin{remark} $\tfrac{1}{2}d_\L\leq d'_\L\leq d_\L$ so $d'_\L$ is equivalent to $d_\L$.
\end{remark}
\subsection{Left, Front, Right (see Figure \ref{LR2})} \label{DefLeftFrontRight}
In this section $(\T,d,\rho,u)$ denote an arbitrary plane $\R$-tree. For every $x\in \T, (y,w)\in \T\times[0,1]$ let $u_{x,y,w}:=u_{x,y}$ if $x\neq y$ and let $u_{x,y,w}:=w$ otherwise.
For all $\x=(x,v),\y=(y,w)\in \T\times[0,1]$, we say that $\x$ is at the left of $\y$ (or $\y$ is at the right of $\x$) and write $\x\Rgt \y$ if $u_{x\wedge y,x,v}<u_{x\wedge y,y,w}$. 
We say that $\y$ is in front of $\x$ and write $\x\too \y$ if $x\in \llbracket 0, y \rrbracket $ and $u_{x,y,w}=v$.
%
\begin{figure}[!h] 
\centering
\includegraphics[scale=0.6]{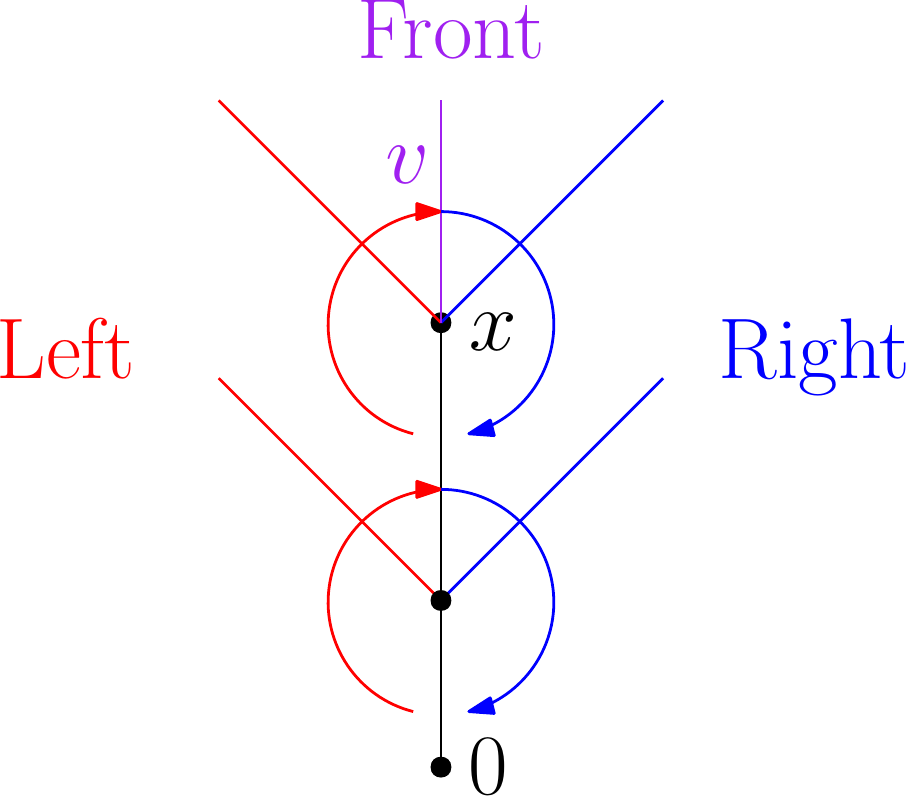}
\caption{A vertex $(x,v)\in\T\times[0,1]$ with its left (red), front (purple), right (blue) are represented. 
}
 \label{LR2}
\end{figure}

\begin{lemma} \label{dfso} \label{deso pas deso} \label{Musique qui tue}  Let $\prec$ be the binary relation defined on $\T\times[0,1]$ such that for every $\x,\y\in \T\times[0,1]$:
 \[ \x\prec \y \iff (\x\Rgt \y) \text{ or } (\text{$\x\too \y$}).  \] 
 Then $\prec$ is a total order relation on $\T\times[0,1]$ and is called the contour order.
\end{lemma}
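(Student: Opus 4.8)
The plan is to verify the three axioms of a total order — irreflexivity/antisymmetry, totality (trichotomy), and transitivity — by systematically reducing everything to the behaviour of the angle function $u$ along the tree. The key observation to set up first: for two points $\x=(x,v),\y=(y,w)\in\T\times[0,1]$, exactly one of the following holuds, by basic $\R$-tree geometry applied to the triple $\{0,x,y\}$ with $m:=x\wedge y$. Either (i) $x\notin\llbracket 0,y\rrbracket$ and $y\notin\llbracket 0,x\rrbracket$, in which case $m$ is a genuine branch point with $x,y$ in distinct components of $\T\setminus\{m\}$, so $u_{m,x,v}\neq u_{m,y,w}$ by the second axiom of an angle function, and thus exactly one of $\x\Rgt\y$, $\y\Rgt\x$ holds while neither $\x\too\y$ nor $\y\too\x$ holds; or (ii) $x\in\llbracket 0,y\rrbracket$ (so $m=x$), where one further splits on whether $u_{x,y,w}=v$ — if equal, $\x\too\y$ and (checking the definition) neither $\x\Rgt\y$ nor $\y\Rgt\x$ can hold since $u_{x\wedge y,x,v}=v=u_{x\wedge y,y,w}$; if unequal, exactly one of $\x\Rgt\y,\y\Rgt\x$ holds; or (iii) the symmetric case $y\in\llbracket 0,x\rrbracket$; or (iv) $x=y$, reducing to the torus coordinate where $u_{x,x,v}=v$, $u_{x,x,w}=w$ and one compares $v,w$ directly. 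Enumerating these cases establishes simultaneously that for distinct $\x,\y$ exactly one of $\x\prec\y$, $\y\prec\x$ holds (totality + antisymmetry), and that $\x\prec\x$ is false (irreflexivity, since $\x\too\x$ would need checking: indeed $x\in\llbracket0,x\rrbracket$ and $u_{x,x,v}=v$, so $\x\too\x$ \emph{does} hold — wait, that breaks irreflexivity).

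Here I need to be careful: with the definition as written, $\x\too\x$ holds for every $\x$, so $\prec$ is \emph{reflexive}, not irreflexive — it is a (non-strict) total order, i.e.\ reflexive, antisymmetric, transitive, total. That is presumably the intended reading of "total order relation" here (the lemma calls it "the contour order", and a linear order with a notion of "first point the root, last point back at the root" naturally includes equality on the diagonal at the torus-coordinate level, but not across distinct tree points). So the statement to prove is: reflexive, antisymmetric (if $\x\prec\y$ and $\y\prec\x$ then $\x=\y$), total, transitive. Reflexivity is the observation just made. Antisymmetry and totality come from the four-case enumeration above: the only way to have $\x\prec\y$ and $\y\prec\x$ simultaneously is through $\x\too\y$ and $\y\too\x$, which forces $x\in\llbracket0,y\rrbracket$, $y\in\llbracket0,x\rrbracket$, hence $x=y$, and then $u_{x,y,w}=v$ reads $w=v$, so $\x=\y$.

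Transitivity is the main obstacle and where the bulk of the work lies. Suppose $\x\prec\y$ and $\y\prec\z$; I want $\x\prec\z$. I would organize the proof by cases on the relative positions of $x,y,z$ along the tree, using the structure of the three geodesics $\llbracket 0,x\rrbracket,\llbracket 0,y\rrbracket,\llbracket 0,z\rrbracket$ and their pairwise meeting points $x\wedge y$, $y\wedge z$, $x\wedge z$. The combinatorics of three points in an $\R$-tree is governed by the fact that among $x\wedge y$, $y\wedge z$, $x\wedge z$ two coincide and are an ancestor of the third (the "median" configuration), so there are only a handful of genuinely distinct shapes to analyze. The crucial consistency property of the angle function one exploits repeatedly is: if $p$ lies on $\llbracket 0,q\rrbracket$ and $p'$ lies on $\llbracket p,q\rrbracket$ with $p'\neq p$, then $u_{p,q,\cdot}=u_{p,p',\cdot}$ (both depend only on which component of $\T\setminus\{p\}$ one enters) — this lets one "push" angle comparisons up and down branches. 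In each configuration one rewrites the hypotheses $\x\prec\y$, $\y\prec\z$ as inequalities/equalities among values of $u$ at the relevant branch points, transports them to a common branch point (namely $x\wedge z$, or $x$ resp.\ $z$ if one is an ancestor of the other), and reads off $\x\prec\z$. The delicate sub-cases are the "degenerate" ones where some of the meeting points coincide with $x$, $y$, or $z$ themselves (so that $\too$ rather than $\Rgt$ is in play in one of the hypotheses), and the one where $y$ sits strictly between $x$ and $z$ on a common line — there one must check that "$\x$ left of $y$ and $\y$ in front of $z$" (or the various $\too/\Rgt$ combinations) indeed propagates. I expect roughly six to ten such cases; each is a short deduction once the right branch point is fixed, but assembling them cleanly — and making sure the case list is exhaustive — is the real content. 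A clean way to compress the bookkeeping is to first prove a single lemma: for any $\x,\y$, $\x\prec\y$ iff, letting $b$ be the first point of $\llbracket0,x\rrbracket\cap\llbracket0,y\rrbracket$ that is "the split point" (formally $b=x\wedge y$), one has $u_{b,x,v}\le u_{b,y,w}$ with the convention that at $b$ we compare using $u_{b,\cdot,\cdot}$ and break ties by "being an ancestor" — and then transitivity becomes a statement about consistently comparing these angle-values, which follows from the push-up/push-down consistency of $u$ together with the tree median property.
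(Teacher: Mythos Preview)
Your approach is essentially the paper's: reflexivity via $\x\too\x$, totality and antisymmetry by the four-case enumeration on the position of $x\wedge y$, and transitivity by a case analysis using the tree median structure and the ``push angles along branches'' consistency of $u$. The only organizational difference is that the paper splits transitivity first by the four $(\Rgt/\too)\times(\Rgt/\too)$ combinations of the two hypotheses (its Lemma~\ref{OSKOUR}) and then by the position of the relevant meets within each, rather than going straight to the median geometry as you propose; both routes lead to the same dozen or so short deductions.
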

\begin{proof} See Appendix \ref{SadoMasoSec}. 
\end{proof}
Finally let $\nu$ be a $\sigma$-finite borel measure on $\T$.
Let $\nu_\L:=\mu\times \1_{l\in [0,1]} dl$. The mass on the left, front, right of $\x\in \T\times[0,1]$ are denoted by, (see Appendix \ref{SadoMasoSec} for definiteness)
\[ \nu_{\Lft}(\x):=\nu_\L \{\y: \y\Rgt \x\} \quad ; \quad  \nu_{\Fnt}(\x) : =  \nu_\L\{\y: \x\too \y\} \quad ; \quad \nu_{\Rgt}(\x):= \nu_\L \{\y: \x\Rgt \y\}. \]
\subsection{The contour path, and the ICRT snake}
Recall that for every $l\in \R^+$, $\mu_l$ is the restriction of $\mu$ to $[0,l]$, and that $\p_l=\mu_l/\mu[0,l]$. Also by \cite{ICRT1} a.s. $(\p_l)_{l\in \R^+}$ converges weakly toward a probability measure $\p$. We prove that a.s. $\p_\Lft$ extends to a function continuous at $\L\backslash(\R^+\times[0,1])$ (see Section \ref{ExtendProjSec}), and that $\p_\Lft$ has an "inverse": Below $\sim_{\L}$ denote the equivalent relation on $\L$ such that for every $\x,\y\in \L$, $\x\sim_{\L} \y$ iff $d_\L(\x,\y)=0$.
\begin{theorem} \label{ConstructContThm} Almost surely there exists a continuous function $\Cont:[0,1]\mapsto \L$ such that for every $\x\in \L$, $\Cont(\p_{\Lft}(\x))\sim_\L \x$. We call $\Cont$ the contour path on $\L$ (see Figure \ref{looptreefig5}).
\end{theorem}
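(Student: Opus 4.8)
The plan is to construct $\Cont$ as a limit of contour paths defined on the finite stick-breaking approximations, using the chaining method to control the uniform distance between successive approximations.

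\medskip

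\textbf{Step 1: The discrete-time contour path.} First I would fix $n$ and work with the sub-looptree $\L_n$ obtained by gluing only the first $n$ branches. On $\R^+\times[0,1]$ restricted to $[0,Y_n]\times[0,1]$, the contour order $\prec$ of Lemma \ref{Musique qui tue} is a genuine total order, and on this finite piece it should be realizable as a continuous path $\Cont_n:[0,\ell_n]\to\L_n$ parametrized so that $\p^n_\Lft(\Cont_n(t))=t$, where $\p^n_\Lft$ is the left-mass for the truncated measure. Concretely, $\Cont_n$ turns around each of the finitely many macroscopic cycles (those with perimeter $\theta_i$, $i\le n$, or rather those $X_i$ that have been revealed) clockwise and traverses the tree segments of length $\tfrac{\theta_0^2}{4}|\cdot|$; this is just the continuum analogue of the depth-first contour of a discrete plane tree, and its continuity on a compact finite-cycle looptree is elementary. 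I would also need to normalize: since $\p_l\to\p$ weakly, the total left-mass converges, so after rescaling I may take all $\Cont_n$ defined on $[0,1]$.

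\medskip

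\textbf{Step 2: Uniform convergence via chaining.} The heart of the proof is showing $(\Cont_n)_{n\in\N}$ is a.s. uniformly Cauchy in $(\L,d_\L)$. Passing from $\Cont_n$ to $\Cont_{n+1}$ amounts to inserting the branch $(Y_n,Y_{n+1}]$ at glue point $Z_n$, which modifies the contour order only on the preimage of the portion of $\L$ that lies "between" the two copies of $Z_n$ — a set whose $\nu_\L$-mass (hence whose length in the new parametrization) is exactly the left-or-right mass swept out by the inserted branch plus its cycle. So $\sup_t d_\L(\Cont_n(t),\Cont_{n+1}(t))$ is bounded by the diameter of the piece of $\L$ affected by the $n$-th gluing, and the relevant estimate is precisely of the form "(number of branches separating a point from $\L_n$) $\times$ (how much $d_\L$ varies across a branch)" that the paper advertises in Section \ref{1.3}. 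I would re-use the compactness estimates behind Theorem \ref{CompactLooptree} (and the technical lemmas recalled from \cite{ICRT1}) to get $\sum_n \sup_t d_\L(\Cont_n(t),\Cont_{n+1}(t))<\infty$ a.s., or at least summability along a subsequence after a Borel–Cantelli argument; compactness of $\L$ then gives a uniform limit $\Cont:[0,1]\to\L$, which is continuous as a uniform limit of continuous maps.

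\medskip

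\textbf{Step 3: Identifying the limit.} It remains to check $\Cont(\p_\Lft(\x))\sim_\L\x$ for every $\x\in\L$. For $\x\in\R^+\times[0,1]$ this follows by passing to the limit in the corresponding identity for $\Cont_n$, using that $\p^n_\Lft(\x)\to\p_\Lft(\x)$ (continuity/convergence of the left-mass, from the weak convergence $\p_l\to\p$ and the definiteness discussion in Appendix \ref{SadoMasoSec}) together with uniform continuity of the limit path $\Cont$. For general $\x\in\L\setminus(\R^+\times[0,1])$ I would invoke the claimed extension of $\p_\Lft$ to a function continuous at such points (Section \ref{ExtendProjSec}) and density of $\R^+\times[0,1]$ in $\L$: approximate $\x$ by $\x_k\in\R^+\times[0,1]$, apply the identity to each $\x_k$, and let $k\to\infty$, using that both $\p_\Lft$ and $\Cont$ are continuous at the relevant points and that $d_\L$ is continuous.

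\medskip

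\textbf{Main obstacle.} The delicate point is Step 2: one must quantify exactly which part of $\L$ (equivalently, which sub-interval of parametrization) is disturbed when a branch is glued, and show the resulting displacements are summable. This requires understanding how the contour order behaves under branch insertion — essentially that inserting a branch at $Z_n$ splices a new interval into the contour at the time corresponding to $Z_n$ and only locally perturbs the identification with $\L$ — and then bounding the $d_\L$-diameter of the spliced region by the same branch-counting-times-oscillation product used for compactness. A secondary subtlety is the parametrization: $\p_\Lft$ is not injective (it is constant on the $\sim_\L$-classes and on tree-segments it may be locally constant where cycles sit), so "inverse" must be read modulo $\sim_\L$ and one must make sure $\Cont$ is well-defined as a map on all of $[0,1]$, not just on the range of $\p_\Lft$; continuity of the finite-level paths plus the intermediate-value structure of the contour order handles the gaps.
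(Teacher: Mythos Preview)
Your overall architecture (build $\Cont_l$ on finite looptrees, prove uniform convergence, identify the limit by density) matches the paper, and your Step~3 is essentially the paper's Section~6.3. But Step~2 contains a genuine gap.

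The claim that $\sup_t d_\L(\Cont_n(t),\Cont_{n+1}(t))$ is bounded by the diameter of the region affected by the $n$-th gluing is not correct as stated, because inserting a branch shifts the parametrization \emph{globally}: both paths are normalized to $[0,1]$, so for $t$ past the insertion point, $\Cont_{n+1}(t)$ corresponds to $\Cont_n(t')$ with $|t-t'|$ of order $\mu(Y_n,Y_{n+1}]/\mu[0,Y_{n+1}]$. Since $\Cont_n$ is $\mu[0,Y_n]$-Lipschitz (Lemma~\ref{First Cont}), this produces a $d_\L$-displacement of order $\mu(Y_n,Y_{n+1}]\asymp\log^2(Y_n)/Y_n$ per step, and $\sum_n \log^2(Y_n)/Y_n$ diverges (e.g.\ $Y_n\asymp\sqrt{n}$ in the Brownian case). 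So the branch-by-branch summation you propose cannot work without an additional idea.

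The paper handles this by never comparing $\Cont_{Y_a}$ and $\Cont_{Y_b}$ at the same time. Instead it builds explicit piecewise-linear time-changes $\psi_{a,b}$ using a finite grid of reference points $\x_{a,i}\in\L_{Y_a}$, and proves (Lemma~\ref{Odd job}) that $d_\infty(\Cont_{Y_a}\circ\psi_{a,b}^{-1},\Cont_{Y_b})\le d_H(\L_{Y_a},\L)+6\log^6(Y_a)/Y_a$, a bound \emph{uniform in $b$}. The key input making this possible is a P\'olya-urn estimate (Lemma~\ref{E=mc2}): for fixed $\x,\y$, the ratios $\p_{Y_b,\Lft}(\x,\y)$ stabilize as $b\to\infty$ with multiplicative error $1\pm 1/\log Y_a$. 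This is what replaces your missing control on the parametrization drift; it is not a consequence of the compactness estimates you invoke, and is the essential new ingredient in Section~\ref{ContourPathSec}. You should also note that Step~1 relies on the inequality $|\mu_{l,\Lft}(\x,\y)|\ge d_\L(\x,\y)$ of Lemma~\ref{BorneCrucial}, which is what makes $\Cont_l$ Lipschitz and hence well-defined on all of $[0,1]$; your sketch does not mention it.
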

\begin{figure}[!h] \label{looptreefig5}
\centering
\includegraphics[scale=0.55]{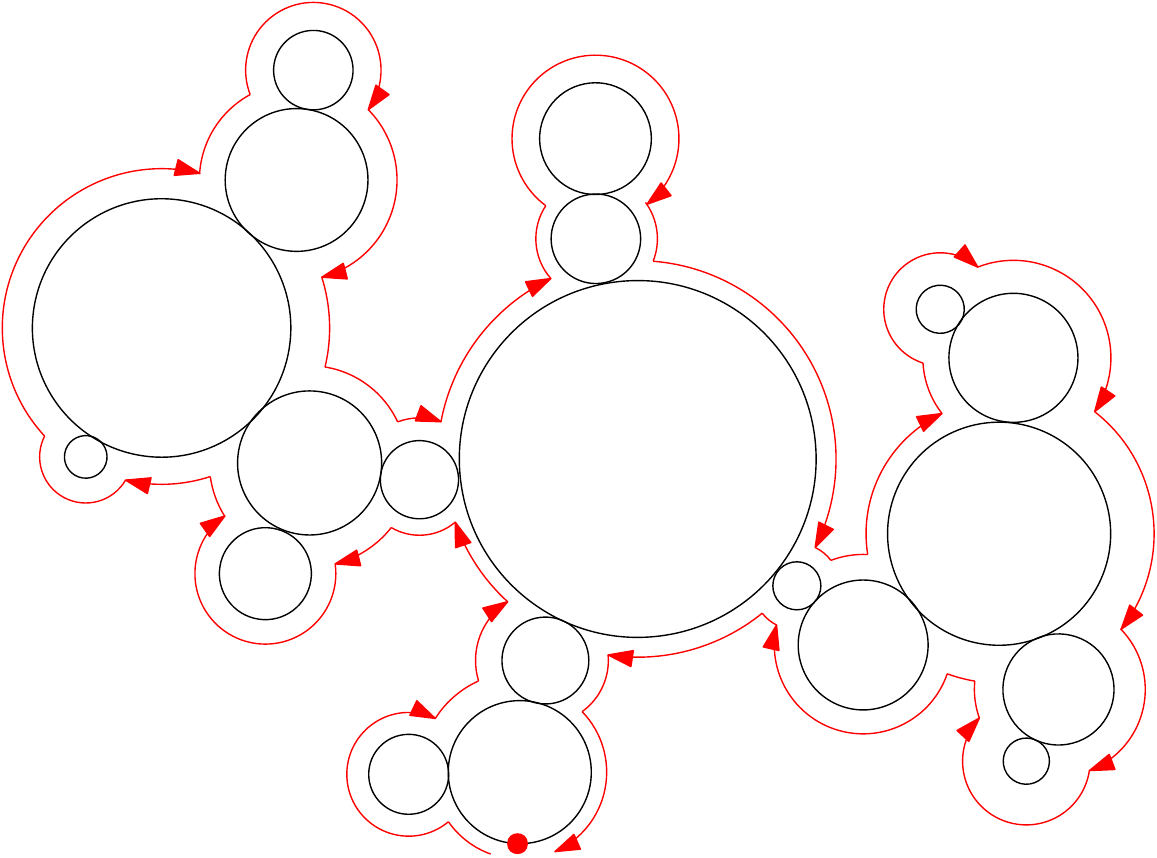}
\caption{A continuum looptree $\L$, with its contour path $\Cont:[0,1]\mapsto \L$ in red. The path start  at the root $(0,0)$ then "turn around" each cycle clockwise. It is continuous, "surjective", not "injective". 
}  
\label{explore1} \label{looptreefig5}
\end{figure}%

The density of $\{ p_\Lft(\alpha),\alpha\in \L\}$ (see Lemma \ref{prelimLeft}) implies that $\Cont$ is unique up to $\sim_\L$. Moreover it also implies with the existence of $\Cont$ that:
\begin{proposition} \label{DefSnake} Almost surely for every continuous function $F:\L\mapsto\R$, $F\circ \Cont$ is the unique continuous function $f$ such that for every $\alpha\in \L$, $f(\p_\Lft(\alpha))=F(\alpha)$.
\end{proposition}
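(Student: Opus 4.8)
The plan is to derive Proposition \ref{DefSnake} formally from Theorem \ref{ConstructContThm} (existence of the contour path $\Cont$) and Lemma \ref{prelimLeft} (density of $\{\p_\Lft(\x):\x\in\L\}$ in $[0,1]$), treating separately existence and uniqueness.

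For existence I would simply set $f:=F\circ\Cont$. Since $\Cont:[0,1]\to(\L,d_\L)$ is continuous by Theorem \ref{ConstructContThm} and $F:(\L,d_\L)\to\R$ is continuous by assumption, $f:[0,1]\to\R$ is continuous. To check the identity $f\circ\p_\Lft=F$ on $\L$, fix $\x\in\L$: Theorem \ref{ConstructContThm} gives $\Cont(\p_\Lft(\x))\sim_\L\x$, i.e.\ $d_\L\big(\Cont(\p_\Lft(\x)),\x\big)=0$. A real-valued continuous function on a pseudo-metric space takes equal values at two points at pseudo-distance $0$ (apply $F$ to the constant sequence equal to $\x$, which converges to $\Cont(\p_\Lft(\x))$ in $\L$), so $f(\p_\Lft(\x))=F\big(\Cont(\p_\Lft(\x))\big)=F(\x)$. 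The same remark incidentally shows that $F\circ\Cont$ does not depend on the choice of $\Cont$ among the $\sim_\L$-equivalent ones.

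For uniqueness, I would let $g:[0,1]\to\R$ be any continuous function with $g(\p_\Lft(\x))=F(\x)$ for every $\x\in\L$. Then $f$ and $g$ agree on $D:=\{\p_\Lft(\x):\x\in\L\}$, which is dense in $[0,1]$ by Lemma \ref{prelimLeft}; two continuous functions on $[0,1]$ that coincide on a dense subset are equal, hence $g=f=F\circ\Cont$. There is no serious obstacle in this proof: essentially all the content of the proposition is already carried by Theorem \ref{ConstructContThm} and Lemma \ref{prelimLeft}, and the only point requiring a word of justification is the implication $d_\L(\x,\y)=0\Rightarrow F(\x)=F(\y)$, i.e.\ that continuity on $\L$ is meant with respect to $d_\L$ (equivalently, that $\sim_\L$-equivalent points of the completed space are topologically indistinguishable).
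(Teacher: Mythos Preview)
Your argument is correct and matches the paper's own justification: the paper does not give a separate proof but simply notes that Proposition~\ref{DefSnake} follows from Theorem~\ref{ConstructContThm} together with the density of $\{\p_\Lft(\x):\x\in\L\}$ from Lemma~\ref{prelimLeft}~(e), which is exactly the existence/uniqueness decomposition you wrote out. Your remark that continuity of $F$ forces $F(\x)=F(\y)$ whenever $d_\L(\x,\y)=0$ is the only point needing comment, and you handled it correctly.
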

In particular, we define the ICRT snake as $\Z:=\F\circ \Cont $. 
\begin{remark}
Similarly, if $\T$ is a.s. compact, we can define the height process of the ICRT as follows. First by Proposition \ref{Completion} (b), $\L=\T\times[0,1]$, and $p_{\T,\L}:(x,u)\in \L\to x$ is continuous. Then we define the contour path on $\T$ as $\Cont_\T:=p_{\T,\L}\circ \Cont$. This path is a.s. continuous by continuity of $\Cont$. Finally, the height process is
\[ \mathfrak H:x\in [0,1]\mapsto d_\T(0,\Cont_\T(x)). \]

Also by first defining
\[ \mathfrak L:(x,u)\in \L \mapsto \frac{\theta_0^2}{2}d_\T(0,x)+ \sum_{i\in \N:X_i\in \llbracket 0, x\rrbracket}\theta_i (1-U_{X_i,x,u}),\] we may define the Lukasiewicz walk as $\mathfrak X:= \mathfrak L\circ \Cont$. 
We will study $\mathfrak H$ and $\mathfrak X$ in \cite{ExcursionStick}. 
\end{remark}
 

We now consider the H\"older continuity. Our first point is that for any continuous function $F$, the H\"older continuity of $F\circ \Cont$ can be deduced from the H\"older continuity of $F$ and $\Cont$. Moreover:
\begin{theorem} \label{HolderContThm} Recall \eqref{22/02/9h}.  A.s. $\Cont$ is H\"older continuous with any exponent smaller than $1/\badam$.
\end{theorem}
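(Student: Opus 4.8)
The plan is to estimate the modulus of continuity of $\Cont$ via the chaining method applied to the looptree $\L$, exactly as advertised in Section \ref{1.3}. The key observation is that two parameters $s,t\in[0,1]$ that are close force $\Cont(s)$ and $\Cont(t)$ to be close in $d_\L$, but the reverse is what we actually need: we must bound $d_\L(\Cont(s),\Cont(t))$ in terms of $|s-t|$. Since $\Cont(\p_\Lft(\x))\sim_\L\x$ and $\p_\Lft$ plays the role of a ``time parametrization'' of the contour, the statement is equivalent to: for all $\x,\y\in\L$, if $|\p_\Lft(\x)-\p_\Lft(\y)|$ is small then $d_\L(\x,\y)$ is small, with the quantitative rate $d_\L(\x,\y)\lesssim |\p_\Lft(\x)-\p_\Lft(\y)|^{1/\badam-\e}$. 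So the real content is to compare the metric $d_\L$ with the ``mass-in-between'' pseudometric $\x,\y\mapsto |\p_\Lft(\x)-\p_\Lft(\y)|$ coming from the measure $\nu_\L=\mu\times\1_{[0,1]}dl$, normalized to total mass $1$.

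First I would reduce to a dyadic statement. Write $S_i$ for the sub-looptree obtained after gluing the first (roughly) $k_i$ branches in the stick-breaking construction, for a suitable rapidly growing sequence $k_i$ (as in Section \ref{Compacito} and the other proofs in the paper), with projections $p_i:S_{i+1}\to S_i$ and $S=\overline{\bigcup_i S_i}=\L$. For a pair $\x,\y$ with $\p_\Lft(\x),\p_\Lft(\y)$ at dyadic distance about $2^{-n}$, I would telescope
\[ d_\L(\x,\y)\leq \sum_{i} d_\L\big(p_i(\x),p_i(\y)\big)\!-\!d_\L\big(p_{i-1}(\x),p_{i-1}(\y)\big) + (\text{base term on } S_{i_0}), \]
choosing the starting scale $i_0=i_0(n)$ so that on $S_{i_0}$ points whose $\p_\Lft$-values differ by $2^{-n}$ are either equal or separated by only a controlled number of branches. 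The two ingredients I would then need, both of which are exactly the ``number of separating branches $\times$ oscillation on each branch'' decomposition mentioned in Section \ref{1.3}, are: (i) a bound on how many of the branches glued between stage $i$ and $i+1$ can fall ``between'' $\x$ and $\y$ in contour order, i.e. can carry $\nu_\L$-mass sandwiched between $\p_\Lft(\x)$ and $\p_\Lft(\y)$ — this should be controlled by $2^{-n}$ times the total rate, via a Poisson/union-bound computation using the intensity $\mu[0,l]\,dl$; and (ii) a bound on the $d_\L$-length contributed by a single such branch at stage $i$, which is at most its length in $\T$ times $\theta_0^2/4$ plus the $\theta_j$-weighted cord lengths it spans — these are precisely the diameter estimates already proven to give compactness in Theorem \ref{CompactLooptree}, and quantitatively they are governed by $\E[\mu[0,l]]$ through \eqref{22/02/9h}. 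Multiplying the worst cases and summing the geometric-type series over $i$ should produce, for every $\e>0$, a bound $d_\L(\x,\y)\leq C_\e\,|\p_\Lft(\x)-\p_\Lft(\y)|^{1/\badam-\e}$ on an event of probability one, after a Borel--Cantelli argument over dyadic scales $n$ to upgrade the scale-by-scale estimates to a uniform modulus.

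The main obstacle I expect is controlling ingredient (i) uniformly: unlike the distance-to-a-fixed-set functional used for compactness, here the relevant event ``a given newly glued branch lies in contour order between $\x$ and $\y$'' is a genuinely two-point (in fact, path-dependent) event, and the set of pairs $(\x,\y)$ is not finite, so one must discretize the $\p_\Lft$-axis into $2^n$ intervals, take a union bound over the $O(2^n)$ resulting ``cells'', and show the per-cell branch count concentrates at the right scale with a failure probability summable against $2^n$ — this forces the choice of $k_i$ and the exponent bookkeeping, and is where the gap between $1/\badam$ and $1/\badim$ (and the appearance of $\badam$ rather than $\badim$) enters, since one pays the $\limsup$ in \eqref{22/02/9h} in the worst scale. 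A secondary technical point is handling the boundary points $\L\setminus(\R^+\times[0,1])$ (the ``ends of infinite branches'') and the fact that $\p_\Lft$ is only continuous there after the extension of Section \ref{ExtendProjSec}; but since $\R^+\times[0,1]$ is dense and $\Cont$ is already known continuous by Theorem \ref{ConstructContThm}, the Hölder bound extends from a dense set by continuity, so this causes no real difficulty once the estimate above is in hand.
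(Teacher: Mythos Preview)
Your plan is workable in spirit but takes a substantially harder route than the paper, and the mechanism you single out as the core of the argument is not the one that actually carries the estimate.

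The paper does \emph{not} re-run a chaining argument to get the H\"older bound. Instead it recycles two facts already proved in the construction of $\Cont$. First, Lemma~\ref{First Cont} (via the deterministic geometric inequality Lemma~\ref{BorneCrucial}, $d_\L(\x,\y)\le|\mu_{l,\Lft}(\x,\y)|$ on $\L_l$) gives that each finite-level contour $\Cont_{Y_a}$ is $\mu[0,Y_a]$-Lipschitz. Second, Lemma~\ref{Odd job} together with Lemma~\ref{Luigi=/=Boo} gives $d_\infty(\Cont_{Y_a}\circ\psi_a^{-1},\Cont)\le d_H(\L_{Y_a},\L)+O(\log^6 Y_a/Y_a)$, with $\psi_a^{-1}$ uniformly $6$-Lipschitz. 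Combining these by the triangle inequality yields directly
\[
d_\L(\Cont(s),\Cont(t))\le 6|s-t|\,\mu[0,Y_a]+2d_H(\L_{Y_a},\L)+O(\log^6 Y_a/Y_a),
\]
and one then simply \emph{chooses} the scale $a$ (equivalently $2^n$) to balance the two terms; the exponent $1/\badam$ drops out of the definition \eqref{22/02/9h}. No union bound over $2^n$ cells, no branch-counting, no Borel--Cantelli over dyadic scales is needed at this stage --- all the probabilistic work (the P\'olya-urn control of $\p_{l,\Lft}$, Lemma~\ref{E=mc2}) was already spent in Section~\ref{ConstructCont} to build $\Cont$ and the reparametrizations $\psi_a$.

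Your ingredient (i), ``count the branches glued at stage $i$ that fall in the contour interval between $\x$ and $\y$'', is both more delicate than you suggest and not the right invariant. What controls $d_\L(\x,\y)$ is the \emph{tree} geodesic $\llbracket x,y\rrbracket$, not the contour interval; the reason the contour interval is nevertheless relevant is precisely the deterministic content of Lemma~\ref{BorneCrucial}, which says the $\mu_l$-mass of the contour interval dominates $d_\L$ on $\L_l$. Once you have that lemma your branch-count~$\times$~branch-diameter heuristic becomes unnecessary, and without it your telescoping sum does not obviously bound $d_\L(\x,\y)$. Your plan would also need the P\'olya-urn comparison of $\p_{l,\Lft}$ with $\p_\Lft$ (Lemma~\ref{E=mc2}) to relate a $\p_\Lft$-interval of size $2^{-n}$ to a $\p_{l,\Lft}$-interval at the finite level, which you do not mention; the ``Poisson/union-bound using intensity $\mu[0,l]\,dl$'' you invoke governs branch \emph{lengths}, not how masses in contour intervals evolve across scales.
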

\begin{remark} The bound is optimal. Indeed, if $\Cont$ is $\alpha$-H\"older continuous then $\L$ have Minkowski upper dimension at most $1/\alpha$ (see Lemma \ref{Minkowski<Cont}).
\end{remark}

Thus, since by Proposition \ref{GFF_GOOD}, a.s. $\F$ is a Gaussian free field on $(\L,d'_\L)$, which have by Theorem \ref{DimTHM} finite upper Minkowski dimension, we deduce (see Lemma \ref{HolderXD}):
\begin{theorem}\label{Holder2} Almost surely $\F$ is H\"older continuous with any exponent smaller than $1/2$, and $\Z$ is H\"older continuous with any exponent smaller than $1/2\badam$. 
\end{theorem}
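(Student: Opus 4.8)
The plan is to combine the H\"older regularity of the contour path $\Cont$ (Theorem \ref{HolderContThm}) with the general fact, already invoked as Lemma \ref{HolderXD}, that a Gaussian free field on a metric space of finite upper Minkowski dimension is H\"older continuous with any exponent strictly below $1/2$, and then to transfer this via composition. First I would record that, by Proposition \ref{GFF_GOOD}, conditionally on $(\mathbf X,\mathbf Y,\mathbf Z,U)$ the field $\F$ is a centered Gaussian process on $\L$ whose increments have variance $d'_\L$, i.e.\ $\F$ is a Gaussian free field on $(\L,d'_\L)$. By the remark following Proposition \ref{GFF_GOOD} we have $\tfrac12 d_\L\le d'_\L\le d_\L$, so $(\L,d'_\L)$ and $(\L,d_\L)$ have the same upper Minkowski dimension, which by Theorem \ref{DimTHM} equals $\badam\le 2<\infty$ almost surely. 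Applying Lemma \ref{HolderXD} to $(\L,d'_\L)$ then yields that almost surely $\F$ is $\alpha$-H\"older on $(\L,d'_\L)$ for every $\alpha<1/2$; since $d'_\L$ and $d_\L$ are bi-Lipschitz equivalent, the same holds with respect to $d_\L$. This is exactly the first assertion.

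For the second assertion, I would use the elementary composition principle: if $F:(\L,d_\L)\to\R$ is $\alpha$-H\"older and $\Cont:([0,1],|\cdot|)\to(\L,d_\L)$ is $\beta$-H\"older, then $F\circ\Cont$ is $\alpha\beta$-H\"older, since
\[
|F(\Cont(s))-F(\Cont(t))|\le C_F\, d_\L(\Cont(s),\Cont(t))^\alpha \le C_F C_\Cont^\alpha |s-t|^{\alpha\beta}.
\]
By Theorem \ref{HolderContThm}, almost surely $\Cont$ is $\beta$-H\"older for every $\beta<1/\badam$; by the first part, almost surely $\F$ is $\alpha$-H\"older for every $\alpha<1/2$. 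Taking a countable sequence $\alpha_n\uparrow 1/2$ and $\beta_n\uparrow 1/\badam$ and intersecting the corresponding almost sure events, we get that almost surely $\Z=\F\circ\Cont$ is $\alpha_n\beta_n$-H\"older for all $n$, hence $\gamma$-H\"older for every $\gamma<1/(2\badam)$. Note $\badam$ is a deterministic quantity depending only on $\Theta$ (through $l\mapsto\E[\mu[0,l]]$), so there is no measurability subtlety in taking a fixed sequence $\beta_n\uparrow 1/\badam$.

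I expect the only real content to be Lemma \ref{HolderXD}, i.e.\ the statement that a GFF on a space of finite upper Minkowski dimension $D$ is H\"older below exponent $1/2$; its proof is the standard Dudley/Kolmogorov chaining argument (covering $(\L,d'_\L)$ at scales $2^{-k}$ by at most $C\,2^{kD'}$ balls for any $D'>D$, controlling the Gaussian suprema of increments on each scale, and summing), but that lemma is stated as available in the excerpt, so here the work reduces to the two bookkeeping steps above: passing between $d_\L$ and $d'_\L$, and the H\"older-composition estimate together with the countable intersection of almost sure events. The mild subtlety to be careful about is that Lemma \ref{HolderXD} must be applied \emph{conditionally} on $(\mathbf X,\mathbf Y,\mathbf Z,U)$ — where $\F$ is genuinely Gaussian — and the conditional almost-sure conclusion then integrates to an unconditional almost-sure statement.
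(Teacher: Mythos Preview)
Your proposal is correct and follows exactly the paper's route: Proposition~\ref{GFF_GOOD} for the GFF structure on $(\L,d'_\L)$, Theorem~\ref{DimTHM} together with $d'_\L\asymp d_\L$ for the finite upper Minkowski dimension, Lemma~\ref{HolderXD} for the $1/2$-H\"older regularity of $\F$, and composition with Theorem~\ref{HolderContThm} for $\Z$. The one hypothesis of Lemma~\ref{HolderXD} you do not explicitly record is that $\F$ be a.s.\ continuous, but this is exactly Theorem~\ref{CompactFennec}, so nothing is missing.
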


\section{Recalls from \cite{ICRT1} Section 4} \label{RecallSec}
 We prove the following results in \cite{ICRT1} Section 4: (Actually in \cite{ICRT1} we focus on the case where $\theta_0>0$ or $\sum_{i=1}^\infty \theta_i=\infty$, but the proof is exactly the same in the complementary case. Also (d) is slightly modified from Lemma 4.6 using Lemma 4.5.)
\begin{lemma} \label{Recall icrt} The following assertions hold a.s.:
\begin{compactitem}
\item[a)] The map $l\mapsto \E[\mu[0,l]]$ is concave.
\item[b)]As $l\to \infty$, $\mu[0,l]\sim \E[\mu[0,l]]=\theta_0^2l+o(l)$.
\item[c)] For every $l$ large enough, $\#([0,l]\cap\{Y_i\}_{i\in \N})\leq 2\mu[0,l]l\leq 2l^2$.
\item[d)] For every $i$ large enough, $\mu[Y_{i-1},Y_i]\leq \log(Y_i)^2/Y_i$.
\end{compactitem}
\end{lemma}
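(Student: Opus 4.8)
The plan is to establish each item directly from the explicit Poisson/exponential description of $\mu$, $\mathbf Y$ and $\mathbf X$, turning the almost sure statements into first- and second-moment estimates combined with Borel--Cantelli. The common starting point is the identity $\E[\mu[0,l]]=\theta_0^2 l+\sum_{i\geq 1}\theta_i(1-e^{-\theta_i l})$, obtained from $\mu[0,l]=\theta_0^2 l+\sum_{i\geq 1}\theta_i\1_{X_i\leq l}$ and $X_i\sim\mathrm{Exp}(\theta_i)$, and we will use $\sum_{i\geq 0}\theta_i^2=1$ throughout.

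For (a): each summand $l\mapsto\theta_i(1-e^{-\theta_i l})$ is concave (second derivative $-\theta_i^3 e^{-\theta_i l}\leq 0$), $l\mapsto\theta_0^2 l$ is affine, and since $1-e^{-x}\leq x$ the partial sums are bounded by $l\sum_{i\geq 1}\theta_i^2\leq l$; hence the series converges pointwise and, being a pointwise limit of concave functions, $l\mapsto\E[\mu[0,l]]$ is concave. For the deterministic half of (b), divide the identity by $l$: the terms $\theta_i(1-e^{-\theta_i l})/l$ are dominated by the summable sequence $\theta_i^2$ and go to $0$, so dominated convergence gives $\E[\mu[0,l]]=\theta_0^2 l+o(l)$. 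For the almost sure equivalence $\mu[0,l]\sim\E[\mu[0,l]]$, note that $\mu[0,l]-\E[\mu[0,l]]$ is a sum of independent centred variables with $\Var(\mu[0,l])=\sum_{i\geq 1}\theta_i^2 e^{-\theta_i l}(1-e^{-\theta_i l})\leq\tfrac14\sum_{i\geq 1}\theta_i^2\leq\tfrac14$. When $\theta_0>0$ or $\sum_{i\geq 1}\theta_i=\infty$ one has $\E[\mu[0,l]]\to\infty$; using the continuity and monotonicity of $l\mapsto\E[\mu[0,l]]$ from (a), pick levels $l_k$ with $\E[\mu[0,l_k]]=2^k$, so Chebyshev yields $\proba(|\mu[0,l_k]-\E[\mu[0,l_k]]|>\e\,\E[\mu[0,l_k]])\leq (4\e^2 4^k)^{-1}$, which is summable; Borel--Cantelli together with interpolation between consecutive levels (the quantity is monotone, the levels grow geometrically) finishes this case. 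In the remaining case $\theta_0=0$, $\sum_{i\geq 1}\theta_i<\infty$, monotone convergence gives $\mu[0,l]\to\sum_{i\geq 1}\theta_i=\E[\mu[0,\infty]]$ directly.

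For (c): conditionally on $\mu$, the count $N_l:=\#([0,l]\cap\{Y_i\})$ is Poisson with mean $\int_0^l\mu[0,s]\,ds\leq l\,\mu[0,l]$ (nondecreasing integrand). A Poisson upper-tail (Chernoff) bound gives $\proba(N_l>(1+\e)\int_0^l\mu[0,s]\,ds\mid\mu)\leq e^{-c_\e\int_0^l\mu[0,s]\,ds}$, and since $\int_0^l\mu[0,s]\,ds$ grows at least linearly in $l$ (by (b) when $\theta_0>0$ or $\sum\theta_i=\infty$, and because $\mu[0,l]$ is eventually bounded below by a positive constant otherwise), summing over $l\in\N$ and interpolating using the slow variation of $l\mapsto\int_0^l\mu[0,s]\,ds$ yields $N_l\leq 2\int_0^l\mu[0,s]\,ds\leq 2l\mu[0,l]$ for $l$ large, a.s.; finally $2l\mu[0,l]\leq 2l^2$ eventually because $\mu[0,l]\leq l$ eventually, again by (b).

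For (d), which I expect to be the main obstacle, I would split $\mu[Y_{i-1},Y_i]=\theta_0^2(Y_i-Y_{i-1})+\sum_{j:\,X_j\in(Y_{i-1},Y_i]}\theta_j$. The continuous part is $\theta_0^2$ times a gap of $\mathbf Y$ lying below $Y_i$; since $\mathbf Y$ has intensity $\mu[0,l]\,dl$, eventually $\gtrsim l\,dl$, its largest gap below level $y$ is $O(\log(y)^2/y)$ off an event of summable probability, which handles this term. The atomic part is the delicate one: a first-moment computation shows that the $\theta_j$-mass of the $X_j$'s falling in a $Y$-gap around level $l$ is on average of order $\frac{1}{\mu[0,l]}\sum_{j\geq 1}\theta_j^2 e^{-\theta_j l}=O(1/l)$, and the possibility that a single atom $\theta_j$ of non-negligible size lands there is suppressed because $\proba(X_j\text{ is near }l)$ decays like $e^{-\theta_j l}$; converting these into an almost sure bound with only the stated logarithmic loss requires a union bound over $i$ and the fluctuation estimates of \cite{ICRT1} (this is the point where Lemma 4.6 there is modified via Lemma 4.5). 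The recurring difficulty --- in (d) because atoms of very different sizes must be controlled simultaneously, and in the regime $\theta_0=0$ because $\E[\mu[0,l]]$ can grow arbitrarily slowly --- is making the passage from expectation to the almost sure statement genuinely uniform, which is exactly why the Borel--Cantelli subsequences are taken along the levels of $\E[\mu[0,\cdot]]$ rather than along $l$.
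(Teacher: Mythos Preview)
The paper does not prove this lemma: it is a ``recall'' whose proof is delegated entirely to \cite{ICRT1}, Section~4 (with the remark that (d) is obtained from Lemma~4.6 there via Lemma~4.5). So there is no in-paper argument to compare to; what you have written is an independent sketch of the proofs that presumably appear in \cite{ICRT1}.

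Your arguments for (a), (b), (c) are the standard ones and are essentially correct. One small gap: in (b), taking levels $l_k$ with $\E[\mu[0,l_k]]=2^k$ and interpolating by monotonicity only pins $\mu[0,l]/\E[\mu[0,l]]$ between $(1-\e)/2$ and $2(1+\e)$, not near $1$; you need levels with geometric ratio $1+\delta$ and then let $\delta\downarrow 0$ (Chebyshev still sums since $\sum (1+\delta)^{-2k}<\infty$). In (c), the claim $\mu[0,l]\leq l$ eventually is fine, but note it comes from $\mu[0,l]/l\to\theta_0^2\leq 1$ with equality only when $\theta_0=1$ (where $\mu$ is Lebesgue and the bound is exact).

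For (d) you correctly isolate the two contributions and identify the atomic part as the real obstacle, but you do not actually carry out the union-bound/fluctuation step, and you end up pointing back to \cite{ICRT1} Lemmas~4.5--4.6, which is precisely what the paper itself does. So on (d) your proposal and the paper are on equal footing: both defer to \cite{ICRT1}.
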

We now adapt \cite{ICRT1} Lemma 4.7. First let us define a metric, which morally counts the number of branches between two points. To do so, we adapt Algorithm \ref{Alg1}: First let $d_{\mathfrak N,0}$ be the trivial metric on $[0,0]$. Then for every $i\geq 0$, define $d_{\mathfrak N,i+1}$ as the metric on $[0,Y_{i+1}]$ such that for every $x< y$,
\[ d_{\mathfrak N,i+1}(x,y):=
\begin{cases} 
d_{\mathfrak N,i}(x,y) & \text{if } x,y\in [0, Y_i] \\
d_{\mathfrak N,i}(x,Z_i)+1& \text{if } x \in [0, y_i], \, y \in (Y_i, Y_{i+1}] \\
1 &  \text{if } x,y\in (Y_i, Y_{i+1}]
\end{cases} \]
Finally let $d_{\mathfrak N}$ be the unique metric on $\R^+$ which agrees with $d_{\mathfrak N,i}$ on $[0, Y_i]$ for every $i\in \N$.

\begin{lemma}\label{CountingLemma} Almost surely, for every $n$ large enough, for every $x\leq 2^{n+1}$, $d_{\mathfrak N} (x,[0,2^n])\leq 4n$.
\end{lemma}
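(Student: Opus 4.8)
The plan is to estimate, for a typical dyadic scale $2^n$, how many branches (i.e.\ how many steps in the $d_{\mathfrak N}$-metric) one must traverse to reach $[0,2^n]$ from an arbitrary point $x\le 2^{n+1}$. The key observation is that $d_{\mathfrak N}(x,[0,2^n])$ counts the length of the ``ancestral chain'' of branches: starting from the branch $(Y_{j},Y_{j+1}]$ containing $x$, one jumps to the branch containing its glue point $Z_j$, then to the branch containing that branch's glue point, and so on, until one lands inside $[0,2^n]$. So the quantity to control is the number of such iterated glueings needed to descend from level $\le 2^{n+1}$ below level $2^n$. First I would reduce the statement to a statement about the glue points: if $x\in(Y_j,Y_{j+1}]$ with $Y_{j+1}\le 2^{n+1}$, then after one step we are at $Z_j\le Y_j$, and crucially $Z_j$ is distributed (conditionally) according to $\p_{Y_j}=\mu_{Y_j}/\mu[0,Y_j]$, which by Lemma \ref{Recall icrt}(b) puts a proportion of mass at least of constant order (roughly $\theta_0^2\cdot 2^n / (\theta_0^2\cdot 2^{n+1}+o(2^{n+1}))\approx 1/2$) on $[0,2^n]$, at least for $n$ large. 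Hence each glueing step, independently, has probability bounded below by a constant $c>0$ of landing directly in $[0,2^n]$.

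Then the core estimate is a geometric/Borel--Cantelli argument. The number of branches with right endpoint in $[2^n,2^{n+1}]$ is at most $\#([0,2^{n+1}]\cap\{Y_i\})\le 2(2^{n+1})^2 = 8\cdot 4^n$ by Lemma \ref{Recall icrt}(c), for $n$ large. For each such branch, the chain of iterated glue points that stays inside $(2^n,2^{n+1}]$ has length stochastically dominated by a geometric variable with success probability $c$; so the probability that a fixed chain has length exceeding $4n$ is at most $(1-c)^{4n}$. Actually one must be slightly careful: the relevant event is that \emph{some} point $x\le 2^{n+1}$ needs more than $4n$ steps, but $d_{\mathfrak N}(x,[0,2^n])$ only depends on which branch $x$ lies in, so it suffices to union-bound over the $\le 8\cdot 4^n$ branches. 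Taking $4n$ steps and choosing the constant so that $8\cdot 4^n (1-c)^{4n}\to 0$ summably in $n$ (which holds provided $4(1-c)^4<1$, i.e.\ $c$ large enough — if $c$ is only a small constant one takes $Cn$ steps with $C$ large instead, then checks $C=4$ works once $n$ is large because the constant in front is only polynomial; I would phrase the final bound with a large-enough multiplicative constant absorbed into ``$n$ large''), Borel--Cantelli gives that a.s.\ for all $n$ large, every $x\le 2^{n+1}$ satisfies $d_{\mathfrak N}(x,[0,2^n])\le 4n$.

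One subtlety to handle carefully: the glue points $Z_j$ along a descending chain are not literally i.i.d., since the branch one lands in after a glueing determines the next glue point's law, and the index $j$ decreases along the chain; also different chains share structure. The clean way around this is to work conditionally and use that, \emph{given} $\mathbf Y$ and the tree built so far, each $Z_j$ is independent with law $\p_{Y_j}$, and $\p_{Y_j}[0,2^n]\ge c$ uniformly over $2^n\le Y_j\le 2^{n+1}$ for $n$ large by Lemma \ref{Recall icrt}(b); then a chain of length $k$ avoiding $[0,2^n]$ requires $k$ successive ``failures'', each of conditional probability $\le 1-c$, so the chain length is dominated by $\mathrm{Geom}(c)$ regardless of the branching/sharing structure — a standard stochastic-domination argument by revealing the chain one glue point at a time. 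I expect this conditioning-and-domination bookkeeping to be the main obstacle, i.e.\ making rigorous that the ``number of branches to descend a dyadic level'' is geometrically tailed uniformly; everything else (the dyadic union bound via Lemma \ref{Recall icrt}(c), and Borel--Cantelli) is routine. This is the same scheme as \cite{ICRT1} Lemma 4.7, and I would cite that proof for the parts that transfer verbatim, only re-doing the steps affected by the slightly different statement here.
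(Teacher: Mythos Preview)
Your plan is correct and essentially identical to the paper's proof: the paper also follows the ancestral chain $z_0=z,\ z_{i+1}=Z_{k_i}$ (with $k_i=\max\{k:Y_k<z_i\}$), observes it is a Markov chain for the filtration $\mathcal F_i=\sigma(\mu,\{Y_j\},z_0,\dots,z_i)$, bounds $\proba(z_{i+1}\le 2^n\mid\mathcal F_i)\ge \mu[0,2^n]/\mu[0,2^{n+1}]\ge 1/3$ for $n$ large, gets $\proba(T\ge 4n)\le(2/3)^{4n-1}$, union-bounds over the $O(4^n)$ branches via Lemma~\ref{Recall icrt}(c), and applies Borel--Cantelli; the extension from branch endpoints to all $x$ is exactly your ``constant on branches'' remark. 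One small sharpening: your heuristic $c\approx 1/2$ via $\E[\mu[0,l]]=\theta_0^2 l+o(l)$ fails when $\theta_0=0$; the paper secures $c=1/3$ uniformly by combining Lemma~\ref{Recall icrt}(b) with the concavity in Lemma~\ref{Recall icrt}(a), which gives $\E[\mu[0,2^{n+1}]]\le 2\,\E[\mu[0,2^n]]$ and hence $\mu[0,2^n]/\mu[0,2^{n+1}]\ge 1/3$ for $n$ large.
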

\begin{proof} We adapt the proof of \cite{ICRT1} Lemma 4.7. Let $\mathcal F:=\sigma(\mu,\{Y_i\}_{i\in \N})$. Let $z$ be a random variable $\mathcal F$-measurable in $[0,2^{n+1}]$. Let us follow the geodesic path between $z$ and $[0,2^n]$ (see Figure \ref{T-----1}): Let $z_0:=z$, then for every $i\geq 0$, let $k_i:=\max\{k\in \N: Y_k<z_i\}$ and let $y_i:=Y_{k_i}$, and let $z_{i+1}:= Z_{k_i}$. Finally note that $d_{\mathcal N}(x,[0,2^n])= T:=\inf\{t\in \N, z_{t}\leq 2^n\}$.

\begin{figure}[!h] 
\centering
\includegraphics[scale=0.65]{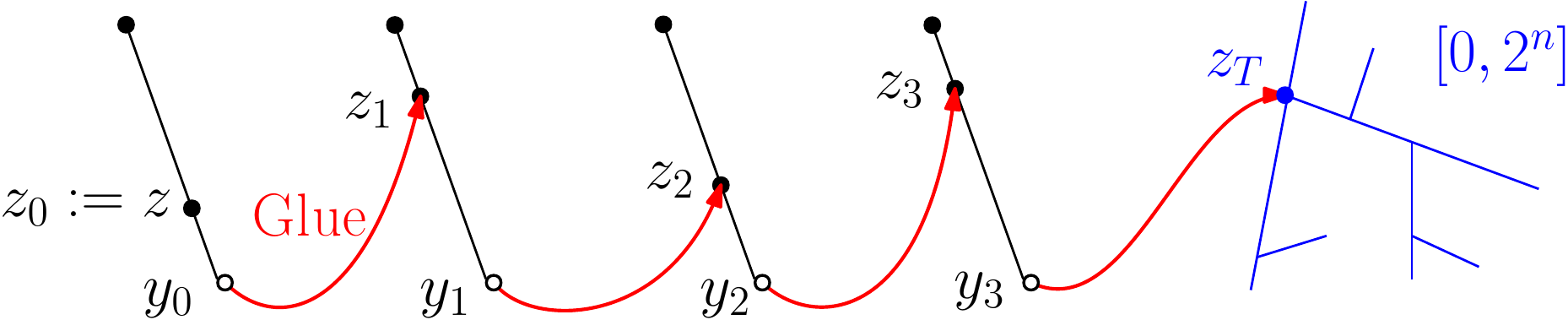}
\caption{A typical construction of $(z_i,y_i)_{i\in \N}$. Each segment represents a branch.}
 \label{T-----1}
\end{figure}

Then for every $i\geq 0$ let $\mathcal F_i:=\sigma(\mathcal F,z_0,\dots, z_i)$. Recall that conditionally on $\mathcal F$, $(Z_i)_{i\in \N}$ are independent so $((y_i,z_i),\mathcal F_i)_{i\geq 0}$ is a Markov chain. Also $T$ is a stopping time for this Markov chain. Moreover by definition of $(Z_i)_{i\in \N}$, if $z_i\geq 2^n$,
\[ \proba(z_{i+1}\leq 2^n | \mathcal F_i)=\frac{\mu[0,2^n]}{\mu[0,y_i]}\geq \frac{\mu[0,2^n]}{\mu[0,z]} \geq \frac{\mu[0,2^n]}{\mu[0,2^{n+1}]}\geq 1/3, \]
where the last inequality holds a.s. for every $n$ large enough by Lemma \ref{Recall icrt} (a) (b).
Thus, 
\[ \proba(T\geq 4n)=\proba(z_{4n-1}>2^n)\leq (2/3)^{4n-1}.\]

Therefore by an union bound,
\[ \proba \left (\left . \exists 1\leq i \leq 2^{2n+3}: Y_i\leq 2^{n}, d_{\mathfrak N}(Y_{i},[0,2^{n+1}])\geq 4n  \right | \mathcal F \right )=O(1/n^2).\]
Hence, by the Borel--Cantelli Lemma, and by Lemma \ref{Recall icrt} (c), a.s. for every $n$ large enough,
\[ \max\{d_{\mathfrak N}(Y_{i},[0,2^n]): i\in \N, Y_i\leq 2^{n+1}\}\leq  4n-1.\]
Finally simply note that $d_{\mathfrak N}(\cdot,[0,2^n] )$ is constant among branches. 
\end{proof}
\section{Basic properties on $\L$.} \label{Compacito}
\subsection{Proof of the compactness of $\L$.} \label{CompactLooptreeSec}
We first show an upper bound on $d_\L$. Then we introduce the projections on $\L_l:=[0,l]\times[0,1]$. Then we show that for every $l\in \R^+$, $(\L_l,d_\L)$ is a.s. compact. Then we upper bound for $n\in \N$, $d_H(\L_{2^n},\L_{2^{n+1}})$. Finally, we show that $\L$ is a.s. compact with the chaining method.

\begin{lemma} \label{BAKA} Almost surely for all $(x,u),(y,v) \in \T\times [0,1]$, we have $d_\L((x,u),(y,v))\leq \mu\llbracket x,y\rrbracket $.
Also for every $x\in \T$, $(y,v) \in \T\times [0,1]$, we have $d_\L((x,U_{x,y,v}),(y,v))\leq \mu\rrbracket x,y\rrbracket $.
\end{lemma}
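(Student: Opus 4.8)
The plan is to read off both inequalities directly from the defining formula \eqref{18/12/15h} of $d_\L$, comparing it term by term with the decomposition of $\mu$ into its continuous and atomic parts. Since $\mu=\theta_0^2\,dx+\sum_{i\ge1}\theta_i\delta_{X_i}$ and the canonical map $\R^+\to\T$ is a local isometry, the continuous part of $\mu$ restricted to a geodesic is $\theta_0^2$ times its length measure, of total mass $d_\T(x,y)$, while (a.s.) the atoms of $\mu$ are exactly the pairwise distinct points $X_i$. Hence for all $x,y\in\T$,
\[ \mu\llbracket x,y\rrbracket \;=\; \theta_0^2\,d_\T(x,y)\;+\sum_{i:\,X_i\in\llbracket x,y\rrbracket}\theta_i, \]
and likewise for the half–open segment, with the sum running over $\{i:X_i\in\,\rrbracket x,y\rrbracket\}$ (deleting the single point $x$ changes neither the length of the geodesic nor the atomic part, except possibly for dropping the atom at $x$ itself).

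For the first assertion the key (deterministic) step is that the summand indexed by $i$ in \eqref{18/12/15h} vanishes whenever $X_i\notin\llbracket x,y\rrbracket$. Indeed, then $X_i\ne x$ and $X_i\ne y$, so the third coordinates are irrelevant: $U_{X_i,x,u}=U_{X_i,x}$ and $U_{X_i,y,v}=U_{X_i,y}$; moreover $x$ and $y$ lie in the same connected component of $\T\setminus\{X_i\}$ (the standard description of geodesics in an $\R$-tree), so the second axiom of an angle function forces $U_{X_i,x}=U_{X_i,y}$, whence $\dc(U_{X_i,x,u},U_{X_i,y,v})=0$. Using in addition $\dc\le\tfrac12$ on the torus $[0,1]$ and $\tfrac{\theta_0^2}{4}\le\theta_0^2$, the formula \eqref{18/12/15h} then gives
\[ d_\L\big((x,u),(y,v)\big)\;\le\;\theta_0^2\,d_\T(x,y)\;+\sum_{i:\,X_i\in\llbracket x,y\rrbracket}\tfrac{\theta_i}{2}\;\le\;\mu\llbracket x,y\rrbracket. \]

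For the second assertion I would rerun exactly this computation with $u$ replaced by $U_{x,y,v}$; the only additional point is that the index $i$ with $X_i=x$ (if there is one) now contributes $0$: indeed $U_{X_i,x,U_{x,y,v}}=U_{x,x,U_{x,y,v}}=U_{x,y,v}$ and, when $x\ne y$, $U_{X_i,y,v}=U_{x,y,v}$ as well, so the corresponding $\dc$ vanishes (the case $x=y$ being trivial, both points then coinciding and $\mu\rrbracket x,x\rrbracket=0$). Thus the surviving indices are precisely those with $X_i\in\,\rrbracket x,y\rrbracket$, and summing up as above yields $d_\L\big((x,U_{x,y,v}),(y,v)\big)\le\mu\rrbracket x,y\rrbracket$.

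No chaining or concentration estimate enters here — the statement is a deterministic consequence of the two explicit formulas once the almost sure properties of $\mu$ (atoms exactly the $X_i$) and of the angle function $U$ from Algorithm \ref{ConstructU} are available. The only place asking for a little care is the bookkeeping of which atoms $X_i$ sit on the geodesic and of the role of the third coordinate in $U_{X_i,\cdot,\cdot}$ when $X_i$ is an endpoint of $\llbracket x,y\rrbracket$; that is exactly what makes $\rrbracket x,y\rrbracket$ (rather than $\llbracket x,y\rrbracket$) the correct bound in the second statement.
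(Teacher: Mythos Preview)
Your proof is correct and follows essentially the same approach as the paper's: both arguments rest on the observation that for $X_i\notin\llbracket x,y\rrbracket$ the points $x,y$ lie in the same component of $\T\setminus\{X_i\}$ so the corresponding summand in \eqref{18/12/15h} vanishes, and for the second assertion the additional term at $X_i=x$ also vanishes by the choice $u=U_{x,y,v}$. Your write-up is a bit more explicit (you spell out the decomposition of $\mu$ on the geodesic and treat the first assertion directly rather than by symmetry), and you use the slightly sharper $\dc\le\tfrac12$, but these are cosmetic differences.
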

\begin{proof} We focus on the second assertion as the first can be proved similarly. Let $u=U_{x,y,v}$. Recall that by definition, writing $\dc$ for the distance on the torus $[0,1]$,
\[ d_{\L}((x,u),(y,v))=\frac{\theta^2_0}{4}d_\T(x,y)+ \sum_{i\in \N}\theta_i \dc(U_{X_i,x,u},U_{X_i,y,v}).
\]
Then note that for every $i\in \N$ such that $X_i\notin \llbracket x,y\rrbracket$, $x,y$ are connected in $\T\backslash \{X_i\}$, hence $U_{X_i,x,u}=U_{X_i,x}=U_{X_i,y}=U_{X_i,y,v}$. Also $U_{x,x,u}=u=U_{x,y,v}$. Therefore,
\[ d_{\L}((x,u),(y,v))\leq \theta^2_0d_\T(x,y)+ \sum_{i\in \N: X_i\in \rrbracket x,y\rrbracket }\theta_i= \mu\rrbracket x,y\rrbracket. \qedhere\]
\end{proof}
\begin{remark}
Note that Lemma \ref{Recall icrt} (c) (d) implies $\mu[Y_i,Y_{i+1}]=O(i^{-1/2+o(1)})$. Also by Lemma \ref{BAKA} the diameter of the branches of the looptree is bounded by $(\mu[Y_i,Y_{i+1}])_{i\in \N}$. Thus we  are close from the compact setting of Curien and Haas \cite{CurienHaas} and of S\'enizergues \cite{Seni}.
\end{remark}

We now introduce the projections on $\T$ and on $\L$ (see Figure \ref{looptreefig6}). For every $x\in \T$, and $l\in \R^+$, let $\pT_l$ be the {\it projection of $x$ on $\T_l:=[0,l]$}, that is the unique $z\in \T_l$ which minimizes $d_\T(x,z)$. 
\begin{figure}[!h] \label{looptreefig6}
\centering
\includegraphics[scale=0.7]{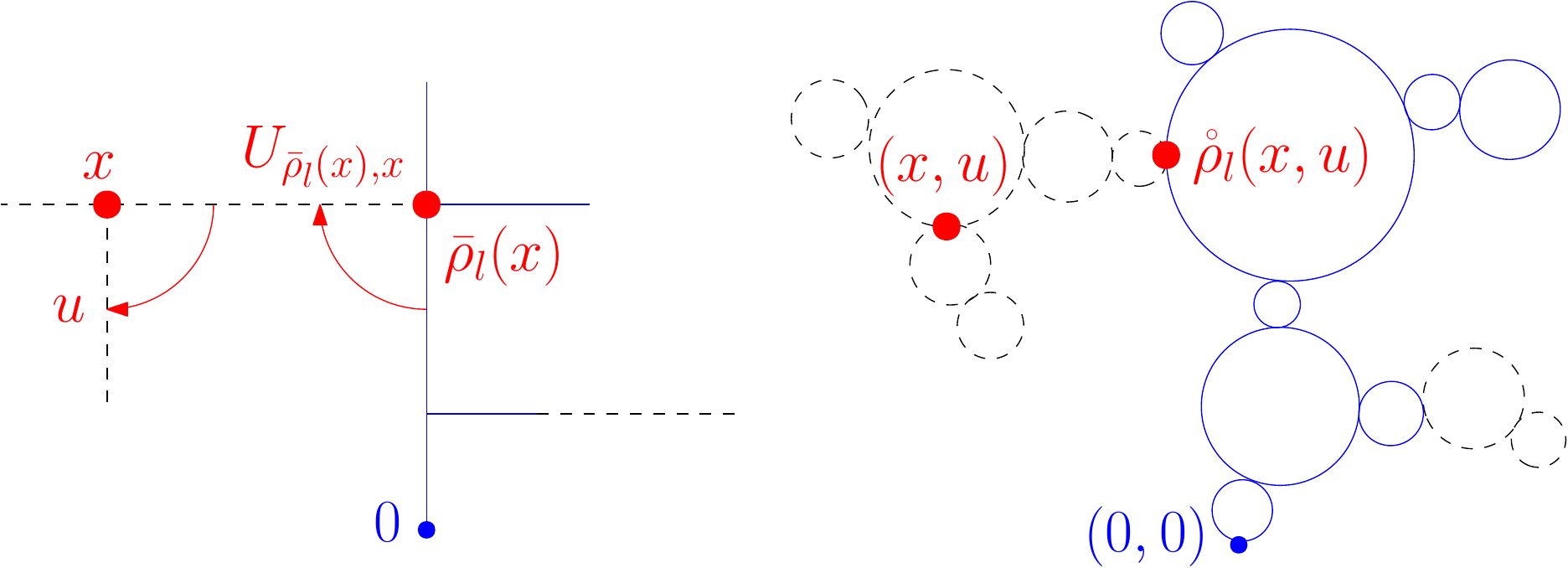}
\caption{Two representations of $\pL_l$, from the tree on the left, and from the looptree on the right. $\T_l$ and $\L_l$ are in blue. $\T\backslash \T_l$ and $\L\backslash \L_l$ are in black dashed. Here $U_{\pT_l(x),x,u}=U_{\pT_l(x),x}$ since $\pT_l(x)\neq x$.  
%
}  
\label{explore1} \label{looptreefig6}
\end{figure}%
\begin{lemma} \label{BAKA NO TEST} For every $(x,u)\in \T\times[0,1]$ and $l\in \R^+$, $\beta\in \L_l\mapsto d_\L((x,u),\y)$ reach its minimum at $\pL_l(x,u):=(\pT_l(x),U_{\pT_l(x),x,u})$. We call $\pL_l(x,u)$ the {\it projection of $x$ on $\L_l$}.
\end{lemma}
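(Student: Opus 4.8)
The plan is to use the explicit formula for $d_\L$ from Proposition \ref{Completion} (d) (equivalently \eqref{18/12/15h} on $\T\times[0,1]$) and minimize it term by term. Fix $(x,u)\in\T\times[0,1]$ and $l\in\R^+$, and let $\y=(y,v)\in\L_l=[0,l]\times[0,1]$ be arbitrary. Since
\[ d_{\L}((x,u),(y,v))=\frac{\theta^2_0}{4}d_\T(x,y)+ \sum_{i\in \N}\theta_i \dc(U_{X_i,x,u},U_{X_i,y,v}), \]
it suffices to show that each summand is minimized (over $y\in\T_l$, $v\in[0,1]$) at the claimed point $\pL_l(x,u)=(\pT_l(x),U_{\pT_l(x),x,u})$, since this point does lie in $\L_l$ ($\pT_l(x)\in[0,l]$ by definition, and the second coordinate is in $[0,1]$). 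For the first term this is exactly the defining property of $\pT_l(x)$ from Lemma \ref{BAKA NO TEST}'s predecessor (the projection on $\T_l$ minimizes $d_\T(x,\cdot)$), and crucially $\pT_l$ is itself unchanged if we move within the fibre, i.e. replacing $(y,v)$ by $(\pT_l(x),\cdot)$ can only decrease $d_\T(x,\cdot)$.

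The main work is the sum over $i$. The key geometric fact I would isolate first is that for every $z\in\T_l$, the path $\llbracket x,z\rrbracket$ passes through $\pT_l(x)$; equivalently $\pT_l(x)=x\wedge z$ lies on $\llbracket x,z\rrbracket\cap\T_l$ for the relevant branching structure (since $\T_l$ is a connected subtree containing the root and $\pT_l(x)$ is the closest point of $\T_l$ to $x$). Consequently, for each $i\in\N$: if $X_i\in\llbracket x,\pT_l(x)\rrbracket$, then $X_i\notin\T_l$ in the relevant sense — more precisely, $\pT_l(x)$ and any $z\in\T_l$ lie in the same component of $\T\setminus\{X_i\}$, so $U_{X_i,\pT_l(x),\cdot}=U_{X_i,z,\cdot}$ regardless of the fibre coordinate; whereas $U_{X_i,x,u}$ may differ. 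If instead $X_i\notin\llbracket x,\pT_l(x)\rrbracket$, then $x$ and $\pT_l(x)$ are in the same component of $\T\setminus\{X_i\}$ (unless $X_i=x$, handled by the $U_{x,x,u}=u$ convention and the fact that then $X_i\notin[0,l]$ forces $\pT_l(x)=\pT_l$ of something on the other side — a boundary case to check carefully), so $U_{X_i,x,u}=U_{X_i,\pT_l(x),U_{\pT_l(x),x,u}}$. In either case the choice $y=\pT_l(x)$, $v=U_{\pT_l(x),x,u}$ makes $U_{X_i,y,v}=U_{X_i,\pT_l(x),U_{\pT_l(x),x,u}}$ equal to $U_{X_i,x,u}$ whenever $X_i$ is "on the $\T_l$-side", giving $\dc$-distance $0$ there, and equal to the common value shared by all $z\in\T_l$ when $X_i$ is "behind" $\pT_l(x)$, which no competing $(y,v)\in\L_l$ can improve. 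Summing, $d_\L((x,u),(y,v))\geq d_\L((x,u),\pL_l(x,u))$ for all $(y,v)\in\L_l$, which is the claim; and the same argument extends to all of $\L_l$ (not just $\T\times[0,1]\cap\L_l$) by Proposition \ref{Completion} (a), (d) and continuity.

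The step I expect to be the main obstacle is the careful case analysis of the angle functions $U_{X_i,\cdot,\cdot}$ when $X_i$ coincides with $x$ or with $\pT_l(x)$, or when $x\in\T_l$ already (so $\pT_l(x)=x$ and the statement is trivial but the fibre coordinate bookkeeping must still be consistent), together with verifying that "$\pT_l(x)$ separates $x$ from $\T_l$" in the precise sense needed — this is really a statement about $\R$-trees and their subtrees that I would want to state as a small preliminary observation. Once that separation property and the resulting trichotomy for each $X_i$ are pinned down, the term-by-term minimization is routine.
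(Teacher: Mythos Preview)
Your approach is correct and essentially the same as the paper's: both split the sum according to whether $X_i$ lies on the stub between $x$ and $\pT_l(x)$, observe that for such $X_i$ one has $U_{X_i,y,v}=0$ for every $(y,v)\in\L_l$ (since $y$ and the root stay connected in $\T\setminus\{X_i\}$), and that the remaining terms are killed at $\pL_l(x,u)$. The paper makes the endpoint bookkeeping cleaner by taking the stub to be the half-open $\rrbracket\pT_l(x),x\rrbracket$, so that the case $X_i=\pT_l(x)$ falls into your second bucket (where $\dc=0$ at $\pL_l(x,u)$) rather than the first; that is the boundary case you should watch, not $X_i=x$.
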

\begin{proof} Note that, since $\T$ is a $\R$-tree, for every $y\in [0,l]$, for every $z\in \rrbracket \pT_l(x),x\rrbracket$, $U_{z,y}=0$. Thus, for every $(y,u)\in \L_l$, since $d_\T(x,y)\geq d_\T(x,\pT_l(x))$,
\begin{equation} d_{\L}((x,u),(y,v))\geq  \frac{\theta^2_0}{4}d_\T(x,\pT_l(x))+\sum_{i:X_i\in \rrbracket \pT_l(x),x\rrbracket}\theta_i  \dc(U_{X_i,x,u})=d_\L(x,\pL_l(x)),  \label{4/3/18hb} \end{equation}
where the last inequality is obtained by a small modification of the proof of Lemma \ref{BAKA}.
\end{proof}
\begin{lemma} \label{CompactLoopFirst}Fix $l>0$. Almost surely, $d_\L$ is finite on $\L_l=[0,l]\times[0,1]$, and $(\L_l,d_l)$ is compact.
\end{lemma}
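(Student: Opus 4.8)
The plan is to reduce compactness of $(\L_l, d_\L)$ to the already-established compactness of $[0,l]$ under $d_\T$ together with control of the "winding" coordinates $U_{X_i,\cdot}$ coming from the finitely-many-large-$\theta_i$ structure. First I would fix $l$ and work on the pseudo-metric space $(\T_l\times[0,1], d_\L)$ with $\T_l = [0,l]$; since $\L_l$ is defined as the completion, it suffices to show $(\T_l\times[0,1],d_\L)$ is totally bounded and that $d_\L$ is finite there. Finiteness is immediate from Lemma \ref{BAKA}: $d_\L((x,u),(y,v)) \leq \mu\llbracket x,y\rrbracket + \theta_1 \leq \mu[0,l] + 1 < \infty$ a.s. (the extra constant absorbing the torus distance in the loop over $x$ itself), using Lemma \ref{Recall icrt} (b) to know $\mu[0,l]<\infty$.

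For total boundedness, fix $\e>0$. Since $\sum_i \theta_i^2 = 1$, choose $N$ with $\sum_{i>N}\theta_i \dc(\cdot,\cdot) \leq \sum_{i>N}\theta_i \cdot \tfrac12$; more carefully, I would split the sum in \eqref{18/12/15h} at $N$ large enough that $\sum_{i>N}\theta_i < \e/4$, which is possible because (by Lemma \ref{Recall icrt} or the ICRT assumptions) the $\theta_i$ are summable only in the $\P$-tree regime — so instead I use that $\sum_{i>N}\theta_i \dc(U_{X_i,x,u},U_{X_i,y,v}) \leq \mu\llbracket x,y\rrbracket$ restricted to those $X_i \in \rrbracket x,y\rrbracket$, and control this via the modulus of continuity of $r\mapsto \mu[0,r]$ plus a partition of $[0,l]$ by $d_\T$-balls. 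Concretely: since $([0,l], d_\T)$ is compact, cover it by finitely many $d_\T$-balls of radius $\de$, with $\de$ chosen (using continuity of $l\mapsto\mu[0,l]$ and the branch-diameter bound from the Remark after Lemma \ref{BAKA}) so that any two points in a common piece have $d_\L$-contribution from $\tfrac{\theta_0^2}{4}d_\T$ and from the tail $\sum_{i>N}$ both at most $\e/4$. For the head $\sum_{i\le N}\theta_i \dc(U_{X_i,x,u},U_{X_i,y,v})$: each map $(x,u)\mapsto U_{X_i,x,u}$ takes values in the compact torus $[0,1]$, so cover $[0,1]^{N}\times[0,1]$ (the last coordinate being $u$) by finitely many boxes of side $\e/(4N)$; the preimages of these boxes, intersected with the $d_\T$-ball pieces, give a finite partition of $\T_l\times[0,1]$ into sets of $d_\L$-diameter $\leq \e$. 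Hence $(\T_l\times[0,1],d_\L)$ is totally bounded, and its completion $\L_l$ is compact.

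The main obstacle is the head term: one must argue that, although $(x,u)\mapsto U_{X_i,x,u}$ is not continuous in $d_\T$ (it jumps across $X_i$), it is nonetheless "$d_\L$-Lipschitz" in the sense that $\theta_i\dc(U_{X_i,x,u},U_{X_i,y,v})$ is itself a term in the sum defining $d_\L$, so controlling the $U$-coordinates and controlling $d_\L$ are essentially the same task — which is why partitioning directly by joint level sets of $(U_{X_i,\cdot})_{i\le N}$ and $\pT_\de$ works. I would also double-check measurability/well-definedness of these level sets using the balanced assumption on the angle function, but this is routine given the setup in Section \ref{ICRTDef}. An alternative, cleaner route: note $\L_l$ is the image of $[0,l]\times[0,1]$ under the (1-Lipschitz into $\ell^1$) map $(x,u)\mapsto (\tfrac{\theta_0}{2}\iota_\T(x), (\sqrt{\theta_i}\,U_{X_i,x,u})_i)$ appropriately rescaled, landing in a compact subset of an $\ell^1$-type space; but the partition argument above is the most self-contained and is the one I would write out.
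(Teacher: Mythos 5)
Your total-boundedness argument is fine as far as it goes (the finiteness step, the splitting of the sum in \eqref{18/12/15h} at a level $N$ with $\sum_{i>N}\theta_i\1_{X_i\in[0,l]}<\e/4$, and the finite partition by $d_\T$-balls crossed with level boxes of $(U_{X_i,\cdot,\cdot})_{i\leq N}$ and of $u$ all work), but there is a genuine gap at the last step. You write that ``$\L_l$ is defined as the completion'' of $([0,l]\times[0,1],d_\L)$; it is not. In Section \ref{CompactLooptreeSec} the paper sets $\L_l:=[0,l]\times[0,1]$, a subset of $\T\times[0,1]$ (only the global space $\L$ is a completion), and the lemma asserts that this set itself is compact. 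A totally bounded pseudo-metric space need not be compact; you still have to show that $([0,l]\times[0,1],d_\L)$ is complete, i.e.\ that every $d_\L$-Cauchy sequence converges to a point \emph{of the form} $(x,u)$ with $x\in[0,l]$, $u\in[0,1]$. This is not cosmetic: the paper later uses exactly this (e.g.\ in Lemma \ref{First Cont}, to extend $\Cont_l$ to a Lipschitz map with values in $\L_l$, and in the proof of Theorem \ref{CompactLooptree}, where the $\L_{2^n}$ must be compact subsets of $\L$).

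The missing piece is precisely what the paper's proof supplies, by a different route: given $((x_n,u_n))_n$ in $\L_l$, extract a subsequence with $x_n\to x$ in $([0,l],d_\T)$ (compact), observe that for each fixed $X\neq x$ the angle $U_{X,x_n,u_n}$ is eventually constant equal to $U_{X,x}$ (since $x_n$ and $x$ are eventually connected in $\T\setminus\{X\}$), extract further so that $U_{x,x_n,u_n}\to u\in[0,1]$, and conclude $d_\L((x_n,u_n),(x,u))\to 0$ by dominated convergence against the finite measure $\mu_{|[0,l]}$. You should either replace your last sentence by this extraction argument (which subsumes total boundedness), or keep your partition argument for total boundedness and add the identification of limit points as a separate completeness step; as written, the proof does not establish the statement.
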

\begin{proof} The finiteness of $d_\L$ directly follows from Lemma \ref{BAKA} since for every $(x,u),(y,v)\in \L_l$, $\llbracket x,y\rrbracket\subset [0,l]$, and since almost surely $\mu$ is locally finite. So let us focus on the compactness. Let $((x_n,u_n))_{n\in \N}\in(\L_l,d_\L)^\N$. Up to extraction, we may assume that $(x_n)_{n\in \N}$ converges for $d_\T$ toward $x\in[0,l]$. Then note that, for every $X\neq x$ and for every $n$ large enough, $x_n$ and $x$ are connected in $\T\backslash \{X\}$, so $U_{X,x,u}=U_{X,x_n,v}$. Furthermore, up to extraction, we may also assume that $U_{x,x_n,u_n}\to u\in [0,1]$ as $n\to \infty$. Therefore, by dominated convergence, a.s. 
\[ d_{\L}((x_n,u_n),(x,u))=\frac{\theta^2_0}{4}d_\T(x_n,x)+ \sum_{i\in \N:X_i\leq l}\theta_i \dc(U_{X_i,x,u},U_{X_i,y,v})\to 0.  \qedhere
\]
\end{proof}

We now show that $(\L_{2^n})_{n\in \N}$ is a Cauchy sequence for $d_H$.
\begin{lemma} \label{ChainingStart} The following assertions hold a.s. for every $n$ large enough:
\begin{compactitem}
\item[a)] $\max_{x\in [0,2^{n+1}]}\min_{y\in [0,2^n]} \mu \rrbracket y,x\rrbracket \leq 4n^3/2^n$.
\item[b)] $d_H(\L_{2^n},\L_{2^{n+1}})\leq 4n^3/2^n$.
\end{compactitem}
\end{lemma}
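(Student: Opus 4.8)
The two statements are tightly linked: part (b) is essentially the ``looptree version'' of part (a), obtained via the upper bound on $d_\L$ from Lemma \ref{BAKA}, so the real work is (a). For (a), the plan is to combine the counting Lemma \ref{CountingLemma}, which controls the \emph{number} of branches one must cross to descend from level $2^{n+1}$ down to level $2^n$, with the mass estimate of Lemma \ref{Recall icrt} (c)(d), which controls the $\mu$-mass of \emph{each} branch. Concretely, fix $x\in[0,2^{n+1}]$ and follow the geodesic in $\T$ from $x$ down to $[0,2^n]$, exactly as in the proof of Lemma \ref{CountingLemma}: write $z_0=x$, $z_{i+1}=Z_{k_i}$ with $y_i=Y_{k_i}=\max\{Y_k:Y_k<z_i\}$, and let $T=d_{\mathfrak N}(x,[0,2^n])$ be the number of steps. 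The quantity $\mu\rrbracket\pT_{2^n}(x),x\rrbracket$ is then at most the sum of $\mu$ over the $T$ branches traversed, i.e. at most $\sum_{i<T}\mu]Y_{k_i-1},Y_{k_i}]$ plus possibly the continuous contribution $\theta_0^2(z_0-z_T)$, though the latter is already $\le \theta_0^2\cdot 2^n$... wait, that is too big, so one must be slightly more careful: the branches crossed all have their glue points $z_i$ and endpoints above level $2^n$, hence all the indices $k_i$ satisfy $Y_{k_i}>2^n$, so each crossed branch $]Y_{k_i-1},Y_{k_i}]$ lies (essentially) in $(2^n,2^{n+1}]$, and we may bound its mass using Lemma \ref{Recall icrt} (d), $\mu[Y_{k-1},Y_k]\le \log(Y_k)^2/Y_k \le \log(2^{n+1})^2/2^n = O(n^2/2^n)$, uniformly over all such $k$, a.s. for $n$ large.

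\textbf{Main steps for (a).} First, by Lemma \ref{CountingLemma}, a.s. for every $n$ large enough and every $x\le 2^{n+1}$ we have $d_{\mathfrak N}(x,[0,2^n])\le 4n$; in particular the geodesic from $x$ down to $[0,2^n]$ crosses at most $4n$ branches. Second, a.s. for $n$ large enough, by Lemma \ref{Recall icrt} (c), there are at most $2\cdot 2^{2(n+1)}$ branches with right endpoint in $(2^n,2^{n+1}]$, and by Lemma \ref{Recall icrt} (d) each such branch has $\mu$-mass at most $\log(2^{n+1})^2/2^n\le Cn^2/2^n$ for a universal constant $C$ (absorbing $\log 2$). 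Third, since every branch crossed on the way from $x$ down to $[0,2^n]$ has its upper endpoint $>2^n$, and since the number of such branches is $\le 4n$, we get
\[ \mu\rrbracket \pT_{2^n}(x),x\rrbracket \;\le\; \sum_{\text{branches crossed}} \mu]Y_{k_i-1},Y_{k_i}] \;\le\; 4n\cdot \frac{Cn^2}{2^n}\;\le\; \frac{4n^3}{2^n} \]
for $n$ large enough (choosing $C$'s contribution to be absorbed; if the constant is not exactly $1$ one enlarges the exponent's polynomial factor — but since the statement already allows $4n^3$, a clean choice of constants in Lemma \ref{Recall icrt} (d) makes this work, and in any case one only needs the bound to be $o(1)$ with an explicit polynomial-over-exponential rate). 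One subtlety: the continuous part $\theta_0^2 d_\T$ of $\mu$ restricted to a single branch contributes at most $\theta_0^2$ times the branch's Euclidean length, and the total Euclidean length of branches crossed above level $2^n$ is at most the total length sitting in $(2^n, 2^{n+1}]$ after gluing... this is not obviously small, so instead one notes that $\mu]Y_{k_i-1},Y_{k_i}]$ already \emph{includes} the $\theta_0^2$-part of that branch, and Lemma \ref{Recall icrt} (d) bounds this full quantity — so there is nothing extra to control.

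\textbf{From (a) to (b).} Given (a), for every $(x,u)\in\L_{2^{n+1}}$ its projection $\pL_{2^n}(x,u)=(\pT_{2^n}(x),U_{\pT_{2^n}(x),x,u})\in\L_{2^n}$ satisfies, by the second assertion of Lemma \ref{BAKA},
\[ d_\L\big((x,u),\pL_{2^n}(x,u)\big)\;\le\;\mu\rrbracket \pT_{2^n}(x),x\rrbracket\;\le\;\frac{4n^3}{2^n}. \]
Since $\L_{2^n}\subset\L_{2^{n+1}}$, every point of $\L_{2^n}$ is within $0$ of $\L_{2^{n+1}}$, and every point of $\L_{2^{n+1}}$ is within $4n^3/2^n$ of $\L_{2^n}$; hence $d_H(\L_{2^n},\L_{2^{n+1}})\le 4n^3/2^n$, giving (b). (One should also recall from Lemma \ref{CompactLoopFirst} that each $\L_{2^n}$ is a genuine compact subset so the Hausdorff distance is well defined.)

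\textbf{Expected main obstacle.} The delicate point is making sure that \emph{every} branch traversed by the downward geodesic in $\T$ genuinely lies above level $2^n$ (so that Lemma \ref{Recall icrt} (d)'s bound, which is only useful for large indices / large $Y_k$, applies), and simultaneously that the projection in the \emph{looptree} travels only along those same branches — i.e. that $\mu\rrbracket\pT_{2^n}(x),x\rrbracket$ really is the sum of the crossed branch-masses and not something larger. This is exactly the content built into Lemma \ref{BAKA NO TEST} (the looptree projection is the image of the tree projection) together with the structure of the stick-breaking geodesic; once these are invoked correctly, the rest is the bookkeeping of multiplying ``at most $4n$ branches'' by ``each of mass $O(n^2/2^n)$''.
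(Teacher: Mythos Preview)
Your proof is correct and follows the same strategy as the paper: combine Lemma~\ref{CountingLemma} (at most $4n$ branches crossed) with Lemma~\ref{Recall icrt}~(d) (each such branch has $\mu$-mass $\le n^2/2^n$ since its upper endpoint exceeds $2^n$), then invoke Lemma~\ref{BAKA} to pass from the $\mu$-bound to the $d_\L$-bound. There is a harmless off-by-one in your branch indexing --- the branch containing $z_i$ is $(Y_{k_i},Y_{k_i+1}]$, not $(Y_{k_i-1},Y_{k_i}]$, so the relevant upper endpoint is $Y_{k_i+1}\ge z_i>2^n$ --- but this does not affect the argument.
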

\begin{proof} 
Let $(x,u)\in \L_{2^{n+1}}$. By lemma \ref{BAKA}, 
\[ d_\L((x,u),\pL_{2^n}(x,u))\leq \mu \rrbracket \pT_{2^n}(x),x\rrbracket. \]
 Also, a.s. for every $n$ large enough, by Lemma \ref{CountingLemma} $\rrbracket  \pT_{2^n}(x),x\rrbracket$ morally consists in the union of a part of at most $4n$ branches of the form $]Y_i,Y_{i+1}]$, which have by Lemma \ref{Recall icrt} mass at most $n^2/2^n$. Hence $\mu\rrbracket \pT_{2^n}(x),x\rrbracket\leq 4n^3/2^n$.
  Since $(x,u)\in \L_{2^{n+1}}$ is arbitrary, this concludes the proof.
\end{proof}
We will reuse the following intermediate result in \cite{ExcursionStick}:
\begin{lemma} \label{MuBounded} Almost surely $x\mapsto \mu\llbracket 0,x\rrbracket $ is bounded on $\T$. 
\end{lemma}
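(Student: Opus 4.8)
The plan is to control $h(x):=\mu\llbracket 0,x\rrbracket$ by a telescoping estimate over the dyadic scales $[0,2^n]$, with Lemma~\ref{ChainingStart}(a) as the engine. First I would reduce the statement to bounding $\sup_{x\in\R^+}h(x)$. For $x\in\T$ written as a limit of points $x_m\in\R^+$, the common ancestors $x\wedge x_m$ lie in $\R^+$ (they sit on the geodesics $\llbracket 0,x_m\rrbracket$, which stay in the stick), and along a suitable subsequence they increase to $x$; hence $\mu\llbracket 0,x\wedge x_{m_k}\rrbracket\uparrow\mu\llbracket 0,x\llbracket$ by continuity from below. Since $\mu(\{x\})=0$ whenever $x\notin\R^+$, this gives $h(x)\leq\sup_{\R^+}h$, so $\sup_{\T}h=\sup_{\R^+}h$.

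Next, for $l>0$ and $x\in\R^+$ I would use the projection $\pT_l(x)$ of $x$ onto $[0,l]$ (Lemma~\ref{BAKA NO TEST}). Because $[0,l]$ is an ancestor-closed subtree, $\pT_l(x)$ is the furthest-from-the-root ancestor of $x$ still lying in $[0,l]$, so $\llbracket 0,x\rrbracket=\llbracket 0,\pT_l(x)\rrbracket\sqcup\rrbracket\pT_l(x),x\rrbracket$ and therefore $h(x)=h(\pT_l(x))+\mu\rrbracket\pT_l(x),x\rrbracket$. Moreover, for any $y\in[0,l]$ the ancestor $x\wedge y$ again lies in $[0,l]$, hence is an ancestor of $\pT_l(x)$, so $\rrbracket y,x\rrbracket\supseteq\rrbracket\pT_l(x),x\rrbracket$ and thus $\mu\rrbracket\pT_l(x),x\rrbracket=\min_{y\in[0,l]}\mu\rrbracket y,x\rrbracket$.

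Now set $M_n:=\sup_{x\in[0,2^n]}h(x)$. Each $M_n$ is a.s.\ finite since $h(x)\leq\mu[0,x]\leq\mu[0,2^n]<\infty$ a.s.\ (local finiteness of $\mu$, e.g.\ via Lemma~\ref{Recall icrt}(b)). Taking $l=2^n$ and combining the two facts of the previous paragraph with Lemma~\ref{ChainingStart}(a), a.s.\ for all large $n$ and every $x\in[0,2^{n+1}]$ one has $h(x)=h(\pT_{2^n}(x))+\mu\rrbracket\pT_{2^n}(x),x\rrbracket\leq M_n+4n^3/2^n$, i.e.\ $M_{n+1}\leq M_n+4n^3/2^n$. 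Since $\sum_n 4n^3 2^{-n}<\infty$, this yields $\sup_n M_n<\infty$, and with the reduction step $\sup_{\T}h=\sup_{\R^+}h=\sup_n M_n<\infty$ a.s.

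I do not expect a genuine obstacle: the substantive work is already contained in Lemma~\ref{ChainingStart}(a) (which itself rests on the counting Lemma~\ref{CountingLemma} and Lemma~\ref{Recall icrt}), and the remaining points — the reduction to $\R^+$ and the identification of the minimum in Lemma~\ref{ChainingStart}(a) with the value at the projection — are soft $\R$-tree manipulations. If one preferred to bypass Lemma~\ref{ChainingStart}(a), essentially the same bound follows directly: by Lemma~\ref{CountingLemma} the ancestral line of any $x\in[0,2^{n+1}]$ traverses at most $\sum_{k\leq n}4k$ branches of the form $\rrbracket Y_i,Y_{i+1}\rrbracket$, each of mass $O(k^2 2^{-k})$ by Lemma~\ref{Recall icrt}(c)(d), so $h(x)=O\!\left(\sum_k k^3 2^{-k}\right)=O(1)$ uniformly in $x$.
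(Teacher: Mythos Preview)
Your proof is correct and follows essentially the same route as the paper: telescoping $M_{n+1}\le M_n+4n^3/2^n$ via Lemma~\ref{ChainingStart}(a), summing, and then extending from $\R^+$ to $\T$ by monotone convergence along the ancestral line (the paper just takes $y\to x$ along $\llbracket 0,x\llbracket$ rather than through common ancestors, but this is the same content). Your explicit identification of the minimizer in Lemma~\ref{ChainingStart}(a) with the projection $\pT_{2^n}(x)$ is a nice touch that the paper leaves implicit.
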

\begin{proof} First by Lemma \ref{ChainingStart} (a) almost surely, for every $n$ large enough,
\[ \max_{x\in [0,2^{n+1}]} \mu\llbracket 0,x\rrbracket \leq \max_{x\in [0,2^{n}]} \mu\llbracket 0,x\rrbracket +4n^3/2^n. \]
Thus since $\sum 4n^3/2^n<\infty$, $x\mapsto \mu\llbracket 0,x\rrbracket $ is bounded on $\R^+$. Moreover for every $x\in \T\backslash \R^+$, by monotone convergence, and since $\mu(\T\backslash \R^+)=0$, as $y\to x$, $y\in \llbracket 0,x\llbracket$,
\begin{equation} \mu\llbracket 0,y\rrbracket \limit \mu\llbracket 0,x\llbracket = \mu\llbracket 0,x\rrbracket. \qedhere \label{3/3/14h} \end{equation}
\end{proof}
\begin{proof}[Proof of Theorem \ref{CompactLooptree}] By Lemma \ref{MuBounded} and \eqref{3/3/14h}, for every $x\in \T$, $\min_{y\in \R^+}\mu\llbracket y,x\rrbracket=0$. Hence, by Lemma \ref{BAKA}, $d_H(\T\times[0,1],\R^+\times[0,1])=0$. So, since $\L$ is the completion of $(\T\times[0,1],d_\L)$, we have $d_H(\L,\R^+\times[0,1])=0$. 

Then by Lemma \ref{ChainingStart}, $(\L_{2^n})_{n\in \N}$ is a Cauchy sequence for the Hausdorff pseudo-distance. Also, by Lemma \ref{CompactLoopFirst}, for every $n\in \N$, $\L_{2^n}$ is compact. Hence, since $\L$ is complete, $(\L_{2^n})_{n\in \N}$ converges toward a compact subset of $\L$. This subset contains $\R^+\times[0,1]$. And since $d_H(\L,\R^+\times[0,1])=0$, it is at distance $0$ of $\L$. Therefore, $\L$ is compact.
\end{proof}
In passing note that we also prove the following result, which we will reuse in \cite{Dsnake}.
\begin{proposition} \label{FBLooptreeThm}  Almost surely as $l\to \infty$, $d_H(\L_l,\L)\to 0$.
\end{proposition}
\subsection{Proof of Proposition \ref{Completion}}\label{CompletionSec}
Toward (a), simply recall that  $d_H(\L,\R^+\times[0,1])=0$.

Toward (b), to show that $\T\times[0,1]=\L$ it is enough to prove that $(\T\times[0,1],d_\L)$ is compact. Let $((x_n,u_n))_{n\in \N}$ be a sequence in $\T\times[0,1]$. Since $\T$ is compact, we may assume up to extraction that 
$x_n\to x\in \T$, and $U_{x,x_n,u_n}\to u\in [0,1]$. Let us prove $(x_n,u_n)\to (x,u)$. There is two cases:

Either $x\notin \R^+$. In this case, for every $l\in \R^+$, for every $n$ large enough, $x$ and $x_n$ are connected in $\T\backslash [0,l]$. And it follows, by Lemma \ref{BAKA NO TEST}, that for every $n$ large enough,
\[ d_\L((x_n,u_n),(x,u))\leq 2d_H([0,l]\times[0,1],\L), \]
 which converges a.s. to $0$ as $l\to\infty$, by Proposition \ref{FBLooptreeThm}.
 
 Or $x\in \R^+$. In this case, for every $l\geq x$, as $n\to \infty$, by Lemma \ref{BAKA NO TEST}, $\pL_l(x_n,u_n)\to(x,u)$. Thus a.s.  
 \[ \limsup_{n\to\infty} d_\L((x_n,u_n),(x,u))\leq d_H(\L_l,\L)\limit_{l\to \infty} 0. \]

Then we show that $p_{\T,\L}:(x,u)\in \T\times[0,1]\mapsto x$ is continuous on $\T\times[0,1]$. 
 We argue by contradiction. Assume that there exists $(x_n,u_n)\in \L^\N$ such that as $n\to \infty$, $(x_n,u_n)\to (x,u)\in \L$ but $x_n\nrightarrow x$. Since $\T$ is compact we may assume up to extraction that $x_n\to y\in \T$.

Since $x\neq y$, there exists $a<b\in \R^+\backslash \{x,y\}$ such that $[a,b]\subset \rrbracket x,y\llbracket$. Then by density of $\mu$ on $\R^+$ (see proof of Theorem 3.1 \cite{ICRT1}), either $\theta_0>0$, so 
\[ \liminf_{n\in \N} d_\L((x_n,u_n),(x,u))\geq \frac{\theta^2_0}{4}d_\T(y,x)>0, \] 
or there exists $i\in \N$ with $\theta_i>0$ and $X_i\in [a,b]\backslash \{x\wedge y\}$, so for every $n\in \N$ large enough, 
\[d_\L((x_n,u_n),(x,u))\geq \theta_i U_{X_i}>0, \]
where the last inequality holds almost surely. Both cases contradict $d_\L((x_n,u_n),(x,u))\to 0$.


Toward (c), note that by definition of $d_\L$, for every $i\in \N$ with $\theta_i>0$, $\x\mapsto U_{X_i,\x}$ is $1/\theta_i$ Lipschitz from $\T\times[0,1]$ to $([0,1],\dc)$, and so extends by continuity on $\L$. 

Toward (d), by (c) and definition of $d_\L$ on $\T\times[0,1]$, it is enough to show that as $n\to \infty$, $\alpha,\beta\in \L^2\mapsto  \sum_{i=1}^n\theta_i \dc(U_{X_i,\x},U_{X_i,\y})$ converges uniformly  as $n\to \infty$.
To this end, first note that by dominated convergence, for every $l\in \R^+$, as $n,m\to \infty$, a.s. (see proof of Lemma \ref{BAKA})
\begin{equation*}  \Delta_{n,m,l}:=\max_{\x,\y\in \L_l}\sum_{i=n}^m\theta_i \dc(U_{X_i,\x},U_{X_i,\y})\leq \sum_{i=n}^\infty \theta_i\1_{X_i\in [0,l]}\to  0. \end{equation*}
Then, by definition of $d_\L$, and the triangular inequality, for every $l\in \R^+$, as $n,m\to \infty$, a.s.
\begin{align} \Delta_{n,m} & :=\max_{\x,\y\in \T\times [0,1]}\sum_{i=n}^m\theta_i \dc(U_{X_i,\x},U_{X_i,\y}) \notag
\\ & \leq 2d_H(\L_l,\L)+\max_{\x,\y\in \T\times [0,1]}\sum_{i=n}^m\theta_i \dc(U_{X_i,\pL_l(\x)},U_{X_i,\pL_l(\y)})  \notag
\\ & \leq 2d_H(\L_l,\L)+\Delta_{n,m,l} \notag
\\ & \to 2d_H(\L_l,\L). \label{23/02/21h} \end{align}
By Proposition \ref{FBLooptreeThm} a.s. $d_H(\L_l,\L)\to 0$. Thus by \eqref{23/02/21h} as $n,m\to\infty$ a.s. $\Delta_{n,m}\to 0$. The maximum in \eqref{23/02/21h} is then directly extended to $\L$ by (c). This yields the desired uniform convergence. 
\section{Preliminary results on left, front, right} \label{OLAGOURDE}
\label{LeftSec}
In this section is we prove the technical results necessary to construct and study $\Cont$ in Section \ref{ContourPathSec}. In Section \ref{PrelimLeftSec}, \ref{Generic} we prove generic results on $\Rgt,\too$, and on $\p_{\Lft},\p_{\Fnt},\p_{\Rgt}$. In Section \ref{ExtendProjSec}, \ref{ExtendLeftSec} we extend respectively $(\pL_l)_{l>0}$ and $\p_{\Lft}$ to $\L$. In Section \ref{PrelimContSec} we estimate $\p_{l,\Lft}$.
\subsection{Properties of $\Rgt$, $\too$.} \label{PrelimLeftSec}
Recall Section \ref{DefLeftFrontRight}. 
 The following lemma is proved in Appendix \ref{SadoMasoSec}:
\begin{lemma} \label{prelimOrder} $\too$ is a partial order. $\Rgt$ is a strict partial order. 
$\too$ raises $\prec$ (see Figure \ref{looptreefig5}): for every $\y\Rgt \z\in \T\times[0,1]$, for every $\x\in \T\times[0,1]$, $\z\too \x\ply \y \Rgt \x$ and $\y \too \x\ply \x \Rgt \z$. 
\end{lemma}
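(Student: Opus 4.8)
The whole statement reduces to elementary computations in $\T$, using only: (i) for $w\in\T$ and $p,q\in\T\setminus\{w\}$, $u_{w,p}=u_{w,q}$ iff $p,q$ lie in the same connected component of $\T\setminus\{w\}$, together with $u_{w,\rho}=u_{w,w}=0$ and the convention $u_{w,p,s}=u_{w,p}$ for $p\neq w$, $u_{w,w,s}=s$; and (ii) the standard $\R$-tree identities $\llbracket p,q\rrbracket=\llbracket p,p\wedge q\rrbracket\cup\llbracket p\wedge q,q\rrbracket$, $p\wedge q\in\llbracket\rho,p\rrbracket\cap\llbracket\rho,q\rrbracket$, and that two points of $\llbracket\rho,p\rrbracket$ are comparable for the ancestor order. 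All degenerate coincidences ($x=y$, etc.) are absorbed by the convention, so I treat the generic case. For $\too$: reflexivity is immediate from $u_{x,x,v}=v$; antisymmetry follows because $x\in\llbracket\rho,y\rrbracket$ and $y\in\llbracket\rho,x\rrbracket$ force $d(\rho,x)=d(\rho,y)$ hence $x=y$, and then $u_{x,y,w}=v$ gives $w=v$; transitivity follows since $x\in\llbracket\rho,y\rrbracket\subseteq\llbracket\rho,z\rrbracket$ and, when $x\notin\{y,z\}$, the geodesic from $x$ to $z$ runs through $y$ without returning to $x$, so $y,z$ share a component of $\T\setminus\{x\}$ and $u_{x,z,t}=u_{x,z}=u_{x,y}=u_{x,y,w}=v$.

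\textbf{$\Rgt$ is a strict partial order.} Irreflexivity is clear. For transitivity assume $\x\Rgt\y\Rgt\z$ with $\x=(x,v),\y=(y,w),\z=(z,t)$; set $a=x\wedge y$, $b=y\wedge z$, $c=x\wedge z$, so $u_{a,x,v}<u_{a,y,w}$ and $u_{b,y,w}<u_{b,z,t}$, and the goal is $u_{c,x,v}<u_{c,z,t}$. The structural input is that among $\{a,b,c\}$ the vertex of minimal depth coincides with a second one: if $a$ is minimal then (using comparability on $\llbracket\rho,x\rrbracket$) $a\in\llbracket\rho,c\rrbracket$ and likewise $a\in\llbracket\rho,b\rrbracket$, and if $a\neq b,c$ then $b,c$ are comparable ancestors lying on $\llbracket a,z\rrbracket$, but either order of them makes the shallower of the two a common ancestor of $x$ and $y$ strictly below $a$ — contradiction. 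This gives three cases. (1) $a=c$ minimal: either $a=b$ as well, all three equal, and $u_{a,x,v}<u_{a,y,w}<u_{a,z,t}$ is exactly the claim; or $b$ is strictly deeper, so $b\in\llbracket a,y\rrbracket\cap\llbracket a,z\rrbracket$ puts $y,z$ in the same component of $\T\setminus\{a\}$, whence $u_{a,y,w}=u_{a,z,t}$ and $u_{c,x,v}=u_{a,x,v}<u_{a,y,w}=u_{a,z,t}=u_{c,z,t}$. (2) $b=c$ minimal: symmetric — now $x,y$ share a component of $\T\setminus\{b\}$, so $u_{b,x,v}=u_{b,y,w}<u_{b,z,t}$. (3) $a=b$ strictly below $c$: this cannot occur under the hypotheses, since then $x$ and $z$ both lie in the component of $\T\setminus\{a\}$ entered from $a$ toward $c$, while $y$ lies in a different one (as $a=x\wedge y$ separates $x$ from $y$), so $u_{a,x,v}=u_{a,z,t}$, contradicting $u_{a,x,v}<u_{a,y,w}<u_{a,z,t}$. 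Irreflexivity and transitivity make $\Rgt$ a strict partial order.

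\textbf{$\too$ raises $\prec$, and the main obstacle.} For $\y\Rgt\z$ and $\z\too\x$: since $z\in\llbracket\rho,x\rrbracket$ we get $y\wedge z\in\llbracket\rho,x\rrbracket$, so $y\wedge z$ and $y\wedge x$ both lie on $\llbracket\rho,x\rrbracket\cap\llbracket\rho,y\rrbracket$. The alternative in which $z$ is an ancestor of $y\wedge x$ would force $y\wedge z=z$, and then $y,x$ sharing a component of $\T\setminus\{z\}$ would give $u_{z,y,w}=u_{z,x,v}=t$ (using $\z\too\x$), contradicting $\y\Rgt\z$; so $y\wedge x$ is an ancestor of $z$, and a depth comparison then yields $y\wedge x=y\wedge z$. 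Now $z$ and $x$ lie in the same component of $\T\setminus\{y\wedge z\}$, so $u_{y\wedge z,z,t}=u_{y\wedge z,x,v}$, and $u_{y\wedge x,y,w}=u_{y\wedge z,y,w}<u_{y\wedge z,z,t}=u_{y\wedge x,x,v}$, i.e. $\y\Rgt\x$. The implication $\y\Rgt\z$, $\y\too\x\ply\x\Rgt\z$ is identical with the roles of $x,z$ swapped: $y\in\llbracket\rho,x\rrbracket$ forces $x\wedge z=y\wedge z$ (the opposite placement contradicting $\y\Rgt\z$ via $u_{y,x,v}=w$), and then $x,y$ share a component of $\T\setminus\{x\wedge z\}$, giving $u_{x\wedge z,x,v}=u_{x\wedge z,y,w}<u_{x\wedge z,z,t}$. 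I expect the only delicate part to be the depth bookkeeping in the transitivity of $\Rgt$ (the identification of which two closest common ancestors coincide and the verification that the remaining configuration is incompatible with the hypotheses); everything else is routine $\R$-tree geometry.
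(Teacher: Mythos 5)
Your proof is correct and follows essentially the same route as the paper's argument in Appendix \ref{SadoMasoSec} (Lemmas \ref{OSKOUR} and \ref{OSGOUR}): an elementary case analysis on the relative positions of the pairwise closest common ancestors, combined with the fact that $u_{w,\cdot}$ is constant on connected components of $\T\setminus\{w\}$. The only differences are organizational --- the paper obtains the second raising implication from the first via the symmetry $u\mapsto 1-u$ while you redo the case analysis, and the paper spells out the degenerate coincidences ($x=y$, $x\wedge y=y$, etc.) as explicit bullets where you declare them ``absorbed by the convention''; they do all check out, but a couple (e.g.\ the sub-case $z=y\wedge x$ in the first raising implication, where $y$ and $x$ are \emph{not} in a common component of $\T\setminus\{z\}$ and one must instead use $u_{z,x,v}=t=u_{z,z,t}$ directly to conclude $\y\Rgt\x$) require a one-line separate argument rather than being literally absorbed.
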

\begin{figure}[!h] \label{looptreefig5}
\centering
\includegraphics[scale=0.9]{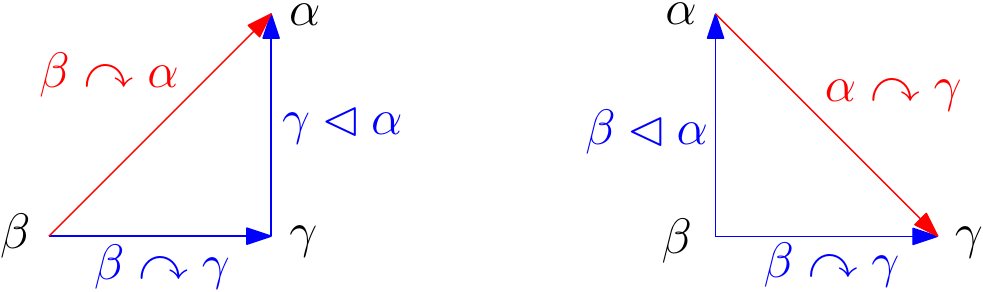}
\caption{The cases of Lemma \ref{prelimOrder}. The assumptions are blue. The conclusions are red. }  
\label{explore1} \label{looptreefig5}
\end{figure}%
\begin{lemma} \label{KeepProjection} Recall $\pT, \pL$ from section \ref{CompactLooptreeSec}.  For every $l>0$ and $\x=(x,a),\y=(y,b) \in \T\times[0,1]$: 
\begin{compactitem}
\item[(a)] If $\pL_l(\x)\Rgt \pL_l(\y)$ then $\x\Rgt \y$.
\item[(b)] If $\pL_l(\x)\too \pL_l(\y)$ and $\pL_l(\x)\neq \pL(\y)$ then $\x=\pL_l(\x)\too \y$.
\end{compactitem}
\end{lemma}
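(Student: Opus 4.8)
The plan is to reduce everything to elementary $\R$-tree geometry together with the two defining axioms of the angle function, so no probabilistic input is needed. The structural fact I would isolate first is that $\T_l=[0,l]$ is a subtree containing the root, hence for every $x\in\T$ the intersection $\llbracket 0,x\rrbracket\cap\T_l$ is an initial segment $\llbracket 0,\pT_l(x)\rrbracket$; in particular $\pT_l(x)$ lies on $\llbracket 0,x\rrbracket$, and (the standard projection fact) $\pT_l(x)\in\llbracket x,z\rrbracket$ for every $z\in\T_l$. From this I would derive two lemmas on how projection interacts with closest common ancestors: \emph{(1)} if $x\wedge y\in\T_l$ then $\pT_l(x)\wedge\pT_l(y)=x\wedge y$; \emph{(2)} if $x\wedge y\notin\T_l$ then $\pT_l(x)=\pT_l(y)$, equal to the exit point $p$ of $\llbracket 0,x\wedge y\rrbracket$ from $\T_l$, which is a \emph{strict} ancestor of $x\wedge y$. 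Both come out of chasing prefixes of geodesics using the "initial segment" observation.

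For (a): write $x'=\pT_l(x)$, $y'=\pT_l(y)$, $a'=u_{x',x,a}$, $b'=u_{y',y,b}$, and suppose $\pL_l(\x)\Rgt\pL_l(\y)$. If $x\wedge y\in\T_l$, fact (1) gives $c:=x\wedge y=x'\wedge y'$, and I would check, splitting on whether $x'=x$, or $x'$ lies strictly between $c$ and $x$, or $x'=c$, that $u_{c,x,a}=u_{c,x',a'}$ (and symmetrically $u_{c,y,b}=u_{c,y',b'}$): this is immediate from the angle axiom when $x$ and $x'$ lie in the same component of $\T\setminus\{c\}$, and from the conventions $u_{x',x,a}=u_{x',x}$ (for $x\ne x'$) and $u_{z,z,w}=w$ in the boundary cases. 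Hence $\x\Rgt\y$. If instead $x\wedge y\notin\T_l$, fact (2) gives $x'=y'=:p\prec x\wedge y$, so the hypothesis $\pL_l(\x)\Rgt\pL_l(\y)$ collapses to $a'<b'$; but $a'=u_{p,x}=u_{p,x\wedge y}=u_{p,y}=b'$ by the angle axiom, so this case is vacuous.

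For (b): suppose $\pL_l(\x)\too\pL_l(\y)$ with $\pL_l(\x)\neq\pL_l(\y)$. First I would rule out $x'=y'$ (it forces $a'=b'$, hence $\pL_l(\x)=\pL_l(\y)$), so $x'$ is a strict ancestor of $y'$, hence of $y$, and $a'=u_{x',y'}=u_{x',y}$ (the last equality from the angle axiom, since $y'\in\llbracket x',y\rrbracket$). The crux is showing $x=x'$: if $x\neq x'$ then $a'=u_{x',x}$, so $u_{x',x}=u_{x',y}$, i.e. $x$ and $y$ lie in the same component $C$ of $\T\setminus\{x'\}$; but $y'\in\llbracket x',y\rrbracket\setminus\{x'\}\subseteq C$ while $y'\in\T_l$, contradicting $C\cap\T_l=\emptyset$, which holds whenever $x\notin\T_l$ because $x'\in\llbracket x,z\rrbracket$ for every $z\in\T_l$. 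With $x=x'$ one gets $\x=\pL_l(\x)$ directly, and since $x'\in\llbracket 0,y\rrbracket$ and $u_{x',y,b}=u_{x',y}=a'$, one concludes $\x=\pL_l(\x)\too\y$.

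The main obstacle is purely bookkeeping: handling the degenerate configurations in part (a) where $\pT_l(x)$ equals $x$ or equals the ancestor $x\wedge y$, which forces one to invoke the $u_{z,z,w}=w$ convention and the identity $u_{x',x,a}=u_{x',x}$ (for $x\ne x'$) in place of the angle axiom. Everything else is a routine consequence of the subtree structure of $\T_l$ and facts (1)--(2).
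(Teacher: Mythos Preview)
Your proof is correct. For part (b) your argument is essentially the same as the paper's, just unpacked: both observe that $\pL_l(\x)\too\x$ and $\pL_l(\x)\too\pL_l(\y)$, and then argue that if $x\neq\pT_l(x)$ then $x$ and $\pT_l(y)$ would be forced into the same component of $\T\setminus\{\pT_l(x)\}$, contradicting the projection property.

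For part (a) you take a genuinely different route. The paper dispatches (a) in one line: since $\pL_l(\x)\too\x$ and $\pL_l(\y)\too\y$ hold by definition of $\pL_l$, the hypothesis $\pL_l(\x)\Rgt\pL_l(\y)$ gives $\x\Rgt\y$ directly from the abstract order-theoretic fact that ``$\too$ raises $\Rgt$'' (Lemma~\ref{prelimOrder}, itself proved via the case analysis of Lemma~\ref{OSKOUR}). You instead split on whether $x\wedge y\in\T_l$ and compute the angles at $x\wedge y$ directly from the tree geometry. Your approach is self-contained and avoids invoking the order lemma, at the cost of repeating here a case analysis close in spirit to the one the paper has already isolated in Lemma~\ref{OSKOUR}. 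The paper's approach buys modularity: once $\too$ raises $\Rgt$ is proved, it is reused several times (here, and again in Lemmas~\ref{ExtendPleft} and~\ref{E=mc2}), so there is a payoff in packaging it. Your approach buys directness and makes the geometric mechanism at $x\wedge y$ explicit. Both are fine.
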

\begin{proof} First by definition of $\pL$, $\pL_l(\x)=(\pT_l(x),U_{\pT_l(x),\x})\too \x$ and similarly $\pL_l(\beta)\too \beta$. (a) follows since $\too$ raises $\Rgt$. 
Toward (b), $\pL_l(\x)\too \x$, and $\pL_l(\x)\too \pL_l(\y)$, so either: 
\begin{compactitem}
\item $x=\pT_l(x)$. 
\item $\pT_l(y)=\pT_l(x)$ and then by $\pL_l(\x)\too \pL_l(\y)$, we have $\pL_l(\x)=\pL_l(\y)$.
\item $x,\pT_l(y)$ are connected in $\T\backslash \{\pT_l(x)\}$ which is absurd by definition of $\pT$.
\end{compactitem}
Hence, $x=\pT_l(x)$. So since $\pL_l(\x)\too \x$, by definition of $\too$, $\x=\pL_l(\x)$. Finally $\x\too \pL_l(\y)\too \y$.
\end{proof}
\subsection{Generic properties on $\p_{\Lft}, \p_{\Fnt}, \p_{\Rgt} $} \label{Generic}
In this section $(\T,d,\rho,u)$ denotes a plane $\R$-tree, and $\p$ denotes a probability Borel measure on $\T$. 
\begin{lemma} The following assertions hold: \label{prelimLeft} 
\begin{compactitem} 
\item[(a)] For every $\x\in \T\times[0,1]$, $\p_{\Lft}(\x)+\p_{\Fnt}(\x)+\p_{\Rgt}(\x)=1$.
\item[(b)]$\p_{\Lft}$ is increasing for $\prec$.
\item[(c)] If $A$ is a random variable with law $\p\times\1_{l\in [0,1]}dl$, then almost surely $\p_\Fnt(A)=0$.
\item[(d)] With the same notations, $\p_\Lft(A)$ is uniform in $[0,1]$. 
\item[(e)] $\{\p_\Lft(\x)\}_{\x\in \T\times[0,1]}$ is dense in $[0,1]$. 
\end{compactitem}
\end{lemma}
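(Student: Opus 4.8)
The five assertions are of increasing difficulty, and most follow from the definitions of $\p_{\Lft},\p_{\Fnt},\p_{\Rgt}$ together with the order-theoretic facts in Lemma~\ref{prelimOrder} and the total order from Lemma~\ref{dfso}. For (a), I would first argue that the three sets $\{\y:\y\Rgt\x\}$, $\{\y:\x\too\y\}$, $\{\y:\x\Rgt\y\}$ partition $\T\times[0,1]$ up to a $\p\times\1_{l\in[0,1]}dl$-null set: given $\y$, the closest common ancestor $x\wedge y$ is well-defined, and comparing the angles $u_{x\wedge y,x,v}$ and $u_{x\wedge y,y,w}$ forces exactly one of $\y\Rgt\x$ or $\x\Rgt\y$ unless these angles coincide, which (by the angle-function axiom) happens precisely when $x\wedge y\in\llbracket 0,x\rrbracket\cap\llbracket 0,y\rrbracket$ separates them in the same component, i.e.\ when $\x\too\y$ or $\y\too\x$; the ``$\y\too\x$'' case can be shown to be $\p_\L$-null by the argument in (c). Summing the three masses then gives $1$.

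For (b), monotonicity of $\p_{\Lft}$ for $\prec$ is exactly the content of ``$\too$ raises $\prec$'' in Lemma~\ref{prelimOrder}: if $\x\prec\y$ I would show the inclusion $\{\z:\z\Rgt\x\}\subseteq\{\z:\z\Rgt\y\}$ up to a null set, by splitting on whether $\x\Rgt\y$ or $\x\too\y$ and invoking the implications $\z\Rgt\x\,(\prec)\,\x\Rgt\y$ etc.\ from Lemma~\ref{prelimOrder} — transitivity of $\Rgt$ handles the first case, and the ``$\z\too\x\ply\z\Rgt\y$'' / front-raising implications handle the second — so $\p_\L\{\z:\z\Rgt\x\}\le\p_\L\{\z:\z\Rgt\y\}$, which is $\p_{\Lft}(\x)\le\p_{\Lft}(\y)$.

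For (c), I would show that for a random $A=(a,v)$ with $a\sim\p$ and $v$ uniform on $[0,1]$ (independently), the front set $\{\y=(y,w):A\too\y\}$ sits inside $\{(y,w): a\in\llbracket 0,y\rrbracket,\ u_{a,y,w}=v\}$. When $a$ is a branch point of $\T$, the condition $u_{a,y,w}=v$ pins $v$ to one of countably many possible angle values, hence has conditional probability $0$ given $a$; when $a$ is not a branch point the set of $y$ with $a\in\llbracket 0,y\rrbracket$ lies in the single subtree above $a$ and the angle condition degenerates similarly. Integrating out gives $\p_\Fnt(A)=0$ a.s. Then (d) follows: by (a) and (c), a.s.\ $\p_{\Lft}(A)=1-\p_{\Rgt}(A)$, and a standard symmetry/Fubini argument shows $\p_{\Lft}(A)$ is uniform — concretely, $\proba(\p_{\Lft}(A)\le t)=\E[\,\p_\L\{\y:\p_{\Lft}(\y)\le t\}\,]$ should be shown to equal $t$, using that $\p_{\Lft}$ is (by (b)) order-preserving and that its distribution function has no atoms. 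Finally (e) is immediate from (d): if the range of $\p_{\Lft}$ missed a nontrivial interval $(s,t)$, then $\p_{\Lft}(A)$ would avoid $(s,t)$ a.s., contradicting uniformity; hence the range is dense.

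\textbf{Main obstacle.} The delicate point is the measure-theoretic bookkeeping in (c) (and the null-set in (a)): one must handle branch points of $\T$ of every (possibly infinite) degree and the atoms $X_i$ of $\mu$ carefully, checking that the event $\{u_{a,y,w}=v\}$ really is conditionally null after conditioning on $\mathbf X,\mathbf Y,\mathbf Z$ and the angle data, and that all the sets involved are genuinely Borel-measurable (the definiteness issue flagged in the reference to Appendix~\ref{SadoMasoSec}). Everything else is a routine translation of the discrete contour-exploration picture into this setting via Lemmas~\ref{dfso} and \ref{prelimOrder}.
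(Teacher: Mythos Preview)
Your approaches to (a), (b), and (e) are essentially the paper's. Your approach to (c) is different but workable: you condition on $A=(a,v)$ and use that a uniform $v$ almost surely avoids the countable set of angle values $\{u_{a,y}:y\in\T\}$ at $a$. The paper instead takes a cleaner dual route: already in (a) it proves that for every \emph{fixed} $\x$ one has $\p_\L\{\y:\y\too\x\}=0$ (Fubini: for each $y$ there is at most one $v$ with $(y,v)\too\x$); then for (c), with $A,A'$ i.i.d.\ of law $\p_\L$, one writes $\E[\p_\Fnt(A)]=\proba(A\too A')=\E\bigl[\p_\L\{\x:\x\too A'\}\bigr]=0$ and concludes $\p_\Fnt(A)=0$ a.s. This avoids any casework on the degree of $a$ and the separability argument you would need.

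The genuine gap is (d). What you wrote --- ``$\proba(\p_{\Lft}(A)\le t)=\E[\p_\L\{\y:\p_{\Lft}(\y)\le t\}]$ should be shown to equal $t$'' --- is just a restatement of the goal, and ``using that its distribution function has no atoms'' is circular, since atomlessness of the law of $\p_\Lft(A)$ is part of what uniformity asserts. The missing idea is a ranking argument: take $(A_i)_{i\in\N}$ i.i.d.\ with law $\p_\L$; by (the argument of) (c) applied to both orderings of a pair, a.s.\ any two $A_i,A_j$ are strictly $\Rgt$-comparable, so by exchangeability the rank $\#\{i\le n:A_i\Rgt A_1\}$ is uniform on $\{0,1,\dots,n-1\}$; dividing by $n$ and letting $n\to\infty$ via the law of large numbers (conditionally on $A_1$) gives that $\p_\Lft(A_1)$ is uniform on $[0,1]$. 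This is the paper's argument, and it is the standard mechanism for turning ``total order plus i.i.d.\ sampling with no ties'' into a uniform rank variable; your outline does not supply it.
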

\begin{proof} Toward (a), note that for every $\x\in \T\times[0,1]$, we have the following partition,
\begin{equation} \T\times[0,1]=\{\y: \y\neq \x, \y\too \x\}\cup \{\y:\x\Rgt \y\}\cup\{\y:\x\too \y\}\cup \{\y:\y\Rgt \x\}. \label{25/02/0h} \end{equation}
Then by Fubini's theorem since for every $y\in \T$, $\#\{v\in [0,1], (y,v)\too \x\}\leq 1$, we have
$\mu_{\L}\{\y: \y\too \x\}=0.$ (a) follows from \eqref{25/02/0h}.

Toward (b), by Lemma \ref{prelimOrder}, for every $\x\prec \y\in \L$, $\{\z: \z\Rgt \x\}\subset \{\z: \z\Rgt \y\}$.

Toward (c), let $A,A'$ be independent random variables with law $\mu_\L$. Since $\T$ is separable, and balanced, the events $A\Rgt A', A\too A'$ are measurable (see Lemma \ref{Measurable2}). 
The measurability of the other random variables in the proof is then due to Fubini's Theorem. 

Then note that a.s. $\p_{\Fnt}(A)=\proba[A\too A'|A]$, and by (a) a.s. $\p\{\x: \x\too A'\}=\proba[A\too A'|A']=0$. Hence, by Fubini's theorem, a.s. $\p_{\Fnt}(A)=0$.

Toward (d), let $(A_i)_{i\in \N}$ be a family of independent random variables with law $\p\times\1_{l\in [0,1]}dl$. By the weak law of large number a.s.
\begin{equation} \frac{1}{n}\sum_{i=1}^n\proba(A_i\Rgt A_1|A_1)\limit_{n\to \infty} \p_{\Lft}(A_1). \label{25/02/1h}\end{equation}
Furthermore, by (b) for every $i\neq j\in \N$,  
\[ \proba(A_i\too A_j)=\E[\proba(A_i\too A_j|A_i)]=\E[\p_\Fnt(A_i)]=0. \] Hence, since $\prec$ is a total order, a.s. for every $i,j\in \N$ either $A_i\Rgt A_j$ or $A_j\Rgt A_i$. Moreover, the law of $(A_i)_{i\in \N}$ is invariant by permutations. Hence for every $n\in \N$, $\#\{i\leq n: A_i\Rgt A_1\}$ is uniform in $\{0,1,2,\dots, n-1\}$. Finally (d) follows from \eqref{25/02/1h}. 
(e) directly follows from (d).
\end{proof}

\subsection{Extension of $\pL_l$ to $\L$} \label{ExtendProjSec}
Recall notations $\pT, \pL$ from section \ref{CompactLooptreeSec}. Although for all $l>0$, $\pL_l$ is continuous on $\T\times[0,1]$, we do not want to extend it by continuity. Indeed, a continuous extension is only defined up to $\sim_\L$, while $\Rgt$, $\too$ are not. So we prove instead that 
$\pL_l$ is locally constant around each vertex of $\L\backslash (\T\times[0,1])$, and extend it naturally. First, we prove that $\pL$ is piecewise constant: 

\begin{lemma} \label{Extend1} For every $l>0$, for every $C$ connected component of $\T\backslash [0,l]$, $\pT$ is constant on $C$, and $\pL$ is constant on $C\times[0,1]$.
\end{lemma}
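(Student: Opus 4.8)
The plan is to prove the statement for $\pT$ first and then deduce it for $\pL$, using only elementary $\R$-tree manipulations. Throughout I would invoke three standard facts about an $\R$-tree $\T$ and a closed connected subset (subtree) $A\subseteq\T$ — here $A=\T_l=[0,l]$, which is compact, hence closed, and connected, and $\pi_A=\pT_l$: (i) connected subsets of $\T$ are geodesically convex, i.e. $\llbracket x,y\rrbracket\subseteq S$ whenever $x,y\in S$ and $S$ is connected (if some interior point $z\in\llbracket x,y\rrbracket$ were not in $S$, it would separate $x$ from $y$ in $\T$, hence in $S\subseteq\T\setminus\{z\}$, contradicting connectedness); (ii) for every $x\in\T$ and $a\in A$ one has $\pi_A(x)\in\llbracket x,a\rrbracket$; (iii) $\llbracket x,\pi_A(x)\rrbracket\cap A=\{\pi_A(x)\}$. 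Facts (ii)--(iii) are routine consequences of uniqueness of arcs and the minimality defining $\pT_l$, and could alternatively be cited from \cite{Legall}.

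\textbf{Step 1 ($\pT$ constant on $C$).} Fix $x,y\in C$. Since $C$ is connected, fact (i) gives $\llbracket x,y\rrbracket\subseteq C$, so $\llbracket x,y\rrbracket\cap[0,l]=\emptyset$. Using the general inclusion $\llbracket x,\pT_l(y)\rrbracket\subseteq\llbracket x,y\rrbracket\cup\llbracket y,\pT_l(y)\rrbracket$ together with fact (ii) (applied with $a=\pT_l(y)\in[0,l]$), we get $\pT_l(x)\in\llbracket x,y\rrbracket\cup\llbracket y,\pT_l(y)\rrbracket$. As $\pT_l(x)\in[0,l]$ and $\llbracket x,y\rrbracket\cap[0,l]=\emptyset$, necessarily $\pT_l(x)\in\llbracket y,\pT_l(y)\rrbracket\cap[0,l]=\{\pT_l(y)\}$ by fact (iii). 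Hence $\pT_l(x)=\pT_l(y)$; write $c$ for this common value. Note $c\in[0,l]$, so $c\notin C$, and in particular $x\neq c$ for every $x\in C$.

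\textbf{Step 2 ($\pL$ constant on $C\times[0,1]$).} By definition, for $(x,u)\in C\times[0,1]$ we have $\pL_l(x,u)=(\pT_l(x),U_{\pT_l(x),x,u})=(c,U_{c,x,u})$. Since $x\neq c$, the definition of $U_{\cdot,\cdot,\cdot}$ gives $U_{c,x,u}=U_{c,x}$, which does not depend on $u$. Finally, for any $x,y\in C$, fact (i) gives $\llbracket x,y\rrbracket\subseteq C\subseteq\T\setminus\{c\}$, so $x$ and $y$ lie in the same connected component of $\T\setminus\{c\}$; by the defining axiom of the angle function, $U_{c,x}=U_{c,y}$. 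Therefore $\pL_l(x,u)=(c,U_{c,x})$ is independent of the choice of $(x,u)\in C\times[0,1]$, which is the claim.

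\textbf{Main obstacle.} There is no genuine difficulty here: the argument is purely topological/combinatorial on the $\R$-tree. The only point requiring care is making sure the projection facts (ii)--(iii) and the geodesic-convexity statement (i) are available in the present, possibly non-compact, setting — these hold for arbitrary $\R$-trees and for arbitrary $x\in\T$ (including completion points), since $\pT_l(x)$ is well defined by compactness of $[0,l]$ and uniqueness of arcs; so either one recalls their short proofs or cites them.
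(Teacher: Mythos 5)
Your proof is correct and follows essentially the same route as the paper: the second assertion is handled identically (using $x,y\neq\pT_l(x)$ to drop the $u$-dependence and the angle-function axiom on the component of $\T\backslash\{\pT_l(x)\}$ containing $C$). The only difference is that you supply a full argument for the first assertion ($\pT_l$ constant on $C$), which the paper explicitly leaves to the reader as a classic exercise; your argument for it is sound.
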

\begin{proof} The first assertion is classic, so we leave it to the reader as an exercise. Let $x,y\in C$ and let $a,b\in[0,1]$. $\pT(x)=\pT(y)$.
%
Since $x,y\neq \pT(x)$, $U_{\pT_l(x),x,a}=U_{\pT_l(x),x}$ and $U_{\pT_l(x),y,b}=U_{\pT_l(x),y}$. Also since $x,y$ are connected in $\T\backslash \{\pT(x)\}$, we have $U_{\pT_l(x),x}=U_{\pT_l(x),y}$. So $U_{\pT_l(x),x,a}=U_{\pT_l(y),y,b}$.
Therefore, $\pL(x,a)=\pL(y,b)$. 
\end{proof}
For every $\x\in \L$, and $\e>0$ let $B(\x,\e)$ denote the open ball for $d_\L$ of center $\x$ and radius $\e$.
\begin{lemma} \label{LocalConstant} For every $l>0$, for every $\x\in \L$, $\pL_l$ is constant on $(\T\times [0,1])\cap B(\x,d_\L(\x,[0,l])/2)$.
\end{lemma}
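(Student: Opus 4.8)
The plan is to show that the set $(\T\times[0,1])\cap B(\x, d_\L(\x,[0,l])/2)$ cannot meet two distinct connected components of $\T\times[0,1]$ relative to the "cut at level $l$", so that Lemma \ref{Extend1} applies. First I would set $r:=d_\L(\x,[0,l])/2$ and pick two arbitrary points $(x_1,u_1),(x_2,u_2)\in(\T\times[0,1])\cap B(\x,r)$. By the triangle inequality $d_\L((x_1,u_1),(x_2,u_2))<2r=d_\L(\x,[0,l])$, so in particular (using Lemma \ref{BAKA NO TEST}, which says $\pL_l$ realizes the distance to $\L_l$) both $(x_1,u_1)$ and $(x_2,u_2)$ are at $d_\L$-distance at least... — more precisely, since $\pL_l$ is the closest point of $\L_l$, we get $d_\L((x_j,u_j),\L_l)\ge d_\L(\x,\L_l)-r=r>0$ for $j=1,2$, so neither $x_1$ nor $x_2$ lies in $[0,l]$.

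Next I would argue that $x_1$ and $x_2$ lie in the same connected component $C$ of $\T\setminus[0,l]$. Suppose not. Then the geodesic $\llbracket x_1,x_2\rrbracket$ in $\T$ must pass through $[0,l]$; in fact $\pT_l(x_1)\neq\pT_l(x_2)$, and both these projection points lie on $\llbracket x_1,x_2\rrbracket\cap[0,l]$. By Lemma \ref{BAKA}, $d_\L((x_1,u_1),(x_2,u_2))\ge$ the $d_\L$-distance realized along the path through $\L_l$; concretely, going from $(x_1,u_1)$ to $\pL_l(x_1,u_1)$ costs at least $d_\L((x_1,u_1),\L_l)\ge r$ and similarly from $(x_2,u_2)$ to $\pL_l(x_2,u_2)$ costs at least $r$, and since $\pL_l(x_1,u_1),\pL_l(x_2,u_2)\in\L_l$ these two sub-paths are disjoint, so $d_\L((x_1,u_1),(x_2,u_2))\ge 2r = d_\L(\x,[0,l])$, contradicting $d_\L((x_1,u_1),(x_2,u_2))<2r$. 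Here I would cite the same computation as in the proof of Lemma \ref{BAKA NO TEST} (equation \eqref{4/3/18hb}) to make the "the geodesic must cross $\L_l$" step rigorous, since $U_{z,y}=0$ for $z\in\rrbracket\pT_l(x_j),x_j\rrbracket$ and $y\in[0,l]$ decouples the two halves.

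Finally, having established that $x_1,x_2$ belong to a common connected component $C$ of $\T\setminus[0,l]$, Lemma \ref{Extend1} gives $\pL_l(x_1,u_1)=\pL_l(x_2,u_2)$. Since $(x_1,u_1),(x_2,u_2)$ were arbitrary in $(\T\times[0,1])\cap B(\x,r)$, this proves $\pL_l$ is constant on that set. The main obstacle is the middle step: turning "the $\T$-geodesic between $x_1$ and $x_2$ must visit $[0,l]$" into the quantitative lower bound $d_\L((x_1,u_1),(x_2,u_2))\ge d_\L((x_1,u_1),\L_l)+d_\L((x_2,u_2),\L_l)$, i.e. that the two projection-paths to $\L_l$ are "essentially disjoint" in their contribution to $d_\L$; this follows by the same decoupling argument (the cords through $\rrbracket\pT_l(x_1),x_1\rrbracket$ and through $\rrbracket\pT_l(x_2),x_2\rrbracket$ are disjoint families of $(X_i)$, plus the $\theta_0^2/4\,d_\T$ term splits along the geodesic), exactly as in the proofs of Lemmas \ref{BAKA} and \ref{BAKA NO TEST}.
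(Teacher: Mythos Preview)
Your proposal is correct and follows essentially the same route as the paper: argue by contradiction, use Lemma \ref{Extend1} to translate ``different projections'' into ``different connected components of $\T\setminus[0,l]$'', and then exploit the disjointness of the two geodesic segments $\rrbracket\pT_l(x_1),x_1\rrbracket$ and $\rrbracket\pT_l(x_2),x_2\rrbracket$ (together with the computation \eqref{4/3/18hb}) to obtain $d_\L((x_1,u_1),(x_2,u_2))\ge d_\L((x_1,u_1),\L_l)+d_\L((x_2,u_2),\L_l)>2r$, contradicting the triangle-inequality bound. The paper carries out exactly this decoupling by writing the two partial sums $S_\y,S_\z$ explicitly and identifying each with the corresponding distance to $\L_l$.
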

\begin{proof}  We argue by contradiction. Let $l>0$, $\x\in \L$, $\y=(y,b),\z=(z,c)\in \T\times[0,1]$. Assume that $\pL_l(\y)\neq \pL_l(\z)$, and $d_\L(\y,\x)<d_\L(\x,[0,l])/2$, and $d_\L(\z,\x)<d_\L(\x,[0,l])/2$. Let $D=d_\L(\x,[0,l])$. By the triangular inequality, 
\begin{equation} d(\y,[0,l])>D/2 \quad ; \quad d(\z,[0,l])>D/2 \quad ; \quad d_\L(\y,\z)< D. \label{4/3/18hd} \end{equation}

Moreover, since $\pL_l(\y)\neq \pL_l(\z)$, by Lemma \ref{Extend1}, $y,z$ are disconnected in $\T\backslash[0,l]$. Hence,
$\llbracket y, \pT_l(y)\llbracket \cap  \rrbracket \pT_l(z),z\llbracket=\emptyset.$ 
As a result, writing
\begin{equation} S_\y:= \frac{\theta^2_0}{4}d(y, \pT_l(y))+ \sum_{i:X_i\in \llbracket y, \pT_l(y)\llbracket } \theta_i\dc(U_{X_i,\y},U_{X_i,\z}), \label{10/3/9h} \end{equation}
and similarly for $S_\z$, we have 
$d_\L(\y,\z)\geq S_\y+S_\z.$ 

Also,  for every $X_i\in \llbracket y, \pT_l(y)\llbracket$, $\pT_l(y)$ and $z$ are connected in $\T\backslash \{X_i\}$ so $U_{X_i,\z}=U_{X_i,\pL_l(\y)}$. Hence, by \eqref{10/3/9h}, \eqref{4/3/18hb}, $S_\y=d_\L(\y,\pL_l(\y))$.
and similarly for $S_\z$. Therefore, since $d_\L(\y,\z)\geq S_\y+S_\z$, \[d_\L(\y,\z)\geq d_\L(\y,\pL_l(\y))+d_\L(\z,\pL_l(\z)).\]
This contradicts \eqref{4/3/18hd}.
\end{proof}
\begin{lemma} \label{ComposProj} For every $0<r\leq s$, $\pL_{r}\circ \pL_{s}=\pL_{s}\circ \pL_{r}=\pL_{r}$. 
\end{lemma}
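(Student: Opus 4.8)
The plan is to reduce everything to the corresponding statement on the tree, namely that $\pT_r \circ \pT_s = \pT_s \circ \pT_r = \pT_r$ for $0 < r \leq s$, which is a classical fact about projections onto a subtree of an $\R$-tree (here $[0,r] \subseteq [0,s]$ are nested subtrees containing the root $0$). Indeed, for any $x \in \T$, $\pT_s(x)$ lies on $\llbracket 0, x\rrbracket$, and $\pT_r(x)$ is the point of $[0,r]$ on $\llbracket 0,x\rrbracket$ closest to $x$; since $\llbracket 0, \pT_s(x)\rrbracket \subseteq \llbracket 0,x\rrbracket$ and $[0,r] \cap \llbracket 0,x\rrbracket = [0,r]\cap \llbracket 0,\pT_s(x)\rrbracket$ (as $[0,r]\subseteq[0,s]$ and the portion of $\llbracket 0,x\rrbracket$ inside $[0,s]$ ends at $\pT_s(x)$), we get $\pT_r(\pT_s(x)) = \pT_r(x)$. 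Likewise $\pT_s(\pT_r(x)) = \pT_r(x)$ since $\pT_r(x) \in [0,r] \subseteq [0,s]$ and the projection onto $[0,s]$ fixes points already in $[0,s]$.

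Next I would lift this to $\L$ using the explicit formula $\pL_l(x,u) = (\pT_l(x), U_{\pT_l(x),x,u})$ from Lemma \ref{BAKA NO TEST}. Fix $(x,u)\in \T\times[0,1]$ and write $x_s := \pT_s(x)$, $x_r := \pT_r(x)$, so that $x_r = \pT_r(x_s) = \pT_s(x_r)$ by the tree-level identity. For the composition $\pL_r \circ \pL_s$, I compute $\pL_r(\pL_s(x,u)) = \pL_r(x_s, U_{x_s,x,u}) = (\pT_r(x_s), U_{\pT_r(x_s), x_s, U_{x_s,x,u}}) = (x_r, U_{x_r, x_s, U_{x_s,x,u}})$. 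It remains to check that the second coordinate equals $U_{x_r,x,u}$, i.e. that the angle seen from $x_r$ toward $x$ (with the $u$-convention at $x$) agrees with the angle seen from $x_r$ toward $x_s$. This holds because $x_r \in \llbracket 0, x_s\rrbracket \subseteq \llbracket 0, x\rrbracket$, so $x$ and $x_s$ lie in the same connected component of $\T\backslash\{x_r\}$ (the one not containing the root), whence by the defining property of the angle function $U_{x_r, x} = U_{x_r, x_s}$ — and if $x_r = x$ the claim is trivial, while if $x_r \neq x$ the $u$-convention is irrelevant since $U_{x_r,x,u} = U_{x_r,x}$. This gives $\pL_r \circ \pL_s = \pL_r$.

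For the other composition, $\pL_s(\pL_r(x,u)) = \pL_s(x_r, U_{x_r,x,u}) = (\pT_s(x_r), U_{\pT_s(x_r), x_r, U_{x_r,x,u}}) = (x_r, U_{x_r, x_r, U_{x_r,x,u}}) = (x_r, U_{x_r,x,u}) = \pL_r(x,u)$, using $\pT_s(x_r) = x_r$ and the angle-function identity $u_{x,x,w} = w$. Finally, since both $\pL_r \circ \pL_s$ and $\pL_s \circ \pL_r$ are continuous on $\L$ (each $\pL_l$ being continuous by Lemma \ref{BAKA NO TEST} together with Proposition \ref{Completion}, or simply $1$-Lipschitz-type estimates from Lemma \ref{BAKA}) and agree with $\pL_r$ on the dense subset $\T\times[0,1]$ — here invoking that $\pL_r$ is continuous and $\T\times[0,1]$ (equivalently $\R^+\times[0,1]$) is dense in $\L$ by Proposition \ref{Completion}(a) — the identities extend to all of $\L$. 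The only mildly delicate point is the bookkeeping with the $u$-convention and the $\dc$-torus identification in the angle arguments; but since at every stage the relevant projected base points are genuine ancestors of one another, the convention never actually intervenes, and the argument is essentially the tree statement plus a one-line angle-function check.
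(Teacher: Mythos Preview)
Your argument on $\T\times[0,1]$ is correct and essentially the paper's: the tree identity $\pT_r\circ\pT_s=\pT_r$ is proved the same way, and your explicit angle check is the unpacked version of the paper's one-line observation that $\pL_r\circ\pL_s(x,a)\too\pL_s(x,a)\too(x,a)$, which forces the second coordinate to be $U_{\pT_r(x),x,a}$. (One tiny omission: when $x_r=x_s\neq x$ your ``same connected component'' sentence does not literally apply, but the identity $U_{x_r,x_s,w}=w$ with $w=U_{x_r,x,u}$ handles it immediately.) The direction $\pL_s\circ\pL_r=\pL_r$ is identical to the paper's.

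The genuine gap is the extension from $\T\times[0,1]$ to $\L$. You invoke continuity of $\pL_l$ plus density, but the paper deliberately does \emph{not} extend $\pL_l$ by continuity: it extends by the local-constancy Lemma~\ref{LocalConstant}, precisely because a continuous extension into the \emph{pseudo}-metric space $(\L,d_\L)$ is only determined up to $\sim_\L$, whereas later uses of this lemma (e.g.\ in the proof of Lemma~\ref{ExtendPleft}) require \emph{exact} equality as elements of $[0,r]\times[0,1]$, since the relations $\Rgt,\too$ are not $\sim_\L$-invariant. Your density-plus-continuity argument therefore only yields $\pL_r\circ\pL_s\sim_\L\pL_r$, which is too weak. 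The correct extension is the paper's: for $\x\in\L\setminus(\T\times[0,1])$, pick $\y\in\T\times[0,1]$ close enough that both $\pL_s(\y)=\pL_s(\x)$ and $\pL_r(\y)=\pL_r(\x)$ (Lemma~\ref{LocalConstant}); then $\pL_r(\pL_s(\x))=\pL_r(\pL_s(\y))=\pL_r(\y)=\pL_r(\x)$, an exact equality. Also, the references you cite (Lemmas~\ref{BAKA}, \ref{BAKA NO TEST}, Proposition~\ref{Completion}) do not by themselves establish continuity of $\pL_l$ on $\L$; the relevant input is again Lemma~\ref{LocalConstant}.
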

\begin{proof} First since $\pL_{r}$ have value in $\L_r\subset \L_s$ and $\pL_{s}$ is the identity on $\L_s$, we have $\pL_{s}\circ \pL_{r}=\pL_{r}$. Then, since $\pL_r$, and $\pL_s$ are locally constant around each vertex of $\L\backslash \T\times[0,1]$, and since $\T\times[0,1]$ is dense, it is enough to show that $\pL_{r}\circ \pL_{s}=\pL_{r}$ on $\T\times[0,1]$.

Let $(x,a)\in \T\times[0,1]$. By definition of $\pT$, $\pT_{r}\circ \pT_s(x)$ is the vertex $z$ in $\llbracket 0,\pT_s \rrbracket \cap [0,r]$ which maximizes $d_\T(0,z)$. Moreover, since $\T$ is a $\R$-tree, $\rrbracket \pT_s,x\rrbracket \subset \T\backslash [0,s]$. Hence, since $r\leq s$, $\pT_{r}\circ \pT_s(x)$ is the vertex $z$ in $\llbracket 0,x \rrbracket \cap [0,r]$ which maximizes $d_\T(0,z)$, which is $\pT_r(x)$. 
Finally by definition of $\pL$, we have $\pL_r\circ \pL_s(x,a)\too \pL_s(x,a)\too (x,a)$. So, 
\[ \pL_r\circ \pL_s(x,a)=(\pT_r(x),U_{\pT_r(x),x,a})=\pL_r(x,a). \qedhere \]
\end{proof}
\subsection{Continuous extension of $\p_\Lft$ to $\L$} \label{ExtendLeftSec}
\begin{lemma} \label{ExtendPleft} Recall the notation $\p_\L:=\p\times\1_{x\in\mu[0,1]}dx$. The map
\begin{equation} \x\in \L\mapsto \p_\L\left ( \bigcup_{l>0} \{\y\in \T\times[0,1], \pL_l(\y) \Rgt \pL_l(\x) \} \right ) \label{6/3/20h} \end{equation}
is well defined and coincide with $\p_\Lft$ on $\T\times[0,1]$.
\end{lemma}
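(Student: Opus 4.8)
The plan is to show that the set-valued quantity inside the measure in \eqref{6/3/20h} is an increasing (in $l$) union of measurable sets, so that the formula is well defined by monotone convergence, and that on $\T\times[0,1]$ it equals $\p_\Lft(\x)$. First I would check monotonicity: for $r\le s$, by Lemma \ref{ComposProj} we have $\pL_r=\pL_r\circ\pL_s$, so if $\pL_r(\y)\Rgt\pL_r(\x)$, writing $\y'=\pL_s(\y)$, $\x'=\pL_s(\x)$ gives $\pL_r(\y')\Rgt\pL_r(\x')$, i.e. the event at level $r$ (applied to $\y$) implies the event at level $s$ need not directly follow — instead I would argue the other direction: the sets
\[ A_l(\x):=\{\y\in\T\times[0,1]:\pL_l(\y)\Rgt\pL_l(\x)\} \]
are nondecreasing in $l$. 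Indeed if $\pL_r(\y)\Rgt\pL_r(\x)$ then, since $\pL_r=\pL_r\circ\pL_s$ and $\pL_s(\cdot)\too(\cdot)$ together with Lemma \ref{prelimOrder} ("$\too$ raises $\Rgt$"), one gets $\pL_s(\y)\Rgt\pL_s(\x)$ via $\pL_r\pL_s(\y)\Rgt\pL_r\pL_s(\x)$ and Lemma \ref{KeepProjection}(a) applied at level $r$ to the points $\pL_s(\y),\pL_s(\x)$. So $A_r(\x)\subseteq A_s(\x)$, and $\bigcup_{l>0}A_l(\x)=\bigcup_{n\in\N}A_n(\x)$ is measurable by Lemma \ref{Measurable2} (each $A_l$ is measurable as the $\prec$-type relations are), hence \eqref{6/3/20h} is a well-defined function of $\x\in\L$.

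Next I would identify the union on $\T\times[0,1]$. Fix $\x\in\T\times[0,1]$; I claim $\bigcup_{l>0}A_l(\x)=\{\y\in\T\times[0,1]:\y\Rgt\x\}$ up to a $\p_\L$-null set. The inclusion $\subseteq$ is immediate from Lemma \ref{KeepProjection}(a). For $\supseteq$, take $\y=(y,w)\in\T\times[0,1]$ with $\y\Rgt\x$, i.e. $u_{\x\wedge\y}$-values are strictly ordered at the branch point $m:=x\wedge y$; choosing $l$ large enough that $m$, and points of $\llbracket 0,x\rrbracket,\llbracket 0,y\rrbracket$ witnessing the relevant connected components, all lie in $[0,l]$, we get $\pL_l(\x)$ and $\pL_l(\y)$ have the same branch point $m$ with the same strict angle ordering, hence $\pL_l(\y)\Rgt\pL_l(\x)$, so $\y\in A_l(\x)$. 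The only subtlety is the boundary case where $x\in\llbracket 0,y\rrbracket$ or $y\in\llbracket 0,x\rrbracket$ (one point an ancestor of the other); there the witnessing point of a connected component may be approached but not attained in $[0,l]$ for any finite $l$, but the set of $\y$ for which this genuinely fails is contained in $\{\y:\x\too\y\}\cup\{\y:\y\too\x\}$, which has $\p_\L$-measure $0$ by Lemma \ref{prelimLeft}(c) (and its mirror via Fubini). Therefore $\p_\L\big(\bigcup_{l>0}A_l(\x)\big)=\p_\L\{\y:\y\Rgt\x\}=\p_\Lft(\x)$, as desired.

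The main obstacle I expect is exactly this boundary bookkeeping: making precise that "for $l$ large enough $\pL_l(\x)$ and $\pL_l(\y)$ already see the branch point and the correct angle ordering," while correctly isolating the ancestor/front case as a null set rather than something one must handle pointwise. Concretely, one should fix finitely many points $a\in\rrbracket m,x\rrbracket$ (or $a=x$ if $x=m$) and $b\in\rrbracket m,y\rrbracket$ with $a,b\notin\{X_i\}$, note $u_{m,a}=u_{m,x,v}$ and $u_{m,b}=u_{m,y,w}$ by balancedness and the angle-function axioms, and then pick $l$ with $a,b,m\in[0,l]$; monotonicity of $A_l(\x)$ in $l$ then upgrades this single good $l$ to the whole union. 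The remaining verification that each $A_l$ is Borel, and that the displayed union is therefore Borel and that \eqref{6/3/20h} is genuinely independent of any representative of $\x$ in $\L$ (it is a bona fide function on $\L$, not on $\T\times[0,1]$), follows from Lemma \ref{Measurable2} and the fact that $\pL_l$ is locally constant near $\L\setminus(\T\times[0,1])$ (Lemma \ref{LocalConstant}), so no continuity-versus-$\sim_\L$ clash arises.
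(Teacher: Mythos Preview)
Your overall approach matches the paper's: monotonicity of $A_l(\x)$ via Lemma~\ref{ComposProj} and Lemma~\ref{KeepProjection}(a), the inclusion $\bigcup_l A_l(\x)\subset\{\y:\y\Rgt\x\}$ via Lemma~\ref{KeepProjection}(a), and the reverse inclusion by exhibiting a suitable level $l$. The paper is more direct for the reverse step---it simply takes $l=x\wedge y$ (which lies in $\R^+$ whenever $x\neq y$ or $x\in\R^+$, since points of $\T\setminus\R^+$ are leaves and genuine branch points are among the $Z_i$, $X_i$) and computes $\pL_{x\wedge y}(\x)=(x\wedge y,U_{x\wedge y,x,v})$, $\pL_{x\wedge y}(\y)=(x\wedge y,U_{x\wedge y,y,w})$, from which $\pL_{x\wedge y}(\y)\Rgt\pL_{x\wedge y}(\x)$ is immediate.

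There is, however, a genuine slip in your handling of the exceptional set. You claim the failure set is contained in $\{\y:\x\too\y\}\cup\{\y:\y\too\x\}$, but this set is \emph{disjoint} from $\{\y:\y\Rgt\x\}$ by the partition used in Lemma~\ref{prelimLeft}(a), so that containment says nothing. The actual obstruction is when $x\wedge y\notin\R^+$, which (since non-$\R^+$ points are leaves) forces $x=y\notin\R^+$; in that case $\pL_l(\x)=\pL_l(\y)$ for every $l$ and the inclusion genuinely fails. So the symmetric difference between $\bigcup_l A_l(\x)$ and $\{\y:\y\Rgt\x\}$ sits inside $\{x\}\times[0,1]$, and this is $\p_\L$-null because $\p(\{x\})=0$ for $x\in\T\setminus\R^+$ (by \cite{ICRT1} Theorem~3.1, as the paper invokes). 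Replace your $\too$-argument with this, and the proof is complete.
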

\begin{remark} One may see each $\x\in \L\backslash (\T\times[0,1])$ as the "end" of the infinite branch (see Figure \ref{looptreefig7}) $\{p_{\T,\L}\circ \pL_l(\x),l\in \R^+\}$, and the relations of $\T\times[0,1]$ may be extended to those missing points. 
\end{remark}
\begin{figure}[!h] \label{looptreefig7}
\centering
\includegraphics[scale=0.65]{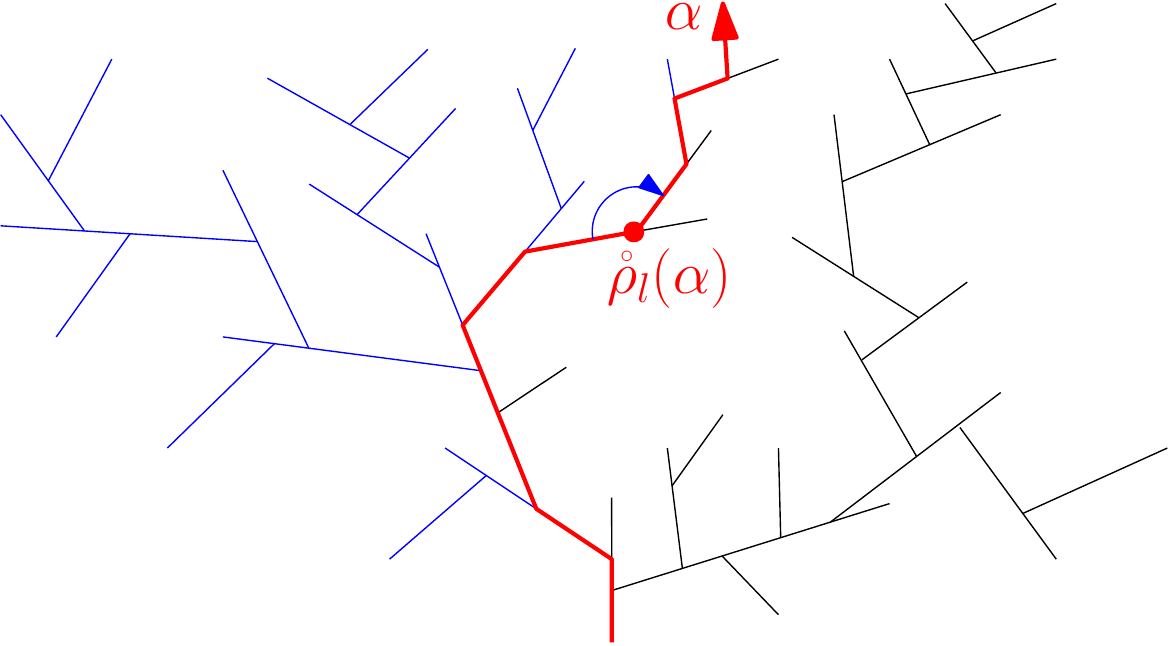}
\caption{ A simplified non compact ICRT,  with a spinal representation of $\alpha\in \L\backslash (\T\times[0,1])$. The infinite branch $\{p_{\T,\L}\circ \pL_l(\x),l\in \R^+\}$ is in red. \eqref{6/3/20h} estimates $\p$ of the blue part, on the left. 
} 
\label{explore1} \label{looptreefig7}
\end{figure}%
\begin{proof} First by Lemmas \ref{KeepProjection} and \ref{ComposProj}, for every $0<l\leq l'$ and $\y \in \T\times[0,1]$ such that $\pL_l(\y) \Rgt \pL_l(\x)$ we have $\pL_l\circ\pL_{l'}(\y)\Rgt \pL_l\circ\pL_{l'}(\x)$ so $\pL_{l'}(\y)\Rgt\pL_{l'}(\x)$. Thus $(\{\y\in \T\times[0,1], \pL_l(\y) \Rgt \pL_l(\x) \})_{l>0}$ is increasing. Furthermore, those sets are measurable (see Appendix \ref{SadoMasoSec}). Hence \eqref{6/3/20h} is well defined.

Next let $\x=(x,a),\y=(y,b)\in \T\times[0,1]$. If there exists $l>0$ such that $\pL_l(\y)\Rgt \pL_l(\x)$ then by Lemma \ref{KeepProjection}, $\y\Rgt \x$. Reciprocally, assume that $\y\Rgt \x$ and either $x\in \R^+$ or $x\neq y$. Then $y\wedge x\in \R^+$, $\pT_{x\wedge y}(x)=x$ and $\pT_{x\wedge y}(y)=y$. Also, since $\y\Rgt \x$, we have $U_{x\wedge y,y,b}<U_{x\wedge y,x,a}$. So,
\begin{equation} \pL_{x\wedge y}(y,b)=(x\wedge y,U_{x\wedge y,y,b})\Rgt (x\wedge y,U_{x\wedge y,x,a})=\pL_{x\wedge y}(x,a).\label{6/3/23h} \end{equation}

As a result, for every $(x,a)\in \T\times[0,1]$,
\[ \{\y, \y\Rgt (x,a)\} \subset \bigcup_{l>0} \{\y, \pL_l(\y) \Rgt \pL_l(x,a) \}\subset \{\y, \y\Rgt (x,a)\}\cup \{x\}\times[0,1], \]
and the first inclusion is an equality when $x\in \R^+$. Finally for every $x\in \T\backslash \R^+$, $\p(x)=0$. (Indeed, when $\theta_0=0$, $\sum \theta_i<\infty$, $\p$ have support $\{X_i\}_{i\in \N}\subset \R^+$. In the other case see \cite{ICRT1} Theorem 3.1.) So $\p_\Lft$ coincide with \eqref{6/3/20h}.
\end{proof}
\begin{lemma} \label{YOLONUL} Let $\p_\Lft$ denote \eqref{6/3/20h}. $\p_\Lft$ is continuous around each vertex of $\L\backslash \{\R^+\times [0,1]\}$.
\end{lemma}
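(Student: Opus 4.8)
The plan is to show that the map in \eqref{6/3/20h}, which we call $\p_\Lft$, is continuous at every $\x\in\L\setminus(\R^+\times[0,1])$. The key is that near such a point, the projections $\pL_l$ stabilize in the sense of Lemma \ref{LocalConstant}: for a fixed $l>0$, $\pL_l$ is constant on $(\T\times[0,1])\cap B(\x,d_\L(\x,[0,l])/2)$, and since $\x\notin\R^+\times[0,1]$ we have $d_\L(\x,[0,l])>0$ for every $l$. So first I would fix $\x\in\L\setminus(\R^+\times[0,1])$ and observe that, since $\T\times[0,1]$ is dense in $\L$, it suffices to prove: for every $\e>0$ there is $\delta>0$ such that $|\p_\Lft(\y)-\p_\Lft(\x)|<\e$ for all $\y\in(\T\times[0,1])$ with $d_\L(\x,\y)<\delta$. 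By the same token we also need to compare $\p_\Lft(\x)$ itself (a value on $\L$) with nearby values on $\T\times[0,1]$, which is handled by rewriting $\p_\Lft(\x)$ via the stabilized projection.

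The next step is to pick $l$ large enough that the increasing union $\bigcup_{l>0}\{\y\in\T\times[0,1]:\pL_l(\y)\Rgt\pL_l(\x)\}$ is approximated within $\e/2$ in $\p_\L$-measure by its $l$-th term; this is possible by continuity from below of the measure $\p_\L$ (which is a probability measure by Lemma \ref{prelimLeft}(a), or at worst $\sigma$-finite and the sets have finite measure). Fix that $l$, and set $\delta:=d_\L(\x,[0,l])/2>0$. Now for any $\y\in(\T\times[0,1])\cap B(\x,\delta)$, Lemma \ref{LocalConstant} gives $\pL_l(\y)=\pL_l(\x)$. Hence the set $\{\z\in\T\times[0,1]:\pL_l(\z)\Rgt\pL_l(\y)\}$ equals $\{\z\in\T\times[0,1]:\pL_l(\z)\Rgt\pL_l(\x)\}$. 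But by the same monotonicity argument used in Lemma \ref{ExtendPleft} (using Lemmas \ref{KeepProjection} and \ref{ComposProj}), for any point $\w\in\T\times[0,1]$ the set $\{\z:\pL_{l}(\z)\Rgt\pL_{l}(\w)\}$ sits between the $l$-term and the full union $\bigcup_{l'>0}\{\z:\pL_{l'}(\z)\Rgt\pL_{l'}(\w)\}$, whose $\p_\L$-measure is $\p_\Lft(\w)$. Applying this to $\w=\x$ and to $\w=\y$, and using that the $l$-term has $\p_\L$-measure within $\e/2$ of $\p_\Lft(\x)$ and within $\e/2$ of $\p_\Lft(\y)$ (the latter because $\pL_{l'}(\y)\Rgt\pL_{l'}(\x)$ is eventually equivalent to $\pL_{l'}(\y)=\pL_{l'}(\x)$ pushing forward — more precisely because for $l'\geq l$ the $l'$-terms agree for $\x$ and $\y$), we conclude $|\p_\Lft(\y)-\p_\Lft(\x)|\leq\e$.

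The main obstacle I anticipate is being careful about the comparison of the truncated sets for $\y$ versus $\x$ when $l'$ ranges beyond the fixed $l$: a priori $\pL_{l'}(\y)$ and $\pL_{l'}(\x)$ need not be equal for $l'>l$, so one must argue that the tails $\{\z:\pL_{l'}(\z)\Rgt\pL_{l'}(\y)\}$ and $\{\z:\pL_{l'}(\z)\Rgt\pL_{l'}(\x)\}$ still have nearly equal measure. The clean way around this is to squeeze $\p_\Lft(\y)$ between the $\p_\L$-measures of the $l$-term for $\x$ and the full union for $\y$: since $\pL_l(\y)=\pL_l(\x)$, the $l$-term coincides, and since $d_\L(\x,\y)<\delta$ one also controls how much larger the full union for $\y$ can be. Concretely, one shows $\{\z:\pL_{l'}(\z)\Rgt\pL_{l'}(\y)\}\subseteq\{\z:\pL_{l'}(\z)\Rgt\pL_{l'}(\x)\}\cup B(\x,\delta')$-type error, or more simply uses that the symmetric difference of the two full unions has $\p_\L$-measure at most $2d_H$-type bound from the nearby-projection estimate; alternatively, a slicker route is to note $\p_\Lft$ restricted to $\T\times[0,1]$ equals the honest $\p_\Lft$ by Lemma \ref{ExtendPleft}, and then prove continuity of this restriction near $\x$ directly from the stabilization of projections plus continuity from below — which is what the two paragraphs above carry out. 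I would present the argument in the squeeze form to keep it short and avoid estimating symmetric differences.
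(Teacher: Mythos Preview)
Your argument for the lower semicontinuity direction, $\liminf_{\y\to\x}\p_\Lft(\y)\geq\p_\Lft(\x)$, is correct and matches the paper: monotone convergence gives an $l$ with the $l$-term close to $\p_\Lft(\x)$, and Lemma \ref{LocalConstant} forces $\pL_l(\y)=\pL_l(\x)$ for $\y\in(\T\times[0,1])\cap B(\x,d_\L(\x,[0,l])/2)$, so the $l$-term for $\y$ equals the $l$-term for $\x$, and since this term is a lower bound for $\p_\Lft(\y)$ you are done.

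The gap is in the upper semicontinuity direction. Your key claim ``for $l'\geq l$ the $l'$-terms agree for $\x$ and $\y$'' is false. Lemma \ref{LocalConstant} only gives $\pL_{l'}$ constant on $B(\x,d_\L(\x,[0,l'])/2)$, and since $[0,l]\subset[0,l']$ we have $d_\L(\x,[0,l'])\leq d_\L(\x,[0,l])$; in fact $d_\L(\x,[0,l'])\to 0$ as $l'\to\infty$ by density of $\R^+\times[0,1]$ in $\L$. Hence for any fixed $\y\neq\x$, once $l'$ is large enough the projections $\pL_{l'}(\y)$ and $\pL_{l'}(\x)$ need not coincide, and there is no a priori reason the full union $S_\y$ should have measure close to $\p_\Lft(\x)$. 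The alternative routes you sketch (a $B(\x,\delta')$-type error set, a $d_H$-type bound on the symmetric difference) are not substantiated and do not obviously work.

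The paper closes this gap with a genuinely new ingredient: it proves the set inclusion
\[
S_\y\ \subset\ S_\x\ \cup\ I_l,\qquad I_l:=\{(z,c)\in\T\times[0,1]:\ \pL_l(\x)\too(z,c)\},
\]
for every $\y$ with $\pL_l(\y)=\pL_l(\x)$, by a case analysis on whether $y\wedge z$ lies above or below $\pT_l(y)$ (using Lemma \ref{ComposProj} and \eqref{6/3/23h}). It then shows $\p_\L(I_l)\to 0$ by observing that $(I_l)_{l>0}$ is decreasing with intersection of $\p_\L$-measure zero. This is exactly the missing upper bound, and your proposal does not contain an equivalent step.
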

\begin{proof} Fix $\x\in \L\backslash \{\R^+\times [0,1]\}$. First, recall that the sets of \eqref{6/3/20h} are increasing so, as $l\to \infty$,
\[ \p_\L(\{\y\in \T\times[0,1], \pL_l(\y) \Rgt \pL_l(\x) \})\to \p_\Lft(\x).\]
Furthermore, recall that by Lemma \ref{LocalConstant} $\pL_l$ is locally constant, hence for every $l>0$,
\[ \liminf_{\y\to \x} \p_{\Lft}(\y)\geq \p_\L(\{\y\in \T\times[0,1], \pL_l(\y) \Rgt \pL_l(\x) \})\to_{l\to \infty}  \p_{\Lft}(\x).\]

Let us show the other inequality. For every $\y\in \L$ let 
\[ S_\y:=\bigcup_{l>0} \{\z\in \T\times[0,1], \pL_l(\z) \Rgt \pL_l(\y) \}. \] 
Let $l>0$. Let $\y\in \L$ such that $\pL_l(\y)=\pL_l(\x)$. Let $(z,c)\in S_\y$. By Lemma \ref{KeepProjection} there exists $r>l$ such that $\pL_r(z,c)\Rgt \pL_r(\y)$. Let $(y,b)=\pL_r(\y)$. By Lemma \ref{ComposProj}, $\pL_l(y,b)=\pL_l(\y)=\pL_l(\x)$.

 If $y\wedge z>\pT_l(y)$, then $y$ and $z$ are connected in $\T\backslash \{\pT_l(y)\}$. So $\pT_l(y)\in \llbracket 0, z\rrbracket$ and $U_{\pT_l(y),y,b}=U_{\pT_l(y),z,c}$. Also, $\pL_l(y,b)=(\pT_l(y),U_{\pT_l,y,b})$. Hence, 
$\pL_l(\x)=\pL_l(y,b)\too (z,c)$. 

If $y\wedge z\leq \pT_l(y)$, then by Lemma \ref{ComposProj},
\[\pL_{y\wedge z}(y,b)=\pL_{y\wedge z}\circ \pL_l(y,b)=\pL_{y\wedge z}\circ \pL_l(\x)=\pL_{y\wedge z}(\x). \]
Moreover, we have $\pL_r(z,c)\Rgt (y,b)$ so by Lemma \ref{prelimLeft} $(z,c)\Rgt (y,b)$. Then by \eqref{6/3/23h}, we have $\pL_{y\wedge z}(z,c)\Rgt \pL_{y\wedge z}(y,b)$.
Hence, $\pL_{y\wedge z}(z,c)\Rgt \pL_{y\wedge z}(\x)$. And $(z,c)\in S_\x$.

To sum up, for all $l>0$, and $\y\in \L$ such that $\pL_l(\y)=\pL_l(\x)$, 
\begin{equation*} S_\y \subset S_\x \cup \{(z,c)\in \T\times[0,1], \pL_l(\x)\too (z,c)\}.\label{6/3/23hb} \end{equation*}
Let $I_l$ denote the right most set above. By Lemma \ref{LocalConstant}, to show that $\limsup_{\y\to \x} \p_{\Lft}(\y)\leq \p_{\Lft}(\x)$, it suffices to show that as $l\to \infty$, $\p_\L(I_l)\to 0$.  First since $\too$ is an order, and since for every $l\leq l'$, $\pL_l(\x)\too \pL_{l'}(\x)$, $(I_l)_{l>0}$ is decreasing. Then for every $z\in \T$, $\pT_l\to z$ as $l\to\infty$. So for every $(y,b)\neq (z,c)\in \T\times[0,1]$ such that $y\neq z$ or $z\in \R^+$, for every $l$ large enough $\pL_l(y,b)\neq \pL_l(z,c)$. Thus, $\cap_{l>0} I_l$ is included in a set of the form $\{z\}\times[0,1]$ with $z\in \T\backslash \R^+$, or of the form $\{(z,c)\}$. Finally recall that for every $z\in \T\backslash \R^+$, $\p(z)=0$. Hence, $\p_\L(\bigcap_{l>0}I_l)=0$. This shows the other desired inequality and thus concludes the proof.
\end{proof}
\subsection{Some preliminary results on $\p_{l,\Lft}$} \label{PrelimContSec}
For every $\nu$, $\sigma$-finite Borel measure on $\T$ and $\x,\y\in \T\times[0,1]$, let $\nu_{\Lft}(\x,\y)$ denote $\nu_{\Lft}(\y)-\nu_{\Lft}(\x)$.
\begin{lemma} \label{BorneCrucial} Almost surely for every $l>0$, $\x, \y\in \L_l=[0,l]\times[0,1]$, $|\mu_{l,\Lft}(\x,\y)|\geq d_\L(\x,\y)$. (See Figure \ref{looptreefig4}.)
\end{lemma}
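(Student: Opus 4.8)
The plan is to reduce the statement to the case where $\x$ and $\y$ lie on a common ``cord'' structure by first handling the case $\x \prec \y$ (otherwise swap the roles), and then decompose the geodesic structure between $\x$ and $\y$ inside $\L_l$. Concretely, recall that $\mu_{l,\Lft}(\x,\y) = \mu_{l,\Lft}(\y) - \mu_{l,\Lft}(\x) = \mu_\L\{\z \in \L_l : \x \Rgt \z \text{ or } \z = \Fnt\text{-related}\} $ counts, roughly, the $\mu_\L$-mass of the region of $\L_l$ lying ``between'' $\x$ and $\y$ in the contour order $\prec$. The key point is that this region contains a set of cords whose total $\theta_i$-weight (plus the $\theta_0^2/4$-scaled tree distance contribution) is at least $d_\L(\x,\y)$ as given by formula \eqref{18/12/15h}.

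First I would set up notation: write $\x = (x,a)$, $\y = (y,b)$, $w = x \wedge y$, and distinguish the contributions to $d_\L(\x,\y)$ coming from (i) $X_i \in \rrbracket w, x \rrbracket$, (ii) $X_i \in \rrbracket w, y \rrbracket$, (iii) $X_i = w$ (the ``pivot'' cord, contributing $\theta_{i_0}\dc(U_{w,\x}, U_{w,\y})$ if $w = X_{i_0}$), plus the $\theta_0^2/4$ tree-distance term. For each such cord I would exhibit a positive-measure family of points $\z \in \L_l$ that lie strictly between $\x$ and $\y$ for $\prec$, and whose $\mu_\L$-measure equals exactly the corresponding term in \eqref{18/12/15h}. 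For a cord at $X_i \in \rrbracket w, x\rrbracket$ say, the relevant points are $(X_i, t)$ with $t$ ranging over the arc of $[0,1]$ of $\dc$-length $\dc(U_{X_i,\x}, U_{X_i,\y})$ ``on the correct side''; since $\x \Rgt \z$ or $\z \Rgt \y$ for such $\z$, they are counted by $\mu_{l,\Lft}(\x,\y)$ with the correct sign, and they carry $\mu_\L$-mass $\theta_i \dc(U_{X_i,\x},U_{X_i,\y})$. The $\theta_0$-part works the same way using $\mu = \theta_0^2 dx + \dots$ restricted to the arc $\llbracket x, y \rrbracket \times [0,1]$, accounting for the factor $1/4$ as the average torus distance. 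Summing these disjoint contributions over all relevant $i$ recovers $d_\L(\x,\y)$ as a lower bound.

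The main obstacle I expect is bookkeeping the orientation/signs carefully: I must check that every $\z$ in the family I construct genuinely satisfies $\x \Rgt \z$ (equivalently lies strictly to the left of $\y$ while to the right of, or in front of, $\x$), using Lemma \ref{prelimOrder} (that $\too$ raises $\Rgt$) and the defining inequalities on the angle function $U$; and conversely that these families for different cords $X_i$ are pairwise disjoint, so their masses add. A secondary point is the degenerate situation where $\x \prec \y$ but $\x \too \y$ (front, not strict left), or where some $\dc(U_{X_i,\x}, U_{X_i,\y})$ ``wraps around'' the torus — here I would split the arc at the antipode and observe that the contour order still picks out an arc of the correct $\dc$-length. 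Once the disjointness and orientation are verified, the inequality $|\mu_{l,\Lft}(\x,\y)| \geq d_\L(\x,\y)$ follows immediately by additivity of $\mu_\L$ and formula \eqref{18/12/15h}.
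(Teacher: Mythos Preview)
Your proposal is correct and follows essentially the same route as the paper: reduce to $\x\prec\y$, consider the set $S=\{\z:\z\Rgt\y\}\setminus\{\z:\z\Rgt\x\}$ with $\mu_\L(S)=\mu_{l,\Lft}(\x,\y)$, decompose along the geodesic $\llbracket x,y\rrbracket$ through the pivot $w=x\wedge y$, and exhibit disjoint arcs in $S$ whose total $\mu_\L$-mass dominates the terms of $d_\L(\x,\y)$.

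One simplification worth noting from the paper's execution: rather than arcs of $\dc$-length $\dc(U_{X_i,\x},U_{X_i,\y})$ at each atom $X_i$, the paper uses uniformly, for \emph{every} $z\in\rrbracket w,x\rrbracket$, the one-sided arc $\{z\}\times(U_{z,x,a},1)$, and symmetrically $\{z\}\times(0,U_{z,y,b})$ for $z\in\rrbracket w,y\rrbracket$, plus $\{w\}\times(U_{w,x,a},U_{w,y,b})$ at the pivot. This choice (i) handles the atoms $X_i$ and the Lebesgue $\theta_0^2\,dz$-part simultaneously, (ii) makes the $\Rgt$-verifications a one-line check, and (iii) removes your torus-wrapping worry entirely: these one-sided arcs always have length $\geq\dc(U_{z,\x},U_{z,\y})$ (since $U_{z,\y}=0$ on the $x$-side and $U_{z,\x}=0$ on the $y$-side), which only helps the inequality. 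Also, the factor $1/4$ is not an average to be computed---it is already in the definition of $d_\L$; at a generic degree-$2$ point $z$ the balanced condition forces $U_{z,\cdot}=1/2$, so the Lebesgue arc length is $1/2$ and one gets $\tfrac{\theta_0^2}{2}d_\T\geq\tfrac{\theta_0^2}{4}d_\T$ trivially.
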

 \begin{figure}[!h] \label{looptreefig4}
\centering
\includegraphics[scale=0.45]{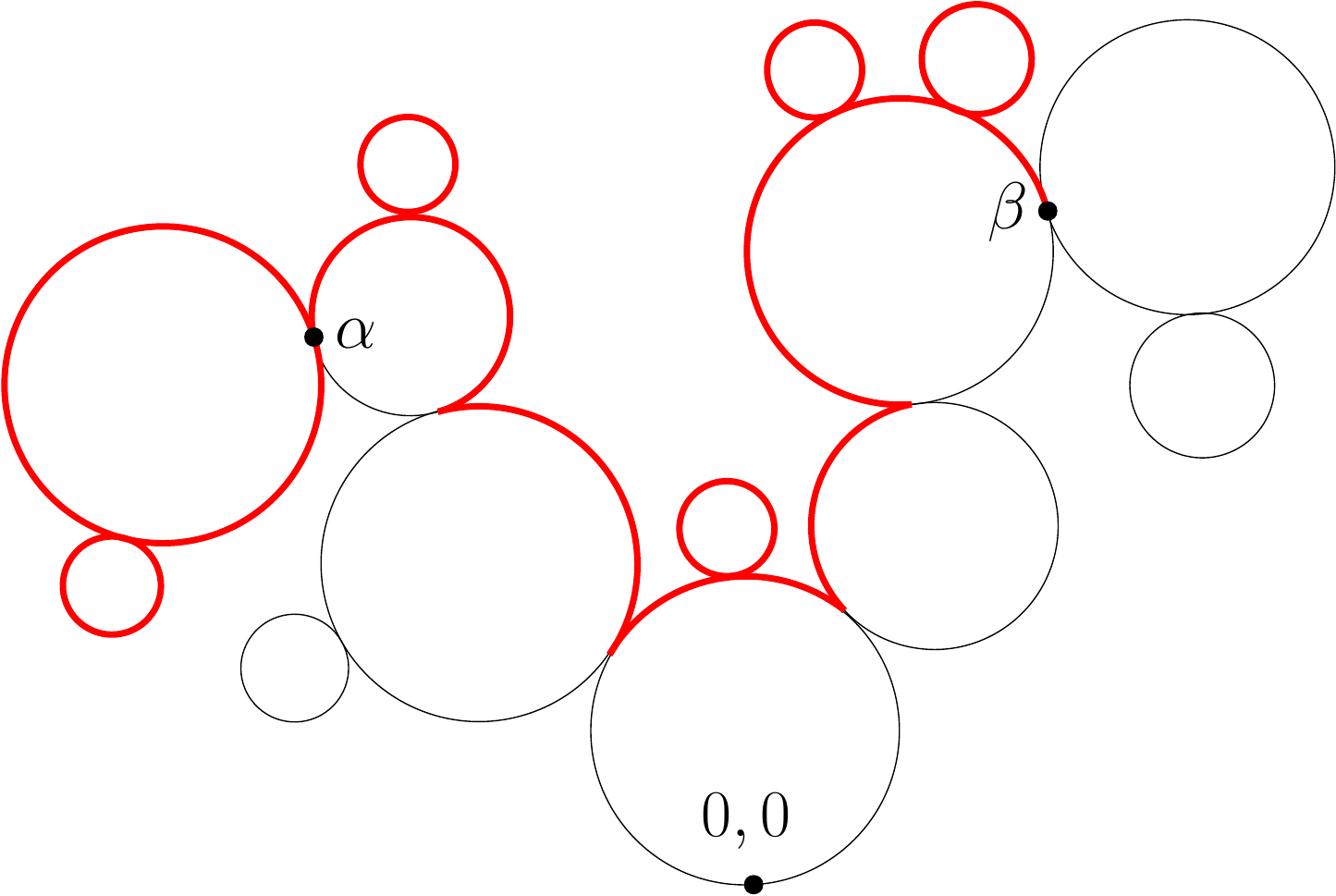}
\caption{An informal proof of Lemma \ref{BorneCrucial}: $\x\prec \y$. The set $S=\{\z, \z\Lft \y\}\backslash \{\z,\z\Lft \x\}$ is red. (It can be obtained by turning clockwise from $\x$ to $\y$.) Its total length is $\mu_{l,\Lft}(\x,\y)$. Note that this set contains a path between $\x$ and $\y$. This path have length at least $d_\L(\x,\y)$. 
}
\label{explore1} \label{looptreefig4}
\end{figure}
\begin{proof} Let $\x=(x,a),\y=(y,b)\in [0,l]\times[0,1]$. By symmetry we may assume $(x,a)\prec (y,b)$. By Lemma \ref{OSKOUR} (a), (b), $\{\z\in \L_l,\z\Lft \x\}\subset\{\z\in \L_l, \z\Lft \y\}$. Then writing 
\[S:=\{\z\in \L_l, \z\Lft \y\}\backslash \{\z\in \L_l,\z\Lft \x\}, \]
 we have $\mu_{l,\Lft}(\x,\y)=\mu(S)$.

For every $z\in \rrbracket x\wedge y, x\rrbracket$, $c\in (U_{z,x,a},1)$, we have $x\wedge z=z$, and $U_{z,x,a}<c=U_{z,z,c}$. So $(x,a)\Rgt(z,c)$. Also, $z$ and $x$ are connected in $\T\backslash \{x\wedge y\}$ so $z\wedge y=x\wedge y$, and $U_{z\wedge y,x,a}=U_{x\wedge y,x,a}<U_{x\wedge y,y,b}$. Hence, $(z,c)\Rgt (y,b)$. Therefore,
\[ S_1:=\{(z,c):z\in \rrbracket x\wedge y,x\rrbracket, c\in (U_{z,x,a},1) \}\subset S.\]
Similarly, 
\[ S_2:=\{(x\wedge y,c): c\in (U_{x\wedge y,x,a},U_{x\wedge y,y,b}) \}\subset S,\]
and,
\[ S_3:=\{(z,c):z\in \rrbracket x\wedge y,y\rrbracket, c\in (0,U_{z,y,b}) \}\subset S.\]

Moreover $S_1,S_2, S_3$ are disjoint. And writing $\dc$ for the distance on the torus $[0,1]$, since for every $z\in \rrbracket x\wedge y,x\rrbracket$, $U_{z,y,b}=0$,
\[ \mu(S_1)=\frac{\theta^2_0}{2} d_\T(x\wedge y,x)+\sum_{i:X_i\in \rrbracket x\wedge y,x\rrbracket} (1-U_{z,x,a})\geq \frac{\theta^2_0}{4} d_\T(x\wedge y,x)+\sum_{i:X_i\in \rrbracket x\wedge y,x\rrbracket}\theta_i \dc(U_{X_i,x,a},U_{X_i,y,b}). \]
Similarly, $\mu(S_2)\geq \mu\{x\wedge y\} \dc (U_{x\wedge y,x,u},U_{x\wedge y,y,b})$, and 
\[ \mu(S_3) \geq \frac{\theta^2_0}{4} d_\T(x\wedge y,y)+\sum_{i:X_i\in \rrbracket x\wedge y,y\rrbracket} \theta_i \dc(U_{X_i,x,a},U_{X_i,y,b}). \]

Finally, by sum, and since for every $i\in \N$, such that $X_i\notin \llbracket x,y\rrbracket$, $U_{X_i,x,a}=U_{X_i,y,b}$,
\[ \mu(S)\geq \mu(S_1)+\mu(S_2)+\mu(S_3)\geq \frac{\theta^2_0}{4}d_\T(x,y)+\sum_{i=1}^\infty \theta_i \dc(U_{X_i,x,a},U_{X_i,y,b})=d_\L(\x,\y). \qedhere \]

\end{proof}
\begin{lemma} \label{CVLeft} Almost surely for every $\x\in \T\times[0,1]$, as $l\to \infty$, $\p_{l,\Lft}(\x)\to \p_\Lft(\x)$.
\end{lemma}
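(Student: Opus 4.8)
The plan is to reduce the claim to the weak convergence $\p_l\to\p$ established in \cite{ICRT1}. For $\x=(x,a)\in\T\times[0,1]$, let $g_\x(y)$ denote the Lebesgue measure of $\{v\in[0,1]:(y,v)\Rgt\x\}$, so that $g_\x:\T\to[0,1]$. By the measurability of the relation $\Rgt$ (Lemma \ref{Measurable2}) and Fubini's theorem, $g_\x$ is Borel and, for any $\sigma$-finite Borel measure $\nu$ on $\T$, $\nu_\Lft(\x)=\int_\T g_\x\,d\nu$; in particular $\p_{l,\Lft}(\x)=\int_\T g_\x\,d\p_l$ and $\p_\Lft(\x)=\int_\T g_\x\,d\p$. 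Hence it suffices to prove that $\int g_\x\,d\p_l\to\int g_\x\,d\p$ for every $\x$.

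Next I would pin down the discontinuity set of $g_\x$. Let $y\in\T\backslash\llbracket\rho,x\rrbracket$ and let $z$ be the point of the geodesic $\llbracket\rho,x\rrbracket$ nearest to $y$. Then the connected component $C$ of $\T\backslash\llbracket\rho,x\rrbracket$ containing $y$ is a connected component of $\T\backslash\{z\}$, and unwinding the definition of $\Rgt$ shows that $g_\x$ is constant on $C$: it equals $\1_{U_{z,y}<U_{z,x}}$ if $z\neq x$, and $\1_{U_{x,y}<a}$ if $z=x$ (in both cases $U_{z,\cdot}$, resp.\ $U_{x,\cdot}$, is constant on $C$ by the angle-function axioms). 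Since the components of $\T\backslash\llbracket\rho,x\rrbracket$ are open, $g_\x$ is locally constant, hence continuous, on $\T\backslash\llbracket\rho,x\rrbracket$; therefore its discontinuity set is contained in $\llbracket\rho,x\rrbracket$.

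Then I would distinguish two regimes according to whether $\mu[0,l]$ stays bounded. If $\theta_0=0$ and $\sum_i\theta_i<\infty$, then $\mu$ is finite, $\mu[0,l]\uparrow s:=\sum_i\theta_i$, $\p=\mu/s$ (the weak limit of $\p_l=\mu_l/\mu[0,l]$, since $\mu_l\uparrow\mu$), and monotone convergence gives $\p_{l,\Lft}(\x)=\mu_{l,\Lft}(\x)/\mu[0,l]=\big(\int_{[0,l]}g_\x\,d\mu\big)\big/\mu[0,l]\to\big(\int_\T g_\x\,d\mu\big)\big/s=\p_\Lft(\x)$. Otherwise $\mu[0,l]\to\infty$, and the key point is that $\p$ charges no set of the form $\llbracket\rho,x\rrbracket$: since $\llbracket\rho,x\rrbracket$ is compact and $\mu$ is locally finite, $\llbracket\rho,x\rrbracket$ has an open neighbourhood $G$ with $\mu(G)<\infty$, hence $\p_l(G)\leq\mu(G)/\mu[0,l]\to0$, and the open-set part of the portmanteau theorem gives $\p(\llbracket\rho,x\rrbracket)\leq\p(G)\leq\liminf_l\p_l(G)=0$. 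Thus $\p$ gives zero mass to the discontinuity set of $g_\x$, and since $g_\x$ is bounded, the mapping theorem for weak convergence yields $\int g_\x\,d\p_l\to\int g_\x\,d\p$. The almost sure events invoked ($\p_l\to\p$ weakly, $\mu$ locally finite, and $\mu[0,l]\to\infty$ resp.\ $\to s$) are independent of $\x$, so on their intersection the argument holds simultaneously for all $\x\in\T\times[0,1]$, which is the assertion.

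The step I expect to be the main obstacle is the one just used: that $\p$ charges no geodesic $\llbracket\rho,x\rrbracket$ from the root (equivalently, that $g_\x$ is $\p$-almost everywhere continuous) when $\mu[0,l]\to\infty$. Note that this would fail with $\mu$ in place of $\p$ as soon as $\mu$ has atoms, so the split into the two regimes is genuine: the purely atomic case (with $\mu[0,l]$ bounded) has to be handled by monotone convergence rather than the mapping theorem. The remaining ingredients — the two Fubini identities, the $\R$-tree topology behind the description of $g_\x$, and the mapping theorem — are routine; one should only be careful to formulate the local constancy of $g_\x$ via the components of $\T\backslash\llbracket\rho,x\rrbracket$ rather than of $\T$ minus a single point, so as not to overlook discontinuities of $g_\x$ produced by branch points accumulating on $\llbracket\rho,x\rrbracket$ from its complement.
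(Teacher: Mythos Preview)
Your overall strategy is the right one, and it is essentially the same as the paper's: split into the two regimes, and in the infinite-mass regime reduce everything to the single fact $\p(\llbracket \rho,x\rrbracket)=0$, which makes $g_\x$ $\p$-a.e.\ continuous so that the mapping/portmanteau theorem applies. The paper does the same thing, only phrased as sandwiching $\{\y:\y\Rgt\x\}$ between an open set $S_1(x,a)\times[0,1]$ and a closed set $(S_1(x,a)\cup\llbracket 0,x\rrbracket)\times[0,1]$ and then invoking the portmanteau theorem on $\T$.

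There is, however, a genuine gap at exactly the step you yourself flagged as the main obstacle. Your argument for $\p(\llbracket\rho,x\rrbracket)=0$ is: ``$\llbracket\rho,x\rrbracket$ is compact and $\mu$ is locally finite, so it has an open neighbourhood $G$ in $\T$ with $\mu(G)<\infty$, hence $\p_l(G)\to 0$.'' But $\mu$ is \emph{not} locally finite on $(\T,d_\T)$ in the regime $\mu[0,l]\to\infty$. Indeed, for any nonempty open $G\subset\T$ meeting $\supp(\p)$ one has $\p(G)>0$, so by the open-set direction of portmanteau $\liminf_l\p_l(G)\geq\p(G)>0$, whence $\mu_l(G)=\mu[0,l]\,\p_l(G)\to\infty$ and $\mu(G)=\infty$. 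In particular, for the Brownian CRT ($\theta_0=1$), $\p$ has full support and every nonempty $d_\T$-open set has infinite $\mu$-mass; no such $G$ exists. What is true is only that $\mu$ is locally finite for the \emph{Euclidean} topology on $\R^+$, but that topology is not the one in which $\p_l\to\p$.

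The fix is immediate and is what the paper does: use the structural input from \cite{ICRT1} Theorem~3.1 that (in this regime) $\p(\R^+)=0$ and $\p$ is nonatomic. Since $\llbracket\rho,x\llbracket\subset\R^+$ for every $x\in\T$, this gives $\p(\llbracket\rho,x\rrbracket)\leq \p(\R^+)+\p(\{x\})=0$. With this in place, your mapping-theorem argument (or the paper's open/closed sandwich) goes through verbatim; the rest of your write-up---the Fubini identity $\nu_\Lft(\x)=\int g_\x\,d\nu$, the local constancy of $g_\x$ on the components of $\T\setminus\llbracket\rho,x\rrbracket$, and the monotone-convergence treatment of the purely atomic case---is correct.
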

\begin{proof} When $\theta_0=0$ and $\sum_{i=1}^\infty \theta_i<\infty$, we have $\mu(\R^+)<\infty$. As a result, $\mu_l\to \mu$ in total variation. So $\p_l\1_{x\in [0,1]}dx \to \p\1_{x\in [0,1]}dx$ in total variation. The desired result follows.

When $\theta_0\neq 0$ or $\sum_{i=1}^\infty\theta_i=\infty$, some extra care is needed since $(\p_l)_{l\in \R^+}$ only converges weakly. For every $(x,a)\in \T\times [0,1]$, let $S(x,a):=\{\y\in \T\times[0,1], \y\Rgt(x,a)\}$. Let 
\[S_1(x,a):=\{ z\in \T\backslash \llbracket 0, x\rrbracket, U_{x\wedge z,z}<U_{x\wedge z,x,a} \}.  \]
Note that $S_1(x,a)$ is a Borel set in $(\T,d_\T)$ as an union of connected component (see Appendix \ref{SadoMasoSec}). Moreover, by definition of $\Rgt$,
\[ S_1(x,a) \times[0,1]\subset S(x,a)\subset (S_1(x,a)\cup \llbracket 0,x\rrbracket )\times [0,1]. \]

Hence, for every $x,a\in \T\times[0,1]$
\begin{equation} \p_{l}(S_1(x,a))\leq \p_{l,\Lft}(x)\leq \p_{l}(S_1(x,a)\cup \llbracket 0,x\rrbracket),\label{sardineA} \end{equation}
and similarly,
\begin{equation} \p(S_1(x,a))\leq \p_{\Lft}(x)\leq \p(S_1(x,a)\cup \llbracket 0,x\rrbracket),\label{sardineD} \end{equation}
Moreover, note that $S_1(x,a)$ is an open set of $\T$ as a union of open connected components. Similarly, note that $S_1(x,a)\cup \llbracket 0,x\rrbracket$ is a closed set since its complementary is a union of open connected components. Furthermore, by \cite{ICRT1} Theorem 3.1, $\p(\R^+)=0$, and $\p$ has no atoms, so $\p(\llbracket 0,x\rrbracket)=0$. As a result, by Portmanteau Theorem, 
\begin{equation} \p_{l}(S_1(x,a))\to \p(S_1(x,a))  \quad \text{and} \quad \p_{l}(S_1(x,a)\cup \llbracket 0,x\rrbracket)\to \p(S_1(x,a)\cup\llbracket 0,x\rrbracket)=\p(S_1(x,a)). \label{sardineE} \end{equation}
The desired result follows by \eqref{sardineA},\eqref{sardineD}, \eqref{sardineE}.
\end{proof}
We now adapt  \cite{ICRT1} Lemma 5.1 to estimate precisely the evolution of $\p_{l,\Lft}$. 
\begin{lemma} \label{E=mc2}  Let $\mathbf U_X:= (U_{X,i})_{i\in \N}$. Almost surely $(\mu,\mathbf Y,\mathbf U_X)$ satisfies the following property. For all $a$ large enough, conditionally on $\mathcal F_a:=\sigma(\mu,\mathbf Y,(Z_i,U_{Z,i})_{i< a},\mathbf U_X)$, for every $\x,\y\in \L_{Y_a}$:
If $\mu_{l,\Lft}(\x,\y) \geq (\log^6 Y_a)/Y_a$ then with probability at least $1-1/Y_a^5$, for every $b\geq a$,
\[  \left (1-\frac 1 {\log Y_a} \right ) \p_{Y_a,\Lft} (\x,\y) \leq p_{Y_b,\Lft} (\x,\y) \leq  \left (1+\frac 1 {\log Y_a} \right) \p_{Y_a,\Lft}(\x,\y).\]
\end{lemma}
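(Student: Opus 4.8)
The plan is to fix $\x,\y \in \L_{Y_a}$ with $\x \prec \y$ and to track the increments of $l \mapsto \p_{Y_l,\Lft}(\x,\y)$ as successive branches are glued, conditionally on $\mathcal F_a$. Write $M_b := \mu[0,Y_b] \cdot \p_{Y_b,\Lft}(\x,\y) = \mu_{Y_b}\{\z \in \L_{Y_b} : \x \Rgt \z \Rgt \y\text{-ish}\}$, i.e.\ the \emph{unnormalized} mass strictly between $\x$ and $\y$ in the contour order inside $\L_{Y_b}$; by Lemma~\ref{prelimLeft}(b) this is nondecreasing in $b$ (once $\x,\y$ are fixed and $b \geq a$, the glued branches only add mass, and each new branch $(Y_k, Y_{k+1}]$ lies entirely on one side of the pair $\{\x, \y\}$ or "between" them depending on where its glue point $Z_k$ falls relative to the contour positions of $\x$ and $\y$). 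First I would argue that, conditionally on $\mathcal F_a$, the $k$-th branch contributes its full mass $\mu(Y_k, Y_{k+1}]$ to $M_{k+1} - M_k$ with conditional probability exactly $q_k := \p_{Y_k,\Lft}(\x,\y) = M_k/\mu[0,Y_k]$ (this is the event $\{Z_k \in S\}$ where $S$ is the "red region" between $\x$ and $\y$ from Figure~\ref{looptreefig4}, whose $\mu_{Y_k}$-mass is $M_k$), and contributes $0$ otherwise — plus, when $Z_k \in S$, there is a small additional contribution from the freshly glued branch's own interior, but that is bounded by $\mu(Y_k, Y_{k+1}]$ which is negligible by Lemma~\ref{Recall icrt}(d). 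So up to controllable lower-order terms, $(M_b)_{b \geq a}$ evolves like a generalized Pólya-type urn / Chinese-restaurant recursion: $M_{b+1} = M_b + \mu(Y_b, Y_{b+1}] \cdot B_b$ with $B_b \mid \mathcal F_b \sim \mathrm{Bernoulli}(M_b / \mu[0,Y_b])$.

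The next step is a martingale argument. Consider the ratio process $R_b := \p_{Y_b,\Lft}(\x,\y) = M_b / \mu[0,Y_b]$. From the recursion, $\E[R_{b+1} \mid \mathcal F_b] - R_b$ should be exactly $0$ up to the negligible self-mass correction: indeed $\E[M_{b+1} \mid \mathcal F_b] = M_b(1 + \mu(Y_b,Y_{b+1}]/\mu[0,Y_b])$ to leading order, and $\mu[0,Y_{b+1}] = \mu[0,Y_b] + \mu(Y_b, Y_{b+1}]$, so $R_b$ is (nearly) a bounded martingale. I would then apply a maximal/concentration inequality — Azuma–Hoeffding or a Freedman-type inequality for the martingale $R_b - R_a$, whose increments are bounded by $\mu(Y_b,Y_{b+1}]/\mu[0,Y_b] = O(\log^2 Y_b / Y_b^2)$ by Lemma~\ref{Recall icrt}(b),(d), and whose predictable quadratic variation summed over $b \geq a$ is $O(\sum_{b \geq a} (\log^2 Y_b / Y_b^2)^2 \cdot q_b)$. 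The point is that the hypothesis $\mu_{Y_a,\Lft}(\x,\y) \geq (\log^6 Y_a)/Y_a$, i.e.\ $R_a \gtrsim \log^6 Y_a / Y_a^2$ (using $\mu[0,Y_a] \asymp Y_a$ by Lemma~\ref{Recall icrt}(b) and $\#\{Y_i \leq Y_a\} \leq 2Y_a^2$ by (c), which converts indices $a$ into scales $Y_a$), makes the relative fluctuation $\sup_{b \geq a} |R_b - R_a| / R_a$ of order $1/\log Y_a$ with the desired failure probability $\leq Y_a^{-5}$. The $\log^6$ in the hypothesis is exactly what is needed: one needs the martingale deviation (of order $\sqrt{\text{variance}} \asymp \sqrt{R_a} \cdot (\text{scale factor})$, a few powers of $\log$) to be small compared to $R_a$ itself, and to beat a union bound over the $O(Y_a^2)$-ish relevant pairs / Borel–Cantelli over scales, yielding the "a.s.\ for all $a$ large enough, conditionally..." phrasing.

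The main obstacle I expect is \emph{not} the martingale concentration itself but the bookkeeping needed to make the "each branch lies on one side" claim rigorous simultaneously for \emph{all} $\x, \y \in \L_{Y_a}$ (an uncountable family) rather than for a fixed pair. The resolution should be that the relevant quantity $M_b$, as a function of $(\x,\y)$, only depends on the contour positions $(\p_{Y_a,\Lft}(\x), \p_{Y_a,\Lft}(\y))$ and more precisely on which already-glued branches lie in the region $S$ — and the set of distinct such regions as $(\x,\y)$ ranges over $\L_{Y_a} \times \L_{Y_a}$ is finite (it is indexed by which of the finitely many branches $Y_k \leq Y_a$ are "inside"), so one only runs the martingale argument for finitely many regions $S$ and union-bounds over them, which is absorbed into the $Y_a^{-5}$ bound with room to spare. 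A secondary nuisance is handling the genuinely continuum part of $\L_{Y_a}$ (points whose contour position is not pinned down by a single branch, the $\theta_0^2/4\, d_\T$ part), but by Lemma~\ref{BorneCrucial} the mass $\mu_{Y_a,\Lft}(\x,\y) \geq d_\L(\x,\y)$ dominates the distance, so small perturbations of $(\x,\y)$ within $\L_{Y_a}$ change $M_b$ by a controlled amount and a density/monotonicity argument (monotonicity of $\p_{\Lft}$ in $\prec$) upgrades the finite family to all pairs. I would also need to invoke Lemma~\ref{CVLeft} to make sense of passing $b \to \infty$ if needed, but as stated the claim is uniform over $b \geq a$ so the martingale maximal inequality already covers the whole tail.
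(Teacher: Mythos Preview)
Your approach is the same as the paper's: both identify that $(\p_{Y_b,\Lft}(\x,\y))_{b\geq a}$ evolves as a P\'olya urn and then apply a concentration inequality (the paper invokes \cite{ICRT1} Lemma~A.1 and the proof of \cite{ICRT1} Lemma~5.1; your Freedman/Azuma route would amount to reproving that lemma). Two simplifications you are missing, however, would streamline your write-up considerably. First, there is no ``small additional contribution from the freshly glued branch's own interior'': Lemmas~\ref{OSKOUR} and~\ref{OSGOUR} show that the \emph{entire} set $]Y_k,Y_{k+1}]\times[0,1]$ lies on the same side of $\x$ (and of $\y$) as the glue point $(Z_k,U_{Z,k})$ does, so the recursion is exactly $M_{b+1}=M_b+\mu(Y_b,Y_{b+1}]\cdot B_b$ with $B_b\mid\mathcal F_b$ Bernoulli$(R_b)$, and $R_b=M_b/\mu[0,Y_b]$ is an \emph{exact} martingale---no lower-order correction. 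Second, the lemma is stated per fixed pair $(\x,\y)\in\L_{Y_a}^2$: the bound $1-Y_a^{-5}$ is the conditional probability for one pair, and the union over finitely many chosen pairs is performed later in Section~\ref{ConstructCont}. So your ``main obstacle'' about uncountable families, and the discretization via Lemma~\ref{BorneCrucial}, are unnecessary here. A minor slip: $\mu[0,Y_a]\asymp Y_a$ holds only when $\theta_0>0$; in general one only has $\mu[0,l]\sim\E[\mu[0,l]]$ (Lemma~\ref{Recall icrt}(b)), which can be sublinear, so your translation of the hypothesis into a bound on $R_a$ needs the correct scale.
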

\begin{remark} With $\x=(0,0)$, for all $a\in \N$, since $\p_{Y_a,\Lft}(0,0)=0$, we have $\p_{Y_a,\Lft} (\x,\y)=\p_{Y_a,\Lft} (\y)$.
\end{remark}
\begin{proof} 
First by Lemma \ref{prelimLeft} (b), $\x\prec \y$. So, by Lemma \ref{OSKOUR} (a), (b), $\{\z: \z\Rgt \x\}\subset \{\z: \z\Rgt \y\}$. Then by construction for every $i\in \N$, conditionally on $\mathcal F_i$, $\z_i:=(Z_i,U_{Z,i})$ have law $\p_{Y_i}\times[0,1]$. 
Moreover, for every $i\geq a$, writing $I_i:=]Y_{i},Y_{i+1}]\times[0,1]$, conditionally on $\mathcal F_i$ a.s.
\begin{compactitem} 
\item With probability $\p_{Y_i,\Lft}(x)$, $\z_i\Rgt \x$. Then by Lemma \ref{OSGOUR}, for every $\delta \in I_i$, $\delta\Rgt \x$.  
\item With probability $\p_{Y_i,\Fnt}(x)$, $\x\too \z_i$. Then by Lemma \ref{OSKOUR} (d), for every $\delta \in I_i$, $\x\too \delta$. 
\item With probability $\p_{Y_i, \Rgt}(x)$, $\x\Rgt \z_i$. Then by Lemma \ref{OSKOUR} (b), for every $\delta \in I_i$, $\x \Rgt \delta$. 
\end{compactitem}
And similarly for $\y$. 

Therefore, by Lemma \ref{prelimLeft} (a), and $\{\z: \z\Rgt \x\}\subset \{\z: \z\Rgt \y\}$, for every $i\geq a$, a.s.
\[ \proba \left (\left . \mu_{Y_{i+1},\Lft}(\x,\y)=\mu_{Y_i,\Lft}(\x,\y)+\mu(Y_i,Y_{i+1}] \right | \mathcal F_i\right )=p_{Y_i,\Lft} (\x,\y), \]
 and,
\[ \proba \left (\left . \mu_{Y_{i+1},\Lft}(\x,\y)=\mu_{Y_i,\Lft}(\x,\y) \right | \mathcal F_i\right )=1-p_{Y_i,\Lft} (\x,\y). \]

As a result, $(\p_{Y_i,\Lft}(\x,\y),\mathcal F_i)_{i\geq a}$ is a P\'olya urn in the sense of \cite{ICRT1} Lemma A.1. And we can conclude exactly as in the proof of \cite{ICRT1} Lemma 5.1.
\end{proof}
\section{Construction and H\"older continuity of the contour path.} \label{ContourPathSec}
\subsection{Construction of the contour path $\Cont_l$ on $\L_l$.}
\begin{lemma} \label{First Cont}Recall that $\sim_\L$ denotes the metric equivalence on $(\L,d_\L)$. A.s. for every $l>0$ there exists a $\mu[0,l]$-Lipschitz function $\Cont_l:[0,1]\mapsto \L_l$ such that for every $\x\in \L_l$, $\Cont_l(\p_{l,\Lft}(\x)) \sim_\L \x$. 
\end{lemma}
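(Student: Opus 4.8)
The plan is to build $\Cont_l$ as a continuous right inverse of the left-mass map $\x\mapsto\p_{l,\Lft}(\x)$ on $\L_l$, with Lemma~\ref{BorneCrucial} doing essentially all the work. Recall that $\p_{l,\Lft}=\mu_{l,\Lft}/\mu[0,l]$, so that lemma reads: a.s.\ for every $l>0$ and every $\x,\y\in\L_l$, $d_\L(\x,\y)\le\mu[0,l]\,|\p_{l,\Lft}(\x)-\p_{l,\Lft}(\y)|$. In particular $\p_{l,\Lft}$ is injective on $\L_l$ up to $\sim_\L$, and I would use this single inequality both for Lipschitz control and to identify points. (When $\mu[0,l]=0$ the space $\L_l$ is a single point up to $\sim_\L$ and a constant map works, so I assume $\mu[0,l]>0$; and I work on a full-probability event on which, for every $l>0$, $(\L_l,d_\L)$ is compact by Lemma~\ref{CompactLoopFirst} and Theorem~\ref{CompactLooptree}, and the inequality above holds.)

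The first, and main, step is to prove that $D:=\p_{l,\Lft}(\L_l)$ is dense in $[0,1]$. Conditioning on the tree, I would apply Lemma~\ref{prelimLeft}(d) with the probability measure $\p:=\p_l$: if $A$ has law $\p_l\times\1_{u\in[0,1]}du$ then $\p_{l,\Lft}(A)$ is uniform on $[0,1]$. Since $\supp(\p_l)\subseteq[0,l]$, a.s.\ $A\in[0,l]\times[0,1]=\L_l$, so for every open interval $I\subseteq[0,1]$ the event $\{\p_{l,\Lft}(A)\in I\}$ has positive probability $|I|$ and, on it, exhibits a point of $\L_l$ whose left mass lies in $I$; hence $D\cap I\neq\emptyset$. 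This is the one place the argument genuinely needs that $\p_{l,\Lft}(A)$ is \emph{exactly} uniform, rather than merely atomless or of full support.

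Next I would define $\Cont_l$ by extension from $D$. For $t\in D$ choose any $h(t)\in\L_l$ with $\p_{l,\Lft}(h(t))=t$ (well defined up to $\sim_\L$ by the inequality); then $d_\L(h(s),h(t))\le\mu[0,l]\,|s-t|$, so $h$ is $\mu[0,l]$-Lipschitz on the dense set $D$ with values in the compact pseudometric space $(\L_l,d_\L)$. For arbitrary $t\in[0,1]$ I would take $t_n\in D$ with $t_n\to t$; then $(h(t_n))_n$ is $d_\L$-Cauchy, hence $d_\L$-convergent in $\L_l$ by compactness, and any two such limits (along any two approximating sequences) are at $d_\L$-distance $0$ by the same inequality. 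I set $\Cont_l(t)$ equal to such a limit: it is well defined up to $\sim_\L$, equals $h(t)$ up to $\sim_\L$ when $t\in D$, and is $\mu[0,l]$-Lipschitz by passing to the limit in the bound for $h$. Picking literal representatives in $[0,l]\times[0,1]$ at the very end does not affect $d_\L$, so this produces a genuine $\mu[0,l]$-Lipschitz map $[0,1]\to\L_l$.

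Finally, for the defining identity: given $\x\in\L_l$, set $t:=\p_{l,\Lft}(\x)\in D$. Then $\Cont_l(t)\sim_\L h(t)$, and since $\p_{l,\Lft}(h(t))=t=\p_{l,\Lft}(\x)$ the inequality gives $d_\L(h(t),\x)=0$, i.e.\ $\Cont_l(\p_{l,\Lft}(\x))\sim_\L\x$, as wanted. I expect the density of $D$ to be the only real obstacle; the construction and the verification are then soft topology together with repeated use of Lemma~\ref{BorneCrucial}.
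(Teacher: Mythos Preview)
Your proof is correct and follows essentially the same route as the paper: both use Lemma~\ref{BorneCrucial} to get the $\mu[0,l]$-Lipschitz bound, invoke Lemma~\ref{prelimLeft} (you use part~(d) to derive density of the range, the paper cites part~(e) directly, which is the same thing), and then extend from this dense set by compactness of $(\L_l,d_\L)$. Your explicit treatment of the degenerate case $\mu[0,l]=0$ and the Cauchy-sequence extension are minor elaborations of what the paper leaves implicit.
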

\begin{remark} 
Since by Lemma \ref{prelimLeft} (e), $\{\p_\Lft(\x),\x\in \L_l\}$ is dense, $\Cont_l$ is unique up to $\sim_\L$. 
\end{remark}
\begin{proof} 
 A.s. for every $l>0$ the following holds:  Let $S_l:=\{\p_\Lft(\alpha),\alpha\in \L_l\}$. For every $u\in S_l$, we may chose $\Cont_l(u)\in \L_l$ such that $\p_\Lft(\Cont_l(u))=u$. Then by Lemma \ref{BorneCrucial}, for every $\x,\y \in \L_l$,
\begin{equation} d_\L(\x,\y)  \leq |\mu_\Lft(\x)-\mu_\Lft(\y)| = \mu[0,l] |\p_\Lft(\x)-\p_\Lft(\y)|. \label{HolderCont} \end{equation}
Thus for every $\x\in \L_l$, we have $d_\L(\x,\Cont_l(\p_{\Lft}(\x)))=0$. Also, by \eqref{HolderCont}, $\Cont_l$ is $\mu[0,l]$-Lipschitz on $S_l$. Furthermore by Lemma \ref{prelimLeft} (e), $S_l$ is dense. Hence, by compactness of $(\L_l,d_\L)$ (see Lemma \ref{CompactLoopFirst}), $\Cont_l$ extends to a $\mu[0,l]$-Lipschitz function on $[0,1]$.
%
%
%
\end{proof}
\subsection{Construction of the contour path $\Cont$.} \label{ConstructCont}
For every $f,g:[0,1]\mapsto \L$, let $d_\infty(f,g):=\max_{x\in[0,1]} d_\L(f(x),g(x))$.
In this section we prove:
\begin{proposition} \label{Cv cont} $(\Cont_{Y_a})_{a\in \N}$ is almost surely a Cauchy sequence for $d_{\infty}$.
\end{proposition}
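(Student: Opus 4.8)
The plan is to bound $d_\infty(\Cont_{Y_a},\Cont_{Y_b})$ for $b\geq a$ using the chaining philosophy explained in Section~\ref{1.3}, with $S$ the looptree and $S_i$ the sub-looptrees $\L_{Y_i}$, and to show the resulting bound is summable in $a$. The key comparison is that for $u\in[0,1]$ in the (dense) range $\{\p_{Y_a,\Lft}(\x):\x\in\L_{Y_a}\}$ one wants to relate $\Cont_{Y_a}(u)$ and $\Cont_{Y_b}(u)$. By density and the Lipschitz property of each $\Cont_{Y_i}$ (Lemma~\ref{First Cont}), it suffices to control, for $\x\in\L_{Y_a}$, the quantity $d_\L\big(\Cont_{Y_a}(\p_{Y_a,\Lft}(\x)),\ \Cont_{Y_b}(\p_{Y_a,\Lft}(\x))\big)$. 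Writing $\y_b:=\Cont_{Y_b}(\p_{Y_a,\Lft}(\x))\in\L_{Y_b}$, we have $\p_{Y_b,\Lft}(\y_b)=\p_{Y_a,\Lft}(\x)$, and by Lemma~\ref{BorneCrucial} applied in $\L_{Y_b}$,
\[
d_\L(\pL_{Y_a}(\y_b),\y_b)\ \leq\ \mu_{Y_b}\big\rrbracket \pT_{Y_a}(\y_b),\y_b\big\rrbracket,
\]
which by Lemmas~\ref{CountingLemma} and~\ref{Recall icrt} is $O(a^3/2^a)$ uniformly, exactly as in Lemma~\ref{ChainingStart}. So it remains to compare $\Cont_{Y_a}(\p_{Y_a,\Lft}(\x))$ with $\pL_{Y_a}(\y_b)$, i.e.\ to show that $\pL_{Y_a}(\y_b)$ is $\sim_\L$-close to the point of $\L_{Y_a}$ whose left-mass equals $\p_{Y_a,\Lft}(\x)$.

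The main point is therefore: $\big|\p_{Y_a,\Lft}(\pL_{Y_a}(\y_b))-\p_{Y_a,\Lft}(\x)\big|$ is small, after which Lemma~\ref{First Cont}'s Lipschitz bound (i.e. \eqref{HolderCont}) converts this into a $d_\L$-bound on $\Cont_{Y_a}$-values, hence (via the triangle inequality and the projection estimate above) a $d_\L$-bound between $\Cont_{Y_a}(\p_{Y_a,\Lft}(\x))$ and $\y_b=\Cont_{Y_b}(\p_{Y_a,\Lft}(\x))$. To estimate $\big|\p_{Y_a,\Lft}(\pL_{Y_a}(\y_b))-\p_{Y_a,\Lft}(\x)\big|$ I would use Lemma~\ref{E=mc2}: the left-mass $\p_{Y_b,\Lft}$ seen from scale $Y_b$ is, with overwhelming probability (at least $1-1/Y_a^5$), within a multiplicative factor $(1\pm 1/\log Y_a)$ of $\p_{Y_a,\Lft}$ for all increments exceeding $(\log^6 Y_a)/Y_a$; increments below that threshold contribute only $O(a^6/2^a)$ to any left-mass difference and are absorbed. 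Combining: $\p_{Y_a,\Lft}(\x)=\p_{Y_b,\Lft}(\y_b)$ is, up to a $(1\pm1/\log Y_a)$ multiplicative error plus an additive $O(a^6/2^a)$, equal to $\p_{Y_a,\Lft}$ of $\y_b$ localized back to $\L_{Y_a}$, i.e. to $\p_{Y_a,\Lft}(\pL_{Y_a}(\y_b))$ up to the projection error (again $O(a^3/2^a)$ in $\mu_{Y_a}$-mass, hence in left-mass). Feeding this through \eqref{HolderCont} gives $d_\infty(\Cont_{Y_a},\Cont_{Y_b})=O(\mu[0,Y_a]/\log Y_a+a^6/2^a)\cdot(\text{something})$ — one must be slightly careful here because $\mu[0,Y_a]\to\infty$, so the factor $1/\log Y_a$ alone is not summable.

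That last subtlety is what I expect to be the real obstacle, and the fix is to telescope over scales rather than compare $a$ to $b$ directly: set up the bound so that $d_\infty(\Cont_{Y_a},\Cont_{Y_{a+1}})$ is controlled, and show this telescoping series converges. Over one step, the relevant left-mass that can be "displaced" is just $\mu]Y_a,Y_{a+1}]$, and by Lemmas~\ref{Recall icrt} and~\ref{ChainingStart} this, together with the mass of the $O(a)$ branches at scale $Y_a$, is $O(a^3/2^a)$; multiplying by the Lipschitz constant $\mu[0,Y_a]=O(Y_a)$ is still not obviously summable, so one should instead bound $d_\infty(\Cont_{Y_a},\Cont_{Y_{a+1}})$ directly by the $d_\L$-variation of $\Cont_{Y_{a+1}}$ on the preimage under $\p_{Y_{a+1},\Lft}$ of the "new" left-mass interval of length $O(a^3/2^a)$, i.e. by $d_H(\L_{Y_{a+1}},\L_{Y_a})+(\text{diameter of the new branches})=O(a^3/2^a)$ using Proposition~\ref{FBLooptreeThm}/Lemma~\ref{ChainingStart} and Lemma~\ref{BAKA}, with the reparametrization controlled by Lemma~\ref{E=mc2} at scale $Y_a$. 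Since $\sum_a a^3/2^a<\infty$ (and the $\log^{-1}Y_a$ errors only help), $(\Cont_{Y_a})_{a\in\N}$ is Cauchy for $d_\infty$, which is the claim.
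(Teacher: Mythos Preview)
Your sketch correctly identifies the ingredients (Lemmas~\ref{BorneCrucial}, \ref{E=mc2}, \ref{ChainingStart}, the Lipschitz bound for $\Cont_{Y_a}$), and you rightly flag that the multiplicative error $1/\log Y_a$ in Lemma~\ref{E=mc2}, fed through the $\mu[0,Y_a]$-Lipschitz constant, is not small. But your proposed fix---telescoping $d_\infty(\Cont_{Y_a},\Cont_{Y_{a+1}})$---does not work. A direct computation (using your own argument with $\y_b=\Cont_{Y_{a+1}}(u)$ and Lemma~\ref{BorneCrucial} at scale $Y_{a+1}$) gives
\[
d_\infty(\Cont_{Y_a},\Cont_{Y_{a+1}})\ \lesssim\ \mu\,]Y_a,Y_{a+1}],
\]
and this is essentially sharp: adding one branch of $\mu$-mass $\epsilon$ causes a global drift of the \emph{normalized} left-mass of order $\epsilon/\mu[0,Y_{a+1}]$, which after the Lipschitz bound gives $\epsilon$. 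But $\sum_a \mu\,]Y_a,Y_{a+1}]=\mu(0,\infty)=\infty$ whenever $\theta_0>0$ or $\sum_i\theta_i=\infty$. Your claimed bound $O(a^3/2^a)$ comes from Lemma~\ref{ChainingStart}, which is stated for dyadic \emph{spatial} scales $2^n$, not for branch indices $a$; there are $\asymp \mu[0,2^n]\,2^n$ branches in $[2^n,2^{n+1}]$, so $a$ and $2^n$ are very different. Even telescoping over dyadic scales fails for the same reason: $\mu\,]2^n,2^{n+1}]$ is of order $\mu[0,2^n]$, not $n^3/2^n$.

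The paper avoids this obstruction by \emph{not} comparing $\Cont_{Y_a}$ and $\Cont_{Y_b}$ at the same parameter. It constructs, from a discrete net $(\x_{a,i})_{0\le i\le n_a}$ in $\L_{Y_a}$ calibrated by Lemma~\ref{E=mc2}, an explicit piecewise-linear reparametrization $\psi_{a,b}:[0,1]\to[0,1]$ with $\psi_{a,b}(\p_{Y_a,\Lft}(\x_{a,i}))=\p_{Y_b,\Lft}(\x_{a,i})$, and proves (Lemma~\ref{Odd job})
\[
d_\infty\big(\Cont_{Y_a}\circ\psi_{a,b}^{-1},\,\Cont_{Y_b}\big)\ \le\ d_H(\L_{Y_a},\L)+6\log^6(Y_a)/Y_a,
\]
which \emph{does} go to $0$ as $a\to\infty$, uniformly in $b\ge a$. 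The Cauchy property then follows from a triangle inequality at fixed $a$,
\[
d_\infty(\Cont_{Y_b},\Cont_{Y_c})\ \le\ d_\infty(\Cont_{Y_a}\circ\psi_{a,b}^{-1},\Cont_{Y_b})+d_\infty(\Cont_{Y_a}\circ\psi_{a,b}^{-1},\Cont_{Y_a}\circ\psi_{a,c}^{-1})+d_\infty(\Cont_{Y_a}\circ\psi_{a,c}^{-1},\Cont_{Y_c}),
\]
together with the uniform convergence $\psi_{a,b}^{-1}\to\psi_a^{-1}$ as $b\to\infty$ (Lemma~\ref{Luigi=/=Boo}) and the continuity of $\Cont_{Y_a}$ on the compact $[0,1]$. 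The reparametrization absorbs exactly the non-summable drift that breaks your telescoping argument; its convergence is what Lemma~\ref{E=mc2} is really designed to deliver.
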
 Since $\L$ is a.s. compact, this directly implies that $(\Cont_{Y_a})_{a\in \N}$ converges uniformly, and we define $\Cont$ as its limit. Our proof is mainly constructive, with several topological arguments along the way.
 
First, since almost surely as $a\to \infty$, $\mu[0,Y_a] Y_a/(\log^6 Y_a )\to \infty$, a.s. there exists for every $a\in \N$ large enough $(l_a,n_a)\in \R^+\times\N$ such that 
\begin{equation} 1 \leq \frac{Y_a}{\log^6 Y_a} \mu[0,Y_a] l_a\leq 2 \quad \text{and} \quad 2n_a l_a=1. \label{2/3/20ha} \end{equation}
we take $(l_a,n_a)$ such that $n_a$ is the largest possible. 

Then for every $a\in \N$, let $\x_{a,0}:=(0,0)$, $\x_{a,n_a}=(0,1)$ and for every $0<i<n_a$,  let $\x_{a,i}\in \L_{Y_a}$ such that 
 \begin{equation} \p_{Y_a,\Lft}(\x_{a,i})\in [2i l_a, (2i+1)l_a ]. \label{2/3/20hb}\end{equation}
$\x_{a,i}$ exists  by Lemma \ref{prelimLeft} (e), and by Lemma \ref{prelimLeft} (d), we can sample $\x_{a,i}$ in a measurable way. 
Also note that for every $a\in \N$, since $(0,0)$ is the minimum for $\prec$ and since $(0,1)$ is the maximum for $\prec$,
\[ \p_{Y_a,\Lft}(0,0)=0 \quad \text{and} \quad \p_{Y_a,\Lft}(0,1)=1. \] %
Thus, by Lemma \ref{prelimLeft} (c) and \eqref{2/3/20hb}, a.s. for every $a\in \N$ large enough, 
\begin{equation} (0,0)=\x_{a,0}\prec \x_{a,1}\dots \prec \x_{a,n_a}=(0,1). \label{3/3/18hc} \end{equation}
 
 Next, by Lemma \ref{Recall icrt} (a), (b), a.s. as $a\to \infty$, $n_a=O(Y_a^2)$. Also by Lemma \ref{Recall icrt} (c), a.s. $i^2=O(Y_i)$, so a.s. $\sum_{a=1}^\infty Y_a^{-3}<\infty$. Therefore by the Borel--Cantelli Lemma and Lemma \ref{E=mc2}, for every $a\leq b$ large enough, and $0\leq i <n_a$, by \eqref{2/3/20ha}, \eqref{2/3/20hb},
\begin{equation} l_a/2 \leq \p_{Y_b,\Lft}(\x_{a,i},\x_{a,i+1})\leq 4l_a. \label{2/3/20hc} \end{equation}
In particular for every $a\leq b$ large enough, $(\p_{Y_b,\Lft}(\x_{a,i}))_{0\leq i \leq n_a}$ is strictly increasing. Hence we may define $\psi_{a,b}:[0,1]\mapsto [0,1]$ such that for every $0\leq i\leq n_a$,
\begin{equation*} \psi_{a,b}(\p_{Y_a,\Lft}(\x_{a,i}))=\p_{Y_b,\Lft}(\x_{a,i}), \label{2/3/21h} \end{equation*}
and such that for every $0\leq i <n_a$, $\psi_{a,b}$ is linear in $[\x_{a,i},\x_{a,i+1}]$. And by \eqref{2/3/20hc}, $\psi_{a,b}$ is strictly increasing and continuous.
%
\begin{lemma} \label{Luigi=/=Boo} Almost surely for every $a\in \N$ large enough, $(\psi_{a,b})_{b\geq a}$ converges uniformly toward a strictly increasing continuous function $\psi_a$.  And a.s. $ (\psi^{-1}_{a,b} )_{b\geq a}$ converges uniformly toward $\psi^{-1}_a$.
\end{lemma}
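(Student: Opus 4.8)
Fix $a\in\N$ large enough that the property in Lemma \ref{E=mc2} holds, that the points $\x_{a,0},\dots,\x_{a,n_a}\in\L_{Y_a}$ are defined, and that \eqref{2/3/20hc} holds for all $b\ge a$. The plan is to exploit that, for this fixed $a$, the map $\psi_{a,b}$ is continuous and piecewise-affine with breakpoints \emph{in the domain} at the numbers $t_i:=\p_{Y_a,\Lft}(\x_{a,i})$, $0\le i\le n_a$, which do not depend on $b$, and with $\psi_{a,b}(t_i)=\p_{Y_b,\Lft}(\x_{a,i})$. Taking $b=a$ in \eqref{2/3/20hc} gives $t_{i+1}-t_i\ge l_a/2>0$, and with $t_0=\p_{Y_a,\Lft}(0,0)=0$, $t_{n_a}=\p_{Y_a,\Lft}(0,1)=1$ this yields $0=t_0<t_1<\dots<t_{n_a}=1$. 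Since the breakpoints are frozen in $b$, the difference $\psi_{a,b}-\psi_{a,b'}$ is affine on each $[t_i,t_{i+1}]$, hence attains its sup at a breakpoint:
\[\sup_{u\in[0,1]}\bigl|\psi_{a,b}(u)-\psi_{a,b'}(u)\bigr|=\max_{0\le i\le n_a}\bigl|\p_{Y_b,\Lft}(\x_{a,i})-\p_{Y_{b'},\Lft}(\x_{a,i})\bigr|.\]
So the whole lemma reduces to showing that, for each fixed $i$, the scalar sequence $(\p_{Y_b,\Lft}(\x_{a,i}))_{b\ge a}$ converges.

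For $i=0$ and $i=n_a$ this is trivial, since $\x_{a,0}=(0,0)$ and $\x_{a,n_a}=(0,1)$ are the extrema of $\prec$, so these sequences are identically $0$ and $1$. For $0<i<n_a$ I would reuse the P\'olya-urn structure established (without any size hypothesis) in the proof of Lemma \ref{E=mc2}, applied with $\x=(0,0)$ — the $\prec$-minimum, so $\{\z:\z\Rgt(0,0)\}=\emptyset\subset\{\z:\z\Rgt\x_{a,i}\}$ — and $\y=\x_{a,i}$, for which $\p_{Y_b,\Lft}((0,0),\x_{a,i})=\p_{Y_b,\Lft}(\x_{a,i})$: that argument shows that, conditionally on $\mathcal{F}_a$ and on the (measurable) choice of $\x_{a,i}$, the process $(\p_{Y_b,\Lft}(\x_{a,i}))_{b\ge a}$ is the proportion process of a P\'olya urn, hence a $[0,1]$-valued martingale for $(\mathcal{F}_b)_{b\ge a}$; martingale convergence then provides an a.s.\ limit $\phi_a(i)$. (Equivalently one can apply Lemma \ref{E=mc2} directly at level $b$, licit once $b$ is large because $\mu[0,Y_b]Y_b/\log^6 Y_b\to\infty$, to get $|\p_{Y_c,\Lft}(\x_{a,i})-\p_{Y_b,\Lft}(\x_{a,i})|\le 1/\log Y_b$ for all $c\ge b$, and conclude by Borel--Cantelli over $b$.)

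Letting $b\to\infty$ in \eqref{2/3/20hc} gives $\phi_a(i+1)-\phi_a(i)\ge l_a/2>0$ for $0\le i<n_a$, while $\phi_a(0)=0$ and $\phi_a(n_a)=1$. I then define $\psi_a:[0,1]\to[0,1]$ as the continuous piecewise-affine map with $\psi_a(t_i)=\phi_a(i)$, affine on each $[t_i,t_{i+1}]$; its slope there equals $(\phi_a(i+1)-\phi_a(i))/(t_{i+1}-t_i)>0$, so $\psi_a$ is a strictly increasing continuous bijection of $[0,1]$, and by the displayed reduction $\psi_{a,b}\to\psi_a$ uniformly (a maximum over the finitely many $i\le n_a$ of quantities $|\p_{Y_b,\Lft}(\x_{a,i})-\phi_a(i)|\to 0$). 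For the inverses I would use the standard fact that uniformly convergent increasing homeomorphisms of $[0,1]$ have uniformly convergent inverses: given $\e>0$, uniform continuity of $\psi_a^{-1}$ on the compact $[0,1]$ provides $\delta>0$ with $|\psi_a^{-1}(s)-\psi_a^{-1}(t)|<\e$ whenever $|s-t|<\delta$; for $b$ with $\sup|\psi_{a,b}-\psi_a|<\delta$ and any $y\in[0,1]$, putting $x:=\psi_{a,b}^{-1}(y)$ gives $|\psi_a(x)-y|=|\psi_a(x)-\psi_{a,b}(x)|<\delta$, hence $|\psi_{a,b}^{-1}(y)-\psi_a^{-1}(y)|=|x-\psi_a^{-1}(y)|=|\psi_a^{-1}(\psi_a(x))-\psi_a^{-1}(y)|<\e$. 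All of this runs on a single full-measure event: the intersection of the a.s.\ events behind Lemma \ref{E=mc2}, the Borel--Cantelli estimate \eqref{2/3/20hc}, and the countably many martingale convergences (one per pair $(a,i)$ with $a$ large and $0<i<n_a$).

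I do not expect a genuine obstacle; the statement is essentially a bookkeeping consequence of Lemma \ref{E=mc2}. The one point requiring a little care is the measurability/adaptedness that legitimises treating $(\p_{Y_b,\Lft}(\x_{a,i}))_{b\ge a}$ as a martingale when the $\x_{a,i}$ are themselves \emph{randomly chosen}, which is handled exactly as in Lemma \ref{E=mc2} by conditioning on $\mathcal{F}_a$ together with the choice of the $\x_{a,i}$ — plus the routine verification, in the degenerate case $\theta_0=0$, that $\mu[0,Y_b]$ stays large enough, which is ensured by $\mu[0,Y_b]Y_b/\log^6 Y_b\to\infty$.
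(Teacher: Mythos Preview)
Your proposal is correct and essentially follows the same piecewise-affine reduction as the paper: both arguments observe that the breakpoints $t_i=\p_{Y_a,\Lft}(\x_{a,i})$ are frozen in $b$, so uniform convergence of $\psi_{a,b}$ reduces to pointwise convergence at the finitely many nodes, and the limit $\psi_a$ is built by linear interpolation; strict monotonicity then follows by passing to the limit in \eqref{2/3/20hc}, and convergence of the inverses is a routine consequence.

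The one genuine difference is how you establish convergence of $(\p_{Y_b,\Lft}(\x_{a,i}))_{b\ge a}$. The paper invokes Lemma \ref{CVLeft} (almost surely, for \emph{every} $\x\in\T\times[0,1]$, $\p_{l,\Lft}(\x)\to\p_\Lft(\x)$), which immediately identifies the limit as $\p_\Lft(\x_{a,i})$ and handles the random choice of $\x_{a,i}$ on a single null set. You instead exploit the P\'olya-urn structure from the proof of Lemma \ref{E=mc2} to get martingale convergence to some $\phi_a(i)$, without naming the limit. Both are valid; the paper's route is slightly cleaner in that it avoids the adaptedness bookkeeping you flag and directly ties $\psi_a$ to $\p_\Lft$ (which is what is needed downstream in the proof of Theorem \ref{ConstructContThm}), while your route is more self-contained and does not require the Portmanteau argument behind Lemma \ref{CVLeft}. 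Your explicit treatment of the inverse convergence via uniform continuity of $\psi_a^{-1}$ is more detailed than the paper's terse appeal to \eqref{3/3/18h}, \eqref{3/3/18hb} and linearity, but the content is the same.
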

\begin{proof}
By Lemma \ref{CVLeft} a.s. for every $a\in \N$ large enough, for every $0\leq i \leq n_a$, as $b\to \infty$, 
\begin{equation} \p_{Y_b,\Lft}(\x_{a,i})\to \p_{\Lft}(\x_{a,i}). \label{3/3/18h} \end{equation}
So if $\psi_a$ is the function such that for every $0\leq i\leq n_a$, 
 \begin{equation*} \psi_{a,b}(\p_{Y_a,\Lft}(\x_{a,i}))=\p_{\Lft}(\x_{a,i}), \label{2/3/21h} \end{equation*}
and such that for every $0\leq i <n_a$, $\psi_{a}$ is linear in $[\x_{a,i},\x_{a,i+1}]$. Then almost surely for every $a\in \N$ large enough, $(\psi_{a,b})_{b\geq a}$ converges uniformly toward $\psi_a$.

Moreover, by \eqref{2/3/20hc} and \eqref{3/3/18h}, a.s. for every $a\in \N$ large enough, for every $0\leq i <n_a$,
\begin{equation} l_a/2 \leq \psi_{a}(\p_{Y_a,\Lft}(\x_{a,i+1}))-\psi_{a}(\p_{Y_a,\Lft}(\x_{a,i}))\leq 4l_a. \label{3/3/18hb} \end{equation}
Hence, $\psi_a$ is also a strictly increasing continuous function.  Finally by \eqref{3/3/18h}, \eqref{3/3/18hb}, and the linearity, a.s. for every $a\in \N$ large enough, $ (\psi^{-1}_{a,b} )_{b\geq a}$ converges uniformly toward $\psi^{-1}_a$.
\end{proof}
%
%
%
%
\begin{lemma} \label{Odd job} Almost surely for every $a\leq b$ large enough,
\[ d_\infty ( \Cont_{Y_a}\circ \psi^{-1}_{a,b}, \Cont_{Y_b})\leq d_H(\L_{Y_a},\L)+6\log^6(Y_a)/Y_a. \]
\end{lemma}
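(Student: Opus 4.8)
The plan is to fix $s\in[0,1]$, set $u:=\psi_{a,b}(s)$, and bound $d_\L(\Cont_{Y_a}(s),\Cont_{Y_b}(u))$ uniformly in $s$. Work on the a.s.\ event where Lemmas \ref{First Cont}, \ref{KeepProjection}, \ref{dfso}, \ref{prelimLeft} hold and where \eqref{2/3/20ha}, \eqref{2/3/20hc}, \eqref{3/3/18hc} hold for all $a\le b$ large enough, and fix such a pair $a\le b$. Choose $i\in\{0,\dots,n_a-1\}$ with $s\in[\p_{Y_a,\Lft}(\x_{a,i}),\p_{Y_a,\Lft}(\x_{a,i+1})]$; since $\psi_{a,b}$ is the increasing bijection of $[0,1]$ with $\psi_{a,b}(\p_{Y_a,\Lft}(\x_{a,j}))=\p_{Y_b,\Lft}(\x_{a,j})$ for all $j$, this forces $u\in[\p_{Y_b,\Lft}(\x_{a,i}),\p_{Y_b,\Lft}(\x_{a,i+1})]$. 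Two elementary facts will be used throughout: by \eqref{2/3/20hb} the interval $[\p_{Y_a,\Lft}(\x_{a,i}),\p_{Y_a,\Lft}(\x_{a,i+1})]$ has length $\p_{Y_a,\Lft}(\x_{a,i},\x_{a,i+1})\le 3l_a$ (for $i=0$ and $i=n_a-1$ one uses $\p_{Y_a,\Lft}(0,0)=0$ and $\p_{Y_a,\Lft}(0,1)=1=2n_al_a$), and $3l_a\,\mu[0,Y_a]\le 6\log^6(Y_a)/Y_a$ by \eqref{2/3/20ha}. The node case is immediate: if $u=\p_{Y_b,\Lft}(\x_{a,j})$ for some $j\in\{i,i+1\}$, then $s=\p_{Y_a,\Lft}(\x_{a,j})$, so by Lemma \ref{First Cont} $\Cont_{Y_a}(s)\sim_\L\x_{a,j}\sim_\L\Cont_{Y_b}(u)$, whence $d_\L(\Cont_{Y_a}(s),\Cont_{Y_b}(u))=0$.

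Assume now $u$ lies in the open interval $\bigl(\p_{Y_b,\Lft}(\x_{a,i}),\p_{Y_b,\Lft}(\x_{a,i+1})\bigr)$, which is nonempty since its length is at least $l_a/2$ by \eqref{2/3/20hc}, and take first $u\in S_{Y_b}:=\{\p_{Y_b,\Lft}(\alpha):\alpha\in\L_{Y_b}\}$. Put $\z:=\Cont_{Y_b}(u)\in\L_{Y_b}\subseteq\T\times[0,1]$; by construction of $\Cont_{Y_b}$ on $S_{Y_b}$ one has $\p_{Y_b,\Lft}(\z)=u$. Since $\prec$ is a total order (Lemma \ref{dfso}), $\p_{Y_b,\Lft}$ is $\prec$-increasing (Lemma \ref{prelimLeft}(b)), and $\p_{Y_b,\Lft}(\x_{a,i})<\p_{Y_b,\Lft}(\z)<\p_{Y_b,\Lft}(\x_{a,i+1})$, it follows that $\x_{a,i}\preceq\z\preceq\x_{a,i+1}$.

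The key structural step is that the projection $\pL_{Y_a}$ is weakly $\prec$-monotone: $\x\preceq\y\Rightarrow\pL_{Y_a}(\x)\preceq\pL_{Y_a}(\y)$. Indeed, otherwise $\pL_{Y_a}(\y)\prec\pL_{Y_a}(\x)$ with $\pL_{Y_a}(\x)\ne\pL_{Y_a}(\y)$, and then---according as this relation is of type $\Rgt$ or of type $\too$---Lemma \ref{KeepProjection}(a) or (b) gives $\y\Rgt\x$ or $\y\too\x$, i.e.\ $\y\prec\x$ with $\x\ne\y$, contradicting $\x\preceq\y$ by antisymmetry. Since $\x_{a,j}\in\L_{Y_a}=[0,Y_a]\times[0,1]$ entails $\pL_{Y_a}(\x_{a,j})=\x_{a,j}$ from the explicit formula, applying weak monotonicity to $\x_{a,i}\preceq\z\preceq\x_{a,i+1}$ yields $\x_{a,i}\preceq\w\preceq\x_{a,i+1}$ with $\w:=\pL_{Y_a}(\z)\in\L_{Y_a}$, hence $\p_{Y_a,\Lft}(\w)\in[\p_{Y_a,\Lft}(\x_{a,i}),\p_{Y_a,\Lft}(\x_{a,i+1})]$ by Lemma \ref{prelimLeft}(b) and $|s-\p_{Y_a,\Lft}(\w)|\le 3l_a$. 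Using that $\Cont_{Y_a}$ is $\mu[0,Y_a]$-Lipschitz with $\Cont_{Y_a}(\p_{Y_a,\Lft}(\w))\sim_\L\w$ (Lemma \ref{First Cont}) and that $d_\L(\z,\w)=d_\L(\z,\L_{Y_a})\le d_H(\L_{Y_b},\L_{Y_a})\le d_H(\L,\L_{Y_a})$ (as $\L_{Y_a}\subseteq\L_{Y_b}\subseteq\L$), the triangle inequality gives $d_\L(\Cont_{Y_a}(s),\Cont_{Y_b}(u))\le\mu[0,Y_a]\,|s-\p_{Y_a,\Lft}(\w)|+d_\L(\w,\z)\le 6\log^6(Y_a)/Y_a+d_H(\L_{Y_a},\L)$. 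For $u$ in the open interval with $u\notin S_{Y_b}$, pick $u_n\in S_{Y_b}$ in that interval with $u_n\to u$ (possible since $S_{Y_b}$ is dense, Lemma \ref{prelimLeft}(e)); the bound just obtained holds for each $u_n$, and letting $n\to\infty$ with $\Cont_{Y_b}$ continuous yields it for $u$. Taking the supremum over $s\in[0,1]$ concludes.

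The main obstacle is this weak $\prec$-monotonicity of $\pL_{Y_a}$, i.e.\ the matching of the two contour orders (on $\L_{Y_a}$ and on $\L_{Y_b}$) through the projection --- which is exactly what Lemma \ref{KeepProjection} provides. The strict inequalities $\p_{Y_b,\Lft}(\x_{a,i})<u<\p_{Y_b,\Lft}(\x_{a,i+1})$ are what rule out a degenerate "two distinct points with equal left-mass" situation, and thereby spare us any delicate reasoning modulo $\sim_\L$ --- which is needed only in the trivial node case.
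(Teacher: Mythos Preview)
Your argument is correct and follows essentially the same route as the paper's proof. Both proofs sandwich the relevant point between $\x_{a,i}$ and $\x_{a,i+1}$, use Lemma~\ref{KeepProjection} to push the contour order through the projection $\pL_{Y_a}$ (your ``weak $\prec$-monotonicity'' is exactly the paper's direct application of that lemma), exploit that the resulting $\p_{Y_a,\Lft}$-values lie in an interval of length at most $3l_a$, apply the $\mu[0,Y_a]$-Lipschitz bound on $\Cont_{Y_a}$, add the projection error $d_\L(\z,\pL_{Y_a}(\z))\le d_H(\L_{Y_a},\L)$, and finish by density of $S_{Y_b}$. The only cosmetic difference is that the paper parametrizes by $\y\in\L_{Y_b}$ and then sets $u=\p_{Y_b,\Lft}(\y)$, whereas you parametrize by $s\in[0,1]$ and then recover $\z$; this lets the paper invoke only the \emph{statement} of Lemma~\ref{First Cont} ($\Cont_{Y_b}(\p_{Y_b,\Lft}(\y))\sim_\L\y$), while your line ``by construction of $\Cont_{Y_b}$ on $S_{Y_b}$ one has $\p_{Y_b,\Lft}(\z)=u$'' appeals to the specific choice made in its \emph{proof}---a harmless point, easily avoided by taking any $\z\in\L_{Y_b}$ with $\p_{Y_b,\Lft}(\z)=u$ and noting $\Cont_{Y_b}(u)\sim_\L\z$.
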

\begin{remark} This result does not depends on the choice of $(\Cont_{Y_a})_{a\in \N}$, since they are unique up to $\sim_\L$.
\end{remark}
Before proving Lemma \ref{Odd job} let us explain why it implies Proposition \ref{Cv cont}. 
\begin{proof}[Proof of Proposition \ref{Cv cont}] First by Proposition \ref{FBLooptreeThm} a.s. $d_H(\L_{Y_a},\L)\to 0$. Hence, by Lemma \ref{Odd job} a.s.
\[ \lim_{a\to \infty} \max_{b\geq a} d_\infty ( \Cont_{Y_a}\circ \psi^{-1}_{a,b}, \Cont_{Y_b})=0. \]
Then by the triangular inequality, a.s. 
\[ \lim_{a\to \infty} \limsup_{b,c\to\infty} d_\infty (\Cont_{Y_c}, \Cont_{Y_b})\leq \lim_{a\to \infty} \limsup_{b,c\to\infty} d_\infty ( \Cont_{Y_a}\circ \psi^{-1}_{a,c}, \ \Cont_{Y_a}\circ \psi^{-1}_{a,b}). \]
Finally by Lemma \ref{Luigi=/=Boo} and continuity of $\Cont_{Y_a}$ the right hand side above is almost surely null. 
\end{proof}
%
\begin{proof}[Proof of Lemma \ref{Odd job}] Almost surely for every $a\leq b$ large enough, $\y\in \L_b$ the following holds: Since $\prec$ is an order, by \eqref{3/3/18hc} there exists $0\leq i<n_a$ such that $\x_{a,i}\prec \y\prec \x_{a,i+1}$. Then recall from Lemma \ref{BAKA NO TEST} that $\pL_{Y_a}(\y)$ denote the projection of $\y$ on $\L_{y_a}$. Then note that by Lemma \ref{KeepProjection}, $\x_{a,i}\prec \pL_{Y_a(\y)}\prec \x_{a,i+1}$. Thus,
 by Lemma \ref{prelimLeft} (c),
\begin{equation} \p_{Y_a,\Lft}(\x_{a,i}) \leq \p_{Y_a,\Lft}\circ \pL_{Y_a}(\y)  \leq \p_{Y_a,\Lft}(\x_{a,i+1}). \label{3/3/21h} \end{equation}

Also, since $\x_{a,i}\prec \y\prec \x_{a,i+1}$, 
\begin{equation*} \p_{Y_b,\Lft}(\x_{a,i}) \leq \p_{Y_b,\Lft}(\y)  \leq \p_{Y_b,\Lft}(\x_{a,i+1}). \end{equation*}
Then by applying $\psi^{-1}_{a,b}$ which is strictly increasing we get,
\begin{equation} \p_{Y_a,\Lft}(\x_{a,i}) \leq \psi^{-1}_{a,b}\circ \p_{Y_b,\Lft}(\y)  \leq \p_{Y_a,\Lft}(\x_{a,i+1}). \label{3/3/23h} \end{equation}

Therefore by \eqref{3/3/21h}, \eqref{3/3/23h}, \eqref{2/3/20hb}, and \eqref{2/3/20ha},
\[ |\psi^{-1}_{a,b}\circ \p_{Y_b,\Lft}(\y)-\p_{Y_a,\Lft}\circ \pL_{Y_a}(\y)|\leq 3 l_a\leq 6 \log^6 (Y_a)/(Y_a\mu[0,Y_a]).\]
Then since by Lemma \ref{First Cont}, $\Cont_{Y_a}$ is a.s. $\mu[0,Y_a]$-Lipschitz,
\[ d_\L(\Cont_{Y_a}\circ \psi^{-1}_{a,b}\circ \p_{Y_b,\Lft}(\y),\Cont_{Y_a}\circ \p_{Y_a,\Lft}\circ \pL_{Y_a}(\y)) \leq 6 \log^6 (Y_a)/Y_a.  \] 
Also, by using the definition of $\Cont_a$, we have, $\Cont_a\circ \p_{Y_a,\Lft}\circ \pL_{Y_a}(\y)\sim_\L \pL_{Y_a}(\y)$. Hence, 
\[ d_\L(\Cont_{Y_a}\circ \psi^{-1}_{a,b}\circ \p_{Y_b,\Lft}(\y),\pL_{Y_a}(\y)) \leq 6 \log^6 (Y_a)/Y_a.  \] 
Then using $d_\L(\pL_{Y_a}(\y),\y)\leq d_H(\L_{Y_a},\L)$ (see Lemma \ref{BAKA NO TEST}),
\[ d_\L(\Cont_{Y_a}\circ \psi^{-1}_{a,b}\circ \p_{Y_b,\Lft}(\y),\y) \leq d_H(\L_{Y_a},\L)+6 \log^6 (Y_a)/Y_a.  \] 
Finally by using the definition of $\Cont_a$, we have $\y\sim_\L \Cont_{Y_b}\circ \p_{Y_b,\Lft}(\y)$, so,
\begin{equation} d_\L(\Cont_{Y_a}\circ \psi^{-1}_{a,b}\circ \p_{Y_b,\Lft}(\y), \Cont_{Y_b}\circ \p_{Y_b,\Lft}(\y)) \leq d_H(\L_{Y_a},\L)+6 \log^6 (Y_a)/Y_a.  \label{4/3/0h}\end{equation}

To conclude the proof by \eqref{4/3/0h}, almost surely for every $a\leq b\in \N$ large enough, writing $S_b:=\{\p_{Y_b,\Lft}(\z),\z\in  \L_{Y_b}\}$,
\[ \max_{x\in S_b} d_\L(\Cont_{Y_a}\circ \psi^{-1}_{a,b}(x),\Cont_{Y_b}(x))\leq  d_H(\L_{Y_a},\L)+6 \log^6 (Y_a)/Y_a. \]
The maximum is then extended to $[0,1]$, by density of $S_b$ (see Lemma \ref{prelimLeft} (d)), since $\Cont_{Y_a}, \psi^{-1}_{a,b}, \Cont_{Y_b}$ are almost surely continuous.
\end{proof}
\subsection{Proof of Theorem \ref{ConstructContThm}}
Recall that by Lemmas \ref{ExtendPleft}, \ref{YOLONUL}, $\p_{\Lft}$ extends to a function continuous at each point of $\L\backslash (\R^+\times[0,1])$. We need to show that almost surely for every $\x\in \L$, $\Cont\circ \p_{\Lft}(\x)\sim_\L \x$. To this end, by continuity of $\p_{\Lft}$ and $\Cont$ at $\L\backslash (\R^+\times[0,1])$, and by density of $\R^+\times[0,1]$ (see Proposition \ref{Completion} (a)), it is enough to show the desired result on $\R^+\times[0,1]$.

Almost surely for every $\x\in \R^+\times[0,1]$ the following holds: First by definition of $(\Cont_l)_{l>0}$, for every $l$ large enough, $\Cont_l\circ \p_{l,\Lft}(\x)\sim_\L \x$. Then by Lemma \ref{CVLeft}, a.s. $\p_{l,\Lft}(\x)\to \p_{\Lft}(x)$. Hence, since a.s. $(\Cont_l)_{l>0}$ converges uniformly toward $\Cont$, as $l\to \infty$, 
\[ \Cont_l\circ \p_{l,\Lft}(\x)\to \Cont\circ \p_\Lft(\x)) \]
 Therefore, $\Cont\circ \p_\Lft(\x)\sim_\L \x$.
\subsection{Holder continuity of $\Cont$: Proof of Theorem \ref{HolderContThm}}
The next result is more precise than Theorem \ref{HolderContThm}, and we will use it in the next section to estimate the Minkowski lower box dimension of $\L$. 
\begin{lemma} Almost surely, for every $n\in \N$ large enough, for every $s,t\in [0,1]$,
\[ d_\L(\Cont(s),\Cont(t))\leq 13|s-t|\E[\mu[0,2^n]]+13n^62^{-n}. \]
\end{lemma}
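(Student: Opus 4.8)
The plan is to bound $d_\L(\Cont(s),\Cont(t))$ by passing through a finite-level approximant $\Cont_{Y_a}$ and then through $\Cont_{2^n}$, using the Lipschitz estimate from Lemma \ref{First Cont} together with the uniform bounds built in Section \ref{ConstructCont}. The point is that we cannot directly use that $\Cont$ is $\mu[0,\infty)$-Lipschitz (there is no such constant), so we work at the scale $2^n$ where $\mu[0,2^n]$ is essentially $\E[\mu[0,2^n]]$ by Lemma \ref{Recall icrt} (b), and we control the error between $\Cont$ and $\Cont_{2^n}$ by the Hausdorff distance $d_H(\L_{2^n},\L)$ which by Lemma \ref{ChainingStart} (b) (iterated) is $O(n^3 2^{-n})$, hence well below $n^6 2^{-n}$.

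First I would fix $n$ large. For $l=2^n$, Lemma \ref{First Cont} gives that $\Cont_{2^n}$ is $\mu[0,2^n]$-Lipschitz, and Lemma \ref{Recall icrt} (a),(b) gives a.s. for $n$ large $\mu[0,2^n]\leq 2\E[\mu[0,2^n]]$ (concavity plus $\mu[0,l]\sim\E[\mu[0,l]]$), so for all $s,t$,
\[ d_\L(\Cont_{2^n}(s),\Cont_{2^n}(t))\leq 2\E[\mu[0,2^n]]\,|s-t|. \]
Next I would control $d_\infty(\Cont_{2^n},\Cont)$. Since $(\Cont_{Y_a})_{a\in\N}$ converges uniformly to $\Cont$, and since $2^n$ lies between consecutive $Y_a$'s, I would use Lemma \ref{Odd job} in the form $d_\infty(\Cont_{Y_a}\circ\psi_{a,b}^{-1},\Cont_{Y_b})\leq d_H(\L_{Y_a},\L)+6\log^6(Y_a)/Y_a$, but a cleaner route is to redo the estimate of Lemma \ref{Odd job} directly with the deterministic level $l=2^n$ in place of $Y_a$: exactly the same argument (project $\y\in\L$ onto $\L_{2^n}$, compare $\p_{2^n,\Lft}$ and $\p_{\Lft}$ via Lemma \ref{CVLeft}, use $d_\L(\pL_{2^n}(\y),\y)\leq d_H(\L_{2^n},\L)$, use that $\Cont_{2^n}$ is $\mu[0,2^n]$-Lipschitz) yields
\[ d_\infty(\Cont_{2^n}\circ\phi_n,\Cont)\leq d_H(\L_{2^n},\L)+6\log^6(2^n)/2^n \]
for a suitable strictly increasing continuous reparametrisation $\phi_n$ of $[0,1]$ analogous to $\psi_{a,b}$; and by Lemma \ref{ChainingStart} (b) summed, $d_H(\L_{2^n},\L)=O(n^3 2^{-n})$. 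Crucially the reparametrisation $\phi_n$ satisfies $|\phi_n(s)-\phi_n(t)|\leq C|s-t|+C\log^6(2^n)/(2^n\mu[0,2^n])$ on the nodes and is linear between them (this is where the slackness $3l_a$ in Lemma \ref{Odd job} enters), so $\phi_n$ is a bi-Lipschitz-up-to-an-additive-error perturbation of the identity; combining, $d_\L(\Cont(s),\Cont(t))\leq d_\L(\Cont_{2^n}(\phi_n(s)),\Cont_{2^n}(\phi_n(t)))+2(d_H(\L_{2^n},\L)+6\log^6(2^n)/2^n)$, and the first term is $\leq 2\E[\mu[0,2^n]]\,|\phi_n(s)-\phi_n(t)|\leq 2\E[\mu[0,2^n]](C|s-t|+C'\log^6(2^n)/(2^n\mu[0,2^n]))$. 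Collecting constants (absorbing $\log^6(2^n)=(n\log 2)^6$, and $\E[\mu[0,2^n]]/\mu[0,2^n]\to1$) gives a bound of the announced shape $13|s-t|\E[\mu[0,2^n]]+13n^6 2^{-n}$ for $n$ large.

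The main obstacle is bookkeeping the reparametrisation: one must make sure the map relating the natural parameter of $\Cont_{2^n}$ to that of $\Cont$ distorts lengths by at most a bounded factor plus an additive term of order $\log^6(2^n)/(2^n\mu[0,2^n])$, which after multiplying by the Lipschitz constant $\mu[0,2^n]$ of $\Cont_{2^n}$ becomes $O(n^6 2^{-n})$; this is exactly the mechanism already present in the proofs of Lemmas \ref{Odd job} and \ref{Luigi=/=Boo}, so the work is to transcribe it with the deterministic scale $2^n$ and then track the numerical constants carefully so that $13$ suffices (the $13$ is not tight — it comes from bounding $2\cdot(2C)$-type products, plus the $6$ from Lemma \ref{Odd job}, plus the Hausdorff term). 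The only genuinely probabilistic inputs are Lemma \ref{Recall icrt}, Lemma \ref{ChainingStart}, Lemma \ref{CVLeft}, and the uniform convergence $\Cont_{Y_a}\to\Cont$ from Proposition \ref{Cv cont}; everything else is deterministic manipulation of Lipschitz maps and the order $\prec$.
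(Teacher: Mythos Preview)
Your approach is essentially the paper's, and it is correct. The paper takes the route you mention first and then set aside: it sends $b\to\infty$ in Lemma~\ref{Odd job} (via Lemma~\ref{Luigi=/=Boo}) to obtain
\[
d_\infty(\Cont_{Y_a}\circ\psi_a^{-1},\Cont)\leq d_H(\L_{Y_a},\L)+6\log^6(Y_a)/Y_a,
\]
then uses that $\Cont_{Y_a}$ is $\mu[0,Y_a]$-Lipschitz and that $\psi_a^{-1}$ is plainly $6$-Lipschitz (from \eqref{2/3/20hb} and \eqref{3/3/18hb}) to get
\[
d_\L(\Cont(s),\Cont(t))\leq 6|s-t|\mu[0,Y_a]+2d_H(\L_{Y_a},\L)+12\log^6(Y_a)/Y_a,
\]
and finally picks $a$ with $Y_a\in[2^n,2^{n+1}]$ (which exists a.s.\ for $n$ large by \cite{ICRT1} Lemma~4.5) and concludes with Lemma~\ref{Recall icrt}~(a)(b) and Lemma~\ref{ChainingStart}~(b).

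Two small points on your ``cleaner route'' at the deterministic level $2^n$. First, the reparametrisation in the paper is purely Lipschitz: $\psi_a^{-1}$ is $6$-Lipschitz with no additive correction; the additive term $6\log^6(Y_a)/Y_a$ sits entirely in the $d_\infty$ comparison of Lemma~\ref{Odd job}, not in $\psi_a^{-1}$. Second, to carry out your variant you would need the P\'olya-urn estimate (Lemma~\ref{E=mc2}) from a deterministic starting level, which is straightforward but not stated; the paper avoids this extra step by working at level $Y_a$ and then locating $Y_a$ in $[2^n,2^{n+1}]$. So the paper's path is a bit more economical, but your sketch leads to the same bound with the same ingredients.
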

\begin{proof}
We keep the notations of Section \ref{ConstructCont}. By taking $b\to \infty$ in Lemma \ref{Odd job}, by Lemma \ref{Luigi=/=Boo}, a.s. for every $a$ large enough, 
\[ d_\infty ( \Cont_{Y_a}\circ \psi^{-1}_{a}, \Cont)\leq d_H(\L_{Y_a},\L)+6\log^6(Y_a)/Y_a. \]
Hence, by the triangular inequality, a.s. for every $a$ large enough, for every $s,t\in [0,1]$,
\[ d_\L(\Cont(s),\Cont(t))\leq d_\L(\Cont_{Y_a}\circ \psi^{-1}_a(s),\Cont_{Y_a}\circ \psi^{-1}_a(t))+2d_H(\L_{Y_a},\L)+12\log^6(Y_a)/Y_a.\]
Moreover, by Lemma \ref{First Cont}, $\Cont_{Y_a}$ is $\mu[0,Y_a]$-Lipschitz, and by \eqref{2/3/20hb}, \eqref{3/3/18hb}, $\psi^{-1}_{a}$ is $6$-Lipschitz. Thus,
\begin{equation} d_\L(\Cont(s),\Cont(t))\leq 6|s-t|\mu[0,Y_a]+2d_H(\L_{Y_a},\L)+12l_a\mu[0,Y_a]. \label{4/3/2h} \end{equation}

Next, as a corollary of \cite{ICRT1} Lemma 4.5, a.s. for every $n$ large enough, there exists $a\in \N$, such that $Y_a\in [2^n,2^{n+1}]$. So, a.s. for every $n\in \N$ large enough, by \eqref{4/3/2h}, for every $s,t\in [0,1]$.
\begin{equation*} d_\L(\Cont(s),\Cont(t))\leq 6|s-t|\mu[0,2^{n+1}]+2d_H(\L_{2^n},\L)+12n^6 2^{-n}. \end{equation*}
The desired result follows from Lemma \ref{ChainingStart} (b), and by Lemma \ref{Recall icrt} (a) (b).
\end{proof}
\begin{proof}[Proof of Theorem \ref{HolderContThm}] Let $\delta>\badam$. By  \eqref{22/02/9h}, $\E[\mu[0,2^{n}]]=o(2^{n(\beta-1)})$. 
So, for every $n\in \N$ large enough, for every $s,t\in [0,1]$ with $2^{-n\beta} \leq |s-t|\leq 13 n^6 2^{-n\beta}$,
\begin{equation*} d_\L(\Cont(s),\Cont(t))\leq n^72^{-n}\leq - \log(|s-t|)^8|s-t|^{1/\beta}. \end{equation*}
Since $\beta>\badam$ is arbitrary, the desired result follows.
\end{proof}
\section{Fractal dimensions of the looptree} \label{DimensionsSec} 
\subsection{Definitions of the fractal dimensions} \label{RecallDefDimSec}
In this section $X$ denotes a pseudo-metric space. 
\begin{definition*} (Minkowski dimensions) For every $\e>0$, an $\e$-set of $X$ is a finite subset $S$ of $X$ such that $d_H(S,X)\leq \e$. For every $\e>0$ let $N_\e$ be the smallest size of a $\e$-set of $X$. Define the Minkowski lower box and upper box dimensions respectively by
\[ \underline{\dim}(X):=\liminf_{\e\to 0} \frac{ \log N_{\e}}{-\log \e} \quad \text{and} \quad \overline{\dim}(X):= \limsup_{\e\to 0} \frac{ \log N_{\e}}{-\log \e}. \]
\end{definition*}
\begin{definition*} (Packing dimension) For every $s\geq 0$ and $A\subset X$ let 
\[ P^s_0(A):= \limsup_{\delta\to 0} \left \{ \sum_{i\in I} \diam(B_i)^s \Bigg \vert \, \{B_i\}_{i\in I} \text{ are disjoint balls $B(x,r)$ with $x\in A$ and $0<r\leq \delta$}\right \}. \]
and
\[ P^s(X):=\inf \left \{\sum_{i=1}^{\infty} P^s_0(A_i) \Bigg \vert X\subset \bigcup_{i=1}^{\infty} A_i \right \}. \]
Then $P^s$ is a decreasing function of $s$, and we define the packing dimension of $X$ as 
\[ \dim_P(X):= \sup \{s, P^s(X)<\infty\}. \]
\end{definition*}
\begin{definition*} (Hausdorff dimension) For every $s,r\geq 0$ write
\[ H^s_r(X):= \inf_{\diam(A_i)\leq r} \left \{ \sum_{i=1}^{\infty} \diam(A_i)^s \Bigg \vert X \subseteq \bigcup_{i=1}^{\infty} A_i \right \}. \]
The Hausdorff dimension of $X$ is defined by
\[ \dim_H(X):=\sup \left \{s, \sup_{r\in \R^+}H_r^s(X)<\infty \right \}. \]
\end{definition*}
\begin{remark} Although, the above dimensions are usually considered for metric spaces, it is easy to check that they are exactly the same for a pseudo-metric space and for its quotient. For this reason, the below results still apply here.
\end{remark}
To compute the packing dimension and Hausdorff dimension we will use the following extension of Theorem 6.9, and Theorem 6.11 from \cite{fractal}. (\cite{fractal} deals with subsets of Euclidian space, but the same arguments hold for every pseudo-metric space.) 
\begin{lemma}\label{Hausdorff} Let $p$ be a Borel probability measure on $X$ and $s\in \R^+$.
 \begin{compactitem} 
  \item[a)]If $p$-almost everywhere $\limsup (\log p(B(x,\e)))/(\log \e)\geq s$ as $\e\to 0$, then $\dim_P(X)\geq s$.
 \item[b)]If $p$-almost everywhere $\liminf (\log p(B(x,\e)))/(\log \e)\geq s$ as $\e\to 0$, then $\dim_H(X)\geq s$.
 \end{compactitem}
\end{lemma}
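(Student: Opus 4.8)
The plan is to deduce both parts from the classical mass distribution principle and its packing‑dimension counterpart, carried out directly in the pseudo‑metric space $X$ (equivalently in its metric quotient, which has the same dimensions). Throughout I write $\overline{d}_p(x):=\limsup_{\e\to 0}\tfrac{\log p(B(x,\e))}{\log\e}$ and $\underline{d}_p(x):=\liminf_{\e\to 0}\tfrac{\log p(B(x,\e))}{\log\e}$; since $\e\mapsto p(B(x,\e))$ is nondecreasing and left‑continuous, these are Borel functions of $x$, and since $\log\e<0$ the inequality $\tfrac{\log p(B(x,\e))}{\log\e}\ge t$ is exactly $p(B(x,\e))\le\e^t$. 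So the hypotheses read ``$\underline{d}_p\ge s$ holds $p$‑a.e.'' in (b) and ``$\overline{d}_p\ge s$ holds $p$‑a.e.'' in (a).

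\emph{Part (b).} Fix $t<s$. By hypothesis, for $p$‑a.e.\ $x$ there is $\e_0(x)>0$ with $p(B(x,\e))\le\e^t$ for all $\e<\e_0(x)$; the Borel sets $A_n:=\{x:p(B(x,\e))\le\e^t\ \text{for all}\ \e<1/n\}$ increase to a set of full $p$‑measure, so we may fix $n$ with $p(A_n)>0$ and set $E:=A_n$. Now let $X\subseteq\bigcup_i U_i$ be any countable cover with $\sup_i\diam(U_i)<1/(2n)$; discarding the $U_i$ that miss $E$ and choosing $x_i\in U_i\cap E$, we have $U_i\subseteq B(x_i,2\diam U_i)$, hence $p(U_i)\le(2\diam U_i)^t$. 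Summing over $i$ gives $p(E)\le\sum_i p(U_i)\le 2^t\sum_i\diam(U_i)^t$, so $H^t_r(X)\ge 2^{-t}p(E)>0$ for all small $r$; thus the $t$‑dimensional Hausdorff measure of $X$ is positive, i.e.\ $\dim_H(X)\ge t$. Letting $t\uparrow s$ proves (b).

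\emph{Part (a).} Here I would use the standard identity $\dim_P(X)=\inf\{\sup_i\overline{\dim}(X_i):X\subseteq\bigcup_i X_i\}$ (the modified‑box‑dimension characterization of packing dimension; it holds in any pseudo‑metric space, proved as in the Euclidean case by comparing $P^s_0$ with the covering numbers $N_\e$). Fix $t<s$ and an arbitrary countable cover $X=\bigcup_i X_i$; it suffices to find one $i$ with $\overline{\dim}(X_i)\ge t$. Since $p(X)=1\le\sum_i p^*(X_i)$, some $X_{i_0}$ has $p^*(X_{i_0})>0$; deleting the $p$‑null set $\{\overline{d}_p<s\}$ we may assume $\overline{d}_p\ge s$ on all of $X_{i_0}$, still with $p^*(X_{i_0})>0$. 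Suppose for contradiction $\overline{\dim}(X_{i_0})<t$, and pick $t'\in(\overline{\dim}(X_{i_0}),t)$ and $t''\in(t,s)$, so that $N_\e(X_{i_0})\le\e^{-t'}$ for all small $\e$. For each $x\in X_{i_0}$, $\overline{d}_p(x)\ge s>t''$ yields $r_k\downarrow 0$ with $p(B(x,r_k))\le r_k^{t''}$; putting $\e_k:=r_k/5$ we get $p(B(x,5\e_k))\le(5\e_k)^{t''}\le\e_k^{t}$ for $k$ large (as $t''>t$). So for every $m$, each $x\in X_{i_0}$ has some $\e(x)<1/m$ with $p(B(x,5\e(x)))\le\e(x)^t$. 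Applying the $5r$‑covering lemma to $\{B(x,\e(x))\}_{x\in X_{i_0}}$ yields a countable disjoint subfamily $\{B(x_j,\e_j)\}_j$ with $X_{i_0}\subseteq\bigcup_j B(x_j,5\e_j)$, so
\[ p^*(X_{i_0})\le \sum_j p(B(x_j,5\e_j))\le \sum_j\e_j^{t}. \]
Grouping the $j$ with $\e_j\in(2^{-k-1},2^{-k}]$: disjointness puts the corresponding centers pairwise at distance $>2^{-k}$, so (being points of $X_{i_0}$) there are at most $N_{2^{-k-1}}(X_{i_0})=O(2^{kt'})$ of them; hence $\sum_j\e_j^t\le\sum_{k\ge\log_2 m}O(2^{k(t'-t)})=O(m^{t'-t})\to 0$ as $m\to\infty$, contradicting $p^*(X_{i_0})>0$. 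Thus $\overline{\dim}(X_{i_0})\ge t$, and letting $t\uparrow s$ gives $\dim_P(X)\ge s$.

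The main obstacle is exactly in part (a): the $\limsup$ hypothesis only provides ``good'' scales $\e_k(x)$ intermittently and varying with the point $x$, whereas the $\liminf$ hypothesis of (b) forces $p(B(x,\e))\le\e^t$ at \emph{all} small scales and so permits a single scale $1/n$ good for the whole set $E$. This is what necessitates a covering lemma together with its unavoidable constant‑factor enlargement of balls; the reason the argument survives the enlargement is that passing from $B(x,\e)$ to $B(x,5\e)$ perturbs the logarithmic density ratio by only $O(1/\log(1/\e))\to 0$, so a target exponent $t''$ chosen strictly between $t$ and $s$ absorbs the loss.
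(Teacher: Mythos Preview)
The paper does not actually prove this lemma: it quotes it as an extension of Mattila's Theorems~6.9 and~6.11 and simply remarks that the Euclidean arguments carry over to pseudo-metric spaces. Your write-up therefore supplies what the paper only cites, and it does so along the classical lines. Part~(b) is exactly the mass distribution principle, and your argument is correct. For part~(a) you go through the Tricot identity $\dim_P(X)=\inf\{\sup_i\overline{\dim}(X_i)\}$ and then a Vitali $5r$ argument to contradict $\overline{\dim}(X_{i_0})<t$; this is one of the standard routes (the other being to bound $P^t_0(X_{i_0})$ from below directly by exhibiting packings), and both rely on the same basic ingredients---the $5r$ covering lemma and the comparison of packing and covering numbers---so there is no substantive divergence from Mattila. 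Two cosmetic fixes: disjointness of $B(x_j,\e_j)$ with $\e_j\in(2^{-k-1},2^{-k}]$ only gives centers at pairwise distance $\ge\e_j>2^{-k-1}$ (not $>2^{-k}$), and a $2^{-k-1}$-separated set in $X_{i_0}$ is bounded in cardinality by $N_{2^{-k-2}}(X_{i_0})$ rather than $N_{2^{-k-1}}(X_{i_0})$; neither affects the $O(2^{kt'})$ count or the conclusion.
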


We have well-known inequalities (see e.g. Chapter 3 of Falconer \cite{FalconPunch}):
\begin{lemma} \label{FalconPunch} For every pseudo-metric space $X$ we have
\[ \dim_H(X)\leq  \underline{\dim}(X) \leq \overline{\dim}(X) \quad \text{and} \quad \dim_H(X)\leq  \dim_P(X) \leq \overline{\dim}(X). \]
\end{lemma}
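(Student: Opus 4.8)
The plan is to follow the classical arguments relating the various fractal dimensions (see e.g.\ \cite{FalconPunch} Ch.~3, or \cite{fractal}), all of which go through verbatim for a pseudo-metric space; if convenient one may instead pass to the metric quotient, since by the preceding remark the dimensions are unchanged. The inequality $\underline{\dim}(X)\le\overline{\dim}(X)$ is immediate from $\liminf\le\limsup$, so there are three inequalities to treat.

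For $\dim_H(X)\le\underline{\dim}(X)$, I would fix $s>\underline{\dim}(X)$ and pick a sequence $\e_k\downarrow 0$ with $N_{\e_k}\le\e_k^{-s}$ for $k$ large. A minimal $\e_k$-set gives a cover of $X$ by at most $N_{\e_k}$ closed balls of radius $\e_k$, each of diameter $\le 2\e_k$, so $H^s_{2\e_k}(X)\le N_{\e_k}(2\e_k)^s\le 2^s$. Since $r\mapsto H^s_r(X)$ is non-increasing, for any $r>0$ one picks $k$ with $2\e_k\le r$ to get $H^s_r(X)\le 2^s$; hence $\sup_{r>0}H^s_r(X)\le 2^s<\infty$, so $\dim_H(X)\le s$, and one lets $s\downarrow\underline{\dim}(X)$.

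For $\dim_P(X)\le\overline{\dim}(X)$, I would fix $s>s'>\overline{\dim}(X)$, so $N_\e\le\e^{-s'}$ for all small $\e$. Given a disjoint family of balls $\{B(x_i,r_i)\}_i$ with $x_i\in X$ and $r_i\le\delta$, one sorts the indices by dyadic scale $r_i\in(2^{-k-1},2^{-k}]$ with $2^{-k}\le 2\delta$. At a fixed scale $k$, disjointness forces the centres to be $2^{-k-1}$-separated, and a set with that separation has at most $N_{c2^{-k}}\le C2^{ks'}$ elements, so the scale-$k$ contribution to $\sum_i\diam(B_i)^s$ is at most $C2^{ks'}(2^{-k+1})^s=C'2^{-k(s-s')}$; summing the geometric series over scales gives a bound $C''\delta^{s-s'}$ uniform in the family. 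Hence $P^s_0(X)=0$, so (taking the trivial cover $X\subseteq X$) $P^s(X)=0<\infty$, giving $\dim_P(X)\le s$, and one lets $s\downarrow\overline{\dim}(X)$.

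The remaining inequality $\dim_H(X)\le\dim_P(X)$ is the only genuinely delicate point, and I expect the bulk of the work there. The cleanest route is the ``modified box-counting'' reformulation of the packing dimension,
\[ \dim_P(X)=\inf\Big\{\,\sup_{i\in\N}\overline{\dim}(X_i)\ :\ X=\bigcup_{i\in\N}X_i\,\Big\} \]
(cf.\ \cite{FalconPunch} Ch.~3): its content is the identity $\dim_P(A)=\dim_P(\overline A)$, the fact that $\overline{\dim}(A)>s$ forces $P^s_0(A)=\infty$ (a pigeonhole/covering argument), and the $\sigma$-stability of the set function $P^s$. Granting this, one concludes quickly, since Hausdorff dimension is countably stable, $\dim_H(X)=\sup_i\dim_H(X_i)$, and $\dim_H(X_i)\le\overline{\dim}(X_i)$ by the inequality proved above applied to each piece; thus $\dim_H(X)\le\sup_i\overline{\dim}(X_i)$ for every countable cover, and taking the infimum over covers yields $\dim_H(X)\le\dim_P(X)$. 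An alternative that avoids the reformulation is to prove directly that $H^s(A)\le 5^s P^s(A)$ for every $A$, by applying a Vitali-type $5r$-covering lemma to maximal $\delta$-packings and then passing to countable covers; this also suffices for the dimension inequality, but the box-counting reformulation is the more transparent of the two, and it is the step I would single out as the main obstacle.
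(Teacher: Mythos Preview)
Your proposal is correct: each of the three nontrivial inequalities is handled by the standard argument, and you rightly flag $\dim_H\le\dim_P$ as the only step requiring real work. The paper itself does not prove this lemma at all; it simply records the inequalities as well known and cites Chapter~3 of Falconer \cite{FalconPunch} (with the remark that passing to the metric quotient is harmless). So there is nothing to compare: you have supplied exactly the classical proof that the citation points to.
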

So we only need to upper bound $\underline{\dim}(\L)$, $\overline{\dim}(\L)$, and to lower bound $\dim_{H}(\L)$ and $\dim_{P}(\L)$.
\subsection{Upper bound on the Minkowski dimensions} \label{MinkowskiDimSec}
To upper bound the Minkowski dimensions we use the contour path $\Cont$. By Theorem \ref{ConstructContThm} for every $\alpha\in \L$, $\Cont(\p_\Lft(\alpha))\sim_\L \alpha$. So $d_H(\Cont([0,1],\L))=0$. As a result,  for every $x>0$, $\{ix,0\leq i \leq 1/x\}$ is a $\e_x$-set of $\L$, where $\e_x:=\max_{s,t}|\Cont(s)-\Cont(t)|$. So for every $\alpha>0$, if $\Cont$ is $\alpha$ H\"older continuous,
\[ \overline\dim(\L)\leq \limsup_{x\to 0} \frac{\log(1/x+O(1))}{-\log(\e_x)}\leq \limsup_{x\to 0} \frac{\log(1/x+O(1))}{(1+o(1))\log(1/x^\alpha)}=1/\alpha.\]
Therefore by Theorem \ref{HolderContThm} a.s. $\overline\dim(\L)\leq \badam$. Also,
\begin{lemma} \label{Minkowski<Cont} Almost surely for every $\alpha>1/\overline \dim(\L)$, $\Cont$ is not $\alpha$ H\"older continuous.
\end{lemma}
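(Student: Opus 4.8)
The plan is to prove the lemma by contraposition, essentially running in reverse the Minkowski-dimension computation displayed just above, but now with an arbitrary H\"older exponent in place of the specific value coming from Theorem \ref{HolderContThm}. I would work on the almost sure event on which $\Cont$ is defined and satisfies $\Cont(\p_\Lft(\beta))\sim_\L\beta$ for every $\beta\in\L$ (Theorem \ref{ConstructContThm}); recall this is precisely what yields $d_H(\Cont([0,1]),\L)=0$. Since on this event the argument becomes deterministic, the random quantifier ``for every $\alpha>1/\overline{\dim}(\L)$'' causes no difficulty.

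Concretely, I would assume $\Cont$ is $\alpha$ H\"older continuous, say $d_\L(\Cont(s),\Cont(t))\leq C|s-t|^\alpha$ for all $s,t\in[0,1]$ and some finite $C$, and derive $\overline{\dim}(\L)\leq 1/\alpha$. For small $\e>0$ set $x:=(\e/C)^{1/\alpha}$ and consider the finite set $S_\e:=\{\Cont(ix):0\leq i\leq\lceil 1/x\rceil\}$, which has $(C/\e)^{1/\alpha}+O(1)$ elements. For any $\beta\in\L$ one has $\p_\Lft(\beta)\in[0,1]$, so $\p_\Lft(\beta)$ lies within $x$ of some $ix$, and then
\[ d_\L(\Cont(ix),\beta)=d_\L\big(\Cont(ix),\Cont(\p_\Lft(\beta))\big)\leq Cx^\alpha=\e. \]
Hence $S_\e$ is an $\e$-set of $\L$, so $N_\e\leq (C/\e)^{1/\alpha}+O(1)$, and plugging this into $\overline{\dim}(\L)=\limsup_{\e\to0}(\log N_\e)/(-\log\e)$ and letting $\e\to0$ gives $\overline{\dim}(\L)\leq 1/\alpha$, i.e.\ $\alpha\leq 1/\overline{\dim}(\L)$. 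The contrapositive is exactly the asserted statement.

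I do not anticipate any real obstacle: the lemma is essentially the converse reading of the bound already established above. The only points deserving a line of justification are that $\e\mapsto(\e/C)^{1/\alpha}$ takes all sufficiently small positive values, so that the $\limsup$ over $\e$ in the definition of $\overline{\dim}(\L)$ is genuinely controlled by the estimate on $N_\e$, and that the additive $O(1)$ coming from rounding $1/x$ up to an integer is negligible in the logarithmic ratio.
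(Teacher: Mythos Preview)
Your proposal is correct and follows essentially the same route as the paper: the paragraph immediately preceding the lemma already shows that if $\Cont$ is $\alpha$ H\"older continuous then $\overline{\dim}(\L)\leq 1/\alpha$, by pushing forward an equispaced grid in $[0,1]$ via $\Cont$ to obtain $\e$-sets of $\L$ of size roughly $\e^{-1/\alpha}$; the lemma is simply the contrapositive of this, and no separate proof is given. Your version is in fact slightly more carefully written, making explicit the H\"older constant $C$ and the image points $\Cont(ix)\in\L$.
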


We now consider $\underline \dim(\L)$. By \eqref{22/02/9h}, there exists $(n_i)_{i\in \N}$ such that \smash{$\E[\mu[0,2^{n_i}]]=o(2^{n_i(\badim-1+o(1))})$}. So by Lemma \ref{HolderContThm}, a.s. for every $i$ large enough, $s,t\in [0,1]$ with $|s-t|\leq 2^{-n_i}/\E[\mu[0,2^{n_i}]]$, 
\[ d_\L(\Cont(s),\Cont(t))\leq 13|s-t|\E[\mu[0,2^{n_i+1}]]+13n_i^62^{-n_i}\leq n_i^72^{-n_i}.\]
Therefore, using the same $\e$-sets as before, a.s. as $i\to \infty$, 
\[ \underline\dim(\L)\leq \liminf_{i\to \infty} \frac{\log(2^{n_i}\E[\mu[0,2^{n_i}]])}{-\log(n_i^{7}2^{-n_i})}\leq \frac{n_i(1+\badim-1+o(1))}{n_i(1+o(1))}\leq  \badim.\]
\subsection{The rebranching principle.} \label{RerootSec}
We want to lower bound the dimensions of $\L$ with Lemma \ref{Hausdorff}. To this end, we morally needs to lower bound the distance between two random vertices in $\L$. 
In this section we show that it morally suffices to lower bound $d_\L((0,0),(Y_1,0))$.  
Our starting point is the rebranching principle, which we use here as follows: Let by convention $Y_0=0$.
\begin{proposition} \label{Permutation}For every permutation $\sigma$ of $\{0\}\cup \N$, we have the following joint equality in distribution,
\[ (d_\T(Y_i,Y_j))_{i,j\geq 0}\equal^{(d)} (d_\T(Y_{\sigma(i)},Y_{\sigma(j)}))_{i,j\geq 0},\] 
\[ \left (\1_{X_k\in \llbracket Y_i,Y_j\rrbracket }\right )_{i,j\geq 0, k\in \N}\equal^{(d)}  \left (\1_{X_k\in \llbracket Y_{\sigma(i)},Y_{\sigma(j)}\rrbracket }\right )_{i,j\geq 0,k\in \N}. \]
\end{proposition}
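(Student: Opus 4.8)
\medskip

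The plan is to reduce the statement to the case of an adjacent transposition $\sigma = (k,k{+}1)$, since these generate the full symmetric group on $\{0\}\cup\N$ and all relevant quantities (distances $d_\T(Y_i,Y_j)$ and indicators $\1_{X_k\in\llbracket Y_i,Y_j\rrbracket}$) depend only on the combinatorial structure of the tree built by Algorithm \ref{Alg1}, hence are symmetric functions of the ordered input data in a sense that transposes consistently. So I would first set up the precise statement: the $\R$-tree $\SBB(\mathbf Y,\mathbf Z)$ with its marked points $(Y_i)_{i\ge 0}$ is a random labelled combinatorial structure, and permuting the \emph{order in which branches are glued} produces an isometric tree with relabelled marks, provided the permuted gluing data is still admissible. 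The key point is therefore to show that the law of the gluing data is invariant (after relabelling the marks) under swapping the insertion times of two consecutive branches.

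\medskip

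Concretely, recall how the ICRT gluing data is generated in Algorithm \ref{ICRT}: $\mathbf Y$ is a Poisson process of rate $\mu[0,l]\,dl$, and given $\mathbf Y$, each $Z_i$ is independent with law $\p_{Y_i} = \mu_{Y_i}/\mu[0,Y_i]$, i.e. $Z_i$ is $\mu$ restricted to $[0,Y_i]$, normalised. The branch glued at step $i$ is the segment $(Y_i, Y_{i+1}]$, of length $Y_{i+1}-Y_i$, attached at height-position $Z_i \le Y_i$ measured along the already-built tree. To swap branches $k$ and $k{+}1$: I would exhibit an explicit measure-preserving map on the configuration space that (i) leaves $\mu$ and the unordered set $\{Y_i\}$ untouched, (ii) reassigns which Poisson point plays the role of the $k$-th versus $(k{+}1)$-th cut, and (iii) correspondingly resamples/reinterprets $Z_k, Z_{k+1}$ so that the resulting glued tree is isometric to the original one with $Y_k \leftrightarrow Y_{k+1}$ swapped. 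Because both branches $k$ and $k{+}1$ are attached at positions $\le Y_k \le Y_{k+1}$ inside the tree $\T_{k-1}$ built from the first $k{-}1$ branches, and branch $k{+}1$ is attached at a position $Z_{k+1}\le Y_{k+1}$ that — crucially — a.s. lies inside $\T_{k-1}$ or inside branch $k$; one handles these two sub-cases. In the first sub-case the two branches are "parallel", attached to the common sub-tree $\T_{k-1}$, and swapping them is a relabelling; in the second sub-case branch $k{+}1$ hangs off branch $k$, and one uses the "re-rooting/re-branching" identity that moving the attachment is compensated by a distributional symmetry of $\mu_{Y_k}$ versus $\mu_{Y_{k+1}}$ conditionally on $\mathbf Y$ — this is where the specific form $\p_{Y_i} = \mu_{Y_i}/\mu[0,Y_i]$ is used. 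After checking the Radon–Nikodym factors cancel (the Poisson intensity contributes a factor from reordering two points, and the conditional laws of $Z_k,Z_{k+1}$ contribute the reciprocal factor), one concludes the joint law of $\bigl((d_\T(Y_i,Y_j))_{i,j},(\1_{X_k\in\llbracket Y_i,Y_j\rrbracket})_{i,j,k}\bigr)$ is unchanged under the swap.

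\medskip

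Then I would assemble: adjacent transpositions of $\{1,2,\dots\}$, together with the transposition $(0,1)$ (swapping the root-segment with the first glued branch, which needs the convention $Y_0 = 0$, $Z_0 = 0$ handled separately since branch $0$ is "attached at the root"), generate all finitary permutations of $\{0\}\cup\N$, and the identity in distribution is preserved under composition. For a general (possibly non-finitary) permutation $\sigma$, one notes both sides of the claimed equality are, for each fixed finite index set, measurable functions of finitely many coordinates, so it suffices to verify the equality for the restriction of $\sigma$ to larger and larger finite blocks, approximated by finitary permutations — alternatively one invokes that the statement for all finite sub-families implies the statement for the whole array by a standard projective-limit / Kolmogorov argument. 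The main obstacle I anticipate is the second sub-case of the adjacent swap, i.e. making the "re-branching" bookkeeping rigorous: one must track exactly how the metric $d_\T$ between marked points transforms when a branch is detached and reattached, and verify that the conditional distribution of the new glue points exactly compensates the change of attachment point. This is essentially the content of the classical line-breaking re-rooting invariance of the CRT/ICRT (Aldous; Aldous–Camarri–Pitman) adapted to the inhomogeneous $\mu$, and I would lean on the version already established in \cite{ICRT1} and \cite{ICRT1}-style computations rather than redo it from scratch; the novelty here is only that we package it as the joint statement about distances and branch-containment indicators simultaneously, which is immediate once the isometry-with-relabelling is in hand.
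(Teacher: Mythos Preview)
Your approach is genuinely different from the paper's. The paper does not manipulate the stick-breaking data at all: it invokes the discrete approximation of \cite{Uniform}, where the ICRT together with its marks $(Y_i)_{i\ge 0}$ is realised as the scaling limit of a uniform tree $T^{\D_n}$ with fixed degree sequence, the $Y_i$ being limits of the leaves $L_i^{\D_n}$. In the discrete model the leaves are trivially exchangeable (relabelling leaves is a bijection of the set of trees with that degree sequence), so the joint law of $(d_{T^{\D_n}}(L_i,L_j))_{i,j}$ and of $(\1_{V_k\in\langle L_i,L_j\rangle})_{i,j,k}$ is permutation-invariant for each $n$; passing to the limit via \eqref{19/03/15h}--\eqref{18/03/18h} gives the proposition in one paragraph, at the price of importing those convergence results.

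Your direct plan has a real gap precisely at the step you yourself flag as the ``main obstacle''. In the second sub-case (branch $k{+}1$ attached inside branch $k$) you assert that ``the Radon--Nikodym factors cancel'', with the Poisson intensity contributing a reordering factor and the conditional laws of $Z_k,Z_{k+1}$ the reciprocal. But the Poisson points $Y_1<Y_2<\dots$ are ordered, not labelled, so there is no reordering factor to speak of; and the asymmetry between $\p_{Y_k}$ and $\p_{Y_{k+1}}$ (the latter sees the extra mass $\mu(Y_k,Y_{k+1}]$) is exactly what makes the transposition non-trivial --- there is no evident measure-preserving involution of $(\mathbf Y,\mathbf Z)$ that swaps the marked endpoints $Y_k,Y_{k+1}$ while keeping the law of the glued tree. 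Your appeal to \cite{ICRT1} is also misplaced: that paper concerns compactness and fractal dimensions and contains no rebranching or re-rooting identity. A correct direct argument would essentially have to go through the exchangeable-increments/bridge representation of Aldous--Camarri--Pitman \cite{introICRT1,introICRT2,ExcICRT}, which is substantially more work than the limit argument the paper uses.
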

\begin{proof} We briefly recall some discrete notions of \cite{Uniform}. Let $(V_i)_{i\in \N}$ be a set of vertices. We say that $\D=(d_1,\dots, d_n)$ is a (pure) degree sequence if $\sum_{i=1}^n d_i=2n-2$ and if $d_1\geq d_2\geq \dots\geq d_n$. 

 For every degree sequence $\D=(d_1,\dots, d_n)$, we say that a tree $T$ have degree sequence $\D$ if $T$ has vertices $(V_i)_{1\leq i \leq n}$ and for every $1\leq i \leq n$, $V_i$ has degree $d_i$. For every degree sequence $\D$ let $T^\D$ denote a uniform tree with degree sequence $\D$. Also let $L_0^\D$, $L_1^\D$,\dots be the leaves of $T^\D$. Those leaves are determinist since they are the vertices of degree 1. We root $T^\D$ at $L_0^\D$.
 
For every tree $T$, let $d_T$ denote the graph distance in $T$.  Also, for every $A,B,C\in T$, we write $(A,T)\in \langle B,C\rangle$ if $A$ lies in the path between $B$ and $C$ in $T$. By Proposition 31 (b) in \cite{Uniform} Section 8.2, and by Lemma 26 (e) in \cite{Uniform} Section 8.1, there exists $(\D_n)_{n\in \N}$ some degree sequences, such that the number of leaves and vertices of degree at least $2$ diverges, and such that for every $a,b\in \N$ the following weak convergences hold jointly 
\begin{equation}  (d_{T^{\D_n}}(L^{\D_n}_i,L^{\D_n}_j) )_{0\leq i,j\leq a}\limit  (d_\T(Y_i,Y_j))_{0\leq i,j\leq a}, \label{19/03/15h} \end{equation}
\begin{equation} \left (\1_{(V_k,T^{\D_n})\in \langle L^{\D_n}_i,L^{\D_n}_j\rangle} \right )_{0\leq i,j\leq a, \, k\leq b}\limit  \left (\1_{X_k\in \llbracket Y_i,Y_j\rrbracket }\right )_{0\leq i,j\leq a, \, k\leq b}. \label{18/03/18h} \end{equation}

The main principle behind the rebranching principle is that for every $n$ large enough the laws of the left hand sides of \eqref{19/03/15h} and \eqref{18/03/18h} are invariant under the permutation of the leaves. So by limit the right hand sides must also be invariant under those permutations. 
\end{proof}
Our goal is now to deduce from Proposition \ref{Permutation} an identity for $d_\L$. First let us introduce some topological notions.
Recall that a.s. $\p$ is a probability Borel measure on $\T$. Let $\p_\L:=\p\times\1_{x\in [0,1]}dx$. 
 Let $\B_\L$ be the product topology on $\T\times[0,1]$. Let $\mathcal F:=\sigma(\mathbf X, \mathbf Y, \mathbf Z, (U_{X,i})_{i\in \N},(U_{Z,i})_{i\in \N})$. Let $\B_\R$ be the Borel topology on $\R$. Finally, for every $\x\in \T\times[0,1]$, $\e>0$, let $B(\x,\e)$ denote the open ball of center $\x$ of radius $\e$ for $d_\L$ in $\T\times[0,1]$.
 
 Keep in mind that we are using two levels of randomness. On the one hand, we work on a completed probability space $(\Omega, \mathcal F,\proba)$. On the other hand, we work at $\omega\in \Omega$ fixed, on the random probability space $(\T\times[0,1], \B_\L,\p_\L)$. 
\begin{lemma} \label{MeasurabilityA} Almost surely the following assertions hold:
\begin{compactitem}
\item[(a)] The map $\x,\y\mapsto d_\L(\x,\y)$ is $(\B_\L\times\B_\L,\B_\R)$-measurable.
\item[(b)] For every $(x,u)\in \T\times[0,1]$, $\e>0$, the open ball $B((x,u),\e)$ is $\B_\L$-measurable.
\item[(c)] For every $\e>0$, the map $\x\in \T\times[0,1]\mapsto \p_\L(B(\x,\e))$ is $(\B_\L,\B_\R)$-measurable.
\end{compactitem}
\end{lemma}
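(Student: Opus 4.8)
The plan is to establish each of the three measurability claims by exhibiting the relevant map as a countable limit or combination of manifestly measurable building blocks, using the explicit formula \eqref{18/12/15h} for $d_\L$ together with the fact that $\B_\L$ is a countably generated product topology on a separable space. Recall that $\T\times[0,1]$ is separable (since $\T$ is Polish and $[0,1]$ is separable), so $\B_\L$ has a countable base, and the product $\sigma$-algebra $\B_\L\otimes\B_\L$ on $(\T\times[0,1])^2$ coincides with the Borel $\sigma$-algebra of the product topology.

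For (a), I would start from the series $d_\L(\x,\y)=\tfrac{\theta_0^2}{4}d_\T(x,y)+\sum_{i\in\N}\theta_i\,\dc(U_{X_i,\x},U_{X_i,\y})$. Each term $\x,\y\mapsto\dc(U_{X_i,\x},U_{X_i,\y})$ is continuous on $(\T\times[0,1])^2$ by Proposition \ref{Completion} (c) (more precisely by its proof, $\x\mapsto U_{X_i,\x}$ is $1/\theta_i$-Lipschitz from $\T\times[0,1]$ to $([0,1],\dc)$ whenever $\theta_i>0$, and trivially measurable when $\theta_i=0$ as it is constant), so each partial sum $S_N(\x,\y):=\tfrac{\theta_0^2}{4}d_\T(x,y)+\sum_{i\le N}\theta_i\dc(U_{X_i,\x},U_{X_i,\y})$ is $(\B_\L\otimes\B_\L,\B_\R)$-measurable — here $d_\T$ is continuous on $\T^2$ hence its pullback to $(\T\times[0,1])^2$ is measurable. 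Finally $d_\L=\lim_{N\to\infty}S_N$ pointwise (this is exactly the uniform-convergence statement proved in Proposition \ref{Completion} (d), or simply monotone convergence of the tail series), and a pointwise limit of measurable functions is measurable. This gives (a).

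For (b), fix $(x,u)\in\T\times[0,1]$ and $\e>0$; then $B((x,u),\e)=\{\y:d_\L((x,u),\y)<\e\}$, which is the preimage of $[0,\e)$ under the map $\y\mapsto d_\L((x,u),\y)$. That map is a section of the map in (a), hence $(\B_\L,\B_\R)$-measurable, so $B((x,u),\e)\in\B_\L$. (One can also note each $S_N((x,u),\cdot)$ is continuous, so $B((x,u),\e)$ is a countable intersection of the increasing open sets $\{S_N<\e\}$, an $F_\sigma$-type description; either route works.)

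For (c), the map $\x\mapsto\p_\L(B(\x,\e))=\int \mathbf 1_{\{d_\L(\x,\y)<\e\}}\,\p_\L(d\y)$ is an integral of a jointly measurable kernel. By part (a) the set $\{(\x,\y):d_\L(\x,\y)<\e\}$ lies in $\B_\L\otimes\B_\L$, and $\p_\L=\p\times\mathbf 1_{[0,1]}dl$ is a finite (probability) Borel measure on $\T\times[0,1]$; so Fubini--Tonelli applies and $\x\mapsto\p_\L(B(\x,\e))$ is $(\B_\L,\B_\R)$-measurable. I expect (a) to be the only real content, and within it the one point deserving care is the justification that the product $\sigma$-algebra equals the Borel $\sigma$-algebra and that each slice/section of a jointly measurable function is measurable — both are standard once separability of $\T\times[0,1]$ is invoked. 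The remaining ingredient, the pointwise convergence $S_N\to d_\L$, is already supplied by Proposition \ref{Completion} (d) and by the a.s. finiteness of $d_\L$ on $\T\times[0,1]$ from Theorem \ref{CompactLooptree}, so there is no genuine obstacle; the proof is essentially bookkeeping assembled from results proved earlier in the paper.
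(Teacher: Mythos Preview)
Your argument for (b) and (c) is correct and essentially identical to the paper's. The issue is in (a): you invoke Proposition~\ref{Completion}(c) (or its proof) to claim that $\x\mapsto U_{X_i,\x}$ is $1/\theta_i$-Lipschitz, hence continuous, hence $\B_\L$-measurable. But that Lipschitz bound is with respect to $d_\L$, not the product metric, and $\B_\L$ is by definition the Borel $\sigma$-algebra of the \emph{product} topology on $\T\times[0,1]$. Using $d_\L$-continuity to deduce $\B_\L$-measurability would require knowing that $d_\L$ is itself continuous (or at least measurable) for the product topology---which is exactly statement (a). So the citation is circular.

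Moreover, the stronger claim that each term is \emph{continuous} on $(\T\times[0,1])^2$ in the product topology is actually false: the map $(y,v)\mapsto U_{X_i,y,v}$ is discontinuous at $y=X_i$ (approaching $X_i$ through different connected components of $\T\backslash\{X_i\}$ gives different constant values of $U_{X_i,y}$, which need not equal $v$). The paper's argument avoids this by observing that $y\mapsto U_{X_i,y}$ is locally constant on the open set $\T\backslash\{X_i\}$, while $v\mapsto U_{X_i,X_i,v}=v$ is continuous on the closed set $\{X_i\}\times[0,1]$; patching these two measurable pieces gives $(\B_\L,\B_\R)$-measurability (not continuity) of $(y,v)\mapsto U_{X_i,y,v}$. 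Once this is in place, your sum-and-limit argument for (a) goes through verbatim, and (b), (c) follow as you wrote.
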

\begin{proof} Toward (a), for every $i\in \N$, the map $y\in \T \mapsto U_{X_i,y}$ is continuous on $\T\backslash \{X_i\}$, since it is locally constant. Then $v\mapsto U_{X_i,X_i,v}$ is continuous. Hence $(y,v)\in  \T\times[0,1]  \mapsto U_{X_i,y,v}$ is $(\B_\L,\B_\R)$-measurable. By definition of $d_\L$, (a) follows by sum and composition. 

Then by (a) the map $(x,u),(y,v)\mapsto \1_{d_\L((x,u),(y,v))\leq \e}$ is also $(\B_\L\times\B_\L, \B_\R)$-measurable. (b),(c) follows by Fubini's Theorem. 
\end{proof}
Next let $\mathcal M$ be the set of probability distribution on $\R^+$. We equip $\mathcal M$ with the weak topology. Recall that $\mathcal M$ is a Polish space. Let $\B_\M$ be the Borel topology on $\mathcal M$. For every probability distribution $\nu$ on $\T\times[0,1]$ $\B_\L$-measurable we construct $d_\L\star \nu\in \mathcal M$ as follows: Let $\x$, $\y$ be two random variables with law $\nu$. By Lemma \ref{MeasurabilityA} (a), $d_\L(\x,\y)$ is a random variable $(\B_\L\times\B_\L, \B_\R)$-measurable. Let $d_\L\star \nu\in \mathcal M$ denote its probability distribution.

\begin{lemma} \label{MeasurabilityB} Let $\x$ be a random variable with law $\p_\L$. 
We have:
\begin{compactitem}
\item[a)] For every $i,j\geq 0$, $d_\L((Y_i,0),(Y_j,0))$ is a random variable $(\mathcal F,\B_\R)$-measurable.
\item[b)] Almost surely,
\[ d_\L\star \left ( \frac{1}{n} \sum_{i=1}^n \delta_{Y_i,0} \right ) \limit_{\text{weakly}} d_\L\star \p_\L.\]
\item[c)] $d_\L\star \p_\L$ is a random variable $(\mathcal F,\B_\M)$-measurable.
\end{compactitem}
\end{lemma}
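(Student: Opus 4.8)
The plan is to prove the three assertions of Lemma \ref{MeasurabilityB} in order, since (b) will feed into (c).

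\textbf{Proof of (a).} By Lemma \ref{MeasurabilityA} (a), the map $(\x,\y)\mapsto d_\L(\x,\y)$ is $(\B_\L\times\B_\L,\B_\R)$-measurable; but here I want measurability in the \emph{outer} randomness $\mathcal F$. For fixed $i,j\geq 0$ the point $(Y_i,0)\in\T\times[0,1]$ depends only on $\mathbf Y$, hence on $\mathcal F$. Plugging $\x=(x,u),\y=(y,v)$ into \eqref{18/12/15h},
\[ d_\L((Y_i,0),(Y_j,0))=\frac{\theta_0^2}{4}d_\T(Y_i,Y_j)+\sum_{k\in\N}\theta_k\,\dc(U_{X_k,Y_i},U_{X_k,Y_j}), \]
and each summand is $\mathcal F$-measurable: $d_\T(Y_i,Y_j)$ is a deterministic function of $\mathbf Y,\mathbf Z$ via Algorithm \ref{Alg1}, while $U_{X_k,Y_i}$ is a deterministic function of $\mathbf X,\mathbf Y,(U_{X,l})_l,(U_{Z,l})_l$ via Algorithm \ref{ConstructU} (using that $U_{X_k,y}$ depends only on which connected component of $\T\setminus\{X_k\}$ contains $y$, which is $\mathcal F$-determined). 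A countable sum of $\mathcal F$-measurable functions is $\mathcal F$-measurable, giving (a).

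\textbf{Proof of (b).} I work $\omega$-by-$\omega$ on a full-measure event. Since a.s. $\L$ is compact (Theorem \ref{CompactLooptree}), $(\T\times[0,1],d_\L)$ is a.s. a totally bounded pseudo-metric space on which $d_\L$ is bounded, hence $d_\L\star\nu$ is well defined for every Borel probability $\nu$. The law of large numbers for the empirical measure of an i.i.d.\ sequence does not directly apply since $(Y_i)_{i\in\N}$ are not i.i.d.\ with law $\p$; instead I invoke the fact (from \cite{ICRT1}, and used implicitly throughout, e.g. in the proof of Lemma \ref{prelimLeft}(d)) that a.s. $\tfrac1n\sum_{i=1}^n\delta_{Y_i}\to\p$ weakly on $\T$. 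Together with the product structure this gives $\tfrac1n\sum_{i=1}^n\delta_{(Y_i,0)}\to\p\times\delta_0$ weakly --- but I actually need convergence to $\p_\L=\p\times\1_{[0,1]}dx$. Here is where one must be slightly careful: $d_\L\star(\p\times\delta_0)=d_\L\star\p_\L$ because, conditionally on the $\T$-coordinates $x,y$, shifting the $[0,1]$-coordinate of a point that is \emph{not} a jump point $X_i$ of $\mu$ does not change $d_\L$ (the torus-distance terms $\dc(U_{X_i,x,u},\cdot)$ only see $u$ when $x=X_i$, and $\p(\{X_i\})$ need not vanish --- so in fact one keeps the $[0,1]$-coordinate uniform to be safe, i.e. works with $\tfrac1n\sum\delta_{(Y_i,V_i)}$ for auxiliary i.i.d.\ uniforms $V_i$, whose empirical measure converges to $\p_\L$, and notes $d_\L\star(\tfrac1n\sum\delta_{(Y_i,V_i)})$ has the same limit). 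Since $d_\L$ is a.s. continuous on $\L\times\L$ (Lemma \ref{MeasurabilityA}(a), or directly from compactness), the map $\nu\mapsto d_\L\star\nu$ is continuous for the weak topology on the compact space $\L$, by the mapping theorem: if $\nu_n\to\nu$ weakly on $\L$ then $\nu_n\otimes\nu_n\to\nu\otimes\nu$ weakly on $\L\times\L$, and pushing forward by the bounded continuous $d_\L$ gives $d_\L\star\nu_n\to d_\L\star\nu$ weakly on $\R^+$. Applying this with $\nu_n=\tfrac1n\sum_{i=1}^n\delta_{(Y_i,0)}\to\p\times\delta_0$ (resp.\ $\nu_n=\tfrac1n\sum\delta_{(Y_i,V_i)}\to\p_\L$) yields (b).

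\textbf{Proof of (c).} By (b), a.s. $d_\L\star\p_\L=\lim_{n\to\infty}d_\L\star(\tfrac1n\sum_{i=1}^n\delta_{(Y_i,0)})$, a limit of $\mathcal M$-valued random variables, so it suffices to check each approximant is $(\mathcal F,\B_\M)$-measurable. The law of $d_\L((Y_I,0),(Y_J,0))$ where $(I,J)$ is uniform on $\{1,\dots,n\}^2$ independent of everything is the measure $\tfrac1{n^2}\sum_{i,j=1}^n\delta_{d_\L((Y_i,0),(Y_j,0))}$, and by (a) each $d_\L((Y_i,0),(Y_j,0))$ is $\mathcal F$-measurable, so this finite convex combination of Dirac masses is an $\mathcal F$-measurable element of $\M$ (the map $t\mapsto\delta_t$ is continuous $\R^+\to\M$, and finite sums are continuous). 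Since $\M$ is Polish, hence its Borel $\sigma$-field is generated by a countable family and closed under a.s.\ limits, the a.s.\ weak limit $d_\L\star\p_\L$ is $(\mathcal F,\B_\M)$-measurable.

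\textbf{Main obstacle.} The delicate point is part (b): reconciling the empirical measure of $(Y_i)_{i\in\N}$ (which converges to $\p$, not to anything involving the $[0,1]$-fiber) with the target $\p_\L=\p\times\1_{[0,1]}dx$, and justifying that $d_\L\star(\p\times\delta_0)=d_\L\star\p_\L$ despite $\p$ possibly charging the jump points $X_i$. The clean fix is to not collapse the fiber at all: sample independent uniform marks $V_i$, use that $\tfrac1n\sum\delta_{(Y_i,V_i)}\to\p_\L$ weakly a.s., and transfer measurability via (a) plus the independence of the $V_i$. Everything else (continuity of $\nu\mapsto d_\L\star\nu$, measurability of finite empirical measures, stability of Borel measurability under a.s.\ limits) is routine given compactness of $\L$ and Lemma \ref{MeasurabilityA}.
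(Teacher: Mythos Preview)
Your approach is essentially the paper's: introduce auxiliary i.i.d.\ uniform marks $V_i$ so that $\tfrac1n\sum\delta_{(Y_i,V_i)}\to\p_\L$ weakly, push forward by $d_\L$, and then identify the result with the statement for the marks $0$. The one point where your write-up is muddled is the identification step in (b). You correctly worry that $\p(\{X_k\})$ need not vanish, but then offer two parallel routes and leave the link between them vague (``has the same limit''). The paper resolves this cleanly by the single observation that almost surely $\{Y_i\}_{i\in\N}\cap\{X_k\}_{k\in\N}=\emptyset$; since the $[0,1]$-coordinate only enters $d_\L$ at the points $X_k$, this gives the \emph{identity} $d_\L((Y_i,0),(Y_j,0))=d_\L((Y_i,V_i),(Y_j,V_j))$ for every $i,j$, hence $d_\L\star(\tfrac1n\sum\delta_{(Y_i,0)})=d_\L\star(\tfrac1n\sum\delta_{(Y_i,V_i)})$ exactly. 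With that, the detour through $\p\times\delta_0$ is unnecessary and (b) follows directly. Your treatments of (a) and (c) match the paper's.
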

\begin{proof} Toward (a), let $i,j\in \N$. Note that $d_\T(Y_i,Y_j)$ is $(\mathcal F,\B_\R)$ measurable since by Algorithm \ref{Alg1} \linebreak[1] it is a sum of measurable random variables. Also by Algorithm \ref{Alg1}, for every $k\in \N$, $\1_{X_k\in \llbracket 0, Y_i\rrbracket }$ is $(\mathcal F,\B_\R)$-measurable. Then by the construction of the uniform angle function $U$ in Algorithm \ref{ConstructU}, either $X_j\notin \llbracket 0, Y_i\llbracket$ so $U_{X_k,Y_i,0}=0$, or $X_j\in \llbracket 0, Y_i\llbracket$ and then $U_{X_k,Y_i,0}=U_{X_k}$. Hence $U_{X_k,Y_i,0}$ is $(\mathcal F,\B_\R)$-measurable. Similarly,  $U_{X_k,Y_j,0}$ is $(\mathcal F,\B_\R)$-measurable. (a) follows by definition of $d_\L$.

Toward (b), recall that by \cite{ICRT1} Theorem 3.1, a.s. $\frac{1}{n}\sum_{i=1}^n \delta_{Y_i}$ converges weakly toward $\p$ for the weak topology on $\T$. Thus, a.s. if $(V_i)_{i\in \N}$ is a family of independent uniform random variables in $[0,1]$, $\frac{1}{n}\sum_{i=1}^n \delta_{Y_i,V_i}$ converges weakly toward $\p_\L$. Hence, a.s. 
\[ \frac{1}{n^2}\sum_{i=1}^n \sum_{i=1}^n \delta_{Y_i,V_i}\times \delta_{Y_j,V_j} \limit_{\text{weakly}}\p_\L\times \p_\L. \]
It directly follows by Lemma \ref{MeasurabilityA} that a.s. 
\begin{equation} d_\L\star \left ( \frac{1}{n} \sum_{i=1}^n \delta_{Y_i,V_i} \right ) \limit_{\text{weakly}} d_\L\star \p_\L.\label{19/03/14h} \end{equation}
Finally note that almost surely $\{Y_i\}_{i\in \N}\cap \{X_i\}_{i\in \N}=\emptyset$. So almost surely for every $i,j\in \N$, 
\[ d_\L((Y_i,0),(Y_j,0))=d_\L((Y_i,V_i),(Y_j,V_j)). \]
(b) follows from \eqref{19/03/14h}.

Toward (c), by (a) the left hand side of (b) is $(\mathcal F, \B_\M)$ measurable. So (c) follows from (b).
\end{proof}
Next, for every $f:\R^+\mapsto \R$ bounded continuous and $M\in \mathcal \M$ let $f(M):=\int_{\R^+} f(x)dM(x)$. Recall that $M\mapsto f(M)$ is continuous. So, by Lemma \ref{MeasurabilityB} (c), $f(d_\L\star \p_\L)$ is $(\F,\B_\R)$ measurable. 
\begin{proposition} \label{Rerooting} For every bounded continuous function $f:\R^+\mapsto \R$, 
 \[ \E[f(d_\L\star \p_\L)]=\E[f(d_\L((0,0),(Y_1,0)))]. \] 
\end{proposition}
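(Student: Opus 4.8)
The plan is to deduce Proposition~\ref{Rerooting} from the rebranching principle (Proposition~\ref{Permutation}) by combining it with the empirical approximation of $d_\L\star \p_\L$ from Lemma~\ref{MeasurabilityB}~(b). First I would observe that, by Lemma~\ref{MeasurabilityB}~(b), almost surely $d_\L\star\big(\tfrac1n\sum_{i=1}^n\delta_{Y_i,0}\big)$ converges weakly to $d_\L\star\p_\L$; hence for any bounded continuous $f$, applying $f$ and using dominated convergence in the outer expectation,
\[
\E\big[f(d_\L\star\p_\L)\big]=\lim_{n\to\infty}\E\Big[f\Big(d_\L\star\big(\tfrac1n\textstyle\sum_{i=1}^n\delta_{Y_i,0}\big)\Big)\Big]
=\lim_{n\to\infty}\E\Big[\tfrac1{n^2}\sum_{i,j=1}^n f\big(d_\L((Y_i,0),(Y_j,0))\big)\Big].
\]
The measurability needed to write these expectations is exactly Lemma~\ref{MeasurabilityB}~(a),(c) together with continuity of $M\mapsto f(M)$.

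The key step is then to show that for each fixed $n$ and each pair $i,j\in\{1,\dots,n\}$ one has
\[
\E\big[f\big(d_\L((Y_i,0),(Y_j,0))\big)\big]=\E\big[f\big(d_\L((Y_0,0),(Y_1,0))\big)\big],
\]
with the convention $Y_0=0$ (so the right-hand side is $\E[f(d_\L((0,0),(Y_1,0)))]$). This is where Proposition~\ref{Permutation} enters: by Proposition~\ref{Completion}~(d) applied at points of $\R^+\times[0,1]$, together with the construction of $U$ in Algorithm~\ref{ConstructU} (which gives $U_{X_k,Y_i,0}=U_{X,k}\1_{X_k\in\llbracket0,Y_i\llbracket}$, and likewise for $Y_j$), the quantity $d_\L((Y_i,0),(Y_j,0))$ is a fixed measurable functional of the random array
\[
\Big(d_\T(Y_i,Y_j)\Big)_{i,j\ge0},\qquad \Big(\1_{X_k\in\llbracket Y_i,Y_j\rrbracket}\Big)_{i,j\ge0,\,k\in\N},\qquad (U_{X,k})_{k\in\N},
\]
and this functional only involves the $i$-th and $j$-th "rows" of the first two arrays. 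Concretely, for $i\le j$ one writes $\llbracket Y_i,Y_j\rrbracket$ in terms of $\llbracket 0,Y_i\rrbracket$, $\llbracket 0,Y_j\rrbracket$ and $Y_i\wedge Y_j$, so $d_\L((Y_i,0),(Y_j,0))$ is expressed purely through $d_\T(Y_i,Y_j)$, the indicators $\1_{X_k\in\llbracket0,Y_i\rrbracket}$, $\1_{X_k\in\llbracket0,Y_j\rrbracket}$, $\1_{X_k\in\llbracket 0,Y_i\wedge Y_j\rrbracket}$, and $(U_{X,k})_k$ — and $(U_{X,k})_k$ is independent of everything in Proposition~\ref{Permutation}. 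Picking the transposition $\sigma=(0\ i)(1\ j)$ (or any permutation sending $\{0,1\}$ to $\{i,j\}$) and invoking the joint equality in distribution of Proposition~\ref{Permutation}, one gets the displayed identity. Averaging over $i,j$ and letting $n\to\infty$ gives $\E[f(d_\L\star\p_\L)]=\E[f(d_\L((0,0),(Y_1,0)))]$.

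The main obstacle is the bookkeeping in the previous paragraph: one must check carefully that $d_\L((Y_i,0),(Y_j,0))$ — as given by Proposition~\ref{Completion}~(d), an infinite sum over $k$ of torus distances $\dc(U_{X_k,(Y_i,0)},U_{X_k,(Y_j,0)})$ plus the $\theta_0^2/4$ tree-distance term — is indeed a measurable function of \emph{only} the three rows $i$, $j$, $i\wedge j$ of the rebranching arrays and of $(U_{X,k})_k$, and that this function is symmetric enough that the chosen permutation $\sigma$ maps the pair $(0,1)$ to $(i,j)$ correctly (one should treat $Y_0=0$ consistently, noting $\llbracket 0,Y_0\rrbracket=\{0\}$ contains no $X_k$ a.s., and that $Y_0\wedge Y_1=Y_0=0$). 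Once that identification is in place, the proof is a routine combination of weak convergence, bounded convergence, and the exchangeability supplied by Proposition~\ref{Permutation}.
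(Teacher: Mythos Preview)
Your overall architecture is exactly the paper's: reduce via Lemma~\ref{MeasurabilityB}~(b) to showing that each $d_\L((Y_i,0),(Y_j,0))$ has the same law as $d_\L((0,0),(Y_1,0))$, then invoke Proposition~\ref{Permutation}. The empirical approximation step and the handling of diagonal terms $i=j$ are fine.

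However, the core step has a genuine gap. Your formula $U_{X_k,Y_i,0}=U_{X,k}\,\1_{X_k\in\llbracket 0,Y_i\llbracket}$ is not correct in general. When $X_k\in\llbracket 0,Y_i\llbracket$, the vertex $Y_i$ lies in \emph{some} non-root component of $\T\setminus\{X_k\}$, but not necessarily the ``continuation'' component carrying angle $U_{X,k}$: if the geodesic from $X_k$ to $Y_i$ first leaves $X_k$ through a branch glued at $X_k$ (i.e.\ through some $(Y_m,Y_{m+1}]$ with $Z_m=X_k$), then $U_{X_k,Y_i}=U_{Z,m}$, not $U_{X,k}$. Consequently $d_\L((Y_i,0),(Y_j,0))$ is \emph{not} a function of rows $i,j$ of the rebranching arrays and of $(U_{X,k})_k$ alone; it also involves some of the $(U_{Z,m})_m$, and \emph{which} ones depends on the full tree structure $(\mathbf X,\mathbf Y,\mathbf Z)$, not just on rows $i,j$. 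So the ``same fixed functional'' argument breaks down.

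The paper circumvents this by working with conditional laws rather than explicit values. Conditionally on $(\mathbf X,\mathbf Y,\mathbf Z)$, for each $k$ with $X_k\in\llbracket Y_i,Y_j\rrbracket$ the pair $(U_{X_k,Y_i},U_{X_k,Y_j})$ consists of two entries drawn from $\{0\}\cup\{U_{X,k}\}\cup\{U_{Z,m}:Z_m=X_k\}$; which two is determined by $(\mathbf X,\mathbf Y,\mathbf Z)$, but the nonzero entries are i.i.d.\ uniform on $[0,1]$, and distinct $k$ use disjoint collections of these uniforms. Hence $\big(\dc(U_{X_k,Y_i},U_{X_k,Y_j})\big)_{k:X_k\in\llbracket Y_i,Y_j\rrbracket}$ are conditionally i.i.d.\ uniform on $[0,1/2]$, and the conditional law of $d_\L((Y_i,0),(Y_j,0))$ given $(\mathbf X,\mathbf Y,\mathbf Z)$ is $\tfrac{\theta_0^2}{4}d_\T(Y_i,Y_j)$ plus a sum of independent Uniform$[0,\theta_k/2]$ over $\{k:X_k\in\llbracket Y_i,Y_j\rrbracket\}$. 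This conditional law depends only on $\big(d_\T(Y_i,Y_j),\,(\1_{X_k\in\llbracket Y_i,Y_j\rrbracket})_k\big)$, and \emph{that} is what Proposition~\ref{Permutation} makes permutation-invariant. Replacing your explicit-formula step by this conditional-law observation repairs the proof.
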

\begin{proof}By Lemma \ref{MeasurabilityB} (b) and by Portmanteau's Theorem, it suffices to prove that for every $i\neq j$, $d_\L((Y_i,0),(Y_j,0))$ has the same law as $d_\L((0,0),(Y_1,0))$. Fix $i\neq j$. Since almost surely $\{Y_i\}_{i\in \N}\cap \{X_i\}_{i\in \N}=\emptyset$, by definition of $d_\L$, writing $\dc$ for the distance on the torus $[0,1]$,
\begin{equation} d_\L((Y_i,0),(Y_j,0))=d_\T(Y_i,Y_j)+\sum_{k\in \N}\theta_k \dc(U_{X_k,Y_i},U_{X_k,Y_j}). \label{18/03/5h} \end{equation}

Then for every $k\in \N$ with $X_k\notin \llbracket Y_i,Y_j\rrbracket$, $Y_i$ and $Y_j$ are connected in $\T\backslash \{X_k\}$ so $U_{X_k,Y_i}=U_{X_k,Y_j}$. Moreover, by the construction of the uniform angle function $U$ in Algorithm \ref{ConstructU}, note that conditionally on $(\mathbf X, \mathbf Y, \mathbf Z)$, $(\dc(U_{X_k,Y_i},U_{X_k,Y_j}))_{k:X_k\in \llbracket Y_i,Y_j\rrbracket}$ is a family of uniform random variables in $[0,1/2]$. 

As a result, by \eqref{18/03/5h}, conditionally on $(\mathbf X, \mathbf Y, \mathbf Z)$, $ d_\L((Y_i,0),(Y_j,0))-d_\T(Y_i,Y_j)$ is a sum of uniform random variables in $([0,\theta_k/2])_{k:X_k\in  \llbracket Y_i,Y_j\rrbracket}$. And similarly for $d_\L((0,0),(Y_1,0))-d_\T(0,Y_1)$. The desired result follows from Proposition \ref{Permutation}.
\end{proof}
\subsection{Lower bound on the Hausdorff and Packing dimensions}
To simplify the notations, let us write  $d_\L(0,Y_1)$ for $d_\L((0,0),(Y_1,0))$. Let 
\begin{equation} \badim':=\liminf_{\e\to 0} \frac{\log \proba(d_\L(0,Y_1)< \e)}{\log \e} \quad ; \quad \badam':=\limsup_{\e\to 0} \frac{\log \proba(d_\L(0,Y_1)<\e)}{\log \e}.\label{19/03/9h} \end{equation} 
\begin{remark} Keep in mind that many inequalities get reversed since $\log$ is negative on $(0,1)$.
\end{remark}
\begin{lemma} \label{LowerBoundHausdorff}  Almost surely $\dim_H(\L)\geq \badim'$ and $\dim_P(\L)\geq \badam'$.
\end{lemma}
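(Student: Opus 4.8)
The strategy is to apply Lemma~\ref{Hausdorff} with $p=\p_\L$, so it suffices to control, for $\p_\L$-almost every $\x\in\L$, the quantities $\liminf_{\e\to 0}\log\p_\L(B(\x,\e))/\log\e$ and $\limsup_{\e\to 0}\log\p_\L(B(\x,\e))/\log\e$. By the rebranching principle (Proposition~\ref{Rerooting}), the law of $d_\L$ evaluated at a $\p_\L$-random pair of points is the law of $d_\L(0,Y_1)$; this is exactly what ties the abstract ball measure to the concrete quantity in~\eqref{19/03/9h}. So the first step is to upgrade Proposition~\ref{Rerooting} from an equality of laws of $d_\L(A,A')$ to a statement about $\p_\L(B(\x,\e))=\p_\L\{\y:d_\L(\x,\y)<\e\}$: for fixed $\e>0$, by Fubini and Proposition~\ref{Rerooting}, $\E\big[\int_\L\p_\L(B(\x,\e))\,d\p_\L(\x)\big]=\proba(d_\L(0,Y_1)<\e)$, and more usefully the random variable $\p_\L(B(\x,\e))$ for $\x\sim\p_\L$ has expectation (over both levels of randomness) equal to $\proba(d_\L(0,Y_1)<\e)$.

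Next I would run a Borel--Cantelli argument along a geometric sequence $\e_k:=2^{-k}$. For the Hausdorff bound: fix $s<\badim'$ and $s'\in(s,\badim')$; then for all $k$ large, $\proba(d_\L(0,Y_1)<\e_k)\le \e_k^{s'}$, hence by the Fubini identity $\E\big[\p_\L(B(\x,\e_k))\big]\le \e_k^{s'}$ where the outer expectation includes sampling $\x\sim\p_\L$. By Markov's inequality applied to the inner measure and a union bound over $k$ (using that $\sum_k \e_k^{s'-s}<\infty$), one gets that $\proba$-a.s., for $\p_\L$-a.e.\ $\x$, $\p_\L(B(\x,\e_k))\le \e_k^{s}$ for all $k$ large; a standard interpolation between consecutive scales (any $\e\in[\e_{k+1},\e_k]$ has $\p_\L(B(\x,\e))\le\p_\L(B(\x,\e_k))\le\e_k^s\le(2\e)^s$) then yields $\liminf_{\e\to0}\log\p_\L(B(\x,\e))/\log\e\ge s$ for $\p_\L$-a.e.\ $\x$, a.s. Letting $s\uparrow\badim'$ through a countable sequence and applying Lemma~\ref{Hausdorff}(b) gives $\dim_H(\L)\ge\badim'$. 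For the packing bound the argument is the symmetric one but uses $\limsup$: fixing $s<\badam'$, there is a \emph{subsequence} $(k_j)$ along which $\proba(d_\L(0,Y_1)<\e_{k_j})\le\e_{k_j}^{s'}$, and the same Borel--Cantelli/Markov reasoning shows $\p_\L$-a.e.\ $\x$ satisfies $\p_\L(B(\x,\e_{k_j}))\le\e_{k_j}^{s}$ for $j$ large, whence $\limsup_{\e\to0}\log\p_\L(B(\x,\e))/\log\e\ge s$; Lemma~\ref{Hausdorff}(a) then gives $\dim_P(\L)\ge\badam'$.

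The main technical obstacle is the order of quantifiers: Lemma~\ref{Hausdorff} needs a \emph{single} a.s.\ event on which the ball-measure decay holds $\p_\L$-a.e.\ \emph{simultaneously for all small $\e$} (not just along $2^{-k}$), whereas Proposition~\ref{Rerooting} only controls each fixed scale in expectation. This is precisely why one restricts to the countable sequence $\e_k=2^{-k}$, applies Borel--Cantelli there, and then uses monotonicity of $\e\mapsto\p_\L(B(\x,\e))$ to fill the gaps between scales at the cost of a harmless constant factor, which disappears in the logarithm; the measurability of $\x\mapsto\p_\L(B(\x,\e))$ needed to make the inner Markov inequality and Fubini rigorous is exactly Lemma~\ref{MeasurabilityA}(c), and the measurability needed to combine the two levels of randomness is Lemma~\ref{MeasurabilityB}. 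A secondary point to handle carefully is that $d_\L$ is only a pseudometric, but as noted in the remark after the definitions the dimensions pass to the quotient unchanged, so working with $\p_\L$ and $B(\x,\e)$ on $\T\times[0,1]$ (dense in $\L$ with full $\p_\L$-mass image) is legitimate.
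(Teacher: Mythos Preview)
Your proposal is correct and follows essentially the same approach as the paper: use Proposition~\ref{Rerooting} to relate $\E\big[\int \p_\L(B(\x,\e))\,d\p_\L(\x)\big]$ to $\proba(d_\L(0,Y_1)<\e)$, then a Markov/Borel--Cantelli along the dyadic scales $\e_k=2^{-k}$, and conclude with Lemma~\ref{Hausdorff}. The only cosmetic differences are that the paper uses explicit continuous test functions $f_n(x)=\max(\min(1,2-2^n x),0)$ (so that Proposition~\ref{Rerooting} applies directly without extending from continuous to measurable $f$), performs the two Borel--Cantelli steps separately---first on $\omega$ to get $f_n(d_\L\star\p_\L)\le n^2\proba(d_\L(0,Y_1)<2^{-n+1})$, then on $\x\sim\p_\L$ to get $\p_\L(B(\x,2^{-n}))\le n^4\proba(d_\L(0,Y_1)<2^{-n+1})$---and thereby obtains a single pointwise estimate valid for all large $n$, from which both the $\liminf$ and $\limsup$ lower bounds follow at once, rather than treating the Hausdorff and packing cases separately with an auxiliary $s'\in(s,\badim')$ or a subsequence.
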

\begin{proof} First note that $\T\times[0,1]\subset \L$, so 
\[ \dim_H(\T\times[0,1],d_\L)\leq \dim_H(\L) \quad \text{and} \quad \dim_P(\T\times[0,1],d_\L)\leq \dim_P(\L).  \]
Then recall Section \ref{RerootSec}. For every $n\in \N$, let $f_n:x\mapsto \max(\min(1,2-2^n x),0)$. Let $\x$ be a uniform random variable with law $\p_\L$. 
We have by Proposition \ref{Rerooting}, for every $n\in \N$,
\[ \E[f_n(d_\L\star p_\L)]=\E[f(d_\L(0,Y_1))]\leq \proba (d_\L(0,Y_1)<2^{-n+1}). \] 
Then by the Borel--Cantelli Lemma and Markov's inequality, a.s. for every $n$ large enough,
\[ f_n(d_\L\star p_\L)\leq n^2\proba(d_\L(0,Y_1)<2^{-n+1}). \] 
Also, note that a.s. if $\x$ is a random variable with law $p_\L$, so writing $\E_\L$ for the expectation with respect to $p_\L$, $\E_\L(B(\x,2^{-n}))\leq  f_n(d_\L\star p_\L)$. Hence, by the Borel--Cantelli Lemma and Markov's inequality, a.s. $\p_\L$ a.s.  for every $n$ large enough,
\[ B(\x,2^{-n})\leq n^4\proba(d_\L(0,Y_1)<2^{-n+1}). \] 
The desired result follows from Lemma \ref{Hausdorff}.
\end{proof}
\begin{lemma} Recall \eqref{22/02/9h}, \eqref{19/03/9h}. We have $\badim'\geq \badim$ and $\badam'\geq \badam$.
\end{lemma}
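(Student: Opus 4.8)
The plan is to reduce, by unwinding the definitions in \eqref{22/02/9h} and \eqref{19/03/9h}, to the estimate: for every $\delta<\badim$ one has $\proba(d_\L(0,Y_1)<\e)\le\e^{\delta}$ for all small $\e$, and for the $\badam$ part the same statement along a suitable sequence $\e_k\to0$. Writing $\delta=1+\delta'$ with $\delta'<\liminf_{l\to\infty}\log\E[\mu[0,l]]/\log l$, this means exploiting that $\E[\mu[0,l]]\ge l^{\delta'}$ for all large $l$. First I would record the stick-breaking formula
\[ d_\L(0,Y_1)=\tfrac{\theta_0^2}{4}Y_1+\sum_{i:\,X_i\in(0,Y_1)}\theta_i W_i, \]
where $W_i:=\dc(0,U_{X,i})=\min(U_{X,i},1-U_{X,i})$ are i.i.d.\ uniform on $[0,1/2]$ and independent of $\mathcal F:=\sigma(\mathbf X,\mathbf Y,\mathbf Z)$; this follows from Proposition~\ref{Completion}(d) since $\llbracket 0,Y_1\rrbracket=[0,Y_1]$, $U_{X_i,0}=0$, and $U_{X_i,Y_1}=U_{X,i}$ precisely when $X_i\in(0,Y_1)$ by Algorithm~\ref{ConstructU}.

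Conditionally on $\mathcal F$, $d_\L(0,Y_1)$ is the deterministic number $\tfrac{\theta_0^2}{4}Y_1\ge0$ plus a sum of independent uniforms, so a convolution (simplex-volume) computation gives, with $N(x):=\#\{i:\theta_i\ge x\}$ and $m:=\#\{i:\,X_i\in(0,Y_1),\ \theta_i\ge2\e\}$,
\[ \proba\bigl(d_\L(0,Y_1)<\e\,\big|\,\mathcal F\bigr)\ \le\ \frac{(2\e)^{m}}{m!\,\prod_{i:\,X_i\in(0,Y_1),\,\theta_i\ge2\e}\theta_i}\ \le\ \frac1{m!}, \]
and moreover this conditional probability is $0$ as soon as $\theta_0^2Y_1\ge4\e$. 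Taking expectations, the easy regime is $\theta_0>0$: there $d_\L(0,Y_1)<\e$ forces $Y_1<4\e/\theta_0^2$, and by the Poisson structure $\proba(Y_1<t)=1-\E[e^{-\int_0^t\mu[0,s]\,ds}]\le\int_0^t\E[\mu[0,s]]\,ds\le t\,\E[\mu[0,t]]\le t^2$ (the last bound from $\sum_{i\ge0}\theta_i^2=1$ and $1-e^{-x}\le x$), so $\proba(d_\L(0,Y_1)<\e)=O(\e^2)$, which suffices since $\badam\le2$. The same Poisson bound handles $\theta_0=0$, $\sum\theta_i<\infty$: for $\e$ small all atoms on $\llbracket0,Y_1\rrbracket$ have $\theta_i\ge2\e$ and their a.s.\ finite, a.s.\ nonempty number is $m$, so $\proba(d_\L(0,Y_1)<\e)\le\E[(2\e)^m/(m!\prod\theta_i)]=O(\e)$, matching $\badim=\badam=1$.

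The substantive case is $\theta_0=0$, $\sum\theta_i=\infty$. Here one splits at a threshold $l=l(\e)$: on $\{Y_1\ge l\}$ one has $m\ge\#\{i\le N(2\e):X_i<l\}$, whose mean is $\sum_{i\le N(2\e)}(1-e^{-\theta_il})$, so a Chernoff bound together with Borel--Cantelli over $\e\in\{2^{-k}\}$ makes $\tfrac1{m!}$ super-polynomially small on $\{Y_1\ge l\}$ as soon as $l\sum_{i\le N(2\e)}\theta_i\to\infty$; on $\{Y_1<l\}$ one uses $\proba(Y_1<l)\le l\,\E[\mu[0,l]]$ and $\E[\mu[0,l]]\ge l^{\delta'}$. \textbf{The main obstacle} is that the crude split — bounding $\mathbf 1_{\{Y_1<l\}}\le1$ — only yields the weaker exponent $2(\badim-1)$ instead of $\badim$: on $\{Y_1<l\}$ the event $d_\L(0,Y_1)<\e$ is itself much rarer than $\{Y_1<l\}$, and to capture this one must retain the full factor $(2\e)^m/\prod_i\theta_i$ and use the coupling between $Y_1$ and the largest atom $X_{i_1}$ on $\llbracket0,Y_1\rrbracket$. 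Concretely I would condition on the atoms of $\mu$ of mass $\ge2\e$ and integrate out the first Poisson point against $\mu$, so that the factorial and $\prod_i\theta_i$ cancel against the combinatorics of which atoms precede $Y_1$; the point is that conditioning on $\{Y_1\approx y\}$ size-biases the law of $\theta_{i_1}$ by an extra factor $\propto\theta_{i_1}$, whence $\E[2\e/\theta_{i_1}]$ becomes $2\e\sum_{\theta_i\ge2\e}\theta_i\asymp\e\,\E[\mu[0,1/\e]]$ (using $\sum_i\theta_i^2=1$), producing precisely the exponent $\badim$; summing the analogous contributions over all values of $m$ gives $\proba(d_\L(0,Y_1)<\e)\le\e^{\badim-o(1)}$. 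The statement for $\badam'\ge\badam$ is identical with $\liminf$ replaced by $\limsup$ throughout and $l$ ranging over the sequence realising the $\limsup$.
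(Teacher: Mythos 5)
Your overall route is the same as the paper's: reduce the statement to the pointwise bound $\proba(d_\L(0,Y_1)<\e)\lesssim \e\,\mathrm{polylog}(1/\e)/\E[\mu[0,1/\e]]$, dispose of $\theta_0>0$ via $d_\L(0,Y_1)\geq\tfrac{\theta_0^2}{4}Y_1$ and $\proba(Y_1\leq t)\leq t\,\E[\mu[0,t]]$, and for $\theta_0=0$ split at a threshold $l(\e)\asymp\mathrm{polylog}(1/\e)/\E[\mu[0,1/\e]]$, handling $\{Y_1<l\}$ by size-biasing the branch carrying $Y_1$. That last mechanism is exactly the paper's union bound over $i$ with $Y_1=A_{i,1}$ in the Aldous--Camarri--Pitman coupling, which yields $\sum_i x^2\theta_i^2\min(1,2\e/\theta_i)\lesssim x^2\,\e\,\E[\mu[0,1/\e]]$; so your "main obstacle" is resolvable, and more simply than you anticipate, since only the single atom $X_i$ on the branch of $Y_1$ needs to be retained (one factor $\min(1,2\e/\theta_i)$ against $\proba(A_{i,1}\leq x)\leq x^2\theta_i^2$), not the full product $(2\e)^m/\prod_i\theta_i$ whose cancellation "against the combinatorics" you would still have to justify.

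The genuine gap is in your concentration step on $\{Y_1\geq l\}$. By passing to the simplex bound $(2\e)^m/(m!\prod\theta_i)\leq 1/m!$ you keep only the atoms with $\theta_i\geq 2\e$, and your argument needs the count $m$ of such atoms in $[0,l]$ to be of order at least $\log(1/\e)$ with high probability. This can fail even with $\theta_0=0$ and $\sum\theta_i=\infty$: if at the relevant scale most of the $\ell^2$-mass of $\Theta$ sits on atoms of size $<2\e$, then $\E[\mu[0,l]]$ (hence your choice of $l$) is governed by those small atoms, the expected number of atoms with $\theta_i\geq2\e$ in $[0,l]$ can be $o(1)$, and $1/m!$ equals $1$ with probability close to $1$. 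What saves the day there is precisely the aggregate $\sum_{\theta_i<2\e}\theta_iW_i$ of the terms you discarded; this is why the paper works with the truncated sum $\Delta_n(x)=\sum_{X_i\leq x}\min(\theta_iW_i,2^{-n})$, whose conditional mean is $\geq x\,\E[\mu[0,2^n]]2^{-n-3}$ however the mass is distributed between large and small atoms, and applies Bernstein's inequality to it. A smaller issue: in the sub-case $\theta_0=0$, $\sum\theta_i<\infty$ you assert that for $\e$ small all atoms on $[0,Y_1]$ satisfy $\theta_i\geq2\e$, but the threshold below which this holds is random, so you cannot take expectations of the resulting bound uniformly in $\e$; the paper's single argument covers this case without separation.
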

\begin{remark} Since a.s. $\underline{\dim}(\L)\leq \badim$ and  $\overline{\dim}(\L)\leq \badam$, by Lemmas \ref{FalconPunch}, \ref{LowerBoundHausdorff}, we have $\badim'\leq \badim$ and $\badam'\leq \badam$.
\end{remark}
\begin{proof} In order to simplify the main proof, let us first deal with the case $\theta_0>0$. By Lemma \ref{Recall icrt} (b), $\E[\mu[0,l]]\sim \theta_0^2l$ as $l\to \infty$. So $\badim=\badam=2$. 
Also, $d_\L(0,Y_1)\geq \theta_0^2/4 Y_1$ so for every $\e>0$,
\[ \proba(d_\L(0,Y_1)\leq \e)\leq \proba(Y_1\leq 4\e/\theta_0^2). \]
Then, since $\{Y_i\}_{i\in \N}$ is a Poisson point process of rate $\mu[0,x]dx$, as $\e\to 0$, by Lemma \ref{Recall icrt} (a),
\[ \proba(Y_1\leq \e)=\E[\proba(Y_1\leq \e|\mu)]\leq \e\E[\mu[0,\e]]=O(\e^2). \]
As a result, $\badim'\geq 2=\badim$ and $\badam'\geq 2=\badam$. So we may assume henceforth that $\theta_0=0$.

Next, note that  a.s. $d_\L(0,Y_1)=\sum_{0\leq X_i \leq Y_1} \theta_i \dc(U_{X_i})$. Thus, writing for every $n\in \N$, $\Delta_n: x\mapsto \sum_{0\leq X_i \leq Y_1} \min(\theta_i\dc(U_{X_i}) ,2^{-n})$, we have $d_\L(0,Y_1)\leq \Delta_n(Y_1)$.  Then writing for every $n\in \N$, $\e_n:=16n^2/\E[\mu[0,2^n]]$, we have, since $x\mapsto \Delta_n(x)$ is increasing,
\begin{equation} \proba(d_\L(0,Y_1)\leq 2^{-n} )\leq \proba(\Delta_n(\e_n)\leq 2^{-n})+\proba(Y_1\leq \e_n,\Delta_n(Y_1)\leq 2^{-n}). \label{19/03/11h} \end{equation}
Our proof consists in estimating both terms of the right hand side above.
\begin{remark} Let us morally explain why we work with $\Delta_n$ and not $d_\L$. We want to estimate, for $\e>0$ small, the typical $d_\L$-distance between two random vertices $\e$-$d_\T$-close from each other. It appears, that the few vertices $\e$-$d_\T$-close from the vertices of high degrees $X_1,X_2,\dots $, tends to be $d_\L$-far from each other. 
As a result, the moments of $d_\L(0,Y_1)\1_{Y_1\leq \e}$ are highly biased toward the moments of the typical $d_\L$-distance between two vertices $\e$-$d_\T$-close from $X_1,X_2,\dots$. To avoid this bias, we use $(\Delta_n)_{n\in \N}$ to truncate the $d_\L$-distance between the vertices $d_\T$-close from $X_1,X_2,\dots$.
\end{remark}
Now, recall that $(X_i)_{i\in \N}$ are independent exponential random variables of parameter $(\theta_i)_{i\in \N}$. We get with elementary computations, for every $0\leq x\leq 1$, $n\in \N$,
\begin{align} \E[\Delta_n(x)] = \sum_{i=1}^\infty \proba(X_i\leq x)\E[\min(\theta_i\dc(U_{X_i}) ,2^{-n})] 
 \geq  \frac{1}{8}\sum_{i=1}^\infty \theta_i x \min(\theta_i,2^{-n}).  \label{18/03/8ha}
\end{align}
And, for every $n\in \N$,
\begin{equation} \E[\mu[0,2^n]]=\sum_{i=1}^\infty \proba(X_i\leq 2^n)\theta_i =\sum_{i=1}^\infty (1-e^{-\theta_i2^n})\theta_i \leq \sum_{i=1}^\infty \min(1,\theta_i 2^n)\theta_i. \label{18/03/8hb}
\end{equation}
Hence, by \eqref{18/03/8ha} and \eqref{18/03/8hb}, for every $n\in \N$, $0\leq x \leq 1$, $\E[\Delta_n(x)]\geq x\E[\mu[0,2^n]]2^{-n-3}$. In particular, for every $n\in \N$ such that $\e_n\leq 1$, $\E[\Delta_n(\e_n)]\geq 2n^22^{-n}$. Also, note that $\Delta_n$ is a sum of independent random variables bounded by $2^{-n}$. So, by Bernstein's inequality (see e.g. \cite{Massart} Section 2.7 (2.10)) for every $n\in \N$ with $\e_n\leq 1$,
$\proba(\Delta_n(\e_n)\leq 2^{-n})\leq e^{-n^2/6}$.
Furthermore, by Lemma \ref{Recall icrt} (c), $\E[\mu[0,2^n]]=O(2^n)$. So, as $n\to \infty$ with $\e_n\leq 1$,
\begin{equation} \proba(\Delta_n(\e_n)\leq 2^{-n})=O(n^22^{-n}/\E[\mu[0,2^n]). \label{19/03/12h}\end{equation}

Toward upper bounding the right most term of \eqref{19/03/11h}, let us recall the construction of Aldous, Camarri, Pitman \cite{introICRT1, introICRT2} of the ICRT in the simple case where $\theta_0=0$. Let $((A_{i,j} )_{j\geq 0})_{i\in \N}$ be a family of independent Poisson point processes of intensity $(\theta_i)_{i\in \N}$ on $\R^+$. We proved in \cite{ICRT1} Lemma 2.1 that there exists a coupling such that a.s.
\[ \left ((X_i)_{i\in \N},\{Y_i,Z_i\}_{i\in \N} \right)\equal \left ( (A_{i,0})_{i\in \N}, \{A_{i,j},A_{i,0} \}_{i,j\in \N} \right). \]
In particular $Y_1\in \{A_{i,1}\}_{i\in \N}$
So, by an union bound, for every $x\geq 0$,
\begin{align*} \proba(Y_1\leq x,\Delta_n(Y_1)\leq 1/2^n) & \leq \sum_{i\in \N} \proba(A_{i,1}\leq x, \Delta_n(Y_1)\leq 2^{-n}) \notag
\\ & \leq \sum_{i\in \N} \proba(A_{i,1}\leq x, \theta_i U_{X_i}\leq 2^{-n}) \notag
\\ & \leq \sum_{i\in \N} x^2 \theta_i^2 \min(1, 2^{-n}/\theta_i). \notag
\end{align*}
Also, by directly adapting \eqref{18/03/8hb}, we get for every $n\in \N$, $\E[\mu[0,2^n]]\geq (1/4) \sum_{i=1}^\infty \min(1,\theta_i 2^n)\theta_i$. So, for every $x\geq 0$,
\begin{equation} \proba(Y_1\leq x,\Delta_n(Y_1)\leq 1/2^n) \leq 4 x^2 \E[\mu[0,2^n]]2^{-n}. \label{19/03/12hb} \end{equation}
In particular, since $\e_n=16n^2/\E[\mu[0,2^n]]$,
\begin{equation}  \proba(Y_1\leq \e_n,\Delta_n(Y_1)\leq 1/2^n)\leq 4 \e_n^2 \E[\mu[0,2^n]]2^{-n} =O(n^42^{-n}/\E[\mu[0,2^n]]). \label{19/03/13h}\end{equation}

To sum up, by \eqref{19/03/11h}, \eqref{19/03/12h}, \eqref{19/03/13h}, we have as $n\to \infty$ with $\e_n\leq 1$,
\begin{align} \proba(d_\L((0,Y_1)\leq 2^{-n} )=O(n^42^{-n}/\E[\mu[0,2^n]]). \label{20/03/12ha} \end{align}

Finally,  we have as $n\to \infty$ with $\e_n\geq 1$, using, $Y_1\leq A_{1,1}$, \eqref{19/03/12hb}, and $\e_n=16n^2/\E[\mu[0,2^n]]$,
\begin{align} \proba(d_\L(0,Y_1)\leq 2^{-n} ) & \leq \proba(Y_1>n^2)+\proba (Y_1\leq n^2, \Delta_n(Y_1)\leq 2^{-n}) \notag
\\ & \leq \proba(A_{1,1}>n^2)+\proba (Y_1\leq n^2, \Delta_n(Y_1)\leq 2^{-n}) \notag
\\ & = O(n^2 e^{-\theta_1 n^2}+ n^4 \E[\mu[0,2^n]]2^{-n}  ) \notag
\\ & =O(n^82^{-n}/\E[\mu[0,2^n]]). \label{20/03/12hb}
\end{align} 
The desired result follows from \eqref{20/03/12ha} \eqref{20/03/12hb} and the definitions of $\badam,\badim$ in \eqref{22/02/9h}, and of $\badam',\badim'$ in \eqref{19/03/9h}.
\end{proof}
\section{Proof of Theorem \ref{CompactGFF}}  \label{CompactGFFSec}
%
Recall the definition of $\GFF$ from \eqref{22/12/00}. Note that $\GFF$ is continuous on any bounded interval of $\R^+$. We need to prove that it extends to a continuous map on $\T$. To this end, we use the chaining method. Let $\mathcal F=\sigma(\mu, \mathbf Y, \mathbf Z)$. For all $x\in \T$, $y\in \R^+$, recall that $\pT_y(x)$ denote the projection of $x$ on $[0,y]$. Also to simplify the notations, for every $x,y\in \R^+$ let $d_\GFF(x,y):=\max_{z\in \llbracket x , y\rrbracket}|\GFF(x)-\GFF(z)|$. 
\begin{remark} Although $d_\GFF$ is a (pseudo) distance on $\T$, we will not study the topology of $(\T,d_\GFF)$.
\end{remark}
\begin{lemma} \label{BAKA_GFF}  a) For every $x,y$ random variables in $\R^+$ $\mathcal \F$-measurable and $t\geq 0$, almost surely 
\[ \proba  \left ( \left . d_\GFF(x,y)>t\sqrt{d_\T(x,y)} \right |\mathcal F  \right )\leq 4e^{-t^2/2}. \] 
b) For every $x,y,t\geq 0$, 
\[ \proba \left (d_\GFF(\pT_y(x),x)>t^2/\sqrt{\mu[0, y]} \right)\leq 5e^{-t^2/4}. \]
\end{lemma}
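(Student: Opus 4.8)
The plan is to prove part (a) by reducing the maximum over the geodesic to a maximum of a Brownian-type fluctuation on the stick-breaking segments, and then to bootstrap to part (b) by summing the tail bounds along the at-most-$d_{\mathfrak N}$ branches between $\pT_y(x)$ and $x$, controlling the number of branches via Lemma \ref{CountingLemma} and the total $d_\T$-length of those branches via $\mu[0,y]$ and Lemma \ref{Recall icrt}.

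First I would prove (a). Fix $x,y\in\R^+$ that are $\mathcal F$-measurable. By construction \eqref{22/12/00}, along the geodesic $\llbracket x,y\rrbracket$ the process $\GFF$ is, conditionally on $\mathcal F$, obtained by concatenating independent Brownian increments: writing $c:=x\wedge y$ for the branch point (in the $\R$-tree sense, so that $\llbracket x,y\rrbracket=\llbracket x,c\rrbracket\cup\llbracket c,y\rrbracket$), the restriction of $\GFF-\GFF(c)$ to each of the two legs is a Brownian motion in the $d_\T$-parametrization, and the two legs are conditionally independent. Hence $z\mapsto \GFF(z)-\GFF(c)$ on $\llbracket x,y\rrbracket$ is distributed as a two-sided Brownian motion run for time $d_\T(x,c)$ on one side and $d_\T(c,y)$ on the other, with $d_\T(x,c)+d_\T(c,y)=d_\T(x,y)$. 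By the reflection principle, for a standard Brownian motion $B$ and any $T\ge 0$, $\proba(\max_{[0,T]}|B|>s)\le 4e^{-s^2/(2T)}$. Applying this to each leg with $s=t\sqrt{d_\T(x,y)}\ge t\sqrt{d_\T(x,c)}$ (resp.\ $t\sqrt{d_\T(c,y)}$), and then noting $d_\GFF(x,y)=\max_{z\in\llbracket x,y\rrbracket}|\GFF(x)-\GFF(z)|\le 2\max_{z}|\GFF(z)-\GFF(c)|$ — actually it is cleaner to bound $d_\GFF(x,y)\le \max_{z\in\llbracket x,c\rrbracket}|\GFF(z)-\GFF(x)| $ directly on the leg containing $x$ and pick up the other leg after passing through $c$; either way a union bound over the (at most two) legs gives $\proba(d_\GFF(x,y)>t\sqrt{d_\T(x,y)}\mid\mathcal F)\le 4e^{-t^2/2}$, absorbing the constant $2$ from concatenation into the $4$ by a mild adjustment of the exponent or by splitting $t$. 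I would present this carefully so that the final constant is exactly $4$ and the exponent exactly $t^2/2$, which is what the statement claims.

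Next, for (b), I would follow the geodesic from $x$ down to $\pT_y(x)$ exactly as in the proof of Lemma \ref{CountingLemma}: set $z_0:=x$, and inductively $k_i:=\max\{k:Y_k<z_i\}$, $y_i:=Y_{k_i}$, $z_{i+1}:=Z_{k_i}$, stopping at $T=d_{\mathfrak N}(x,[0,y])$. Then $\rrbracket \pT_y(x),x\rrbracket$ decomposes into $T$ pieces, each a (partial) branch of the form $]Y_{k},Y_{k+1}]$, and $d_\GFF(\pT_y(x),x)\le \sum_{i=0}^{T-1} d_\GFF(z_i\wedge z_{i+1},\dots)$ — more precisely the triangle-type bound $d_\GFF(\pT_y(x),x)\le \sum_i \max_{z\in \text{branch } i}|\GFF(z)-\GFF(\text{top of branch } i)|$, where each summand is, conditionally on $\mathcal F$, a Brownian maximum over a segment of $d_\T$-length $\ell_i:=\mu$-free geometric length $=|$ that piece $|$, with $\sum_i \ell_i = d_\T(\pT_y(x),x)$. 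Now I would condition on $\mathcal F$ (so the branch structure and the $\ell_i$ are fixed) and on $T=m$. By part (a)'s reflection-principle input applied segment by segment, each piece contributes a sub-Gaussian tail with variance proxy $\ell_i$; summing $m$ independent such Brownian maxima and using a standard chaining/Gaussian-concentration argument (or simply: $\sum_i \max|B^{(i)}|$ is stochastically dominated appropriately, and $\sum_i\ell_i$ is controlled), together with the crude deterministic bound $d_\T(\pT_y(x),x)\le$ (number of branches)$\times$(max branch length) and $\mu[0,y]\gtrsim$ (something), yields a bound of the form $\proba(d_\GFF(\pT_y(x),x)>s)\le C e^{-cs^2\mu[0,y]}$ up to handling the randomness of $T$. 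Concretely, on the event $\{T\le 4n\}$ (with $n$ chosen so $2^n\asymp y$, using Lemma \ref{CountingLemma}) and using that each branch has small $d_\T$-mass, one gets the claimed $5e^{-t^2/4}$ after optimizing constants; the leftover event $\{T>4n\}$ has super-polynomially small probability by Lemma \ref{CountingLemma}'s proof, so it is absorbed into the constant $5$.

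The main obstacle I anticipate is bookkeeping the constants so that part (b) comes out exactly as $5e^{-t^2/4}$ with $\sqrt{\mu[0,y]}$ (not $\mu[0,y]$ or $\E[\mu[0,y]]$) in the denominator: this requires being careful that the relevant "variance budget" along the path from $\pT_y(x)$ to $x$ is $d_\T(\pT_y(x),x)$, and that this $d_\T$-distance is with high probability of order $1/\mu[0,y]$ — which is precisely the content one extracts by combining Lemma \ref{CountingLemma} (at most $\sim\log y$ branches) with Lemma \ref{Recall icrt}(d) (each branch has $d_\T$-length $\le \log(Y_i)^2/Y_i$, hence total $d_\T$-length $\lesssim \mathrm{polylog}(y)/y \lesssim 1/\mu[0,y]$ up to logs) — and then checking that the polylog factors are harmless because $e^{-t^2/4}$ decay beats them. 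A secondary subtlety is that $\pT_y(x)$ and the $z_i$ are not $\mathcal F$-measurable individually, but the pair process $((y_i,z_i))_i$ is a Markov chain adapted to the filtration $\mathcal F_i$ from Lemma \ref{CountingLemma}, and the Brownian increments defining $\GFF$ are independent of this chain given $\mathcal F$; so one should condition on $\mathcal F$ throughout, treat the branch lengths as fixed, apply the segment-wise Gaussian tail bounds, and only at the end take expectation over the (conditionally determined) geometry using the high-probability bounds on $T$ and on the branch lengths.
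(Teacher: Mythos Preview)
Your plan for (a) is in the right spirit but takes a detour: you split $\llbracket x,y\rrbracket$ at $c=x\wedge y$, then try to recombine the two legs with a factor $2$ and a union bound, and you yourself note the constants do not immediately come out as $4e^{-t^2/2}$. The paper's argument is cleaner and avoids this bookkeeping entirely: if $\gamma:[0,d_\T(x,y)]\to\llbracket x,y\rrbracket$ is the geodesic from $x$, then conditionally on $\mathcal F$ the process $s\mapsto \GFF(\gamma(s))-\GFF(x)$ is a \emph{single} standard Brownian motion on $[0,d_\T(x,y)]$ (time-reversal on the leg $\llbracket x,c\rrbracket$ preserves the Brownian law, and the increments on $\llbracket c,y\rrbracket$ use disjoint pieces of $\Brown$, hence are independent). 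Then $d_\GFF(x,y)=\max_{[0,T]}|B|$ with $T=d_\T(x,y)$, and the reflection principle gives the stated bound directly.

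For (b) your plan has a genuine gap in efficiency and in the ingredients invoked. The paper's proof is two lines: cite \cite{ICRT1} Lemma 4.7, which gives the tail bound
\[
\proba\!\left(d_\T(\pT_y(x),x)>t^2/\mu[0,y]\right)\le e^{-t^2/4},
\]
and on the complementary event apply (a) with the pair $(\pT_y(x),x)$ (both are $\mathcal F$-measurable) to get $\proba(d_\GFF>t\sqrt{d_\T}\mid\mathcal F)\le 4e^{-t^2/2}\le 4e^{-t^2/4}$; a union bound gives $5e^{-t^2/4}$. Your branch-by-branch decomposition is unnecessary precisely because (a) already handles the \emph{entire} geodesic $\llbracket \pT_y(x),x\rrbracket$ at once; splitting into $T$ pieces and summing Brownian maxima would cost you an extra factor (roughly $\sqrt T$ via Cauchy--Schwarz on $\sum_i\sqrt{\ell_i}$) that then has to be beaten down again. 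Moreover, the lemmas you plan to use are not of the right type: Lemma \ref{CountingLemma} and Lemma \ref{Recall icrt}(d) are almost-sure ``for all $n$ large enough'' statements, not tail bounds valid for every fixed $x,y,t$; and Lemma \ref{Recall icrt}(d) bounds $\mu(Y_{i-1},Y_i]$, not the $d_\T$-length $Y_i-Y_{i-1}$ that controls the Brownian variance. Finally, your measurability worry is misplaced: since $\mathcal F=\sigma(\mu,\mathbf Y,\mathbf Z)$ determines the tree, $\pT_y(x)$ is $\mathcal F$-measurable for deterministic $x,y$, so (a) applies directly.
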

\begin{proof} Toward (a). Let $\gamma:[0,d_\T(x,y)]\mapsto \llbracket x,y\rrbracket$ be the geodesic from $x$ to $y$.  Note that by definition of $\GFF$, conditionally on $\mathcal F$, $\GFF \circ \gamma-\GFF(x)$ is a brownian motion. So (a) directly follows from standard results on Brownian motions.

Toward (b), by \cite{ICRT1} Lemma 4.7 for every $x,y,t\geq 0$, 
\[ \proba\left (d_\T(\pT_y(x),x)>t^2/\mu[0,y] \right )\leq e^{-t^2/4}. \] 
 The desired result follows using (a) and an union bound.
\end{proof}
We may assume below that $\theta_0>0$ or $\sum_{i=1}^\infty \theta_i=\infty$, since otherwise $\E[\mu(\R^+)]<\infty$ and the assumption of Theorem \ref{CompactGFF} is not satisfied. 
Then by Lemma \ref{Recall icrt} (a), we may define for every $n\in \N$, $\X_n$ as the unique real such that $\E[\mu[0,\X_n]]=e^n$. The next result is adapted from \cite{ICRT1} Lemma 6.2, which is used to prove the compactness of ICRT and upper bound its fractal dimensions. 
\begin{lemma} \label{ChainingStartGFF} Almost surely for every $n$ large enough:
\[ \max_{x\in [0,\X_{n}]} |\GFF(x)-\GFF(\pT_{\X_{n-1}}(x))|\leq 21 \log(\X_n) e^{-(n-1)/2}.\]
\end{lemma}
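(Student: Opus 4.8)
The plan is to rerun, at the level of the Brownian functional $\GFF$, the chaining argument carried out for $d_\T$ in \cite{ICRT1} Lemma 6.2, feeding in the Gaussian tail bound of Lemma \ref{BAKA_GFF}. First I would reduce the continuum maximum over $x\in[0,\X_n]$ to a maximum over finitely many geodesics. Recall $Y_0=0$ and that the $j$-th stick is the segment with real coordinates $(Y_{j-1},Y_j]$, grafted at $Z_{j-1}$; for each $j$ with $Y_{j-1}<\X_n$ put $v_j:=\min(Y_j,\X_n)$. By Lemma \ref{Recall icrt} (c), a.s.\ once $n$ is large there are at most $2\X_n^2+1$ such $j$. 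Inspecting the stick-breaking construction, one checks that every $x\in[0,\X_n]$ with $\pT_{\X_{n-1}}(x)\neq x$ lies on the geodesic $\llbracket\pT_{\X_{n-1}}(v_j),v_j\rrbracket$, where $j$ is the stick containing $x$, and that moreover $\pT_{\X_{n-1}}(x)=\pT_{\X_{n-1}}(v_j)$; since $x$ then lies on that geodesic,
\[ \bigl|\GFF(x)-\GFF(\pT_{\X_{n-1}}(x))\bigr|\le d_\GFF\bigl(\pT_{\X_{n-1}}(v_j),v_j\bigr). \]
Hence it suffices to bound $\max_j d_\GFF\bigl(\pT_{\X_{n-1}}(v_j),v_j\bigr)$.

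For the probabilistic estimate I would work conditionally on $\mathcal F=\sigma(\mu,\mathbf Y,\mathbf Z)$, under which each $v_j$, each $\pT_{\X_{n-1}}(v_j)$, and $\mu[0,\X_{n-1}]$ are determined. By \cite{ICRT1} Lemma 6.2 (the $d_\T$-analogue of the present lemma, bounding $\max_{x\in[0,\X_n]}d_\T(x,\pT_{\X_{n-1}}(x))$ by $C_0\log(\X_n)e^{-(n-1)}$ for an explicit constant with $441/(2C_0)>2$) and Lemma \ref{Recall icrt} (b), (c), there is an $\mathcal F$-measurable event $G_n$, holding for all $n$ large a.s., on which the stick count is $\le 2\X_n^2+1$, $\mu[0,\X_{n-1}]\ge e^{n-1}/2$, and $d_\T(\pT_{\X_{n-1}}(v_j),v_j)\le C_0\log(\X_n)e^{-(n-1)}$ for every $j$. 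On $G_n$, Lemma \ref{BAKA_GFF} (a) applied to the $\mathcal F$-measurable pair $(\pT_{\X_{n-1}}(v_j),v_j)$ with $t=21\sqrt{(\log\X_n)/C_0}$ gives, using $t\sqrt{C_0\log(\X_n)e^{-(n-1)}}=21\log(\X_n)e^{-(n-1)/2}$,
\[ \proba\Bigl(d_\GFF(\pT_{\X_{n-1}}(v_j),v_j)>21\log(\X_n)e^{-(n-1)/2}\,\Big|\,\mathcal F\Bigr)\1_{G_n}\le 4\X_n^{-441/(2C_0)}\1_{G_n}. \]

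A union bound over the $\le 2\X_n^2+1$ sticks, then an expectation, give
\[ \proba\Bigl(G_n\cap\bigl\{\max_{x\in[0,\X_n]}|\GFF(x)-\GFF(\pT_{\X_{n-1}}(x))|>21\log(\X_n)e^{-(n-1)/2}\bigr\}\Bigr)=O\bigl(\X_n^{2-441/(2C_0)}\bigr), \]
which is summable in $n$ since $\X_n$ grows at least geometrically ($\E[\mu[0,l]]=O(l)$ by Lemma \ref{Recall icrt} (b)). Combined with the fact that $G_n$ holds for every $n$ large a.s., Borel--Cantelli gives the claim. The constant $21$ is not sharp.

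The step I expect to be the real obstacle is the first one: one must verify, through the cases according to where $\X_{n-1}$, $Z_{j-1}$, and $v_j$ sit on stick $j$, that $x\mapsto\pT_{\X_{n-1}}(x)$ is constant along each geodesic $\llbracket\pT_{\X_{n-1}}(v_j),v_j\rrbracket$ and that these geodesics cover all of $[0,\X_n]$. It is precisely here that it matters that $d_\GFF(\cdot,\cdot)$ is a supremum over the whole geodesic — which lets the oscillation of $\GFF$ inside a stick be absorbed at no cost in the constant — and that $\GFF$ is genuinely Brownian along geodesics, which is what makes Lemma \ref{BAKA_GFF} applicable.
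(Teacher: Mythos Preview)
Your argument is correct, and it takes a genuinely different route from the paper's. The paper does \emph{not} invoke the uniform $d_\T$ bound of \cite{ICRT1} Lemma~6.2 as a black box. Instead it works pointwise: for each fixed $x\in[0,\X_n]$ it applies Lemma~\ref{BAKA_GFF}\,(b) (which bundles \cite{ICRT1} Lemma~4.7 with the Gaussian tail), obtains $\proba(E_n(x))\le 5\X_n^{-5}$, and then uses Fubini together with Markov and Borel--Cantelli to conclude that the Lebesgue measure of the bad set of $x$'s is eventually $<\X_n^{-3}$. This handles every $x$ that is at $d_\T$-distance $\ge \X_n^{-3}$ from the finite set $\Lambda_n=(\{Y_i\}\cap[\X_{n-1},\X_n])\cup\{\X_n\}$; the remaining short geodesics near $\Lambda_n$ are then controlled directly by Lemma~\ref{BAKA_GFF}\,(a) and a union bound.

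Your approach replaces this two-stage measure argument by a single combinatorial reduction: one geodesic $\llbracket\pT_{\X_{n-1}}(v_j),v_j\rrbracket$ per stick, whose $d_\T$-length you control uniformly via the already-proved \cite{ICRT1} Lemma~6.2, and then Lemma~\ref{BAKA_GFF}\,(a) plus a union bound over $O(\X_n^2)$ sticks. The case analysis you flag as the obstacle is indeed straightforward (it splits according to whether the stick is entirely above $\X_{n-1}$, or $\X_{n-1}$ lands on it), and your use of the fact that $d_\GFF$ is a supremum over the geodesic is exactly what makes the reduction to the endpoint $v_j$ lossless. The trade-off is that your argument is cleaner and avoids the Fubini step, but it is less self-contained: it needs the constant $C_0$ from \cite{ICRT1} Lemma~6.2 to satisfy $C_0<441/4$, which you assert but do not re-derive. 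The paper's route, by reproving the relevant estimate through Lemma~\ref{BAKA_GFF}\,(b), sidesteps any dependence on that external constant.
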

\begin{proof} 
 We first adapt the proof of \cite{ICRT1} Lemma 6.2, to morally deal with vertices far from $\{Y_i\}_{i\in \N}$. For every $n\in \N$, and $x\in\R^+$ let $E_n(x)$ be the event $d_\GFF(\pT_{\X_{n-1}}(x),x)>20 \log(\X_n)e^{-(n-1)/2}$. By Fubini's Theorem,  Lemma \ref{BAKA_GFF} (b) and $\mu[0,\X_{n-1}]=e^{n-1}$, we have
\[ \E \left [ \int_{0}^{\X_{n}} \1_{E_n(x)} dx \right  ]  = \int_{0}^{\X_{n}} \proba \left (E_n(x) \right ) dx  \leq 5 \X_{n} \exp \left ( -5 \log \X_{n} \right )=5\X_n^{-4}.
\]
Furthermore by Lemma \ref{Recall icrt} (b), $e^n=O(\X_n)$ so $\sum \X_n^{-1}<\infty$. Hence by Markov's inequality and the Borel--Cantelli Lemma, for every $n$ large enough,
 \[  \int_{0}^{\X_{n}} \1_{E_n(x)}dx < \X_{n}^{-3}. \]
 Note that it implies that, for every $n$ large enough, for every $x\in [0,\X_{2^n}]$, $y\in [0,\X_{2^n}]$, with $x\in \llbracket p_{\X_{n-1}}(y),y \rrbracket$ and $d_\T(x,y)\geq \X_{n}^{-3}$,
\begin{equation} |\GFF(x)-\GFF(\pT_{\X_{n-1}}(x))|\leq 20 \log(\X_n)e^{-(n-1)/2}, \label{22/12/13h} \end{equation}
since otherwise for every $z\in \llbracket x,y\rrbracket$ we would have $E_n(z)$. 

Next, let $\Lambda_n:=(\{Y_i\}_{i\in \N}\cap [\X_{n-1},\X_n])\cup \{ \X_{n}\}$. Note that for every $x\in  [0,\X_n]$, if there exists $y\in \Lambda_n$ such that $x\in \llbracket \pT_{\X_{n-1}}(y),y\rrbracket$ and $d_\T(x,y)>\X_n^{-3}$, then \eqref{22/12/13h} holds. So, writing for $y\in \Lambda_n$, $B(y,\X_{n}^{-3})$ for the closed ball of center $y$ and radius $\e$, and $\gamma_{n,y}:=  \llbracket \pT_{\X_{n-1}}(y),y\rrbracket \cap B(y,\X_{n}^{-3})$,  it remains to estimate $\GFF$ on the set $\bigcup_{y\in \Lambda_n} \gamma_{n,y}$.
%
%
%
%
%

More precisely, by \eqref{22/12/13h}, it is enough to show that a.s. for every $n$ large enough and $y\in \Gamma_n$, 
\begin{equation} M_{n,y}:= \max_{x\in \gamma_{n,y}}|\GFF(x)-\GFF(y)|\leq \log(\X_n)e^{-(n-1)/2}. \label{23/02/7h} \end{equation}
To this end, note that for every $y\in \Gamma_n$, $\gamma_{n,y}$ is a geodesic with extremities $\F$-measurable, and  of length at most $\X_n^{-3}$. Thus, by Lemma \ref{BAKA_GFF} (a) and an union bound a.s., 
\begin{equation} \proba\left (\left . \max_{y\in \Gamma_n} M_{n,y}>\log(\X_n)e^{-(n-1)/2} \right |\mathcal F \right ) \leq \#\Gamma_n 4e^{-\X_n^{3}\log(\X_n)^2 e^{-(n-1)}}. \label{27/12/18h} \end{equation}
Then by Lemma \ref{Recall icrt} (b), $e^n=O(\X_{n})$, and by Lemma \ref{Recall icrt} (c), $\#\Gamma_n=O(\X_n^2)$. Thus a.s. the right hand side of \eqref{27/12/18h} is summable. Finally \eqref{23/02/7h} follows from the Borel--Cantelli Lemma.
\end{proof}
\begin{proof}[Proof of Theorem \ref{CompactGFF}] Beforehand, let us rewrite the assumption. We have,
\[ \int_{\X_1}^{\infty} \frac{dl}{l\sqrt{\E[\mu[0,l]]}} = \sum_{n=1}^{\infty} \int_{\X_{n}}^{\X_{n+1}} \frac{dl}{l\sqrt{\E[\mu[0,l]]}} \geq \sum_{n=1}^{\infty} \int_{\X_{n}}^{\X_{n+1}} \frac{dl}{le^{{(n+1)}/2}} 
\geq \sum_{n=2}^{\infty} \frac{\log \X_{n}}{e^{{(n+9)}/2}}-\log \X_1. \]
So the assumption implies that $\sum \log \X_{n} e^{-n/2}<\infty$. (It is actually equivalent.)

Then for every $n\in \N$ let $\GFF_n:x\in \T\mapsto \GFF(\pT_{\X_{n-1}}(x))$. Note that for every $n\in \N$, since $\GFF$ is continuous on $([0,2^n],d_\T)$, $\GFF_n$ is continuous. Moreover, by Lemma \ref{ChainingStartGFF}, a.s. for every $n$ large enough $\|\GFF_n-\GFF_{n-1}\|_{\infty}\leq 21\log \X_{n} e^{-(n-1)/2}$. Therefore, by the assumption of Theorem \ref{CompactGFF}, $\{\GFF_n\}_{n\in \N}$ is a.s. a Cauchy sequence of continuous function, and so converges uniformly toward a continuous function $\tilde \GFF$  on $\T$. Finally note that $\tilde \GFF$ extends $\GFF$.
\end{proof}
\section{Proof and extension of Theorem \ref{CompactFennec}} \label{CompactFennecSec}
We consider $(\Clown_i)_{\in \N}$ independent random functions from $[0,1]$ to $\R$ independent with $\sigma(\mathbf X,\mu,\mathbf Y,\mathbf Z, U)$. 
We assume that:
\begin{Hypo}\label{Locally centered} For every $i\in \N$, $\Clown_i(0)=0$ and $\E[ \Clown_i(U_{X_i})]=0$.
\end{Hypo} 
\begin{Hypo}\label{Maximum 4 moment} \label{4+e moment} There exists $\kappa\geq 2$ such that 
 for every $i\in \N$, $\E\left [\|\Clown_i\|_\infty^{\kappa} \right]\leq1$, and
  \begin{equation} \E[\mu[0,l]]=O(l^{\kappa/2-1}\log(l)^{-5\kappa}). \label{28/02/18h} \end{equation}
\end{Hypo}
\begin{remarks} The assumptions are not necessary, and our method may be extended to other functions. However, they naturally appear in the study of field on $\D$-trees: (we will give more details in \cite{Dsnake})
\\$\bullet$ The first assumption is called locally centered. One can always split each $\Clown_i$, into a "constant" $u\mapsto \1_{u\neq 0} c_i$, and a centered function. However, dealing with constants require different techniques. 
\\ $\bullet$ The second assumption is an improvement of the so called $4+\e$ moment assumption. Indeed by Lemma \ref{Recall icrt} (a), $\E[\mu[0,2^n]]=O(2^n)$, so when $\kappa>4$, \eqref{28/02/18h} holds. 
\\ $\bullet$ In particular if $\mu$ corresponds to the $\alpha$-stable trees, $\mu[0,l]\sim c.l^{\alpha-1}$, and we can consider $\kappa>2\alpha$. 
\end{remarks}
Under Assumptions \ref{Locally centered}, \ref{4+e moment}, which are satisfied by $(\Brown_i)_{i\in \N}$ (up to a constant), we have:
\begin{proposition} \label{UniformConvergence}Almost surely $\sum_{i=1}^\infty \sqrt{\theta_i} \Clown_i(U_{X_i,\x})$ converges uniformly on $\L$. 
\end{proposition}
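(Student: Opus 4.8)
The plan is to run the chaining method of Section~\ref{1.3} along exactly the lines of the proof of Theorem~\ref{CompactGFF}, taking $S=\L$, $S_n=\L_{\X_n}$ with $\X_n$ defined by $\E[\mu[0,\X_n]]=e^n$ (and $\X_n=+\infty$ for the finitely many relevant scales in the degenerate case $\E[\mu(\R^+)]<\infty$, which forces $\kappa>2$ by Assumption~\ref{4+e moment}), and projections $\pL_{\X_n}$ from Lemma~\ref{BAKA NO TEST}. The one new ingredient compared with Theorem~\ref{CompactGFF} is that the oscillation of the relevant function along a single branch is now controlled by a martingale in the auxiliary randomness $(\Clown_i)_{i\in\N}$ rather than by a Brownian motion. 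Write $g_N:\x\mapsto\sum_{i=1}^N\sqrt{\theta_i}\,\Clown_i(U_{X_i,\x})$ on $\L$, using Proposition~\ref{Completion}~(c) to make sense of $U_{X_i,\x}$ for $\x\in\L$. Because $\Clown_i(0)=0$ and $U_{X_i,\cdot}$ vanishes on the side of $X_i$ containing the root, one checks that for $\x\in\L$, setting $\y:=\pL_{\X_n}(\x)=(y,v)\in\L_{\X_n}=[0,\X_n]\times[0,1]$ and using $\pL_{\X_{n-1}}(\x)=\pL_{\X_{n-1}}(\y)$ (Lemma~\ref{ComposProj}),
\[ \Delta_n^N(\x):=g_N(\pL_{\X_n}(\x))-g_N(\pL_{\X_{n-1}}(\x))=\sum_{i\leq N:\ X_i\in\rrbracket\pT_{\X_{n-1}}(y),y\rrbracket}\sqrt{\theta_i}\,\Clown_i(U_{X_i,\y}). \]
Since $g_N(\pL_{\X_n}(\x))\to g_N(\x)$ as $n\to\infty$ (the projections converge and each term depends on $\x$ only through the continuous map $U_{X_i,\cdot}$) and $g_N(\pL_{\X_n}(\x))\to 0$ as $n\to-\infty$ (the projections converge to the one-point root loop, on which $g_N$ vanishes), we get $g_N(\x)=\sum_n\Delta_n^N(\x)$. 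It then suffices to prove that a.s.\ $\sum_n\sup_{\x\in\L}|\Delta_n^N(\x)-\Delta_n^M(\x)|\to 0$ as $M,N\to\infty$, i.e.\ that $(g_N)$ is uniformly Cauchy on $\L$.

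Next I would set up the branch martingale. Let $\mathcal G:=\sigma(\mathbf X,\mu,\mathbf Y,\mathbf Z,(U_{Z,i})_{i\in\N})$ be all the data except $(\Clown_i)_{i\in\N}$ and $(U_{X,i})_{i\in\N}$. Conditionally on $\mathcal G$, the $(U_{X,i})_i$ are i.i.d.\ uniform on $[0,1]$, independent of the prescribed $(\Clown_i)_i$, so by Assumption~\ref{Locally centered} one has $\E[\Clown_i(U_{X,i})\mid\mathcal G]=0$. Fixing a branch $B=\rrbracket Y_j,Y_{j+1}\rrbracket$ and listing the atoms of $\mu$ in it as $X_{i_1}<X_{i_2}<\dots$, the partial sums $S_k:=\sum_{l\leq k}\sqrt{\theta_{i_l}}\,\Clown_{i_l}(U_{X,i_l})$ then form, conditionally on $\mathcal G$, a martingale with $L^\kappa$-bounded increments. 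Doob's maximal inequality and the Burkholder--Davis--Gundy inequality, combined with $\sum_l\theta_{i_l}\leq\mu(B)$, $\kappa\geq2$ and $\E[\|\Clown_i\|_\infty^\kappa]\leq1$, give $\E\big[\sup_k|S_k|^\kappa+\max_l\theta_{i_l}^{\kappa/2}\|\Clown_{i_l}\|_\infty^\kappa\ \big|\ \mathcal G\big]\leq C_\kappa\,\mu(B)^{\kappa/2}$. Moreover this martingale converges a.s.\ and $\sum_{i:X_i\leq\X_n}\theta_i^{\kappa/2}\|\Clown_i\|_\infty^\kappa<\infty$ a.s.\ (its expectation is at most $\X_n\sum_i\theta_i^{1+\kappa/2}<\infty$ since $1+\kappa/2\geq 2$; in the degenerate case, $\sum_i\theta_i^{\kappa/2}\leq\sum_i\theta_i<\infty$). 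Exactly as when bounding the oscillation of $\Cont$ along a branch, the contribution of a single branch $B$ to $\Delta_n^N(\x)$ is, for each $\y\in\L_{\X_n}$, one of the partial sums of $(S_k)$ plus at most one extra loop term $\sqrt{\theta_{i_l}}\Clown_{i_l}(U_{X_{i_l},\y})$; hence $\sup_{\x\in\L}|\Delta_n^N(\x)-\Delta_n^M(\x)|$ is bounded, over the branches $B$ crossed by some $\y\in\L_{\X_n}$, by the sum of the oscillations of $S_k$ over indices $>M$ plus the tails $\max_{l:\,i_l>M}\sqrt{\theta_{i_l}}\|\Clown_{i_l}\|_\infty$. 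For fixed $n$ this tends to $0$ as $M\to\infty$ (only finitely many branches meet $[0,\X_n]$, by Lemma~\ref{Recall icrt}~(c)), and, uniformly in $M<N$, it is dominated by $\sum_{B\text{ meeting }(\X_{n-1},\X_n]}$ of the $L^\kappa$ quantity above.

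Then I would sum over scales. By \cite{ICRT1} Lemma~4.7 (the version of Lemma~\ref{CountingLemma} at the scales $\X_n$), a.s.\ for $n$ large the path $\rrbracket\pT_{\X_{n-1}}(y),y\rrbracket$ with $y\leq\X_n$ crosses $O(n)$ branches, all contained in $(\X_{n-1},\X_n]$; each such branch $B=\rrbracket Y_j,Y_{j+1}\rrbracket$ has $Y_{j+1}>\X_{n-1}$, hence by Lemma~\ref{Recall icrt}~(d) $\mu(B)\leq\log(\X_{n-1})^2/\X_{n-1}$, and by Lemma~\ref{Recall icrt}~(b),(c) there are $O(\X_n^2)$ branches meeting $(\X_{n-1},\X_n]$. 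A union bound over these branches together with Markov's inequality at exponent $\kappa$ and the bound $\E[(\cdot)^\kappa\mid\mathcal G]\leq C_\kappa\mu(B)^{\kappa/2}$ then produces a deterministic sequence $t_n$ with $\sum_n n\,t_n<\infty$ and $\sum_n\proba(\text{the scale-}n\text{ quantity above}>t_n\mid\mathcal G)<\infty$ a.s.\ --- and this is precisely where Assumption~\ref{4+e moment} in the form $\E[\mu[0,l]]=O(l^{\kappa/2-1}\log(l)^{-5\kappa})$ (which forces $\log\X_n\gtrsim n$ and $\X_n\gtrsim e^{2n/(\kappa-2)}$) is used. By the Borel--Cantelli lemma, a.s.\ $\sup_{\x\in\L}|\Delta_n^N(\x)-\Delta_n^M(\x)|\leq O(n)\,t_n$ for all $M<N$ and all $n$ large, which with the previous paragraph gives $\limsup_{M,N\to\infty}\sup_{\x\in\L}|g_N(\x)-g_M(\x)|=0$, i.e.\ uniform convergence on $\L$.

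The hard part will be this last step: calibrating the deterministic thresholds $t_n$ so that the union bound over the $O(\X_n^2)$ branches of size $\lesssim\log(\X_{n-1})^2/\X_{n-1}$ at scale $n$ is summable while $\sum_n n\,t_n$ still converges --- the exponent $\kappa/2-1$ and the $\log(l)^{-5\kappa}$ factor in \eqref{28/02/18h} are tailored exactly for this balance, and keeping track of it cleanly across the two regimes ($\theta_0>0$ or $\sum_{i\geq1}\theta_i=\infty$ versus $\E[\mu(\R^+)]<\infty$) is the bulk of the work. A secondary technicality is the bookkeeping in the branch decomposition of the ``current loop'' term $\sqrt{\theta_{i_l}}\Clown_{i_l}(U_{X_{i_l},\y})$ and the verification that, after projecting, everything takes place at genuine points of $\R^+\times[0,1]$.
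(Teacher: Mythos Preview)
Your plan is the same chaining strategy as the paper's proof --- bound the oscillation of the partial sums along each branch, then sum over the branches separating successive scales --- and your use of Doob plus Burkholder--Davis--Gundy is an adequate substitute for the paper's Lemma~\ref{4+e} (both yield $\proba(\text{branch oscillation}>t\mid\mathcal F)\leq C_\kappa(\mu(B)/t^2)^{\kappa/2}$). The paper's proof is organized slightly differently: it works directly on $\R^+\times[0,1]$ (noting that the supremum of $|\Slow_{n,m}|$ is attained there), defines a branch-recursion $\MAX_N$ dominating $\Delta_N$, and then chains along \emph{dyadic} scales $2^n$ using Lemma~\ref{CountingLemma} and the explicit threshold $\lambda_n=n^2\mu[0,2^n]^{1/\kappa}2^{n/\kappa-n/2}$, for which the summability $\sum n\lambda_n<\infty$ is a one-line consequence of \eqref{28/02/18h}.

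There is, however, a genuine gap in your version: the claim that ``by \cite{ICRT1} Lemma~4.7 \dots\ the path crosses $O(n)$ branches'' at the scales $\X_n$ is not what that lemma says. In this paper (Lemma~\ref{BAKA_GFF}~(b)) \cite{ICRT1} Lemma~4.7 is quoted as a bound on $d_\T(\pT_y(x),x)$, not on the number of branches $d_{\mathfrak N}$; the branch-counting result is Lemma~\ref{CountingLemma}, and it is stated for dyadic scales. Its proof hinges on the ratio $\mu[0,2^n]/\mu[0,2^{n+1}]\geq 1/3$, which at your scales becomes $\mu[0,\X_{n-1}]/\mu[0,\X_n]\sim 1/e$ --- fine for the hitting probability --- but the union bound over $\#\{i:Y_i\leq\X_n\}\sim 2e^n\X_n$ branches is no longer controlled by a fixed geometric factor, since $\X_n/\X_{n-1}$ (hence $\X_n$ itself) can be arbitrarily large under Assumption~\ref{4+e moment} when $\kappa$ is close to $2$. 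This also makes your ``hard part'' genuinely harder: the threshold $t_n$ must now absorb a factor $\X_n^{2/\kappa}$ in the union bound while keeping $\sum n\,t_n<\infty$, and one does not see this balance close without further input. The simplest fix is to switch to dyadic scales $2^n$: then Lemma~\ref{CountingLemma} applies verbatim, the per-scale union bound runs over $O(\mu[0,2^n]2^n)$ branches of mass $O(n^2/2^n)$, and the choice $t_n=\lambda_n$ above makes both series converge --- exactly the computation the paper performs in Lemma~\ref{fatigue} and the final proof.
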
 
For every $n,m \in \N$ let 
\[ \Slow_{n,m}:= \sum_{i=n}^m \sqrt{\theta_i} \Clown_i(U_{X_i,\x}). \]
 To prove the uniform convergence, we need to show that almost surely as $N\to \infty$,
\[ \Delta_N:= \max_{\x\in \L}\max_{n,m\geq N} \left |\Slow_{n,m}(\alpha) \right |\to 0.\]
Note that for every $n,m\in \N$, $|\Slow_{n,m}(\alpha)|$ reach its maximum on $\bigcup_{i=n}^m \{X_i\}\times[0,1]\subset \R^+\times[0,1]$. So,
\begin{equation} \Delta_N=\max_{(x,u)\in \R^+\times[0,1]}\max_{n,m\geq N} |\Slow_{n,m}(x,u)|. \label{RemarkCVU} \end{equation}

To show that a.s. $\Delta_N\to 0$ we use the chaining method (see Section \ref{1.3}): For every $i\in \N$, let 
\[ \Delta_{N,i}:= \max_{Y_i< x\leq Y_{i+1},u\in [0,1]}\max_{n,m\geq N} |\Slow_{n,m}(x,u)-\Slow_{n,m}(Z_i, U_{Z_i,Y_{i+1}})|\]
Then define $\MAX_N$ by induction on $([0,Y_i])_{i\in \N}$ such that $\MAX_N(0)=0$, and such that for every $i\geq 0,x\in ]Y_{i},Y_{i+1}]$, 
\begin{equation} \MAX_N(x):=\MAX_N(Z_i)+\Delta_{N,i}. \label{24/02/14h} \end{equation}
 The triangular inequality implies by induction that for every $i\in \N$, for every $x\in[0,Y_{i}]$, $u\in[0,1]$, $\max_{n,m\geq N} |\Slow_{n,m}(x,u)|\leq \MAX_N(x)$, so
\begin{equation} \Delta_N \leq \max_{x\in \R^+}  \MAX_N(x). \label{24/02/19hd}\end{equation}
 Our method consists in estimating $(\Delta_{N,i})_{N,i\in \N}$, and then deduce from \eqref{24/02/14h} and Lemma \ref{CountingLemma} an estimate on $\max \MAX_N$.
 
 To simplify the notations we write $\mathcal F:=\sigma(\mathbf X,\mu,\mathbf Y,\mathbf Z)$ and $\mu^N:=\sum_{i\geq N}\theta_i \delta_{X_i}$.
\begin{lemma} \label{DEGUEUX} There exists $C_\kappa >0$ which depends only on $\kappa$ such that for every $N,i\in \N$, $t>0$,
\[ \proba\left (\left. \Delta_{N,i}\geq  t\right |\mathcal F \right )\leq C_\kappa\left (\frac{\mu^N(Y_i,Y_{i+1}]}{t^2}\right)^{\kappa/2}.\]
\end{lemma}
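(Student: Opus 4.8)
The plan is to control the random variable $\Delta_{N,i}$ by first conditioning on $\mathcal F$ so that the only randomness left is in the functions $(\Clown_j)_{j\ge N}$, and then to bound a maximum of a sum of independent, locally centered processes. Note that, since $\Clown_j(0)=0$, we have $\Slow_{n,m}(Z_i,U_{Z_i,Y_{i+1}})-\Slow_{n,m}(x,u)=\sum_{j=n}^m\sqrt{\theta_j}\bigl(\Clown_j(U_{X_j,Z_i,U_{Z_i,Y_{i+1}}})-\Clown_j(U_{X_j,x,u})\bigr)$, and for $j<N$ the terms drop out of $\Slow_{n,m}$ anyway by definition. The key structural observation is that for $x\in\,]Y_i,Y_{i+1}]$, the only indices $j$ for which $U_{X_j,x,u}$ can differ from $U_{X_j,Z_i,U_{Z_i,Y_{i+1}}}$ are those with $X_j\in\,]Y_i,Y_{i+1}]$ — because if $X_j\notin\,]Y_i,Y_{i+1}]$ then $x$ (which lies on the branch $]Y_i,Y_{i+1}]$) and $Z_i$ are connected in $\T\backslash\{X_j\}$, so $U_{X_j,x}=U_{X_j,Z_i}$ and both equal their value at the end of the branch. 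Hence
\[ \Delta_{N,i}=\max_{Y_i<x\le Y_{i+1},\,u}\ \max_{n,m\ge N}\Bigl|\sum_{j\ge N:\,X_j\in\,]Y_i,Y_{i+1}]}\sqrt{\theta_j}\,\bigl(\Clown_j(U_{X_j,x,u})-\Clown_j(U_{X_j,Z_i,Y_{i+1}})\bigr)\1_{n\le j\le m}\Bigr|, \]
which is bounded crudely by $2\sum_{j\ge N:\,X_j\in\,]Y_i,Y_{i+1}]}\sqrt{\theta_j}\,\|\Clown_j\|_\infty$.

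From here I would estimate the conditional $\kappa/2$-th moment of this sum. Write $W:=\sum_{j\in J}\sqrt{\theta_j}\,\|\Clown_j\|_\infty$ where $J=\{j\ge N:X_j\in\,]Y_i,Y_{i+1}]\}$ is $\mathcal F$-measurable. Since $\kappa\ge 2$, so $\kappa/2\ge 1$, I cannot just use linearity; but the $(\Clown_j)$ are independent and, by Assumption \ref{4+e moment}, $\E[\|\Clown_j\|_\infty^\kappa]\le 1$, so $\E[\|\Clown_j\|_\infty^{\kappa/2}]\le 1$ too (by Jensen, since $\kappa/2\le \kappa$ and these are norms $\ge 0$ — more precisely $\E[X^{\kappa/2}]\le \E[X^\kappa]^{1/2}\le 1$). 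The clean tool is the Marcinkiewicz–Zygmund / Rosenthal inequality, or more simply the bound that for independent nonnegative random variables $V_j$ and an exponent $p=\kappa/2\ge 1$,
\[ \E\Bigl[\Bigl(\sum_{j\in J}V_j\Bigr)^{p}\,\Big|\,\mathcal F\Bigr]\le C_p\Bigl(\sum_{j\in J}\E[V_j^{p}\mid\mathcal F]+\Bigl(\sum_{j\in J}\E[V_j\mid\mathcal F]\Bigr)^{p}\Bigr). \]
Applying this with $V_j=\sqrt{\theta_j}\,\|\Clown_j\|_\infty$ and using $\E[\|\Clown_j\|_\infty^{p}\mid\mathcal F]=\E[\|\Clown_j\|_\infty^{p}]\le 1$ and similarly for the first moment (which is $\le 1$ as well), we get
\[ \E[W^{\kappa/2}\mid\mathcal F]\le C_\kappa\Bigl(\sum_{j\in J}\theta_j^{\kappa/4}+\bigl(\sum_{j\in J}\sqrt{\theta_j}\bigr)^{\kappa/2}\Bigr). \]
Now I need to compare this with $(\mu^N(]Y_i,Y_{i+1}])\,)^{\kappa/2}=(\sum_{j\in J}\theta_j)^{\kappa/2}$. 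Since each $\theta_j\le 1$ we have $\sqrt{\theta_j}\ge\theta_j$ so $\sum_{j\in J}\sqrt{\theta_j}\ge\sum_{j\in J}\theta_j$, which goes the wrong way; this is the one place requiring care. However $\theta_j\le\theta_1\le 1$ and in fact, because $\sum_j\theta_j^2=1$ forces $\theta_j\to 0$, for $N$ large $\theta_j$ is small — but we want a bound uniform in $N,i$. The resolution is that we should compare against the square of the mass, not the mass itself: the statement has $(\mu^N/t^2)^{\kappa/2}$, i.e. $\mu^N$ enters quadratically inside, so what we actually must show is $\E[W^{\kappa/2}\mid\mathcal F]\le C_\kappa\,(\mu^N(]Y_i,Y_{i+1}])\,)^{\kappa/2}$ where $\mu^N(]Y_i,Y_{i+1}])=\sum_{j\in J}\theta_j$. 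Using $\sqrt{\theta_j}\le 1$, we bound $\sum_{j\in J}\sqrt{\theta_j}\le \#J$, which is not directly what we want; instead, the right move is $\bigl(\sum_{j\in J}\sqrt{\theta_j}\bigr)^{\kappa/2}$: by the power-mean / $\ell^1$–$\ell^2$ comparison this is awkward. I expect this is the main obstacle — reconciling the $\sqrt{\theta_j}$ weights with a clean $\mu^N$-bound — and the fix is to observe that actually one should not crudely pull $\|\Clown_j\|_\infty$ out, but rather keep the increment structure: since $\Clown_j$ is locally centered and each increment $\Clown_j(U_{X_j,x,u})-\Clown_j(U_{X_j,Z_i,Y_{i+1}})$ of the sum appears with the weight $\sqrt{\theta_j}$, the natural "variance" of the $j$-th term is of order $\theta_j\,\E[\|\Clown_j\|_\infty^2]\le\theta_j$, which sums to exactly $\mu^N(]Y_i,Y_{i+1}])$ — and then a Rosenthal-type bound at exponent $\kappa/2$ applied to the centered (in the relevant sense) increments gives, for the maximum over $x,u,n,m$ handled by a union bound over the at-most-$\#J$ jump locations (or a direct monotonicity argument since each $\Clown_j$ contributes a single coordinate), precisely $\E[\Delta_{N,i}^{\kappa/2}\mid\mathcal F]\le C_\kappa\,(\mu^N(]Y_i,Y_{i+1}])\,)^{\kappa/2}$. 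Finally, conditional Markov's inequality, $\proba(\Delta_{N,i}\ge t\mid\mathcal F)\le t^{-\kappa/2}\E[\Delta_{N,i}^{\kappa/2}\mid\mathcal F]$, yields the claimed bound $C_\kappa\,(\mu^N(]Y_i,Y_{i+1}])/t^2)^{\kappa/2}$.

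In summary: (i) reduce $\Delta_{N,i}$ to a sum over the finitely many $j$ with $X_j\in\,]Y_i,Y_{i+1}]$ using the connectivity/angle-function argument, noting each such $\Clown_j$ enters through a single coordinate $U_{X_j,\cdot}$ so the max over $(x,u)$ and over $n,m$ costs only a constant (or a $\#J$-union bound, which is absorbed); (ii) condition on $\mathcal F$ and apply a Rosenthal/Marcinkiewicz–Zygmund inequality at exponent $p=\kappa/2\ge 1$ to this conditionally independent sum, using $\E[\|\Clown_j\|_\infty^\kappa]\le 1$ to control both the $\sum\E[V_j^p]$ and $(\sum\E[V_j])^p$ terms, each dominated by a constant times $(\sum_{j\in J}\theta_j)^{p}=(\mu^N(]Y_i,Y_{i+1}])\,)^{\kappa/2}$; (iii) conclude by conditional Markov. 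I would expect the bookkeeping in step (i) — making rigorous that the maximum over the branch and over the truncation indices $n,m$ only inflates the bound by a $\kappa$-dependent constant — and the correct form of the moment inequality in step (ii) to be the only genuinely delicate points; everything else is routine, and this lemma will then feed (presumably in the next lemmas) into the chaining estimate $\Delta_N\le\max_x\MAX_N(x)$ via \eqref{24/02/14h}, \eqref{24/02/19hd}, and Lemma \ref{CountingLemma}.
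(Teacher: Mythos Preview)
Your reduction in step (i) is correct and matches the paper: only indices $j$ with $X_j\in\,]Y_i,Y_{i+1}]$ contribute, and this is the key structural observation. However, there are two genuine gaps in steps (ii)--(iii).

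\textbf{Wrong exponent.} You apply a Rosenthal/Marcinkiewicz--Zygmund inequality at exponent $p=\kappa/2$ and then Markov at order $\kappa/2$. This produces
\[
\proba(\Delta_{N,i}\ge t\mid\mathcal F)\le t^{-\kappa/2}\,\E[\Delta_{N,i}^{\kappa/2}\mid\mathcal F]\le C_\kappa\Bigl(\frac{\mu^N(Y_i,Y_{i+1}]}{t}\Bigr)^{\kappa/2},
\]
which is off from the claim by a factor $t^{\kappa/2}$. The correct route is to bound the $\kappa$-th moment: for a centered sum $S=\sum_{j\in J}\sqrt{\theta_j}\,Z_j$ with $\E[|Z_j|^\kappa]\le 1$, Marcinkiewicz--Zygmund plus Minkowski give $\E[|S|^\kappa]\le C_\kappa\bigl(\sum_{j\in J}\theta_j\bigr)^{\kappa/2}$, and Markov at order $\kappa$ then yields the stated $(\mu^N/t^2)^{\kappa/2}$.

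\textbf{Handling of the suprema.} Your crude bound $W=2\sum_{j\in J}\sqrt{\theta_j}\|\Clown_j\|_\infty$ indeed fails, as you note, because $\E[W]$ can be of order $\sum_{j\in J}\sqrt{\theta_j}$, which is not controlled by $\mu^N=\sum_{j\in J}\theta_j$ when $\#J$ is large. Your fix (``keep the increment structure'') is the right instinct, but the claim that the maximum over $x,u,n,m$ ``only inflates the bound by a $\kappa$-dependent constant'' or is ``absorbed'' by a $\#J$-union bound is not substantiated: a union bound over $\#J$ evaluations of a Rosenthal-type tail for the \emph{full} centered sum would multiply by $\#J$, which is not controlled by $\mu^N$. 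What is needed is a genuine maximal inequality for the supremum over all partial sums and all thresholds $x$. The paper obtains this via its Lemma~\ref{4+e} (a tailored concentration bound for $\sup_k\|S_k\|_\infty$), applied to the sum $\sum_{j:\,Y_i<X_j<x}\sqrt{\theta_j}\Clown_j(U_{X_j})$ with $u=0$; the case $u\neq 0$ is then handled separately, since it only affects the single term $j$ with $x=X_j$, giving $\sup_u|\Slow_{n,m}(X_j,u)-\Slow_{n,m}(X_j,0)|\le\sqrt{\theta_j}\|\Clown_j\|_\infty$, and a union bound over $j\in J$ together with $\sum_{j\in J}\theta_j^{\kappa/2}\le(\sum_{j\in J}\theta_j)^{\kappa/2}$ closes the argument. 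This two-part split (centered partial-sum supremum via a maximal inequality, plus a one-term-per-$j$ piece via Markov) is what makes the proof go through cleanly.
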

\begin{proof} We work conditionally on $\mathcal F$. First for every $x\in (Y_i,Y_{i+1}]$ and $j\in \N$ with $X_j\notin \{Z_i\}\cup (Y_i,x]$, $Z_i$ and $x$ are connected in $\T\backslash \{X_j\}$ so $U_{X_j,Z_i}=U_{X_j,x}$. Similarly, $U_{Z_i,x}=U_{Z_i,Y_{i+1}}$. Also, by definition of $U$, for every $j\in\N$ with $X_j\in(Y_i,x)$, $U_{X_j,x}=U_{X_j,Y_{i+1}}=U_{X_j}$.  Also, $U_{x,x,0}=0$. Therefore, for every $n,m\in \N$, and $x\in (Y_i,Y_{i+1}]$,
\[ \Slow_{n,m}(x,0)-\Slow_{n,m}(Z_i, U_{Z_i,Y_{i+1}})=\sum_{j\in \N:Y_i<X_j< x} \sqrt{\theta_j}\Clown_j(U_{X_j}). \]

Moreover by definition of $\Clown$, $U$ and Assumption \ref{Locally centered}, $(\Clown_j(U_{X_j}))_{j\in \N}$ are independent and centered. Also by Assumption \ref{4+e moment}, for every $j\in \N$, $\E[\|\Clown_j\|_\infty^{\kappa}]\leq 1$. So by Lemma \ref{4+e}, there exists $C>0$, which depends only on $\e$, such that for every $N\in \N$, and $t>0$, a.s.
\begin{equation} \proba\left (\left. \max_{n,m\geq N} \max_{Y_i<x\leq Y_{i+1}} \left |\Slow_{n,m}(x,0)-\Slow_{n,m}(Z_i, U_{Z_i,Y_{i+1}})\right |\geq  t \right | \mathcal F \right )\leq C\left(\frac{\mu^N(Y_i,Y_{i+1}]}{t^2} \right )^{\kappa/2}. \label{24/02/16h} \end{equation}

Furthermore, for every $x\in (Y_i,Y_{i+1}]$, $u\in[0,1]$, with $x\notin\{X_j\}_{j\in \N}$, for every $n,m\in \N$, $\Slow_{n,m}(x,u)=\Slow_{n,m}(x,0)$. And for every $j\in \N$,
\[ \max_{n,m\geq N} \max_{u\in [0,1]} \left |\Slow_{n,m}(X_j,u)-\Slow_{n,m}(X_j, 0) \right | = \1_{j\geq N} \sqrt{\theta_j} \max_{u\in[0,1]} \Clown_j(u).\]
Then by Assumption \ref{Maximum 4 moment}, and by Markov's inequality, for every $j\in \N$,  $t>0$,
\[ \proba \left (\sqrt{\theta_j} \max_{u\in[0,1]} \Clown_j(u)>t \right )\leq (\theta_j/t^2)^{\kappa/2}. \]
Therefore by an union bound, and by convexity of $x\mapsto x^{\kappa/2}$, for every $t>0$,
\begin{align} \proba \left (\left . \max_{n,m\geq N} \max_{Y_i<x\leq Y_{i+1}}\max_{u\in[0,1]}\left |\Slow_{n,m}(x,u)-\Slow_{n,m}(x, 0) \right |\geq t \right | \mathcal F \right ) & \leq \sum_{j\geq N} \left (\frac{\theta_j}{t^2} \right )^{\kappa/2}\1_{Y_i<X_j\leq Y_{i+1}} \notag
\\ & \leq \left(\frac{\mu^N(Y_i,Y_{i+1}]}{t^2} \right )^{\kappa/2} . \label{24/02/14hb} \end{align}

Finally the desired inequality follows from \eqref{24/02/16h}, \eqref{24/02/14hb}, and an union bound.
\end{proof}
\begin{lemma} \label{fatigue} A.s. for every $n$ large enough, for every $i\in \N$ with $(Y_i,Y_{i+1}]\cap [2^n,2^{n+1}]\neq \emptyset$, 
\[ \Delta_{1,i}\leq \lambda_n:= n^2 \mu[0,2^n]^{1/\kappa} 2^{n/\kappa-n/2}.\]
\end{lemma}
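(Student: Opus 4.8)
The plan is to apply the tail bound from Lemma \ref{DEGUEUX} with $N=1$, together with an upper bound on the number of branches $]Y_i,Y_{i+1}]$ meeting $[2^n,2^{n+1}]$ and an upper bound on their $\mu$-mass, and then conclude by a union bound and Borel--Cantelli. First I would note that $\mu^1(Y_i,Y_{i+1}] = \sum_{j\geq 1}\theta_j\1_{X_j\in(Y_i,Y_{i+1}]}\leq \mu(Y_i,Y_{i+1}]$, so Lemma \ref{DEGUEUX} gives, for every $i$ and every $t>0$, a.s.
\[ \proba\!\left(\left.\Delta_{1,i}\geq t\,\right|\mathcal F\right)\leq C_\kappa\left(\frac{\mu(Y_i,Y_{i+1}]}{t^2}\right)^{\kappa/2}. \]
The key deterministic inputs are from Lemma \ref{Recall icrt}: by (c), a.s. for every $n$ large enough the number of indices $i$ with $Y_i\leq 2^{n+1}$ is at most $2\cdot 2^{2(n+1)}=O(4^n)$, hence in particular the number of branches meeting $[2^n,2^{n+1}]$ is $O(4^n)$; and by (d), a.s. for every $i$ large enough, $\mu(Y_{i-1},Y_i]\leq \log(Y_i)^2/Y_i$, so for every branch meeting $[2^n,2^{n+1}]$ we have $\mu(Y_i,Y_{i+1}]\leq \log(2^{n+1})^2/2^n = O(n^2 2^{-n})$ for $n$ large.

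Next I would plug $t=\lambda_n = n^2\mu[0,2^n]^{1/\kappa}2^{n/\kappa-n/2}$ into the tail bound. For a branch meeting $[2^n,2^{n+1}]$ this yields
\[ \proba\!\left(\left.\Delta_{1,i}\geq \lambda_n\,\right|\mathcal F\right)\leq C_\kappa\left(\frac{O(n^2 2^{-n})}{n^4\,\mu[0,2^n]^{2/\kappa}\,2^{2n/\kappa-n}}\right)^{\kappa/2}=O\!\left(n^{-\kappa}\,\mu[0,2^n]^{-1}\,2^{-n}\right), \]
where the powers of $2$ cancel: $2^{-n}\cdot 2^{n}\cdot 2^{-2n/\kappa}$ raised to the $\kappa/2$ gives $2^{-n}$. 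A union bound over the $O(4^n)$ branches meeting $[2^n,2^{n+1}]$ gives, conditionally on $\mathcal F$, a probability $O(n^{-\kappa}\,4^n\,2^{-n}/\mu[0,2^n]) = O(n^{-\kappa}\,2^{n}/\mu[0,2^n])$. By Lemma \ref{Recall icrt} (b), $\mu[0,2^n]\sim\E[\mu[0,2^n]]$ a.s., and by Assumption \ref{Maximum 4 moment}, $\E[\mu[0,2^n]]=O(2^{n(\kappa/2-1)}\log(2^n)^{-5\kappa})$ — but that is an upper bound, which is the wrong direction here. So instead I would use the cruder lower bound $\mu[0,2^n]\geq \theta_1(1-e^{-\theta_1 2^n})\geq \theta_1/2$ for $n$ large (when $\theta_1>0$; the case $\theta_1=0$ forces $\Theta$ trivial), giving that the conditional probability is $O(n^{-\kappa}2^n)$ — which does not sum.

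This is the main obstacle, and it shows the bound on $\mu(Y_i,Y_{i+1}]$ from Lemma \ref{Recall icrt} (d) must be used together with the count, not the count alone: the correct estimate is that $\sum_i \mu(Y_i,Y_{i+1}]\1_{(Y_i,Y_{i+1}]\cap[2^n,2^{n+1}]\neq\emptyset}\leq \mu[2^{n-1},2^{n+2}] = O(2^n)$ a.s. (by Lemma \ref{MuBounded}'s proof, or directly Lemma \ref{Recall icrt} (b)), so rather than a union bound with the worst-case single-branch mass, I would split branches by dyadic scales of their mass or, more simply, bound
\[ \proba\!\left(\left.\exists\, i: (Y_i,Y_{i+1}]\cap[2^n,2^{n+1}]\neq\emptyset,\ \Delta_{1,i}\geq\lambda_n\,\right|\mathcal F\right)\leq \sum_i C_\kappa\left(\frac{\mu(Y_i,Y_{i+1}]}{\lambda_n^2}\right)^{\kappa/2}. \]
Using $\kappa/2\geq 1$, so $\sum_i a_i^{\kappa/2}\leq (\sum_i a_i)(\max_i a_i)^{\kappa/2-1}$ with $a_i = \mu(Y_i,Y_{i+1}]/\lambda_n^2$, and then $\sum_i\mu(Y_i,Y_{i+1}]=O(2^n)$ and $\max_i\mu(Y_i,Y_{i+1}]=O(n^22^{-n})$, gives
\[ \leq C_\kappa\,\frac{O(2^n)}{\lambda_n^2}\left(\frac{O(n^22^{-n})}{\lambda_n^2}\right)^{\kappa/2-1}. \]
With $\lambda_n^2 = n^4\mu[0,2^n]^{2/\kappa}2^{2n/\kappa-n}$, one checks that $2^n/\lambda_n^2 = n^{-4}\mu[0,2^n]^{-2/\kappa}2^{2n-2n/\kappa}$ and $(n^22^{-n}/\lambda_n^2)^{\kappa/2-1}$ contributes $n^{(2-4)(\kappa/2-1)}\mu[0,2^n]^{-(2/\kappa)(\kappa/2-1)}2^{-(2n-2n/\kappa)(\kappa/2-1)}$; multiplying, the powers of $2$ become $2^{(2n-2n/\kappa)(1-(\kappa/2-1))}=2^{(2n-2n/\kappa)(2-\kappa/2)}$, which for $\kappa>4$ is a negative power of $2$ and hence summable, and for $2\leq\kappa\leq 4$ one uses the refined bound \eqref{28/02/18h} on $\E[\mu[0,2^n]]$ (which makes $\mu[0,2^n]$ genuinely smaller than $2^n$, pushing the exponent negative) to get summability; the logarithmic factor $\log(2^n)^{-5\kappa}$ in \eqref{28/02/18h} absorbs the polynomial-in-$n$ factors. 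Once the conditional probabilities are summable in $n$, Borel--Cantelli (applied after taking expectations) gives that a.s. for every $n$ large enough, every branch meeting $[2^n,2^{n+1}]$ satisfies $\Delta_{1,i}<\lambda_n$, which is the claim.
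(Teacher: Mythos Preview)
Your overall strategy---apply Lemma~\ref{DEGUEUX}, take a union bound over the branches meeting $[2^n,2^{n+1}]$, then Borel--Cantelli---is exactly what the paper does. The gap is in the \emph{count}: you use the crude bound $\#\{i:Y_i\leq 2^{n+1}\}\leq 2\cdot 2^{2(n+1)}=O(4^n)$, whereas Lemma~\ref{Recall icrt}~(c) actually gives the sharper first inequality
\[
\#\{i:Y_i\leq 2^{n+1}\}\ \leq\ 2\,\mu[0,2^{n+1}]\,2^{n+1}\ \leq\ 9\,\mu[0,2^n]\,2^n,
\]
the last step by (a)(b). With this count, the per-branch tail bound (which you computed correctly as $O(n^{-\kappa}\mu[0,2^n]^{-1}2^{-n})$) multiplied by $O(\mu[0,2^n]\,2^n)$ cancels exactly to $O(n^{-\kappa})$, which is summable. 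This is precisely the paper's computation, and it requires no appeal to Assumption~\ref{4+e moment} or to any lower bound on $\mu[0,2^n]$.

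Your attempted workaround via $\sum_i a_i^{\kappa/2}\leq (\sum_i a_i)(\max_i a_i)^{\kappa/2-1}$ does not rescue the argument, for two reasons. First, the bound $\sum_i\mu(Y_i,Y_{i+1}]=O(2^n)$ is the wrong estimate: the union of those branches has $\mu$-mass at most $\mu[0,2^{n+2}]+o(1)=O(\mu[0,2^n])$, not $O(2^n)$. Second, your exponent arithmetic is off: carrying it through carefully, the product
\[
\frac{2^n}{\lambda_n^2}\left(\frac{n^2 2^{-n}}{\lambda_n^2}\right)^{\kappa/2-1}
\ =\ n^{-\kappa-2}\,\mu[0,2^n]^{-1}\,2^{\,n},
\]
still carries the factor $2^n/\mu[0,2^n]$, which in general does not sum (e.g.\ when $\theta_0=0$ and $\mu[0,l]$ grows sublinearly). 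Assumption~\eqref{28/02/18h} is an \emph{upper} bound on $\E[\mu[0,l]]$ and therefore only makes $\mu[0,2^n]^{-1}$ larger; it cannot close this gap. Had you used $\sum_i\mu(Y_i,Y_{i+1}]=O(\mu[0,2^n])$ instead, the same H\"older trick would indeed yield $O(n^{-\kappa-2})$---but at that point the direct union bound with the correct count is simpler and is what the paper does.
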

\begin{proof} A.s. for every $n$ large enough and $i\in \N$ with $(Y_i,Y_{i+1}]\cap [2^n,2^{n+1}]\neq \emptyset$, we have by Lemma \ref{Recall icrt} (d) 
\[ \mu(Y_i,Y_{i+1}]\leq \log(Y_{i+1})^2/Y_{i+1}\leq n^2/2^n. \]
 Then by Lemma \ref{Recall icrt} (c), then (a) and (b), a.s. for every $n$ large enough,
 \[ \#\{i\in \N, (Y_i,Y_{i+1}]\cap [2^n,2^{n+1}]\neq \emptyset\} \leq 2\mu[0,2^{n+1}]2^{n+1}\leq 9\mu[0,2^n]2^n.\]
 
Therefore, by Lemma \ref{DEGUEUX} a.s. for every $n$ large enough, writing $E_n$ for the event that there exists $i\in \N$ such that $(Y_i,Y_{i+1}]\cap [2^n,2^{n+1}]\neq \emptyset$, and such that $\Delta_{1,i}>\lambda_n$,
\[ \proba(E_n|\mathcal F)\leq 9C_\kappa \mu[0,2^n]2^n \left ( \frac{n^2/2^n}{n^4 \mu[0,2^n]^{2/\kappa} 2^{2n/\kappa-n}}\right )^{\kappa/2}=9C_\kappa n^{-\kappa}. \]
The desired result follows by the Borel--Cantelli Lemma.
\end{proof}
\begin{proof}[Proof of Proposition \ref{UniformConvergence}.] Recall that we need to prove that a.s. as $N\to \infty$, $\|\MAX_N\|_{\infty}\to 0$. 
First, by Lemma \ref{CountingLemma} a.s. for every $n\in \N$ large enough, every $x\in [0,2^{n+1}]$ is separated from $[0,2^n]$ by at most $4n$ branches. So, by Lemma \ref{fatigue} a.s. 
 for every $n$ large enough, for every $N\in \N$,
 \[ \max_{x\leq 2^{n+1}} \MAX_N(x) \leq \max_{x\leq 2^n} \MAX_N(x) +4n\lambda_n \]
 Hence, a.s. for every $n$ large enough, for every $N\in \N$,
\begin{equation} \max_{x\in \R^+} \MAX_N(x)\leq \max_{x\leq 2^n} \MAX_N(x)+\sum_{i=n}^\infty 4i\lambda_i. \label{24/02/19ha}\end{equation}

Then note that for every $i\in \N$, $(\Delta_{N,i})_{N\in \N}$ is decreasing. So by Lemma \ref{DEGUEUX}, since $\mu$ is a.s. locally finite, a.s. as $N\to \infty$,  $\Delta_{N,i}\to 0$.
Therefore a.s. for every $n\in \N$, as $N\to \infty$,
\begin{equation} \max_{x\leq 2^n} \MAX_N(x) \leq \sum_{i:Y_i\leq 2^n} \Delta_{N,i} \to 0. \label{24/02/19hb} \end{equation}

Moreover by Lemma \ref{Recall icrt} (a), and by Assumption \ref{4+e moment}, a.s. as $n\to \infty$,
\begin{equation} \sum_{i\geq n}4i\lambda_i  = \sum_{i\geq n}4i^3 \mu[0,2^i]^{1/\kappa} 2^{i/\kappa-i/2} \sim  \sum_{i\geq n}4i^3 \E[\mu[0,2^i]]^{1/\kappa} 2^{i/\kappa-i/2}\to 0. \label{24/02/19hc} \end{equation}

To conclude the proof, by \eqref{24/02/19ha},\eqref{24/02/19hb},\eqref{24/02/19hc} almost surely as $N\to \infty$, $\|\MAX_N \|_{\infty}\to 0$. 
\end{proof}
%

\bibliographystyle{unsrt}
\appendix
\appendix
\section{Holder continuity of a Gaussian free field.}

If $(X,d)$ is a metric space, we call a Gaussian free field on $X$, a random function $\F:X\mapsto \R^+$ such that for every $x\in X$, $\F(x)$ is measurable, and such that for every $x,y\in X$, $\F(x)-\F(y)$ is a Gaussian random variable with variance $d(x,y)$. 
\begin{lemma} \label{HolderXD} Let $(X,d)$ be a metric space. Let $\F$ be a Gaussian free field on $X$. If $\F$ is almost surely continuous and $(X,d)$ have finite upper Minkowski dimension then almost surely $\F$ is H\"older continuous with any exponent smaller than $1/2$.
\end{lemma}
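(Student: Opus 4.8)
\emph{Plan.} The statement is the classical Gaussian chaining (Kolmogorov--Dudley) argument, the point being that finite upper Minkowski dimension makes fine nets of $X$ only polynomially large. Fix $D>\overline{\dim}(X)$. Then $N_\e<\e^{-D}$ for all $\e$ small, and in particular $N_\e<\infty$ for every $\e$, so $X$ is totally bounded. For each $n$ I would choose a minimal $2^{-n}$-set $S_n\subset X$, so $|S_n|=N_{2^{-n}}<2^{nD}$ for $n$ large, and for $x\in X$ let $p_n(x)\in S_n$ be a point with $d(x,p_n(x))\le 2^{-n}$ (ties broken arbitrarily; no measurability of $x\mapsto p_n(x)$ is needed). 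For each $n$ let $\mathcal P_n$ be the finite set of pairs $(s,t)$ with $s,t\in S_n\cup S_{n+1}$ and $d(s,t)\le 3\cdot 2^{-n}$; then $|\mathcal P_n|\le 4\cdot 2^{(2n+2)D}$.

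Set $M_n:=\max_{(s,t)\in\mathcal P_n}|\F(s)-\F(t)|$. Each increment $\F(s)-\F(t)$ is Gaussian with variance $d(s,t)\le 3\cdot 2^{-n}$, so by the Gaussian tail bound and a union bound, $\proba\!\big(M_n>K\sqrt n\,2^{-n/2}\big)\le 8\cdot 2^{(2n+2)D}e^{-K^2 n/6}$, which is summable in $n$ once $K=K(D)$ is large enough. By the Borel--Cantelli Lemma, almost surely $M_n\le K\sqrt n\,2^{-n/2}$ for all $n$ large. Work from now on on the full-probability event where this holds \emph{and} $\F$ is continuous.

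For $x,y\in X$ with $d(x,y)$ small, put $m:=\lfloor-\log_2 d(x,y)\rfloor$, so $2^{-m-1}<d(x,y)\le 2^{-m}$. Since $p_n(x)\to x$ and $\F$ is continuous, $\F(p_n(x))\to\F(x)$, and telescoping gives $\F(x)-\F(p_m(x))=\sum_{n\ge m}\big(\F(p_{n+1}(x))-\F(p_n(x))\big)$; as $d(p_{n+1}(x),p_n(x))\le 2\cdot 2^{-n}\le 3\cdot 2^{-n}$, each term is bounded by $M_n$, hence $|\F(x)-\F(p_m(x))|\le\sum_{n\ge m}K\sqrt n\,2^{-n/2}\le C\sqrt m\,2^{-m/2}$, and likewise for $y$. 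Moreover $d(p_m(x),p_m(y))\le 2^{-m}+d(x,y)+2^{-m}\le 3\cdot 2^{-m}$, so $|\F(p_m(x))-\F(p_m(y))|\le M_m\le K\sqrt m\,2^{-m/2}$. Summing, $|\F(x)-\F(y)|\le C'\sqrt m\,2^{-m/2}\le C''\sqrt{\log(1/d(x,y))}\,\sqrt{d(x,y)}$. For a fixed $\alpha<1/2$, $\sqrt{\log(1/r)}\,r^{1/2-\alpha}\to 0$ as $r\to0$, so there is a (random) $\delta>0$ with $|\F(x)-\F(y)|\le d(x,y)^{\alpha}$ whenever $d(x,y)<\delta$. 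Finally, since $X$ is totally bounded it is covered by finitely many, say $N$, balls of radius $\delta/2$, so any two points of $X$ are joined by a chain of at most $N$ points with consecutive distances $<\delta$; this both shows $\F$ is bounded and upgrades the local estimate to $|\F(x)-\F(y)|\le N\,d(x,y)^{\alpha}$ for \emph{all} $x,y\in X$. As $\alpha<1/2$ was arbitrary, $\F$ is a.s.\ H\"older continuous with every exponent below $1/2$.

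The argument is routine Gaussian chaining; the only points that need a little care are extracting total boundedness (hence boundedness of $\F$) from finiteness of $\overline{\dim}(X)$, so that a small-scale modulus of continuity can be promoted to a genuine global H\"older bound, and noting that a.s.\ continuity of $\F$ justifies the pointwise telescoping identity without any measurability of the projections $p_n$.
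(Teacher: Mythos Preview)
Your argument is the same Gaussian chaining as the paper's proof, and it is correct up to the very last step. There, the claim that total boundedness lets you join any two points of $X$ by a chain of at most $N$ points with consecutive distances $<\delta$ is false in general (take $X=\{0,10\}\subset\R$ and $\delta=1$), so the bound $|\F(x)-\F(y)|\le N\,d(x,y)^\alpha$ is not justified that way. The fix is immediate with what you already have: the local H\"older estimate gives $|\F(x)-\F(c_i)|<(\delta/2)^\alpha$ on each of the $N$ balls $B(c_i,\delta/2)$ covering $X$, so $\F$ is bounded, and for $d(x,y)\ge\delta$ one simply uses $|\F(x)-\F(y)|\le 2\|\F\|_\infty\le (2\|\F\|_\infty/\delta^\alpha)\,d(x,y)^\alpha$. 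With this correction the proof is complete and matches the paper's approach.
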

\begin{proof} Since $(X,d)$ have finite upper Minkowksi dimension, there exists $k\in \N$ such that for every $n\in \N$, there exists $a_{n,1},\dots, a_{n,k^n}\in X$ such that $\max_{x\in X} d(x,\{a_{n,1},\dots, a_{n,k^n}\})\leq 2^{-n}$. 

For every $n\in \N$, let $E_n$ denote the following event:
\[ E_n:=\left \{\forall i\leq k^n, \forall j\leq k^{n+1}, d(a_{n,i},a_{n+1,j})\leq 2^{9-n}\ply |\F(a_{n,i})-\F(a_{n+1,j})|\leq k n 2^{-n/2} \right \}.\] 
By an upper bound, and by definition of a Gaussian free field, we have for every $n\in \N$,
\[ \proba(E_n)\leq k^{n}k^{n+1}e^{-(8kn2^{-n/2})^2/(2.2^{9-n})}=k^{2n+1}e^{-k^2 n^2/2^{18}}.\]
Thus, since the right hand side is summable, by the Borel--Cantelli Lemma, almost surely for every $n\in \N$ large enough we have $E_n$.

The rest of the proof is deterministic. Assume that there exists $N\in \N$ such that for every $n\geq N$ we have $E_N$, and that $\F$ is continuous. Let $x,y\in X$ such that $d(x,y)\leq 2^{-N-9}$. Let $n\in \N$ such that $2^{-n+1}<d(x,y)\leq 2^{-n}$. Since $\F$ is continuous we can consider $m\in \N, m>n$ such that for every $z\in X$ with $d(x,z)\leq 2^{-m}$, $|\F(x)-\F(z)|\leq 2^{-n/2}$, and similarly for $y$. Let by induction $x_{m+1}, x_{m},x_{m-1}, \dots, x_n$ such that $x_{m+1}=x$, and such that for every $m\geq i\geq n$, $x_{i}\in \{a_{i,1},\dots, a_{i,k^i}\}$, and $d(x_i,x_{i+1})\leq 2^{-i}$. Define similarly $y_{m+1},\dots, y_n$. We have 
\[ d(x_n,y_{n+1})\leq \sum_{i=n}^m d(x_{i+1},x_{i})+d(x,y)+\sum_{i=n+1}^{m}d(y_i,y_{i+1})\leq \sum_{i=n}^m 2^{-i}+2^{-n}+\sum_{i=n+1}^{m}2^{-i}\leq 2^{2-n}. \]
So by the triangular inequality, then by definition of $m$ and by $E_n, E_{n+1},\dots, E_m$,
\begin{align*} |\F(x)-\F(y)|  \leq & |\F(x_{m+1})-\F(x_{m})|+\sum_{i=n}^{m-1} |\F(x_{i+1})-\F(x_{i})|
\\ & +|\F(x_n)-\F(y_{n+1})|+\sum_{i=n+1}^{m-1} |\F(y_i)-\F(y_{i+1})|+|\F(y_{m})-\F(y_{m+1})| 
\\   \leq & 2^{-n/2}+\sum_{i=n}^{m-1}k i 2^{-i/2}+k n 2^{-n/2}+\sum_{i=n+1}^{m-1} k i 2^{-i/2}+2^{-n/2}
\\  \leq & 7^{7!} k n^7 2^{-n/2}
\\ \leq & 9^{9!} k \frac{d(x,y)^{1/2}}{\log(d(x,y))^7},
\end{align*}
using $d(x,y)\geq 2^{-n+1}$ for the last inequality. Therefore, since $(x,y)$ are arbitrary with $d(x,y)\leq 2^{-N-9}$, $\F$ is almost surely H\"older continuous with any exponent smaller than $1/2$.
\end{proof}
\section{A concentration inequality for supremum of empirical process.}

\begin{lemma} \label{4+e} Let $n\in \N$, $\kappa\geq 2$, $(x_i)_{i\leq n}\in \R^n$. Let $(X_i)_{i\leq n}$ be a family of independent centered, random variables.  For every $k\leq n$ let $S_k:= x\in \R\mapsto \sum_{i\leq k}\1_{x\leq x_i}X_i$. Assume that for every $i\leq n$, $v_i:=\E[X_i^\kappa]^{2/\kappa}<\infty$, and let $V:=\sum_{i\leq n} v_i$.
Then for every $t>0$,
\[ P_n:=\proba\left (\sup_{i\leq n} \| S_i\|_{\infty}>t \right )\leq  C_\kappa \left (\sqrt{V}/t\right )^{\kappa}, \]
where $C_\kappa$ is a constant which depends only on $\kappa$.
\end{lemma}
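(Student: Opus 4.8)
The plan is to prove this by combining a maximal inequality for martingales with a moment bound, treating two regimes of $\kappa$ slightly differently. Observe first that for each fixed threshold $x$, the sequence $(S_i(x))_{i \le n} = (\sum_{j \le i} \1_{x \le x_j} X_j)_{i \le n}$ is a martingale in $i$ (the $X_j$ are independent and centered, and $\1_{x \le x_j}$ is deterministic). However, we need a supremum over both $i$ and $x$ simultaneously. The key simplification is that $\sup_{i \le n}\|S_i\|_\infty$ is actually a maximum over a finite set: reorder the indices so that $x_1 \le x_2 \le \cdots \le x_n$; then $S_i(x)$, as a function of $x$, is a step function whose values are among the partial sums $\{\sum_{j=k}^{i} X_j : 1 \le k \le i\}$ (or $0$). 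So $\sup_{i\le n}\|S_i\|_\infty = \max_{1 \le k \le i \le n} |\sum_{j=k}^i X_j|$, a maximum over "intervals" of the sequence.

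First I would reduce to a single martingale maximal inequality. Note $|\sum_{j=k}^i X_j| \le |\sum_{j=1}^i X_j| + |\sum_{j=1}^{k-1} X_j| \le 2\max_{m \le n}|\sum_{j=1}^m X_j|$. Hence $\sup_{i\le n}\|S_i\|_\infty \le 2 M_n$ where $M_n := \max_{m\le n} |T_m|$ and $T_m := \sum_{j=1}^m X_j$ is a martingale. So it suffices to show $\proba(M_n > t) \le C_\kappa' (\sqrt V / t)^\kappa$. Then apply Doob's $L^p$ maximal inequality with $p = \kappa$: $\E[M_n^\kappa] \le (\kappa/(\kappa-1))^\kappa \E[|T_n|^\kappa]$, and conclude by Markov's inequality $\proba(M_n > t) \le t^{-\kappa}\E[M_n^\kappa]$. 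The remaining task is to bound $\E[|T_n|^\kappa]$ in terms of $V = \sum_i v_i = \sum_i \E[X_i^\kappa]^{2/\kappa}$.

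The heart of the argument is then a Rosenthal-type inequality: for a sum $T_n = \sum_{i\le n} X_i$ of independent centered random variables and $\kappa \ge 2$,
\[
\E[|T_n|^\kappa] \le C_\kappa \left( \Big(\sum_{i\le n} \E[X_i^2]\Big)^{\kappa/2} + \sum_{i\le n} \E[|X_i|^\kappa] \right).
\]
Since $\kappa \ge 2$, by Jensen $\E[X_i^2] \le \E[|X_i|^\kappa]^{2/\kappa} = v_i$, so $\sum_i \E[X_i^2] \le V$; and $\sum_i \E[|X_i|^\kappa] = \sum_i v_i^{\kappa/2} \le \big(\sum_i v_i\big)^{\kappa/2} = V^{\kappa/2}$ because $\kappa/2 \ge 1$ and the $v_i$ are nonnegative (superadditivity of $x \mapsto x^{\kappa/2}$). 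Both terms are therefore $\le V^{\kappa/2}$, giving $\E[|T_n|^\kappa] \le 2 C_\kappa V^{\kappa/2}$. Chaining the constants through Doob's inequality and the factor $2$ from the interval reduction yields the claim with $C_\kappa$ depending only on $\kappa$.

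The main obstacle is justifying the Rosenthal inequality cleanly; rather than reprove it, I would cite a standard reference (e.g. \cite{Massart}, which is already in the bibliography and contains such moment bounds for sums of independent variables). A minor subtlety is that $\|S_i\|_\infty$ is a supremum over the uncountable set $x \in \R$, so one should note at the outset that this supremum is attained on a finite set (the partial sums over index-intervals described above), which makes all measurability and maximal-inequality manipulations legitimate; this is where the monotone reordering of the $x_i$ is used. Everything else is routine.
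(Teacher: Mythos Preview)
Your overall strategy—Doob's maximal inequality followed by a Rosenthal/Marcinkiewicz moment bound $\E[|T_n|^\kappa]\le C_\kappa V^{\kappa/2}$—is exactly the engine the paper uses, but your reduction step contains a genuine gap. Reordering the indices so that $x_1\le\dots\le x_n$ does \emph{not} leave $\sup_{i\le n}\|S_i\|_\infty$ invariant: $S_i$ sums over the first $i$ indices in the \emph{given} order, and that set changes under the permutation. Concretely, take $n=4$ and $(x_1,x_2,x_3,x_4)=(2,4,1,3)$; then $S_2(0)=X_1+X_2$, yet after sorting (new sequence $X_3,X_1,X_4,X_2$) no contiguous block $\sum_{j=k}^{i} X'_j$ equals $X_1+X_2$. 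So the claimed identity $\sup_i\|S_i\|_\infty=\max_{k\le i}\bigl|\sum_{j=k}^{i} X_j\bigr|$ fails for general $(x_i)$, and your bound $\le 2\max_m|T_m|$ need not hold.

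The fix is short: for the original (unsorted) data, $(\|S_i\|_\infty)_{i\le n}$ is itself a submartingale for $\mathcal F_i=\sigma(X_1,\dots,X_i)$ (pick an $\mathcal F_i$-measurable maximiser $x^*_i$; then $\E[\|S_{i+1}\|_\infty\mid\mathcal F_i]\ge\E[|S_{i+1}(x^*_i)|\mid\mathcal F_i]\ge|S_i(x^*_i)|=\|S_i\|_\infty$). Doob in $i$ gives $\E[\sup_i\|S_i\|_\infty^\kappa]\le c_\kappa\E[\|S_n\|_\infty^\kappa]$; a second Doob in $x$ (since $x\mapsto S_n(x)$ is a martingale when one reveals the $X_j$ in order of decreasing $x_j$) gives $\E[\|S_n\|_\infty^\kappa]\le c_\kappa\E[|T_n|^\kappa]$, and from there your Rosenthal argument concludes. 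For comparison, the paper handles the double supremum by induction on $n$: it splits at the first $k$ with $\sum_{j\le k}v_j\ge V/2$, bounds $\sup_{i<k}\|S_i\|_\infty$ and $\sup_{i>k}\|S_i-S_k\|_\infty$ by the inductive hypothesis (each with total variance $\le V/2$), controls the single term $\|S_k\|_\infty$ via Doob in $x$ plus Marcinkiewicz--Minkowski, and then chooses the splitting parameter and the constant so that the recursion closes.
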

\begin{remark} By taking $n\to \infty$, the previous lemma also holds when $n=\infty$.
\end{remark}
\begin{proof} 
 Fix $\kappa\geq 2$. We work by induction with trivial initialization $n=0$. Fix $C\in \R^+$, which we chose later. Assume that Lemma \ref{4+e} holds for every $m<n$ with the constant $C$.  Let $\e>0$, which we chose later. Let $k\in \N$ be the smallest integer such that $\sum_{i\leq k} v_i\geq \frac{1}{2}\sum_{i\leq n} v_i$. Note that,
\[ P_n\leq  \proba\left (\sup_{i<k} \| S_i\|_\infty>t\right )  +\proba\left (\| S_k\|_\infty>\e t \right ) +\proba\left (\sup_{k<i\leq n} \|S_i-S_k\|_\infty>(1-\e)t \right ). \]
Then by induction, and by definition of $k$,
\begin{equation} P_n\leq C\left (\sqrt{V/2}/t\right )^\kappa+\proba\left (\| S_k\|_\infty>\e t \right )+C \left (\sqrt{V/2}/((1-\e)t)\right )^\kappa. \label{21/02/1h}\end{equation}

It remains to estimate $\| S_k\|_\infty$. To this end, first note that $(|S_k(x)|^\kappa)_{x\in \R}$ is a sub-martingale, so by Doob's inequality, 
\begin{equation} \proba\left (\| S_k\|_\infty>\e t \right )\leq \max_{x\in \R}\frac{\E[|S_k(x)|^\kappa]}{(\e t)^\kappa}=\frac{\E[|\sum_{i=1}^k X_i|^\kappa]}{(\e t)^\kappa}. \label{23/02/1h}\end{equation}

Furthermore by Marcinkiewicz's inequality (see \cite{Massart} Section 15.4), writing $c_\kappa:=2^{\kappa+1}\left (2\kappa \right)^{\kappa/2}$,
\[\E\left [ \left |\sum_{i=1}^k X_i \right |^\kappa \right] \leq c_\kappa \E\left [\left (\sum_{i=1}^{k} X_i^2 \right)^{\kappa/2} \right],\]
and by Minkowski's inequality
\[\E\left [ \left (\sum_{i=1}^k X_i^2 \right )^{\kappa/2} \right] \leq c_\kappa \left (\sum_{i=1}^k \E\left [(X_i^2)^{\kappa/2} \right ]^{2/\kappa} \right )^{\kappa/2}=c_\kappa V^{\kappa/2}.\]

Therefore, by \eqref{21/02/1h} and \eqref{23/02/1h},
\[ P_n\leq \left (C\frac{1+1/(1-\e)^\kappa}{2^\kappa}+c_\kappa/\e^\kappa \right)  \left (\sqrt{V}/t\right )^{\kappa}. \]
Finally some quick computations show that there exists $C,\e$ such that $(C\frac{1+1/(1-\e)^\kappa}{2^\kappa}+c_\kappa/\e^\kappa)\leq C$, and the desired result follows by induction.
\end{proof}
\begin{remark} If $\kappa\geq 4$, one may chose $\e=1/3$,  $C_\kappa=2.3^\kappa c_\kappa\leq 4.9^\kappa \kappa^{\kappa/2}$. 
\end{remark}

\section{Proof of Lemma \ref{Musique qui tue}, and measurability for $\nu_\Lft$, $\nu_\Fnt$ and $\nu_\Rgt$} \label{SadoMasoSec}
Since the proofs of this section consist in checking some list of cases, we strongly advise the reader to draw those different cases to make the reading less tedious. Also, we use the symbol $\ddagger$ to mean that we proved that a case is absurd. Recall Sections \ref{RtreeDef}, \ref{DefLeftFrontRight}. Let $(\T,d,\rho,u)$ denote a plane $\R$-tree. Let $\L:=\T\times[0,1]$. Let $\nu$ denote a $\sigma$-finite measure on $\T$. 

First $\prec$ is reflexive since for every $\x\in \T\times[0,1]$, 
$\x\too\x$.
Also all elements are comparable: For every $\x=(x,a),\y=(y,b)\in \T\times[0,1]$ such that neither $\x\Rgt \y$ nor $\x\Rgt \y$, $U_{x\wedge y,x,y}=U_{x\wedge y,x,y}$. So $x=x\wedge y$ or $y=x\wedge y$, since otherwise $x,y$ would be connected in $\T\backslash\{x\wedge y\}$. In the first case $\x\too \y$. In the second case, $\y\too \x$.
\begin{proof}[Proof that $\prec$ is antisymmetric] Let $\x=(x,a),\y=(y,b)\in \T\times[0,1]$. Assume that $\x\prec \y$ and $\y\prec \x$, let us prove that $\x=\y$. We have either:
\begin{compactitem}
\item[$\bullet$ $\x\Rgt\y$, $\y\Rgt\x$:] So $u_{x\wedge y,x,a}<u_{x\wedge y,y,b}$ and $u_{x\wedge y,y,b}<u_{x\wedge y,x,a}$. $\ddagger$
\item[$\bullet$ $\x\Rgt\y$, $\y\too\x$:] By $\y\Fnt \x$, $y\too x$, $u_{y,x,a}=b$. Then by $x\wedge y=y$ and $\x\Rgt \y$, $u_{y,x,a}<u_{y,y,b}$. $\ddagger$
\item[$\bullet$ $\x\too\y$, $\y\Rgt\x$:] This case is similar by symmetry. $\ddagger$ 
\item[$\bullet$ $\x\too\y$, $\y\too\x$:] $x\in \llbracket 0,y\rrbracket$ and $y\in \llbracket 0, x\rrbracket$ so $x=y$. Moreover, $a=u_{x,y,b}=b$. \qedhere
\end{compactitem}
\end{proof}
Toward proving that $\prec$ is transitive, it is enough to show that:
\begin{lemma} \label{OSKOUR} For every $\x=(x,a),\y=(y,b),\z=(z,c)\in \T\times[0,1]$:
\begin{compactitem}
\item[(a)] If $\x\Rgt \y$, $\y\Rgt \z$, then $\x\Rgt \z$.
\item[(b)] If $\x\Rgt \y$, $\y\too \z$, then $\x\Rgt \z$.
\item[(c)] If $\x\too \y$, $\y\Rgt \z$, then $\x\Rgt \z$ or $\x\too \z$.
\item[(d)] If $\x\too \y$, $\y\too \z$, then $\x\too \z$.
\end{compactitem}
\end{lemma}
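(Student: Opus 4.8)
Each of the four implications is proved by an elementary but careful case analysis on the relative position in the $\R$-tree $\T$ of the base points $x,y,z$ of $\x=(x,a),\y=(y,b),\z=(z,c)$, using only the definitions of $\Rgt$ and $\too$ from Section~\ref{DefLeftFrontRight}, the two axioms of an angle function from Section~\ref{RtreeDef}, and the convention $u_{p,q,w}=w$ when $p=q$. The organizing fact, which I would record first, is standard: for any $x,y,z\in\T$ two of the three closest common ancestors $x\wedge y$, $y\wedge z$, $x\wedge z$ coincide, and this common value lies on $\llbracket\rho,m\rrbracket$ for the third one $m$; one also repeatedly uses the immediate consequence of the second angle axiom that if $p\in\T$ and $x',x''\in\T\setminus\{p\}$ lie in the same connected component of $\T\setminus\{p\}$ then $u_{p,x',\cdot}=u_{p,x'',\cdot}$. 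Given these, one reduces to finitely many sub-cases according to which pair of meets coincides, which (if any) of $x,y,z$ are equal, and whether a meet equals one of $x,y,z$.

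Items (c) and (d) are then direct, and I would carry them out in full as model computations. For (d): $\x\too\y$ and $\y\too\z$ give $x\in\llbracket\rho,y\rrbracket\subseteq\llbracket\rho,z\rrbracket$; if $x=y$ then $a=u_{x,y,b}=b$ and $u_{x,z,c}=u_{y,z,c}=b=a$, while if $x\neq y$ then $y$ and $z$ lie in one component of $\T\setminus\{x\}$, so $u_{x,z,c}=u_{x,z}=u_{x,y}=u_{x,y,b}=a$; either way $\x\too\z$. For (c): $x\in\llbracket\rho,y\rrbracket$ forces $x$ and $y\wedge z$ to be comparable along $\llbracket\rho,y\rrbracket$, and one distinguishes three sub-cases. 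If $y\wedge z$ is a strict ancestor of $x$, then $x\wedge z=y\wedge z=:m$ while $x,y$ lie in one component of $\T\setminus\{m\}$, so $u_{m,x,a}=u_{m,y,b}<u_{m,z,c}$, i.e.\ $\x\Rgt\z$. If $x=y\wedge z$, then $x$ is an ancestor of $z$, $x\wedge z=x$, and $\y\Rgt\z$ reads $a=u_{x,y,b}<u_{x,z,c}$, again $\x\Rgt\z$. If $x$ is a strict ancestor of $y\wedge z$, then $x\in\llbracket\rho,z\rrbracket$ with $y,z$ in one component of $\T\setminus\{x\}$, so $u_{x,z,c}=u_{x,y}=u_{x,y,b}=a$, i.e.\ $\x\too\z$. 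Items (a) and (b) run along the same lines; a useful simplification is that some configurations are simply impossible under the hypotheses: for instance in (a), if $x\wedge z$ were the deepest of the three meets, then $x\wedge y=y\wedge z=:m$, the points $x$ and $z$ would lie in one component of $\T\setminus\{m\}$, and chaining $\x\Rgt\y$ with $\y\Rgt\z$ at $m$ would give $u_{m,x,a}<u_{m,z,c}$ while $u_{m,x,a}=u_{m,z,c}$ — so only the two remaining meet-configurations occur, and in each the conclusion follows by substituting one meet for the equal other and chaining the two strict inequalities. Case (b) is handled analogously, using $y\wedge z=y$ coming from $\y\too\z$ (unless $z=y$, in which case $\z=\y$ and the statement is trivial).

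The only real difficulty is the bookkeeping: each generic configuration spawns several degenerate sub-cases ($x=y$, $y=z$, $x=z$, a meet equal to an endpoint, a point equal to $\rho$), which interact with the convention $u_{p,p,w}=w$, so one must be systematic and exhaustive; there is no conceptual obstacle. Accordingly I would present the argument, in the style of Appendix~\ref{SadoMasoSec}, as a complete list of checked cases, and place it before its uses in Lemmas~\ref{prelimOrder} and~\ref{Musique qui tue}. As a final sanity check I would verify that (a)--(d) indeed yield transitivity of $\prec$ by running through the four ways of combining $\x\prec\y$ and $\y\prec\z$.
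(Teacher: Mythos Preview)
Your proposal is correct and follows essentially the same approach as the paper's own proof in Appendix~\ref{SadoMasoSec}: a direct case analysis on the relative positions of the meets $x\wedge y$, $y\wedge z$, $x\wedge z$ along the geodesics to the root, using only the angle-function axioms and the convention $u_{p,p,w}=w$. Your treatment of (c) and (d) matches the paper's cases almost line by line; your sketch for (a) and (b), organized around the ``two of the three meets coincide'' median lemma, is a slight repackaging of the paper's explicit case split (the paper lists the positions of $x\wedge y$ versus $y\wedge z$ on $\llbracket 0,y\rrbracket$ directly, and for (b) splits on whether $x=y$ and whether $x\wedge y=y$), but the substance and the eliminated-impossible-configuration step are identical.
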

\begin{proof} Toward (a) ($\x\Rgt \y$, $\y\Rgt\z$) note that $x\wedge y, y\wedge z\in \llbracket 0, y\rrbracket$, so we have either:
\begin{compactitem}
\item[$\bullet$ $x\wedge y\in \llbracket 0,y\wedge z \llbracket$:] So $x\wedge z=x\wedge y$. Also, $y$ and $z$ are connected in $\T\backslash \{x\wedge y\}$, so $u_{x\wedge y,y,b}=u_{x\wedge y,z,c}$. Then by $\x\Rgt \y$, $u_{x\wedge y,x,a}<u_{x\wedge y,y,b}$. So $u_{x\wedge z,x,a}<u_{x\wedge z,z,c}$ and $x\Rgt z$. 
\item[$\bullet$ $y\wedge z\in \llbracket 0,x\wedge y \llbracket$:] Similarly, $x\wedge z=y\wedge z$, and $u_{y\wedge z,x,a}=u_{y\wedge z,y,b}$. Then by $\y\Rgt \z$, we have $u_{y\wedge z,y,b}<u_{y\wedge z,z,c}$. So $u_{x\wedge z,x,a}<u_{x\wedge z,z,c}$ and $x\Rgt z$. 
\item[$\bullet$ $x\wedge y=y\wedge z$:] $\,$ \\ $\bullet$ Either $x\wedge z=x\wedge y$, and then $u_{x\wedge y,x,a}<u_{x\wedge y,y,b}$ and $u_{y\wedge z,y,b}<u_{y\wedge z,z,c}$. So $u_{x\wedge z,x,a}<u_{x\wedge z,z,c}$. Therefore $x\Rgt z$. 
\\ $\bullet$ Or $x\wedge z\neq x\wedge y$, so $x$ and $z$ are connected in $\T\backslash \{x\wedge y\}$. So $u_{x\wedge y,x,a}=u_{x\wedge y,z,c}$. However, by $\alpha\Rgt \beta$, $u_{x\wedge y,x,a}<u_{x\wedge y,y,b}$ and by $\beta\Rgt \gamma$, $u_{y\wedge z,y,b}<u_{y\wedge z,z,c}$. $\ddagger$
\end{compactitem}

Toward (b), ($\x\Rgt \y$, and $\y\too\z$). We have either:
\begin{compactitem}
\item[$\circ$ $x=y$:] By $\x\Rgt \y$, $a<b$. Also, by $\y\too\z$, $y\in \llbracket 0,z\rrbracket $, and $u_{y,z,c}=b>a$. Thus, $\x\Rgt \z $. 
\item[$\circ$ $x\neq y$, $x\wedge y=y$:] By $\x\Rgt \y$, $u_{y,x,a}<u_{y,y,b}=b$. Also, by $\y\too\z$, $u_{y,z,c}=b$. Thus, $x$ and $z$ are in different components of $\T\backslash \{y\}$,  so $x\wedge z=y$. Hence, $\x\Rgt \z$. 
\item[$\circ$ $x\neq y$, $x\wedge y\neq y$:] $y\too \z$ so $y$ and $z$ are connected in $\T\backslash \{x\wedge y\}$, thus $x\wedge y=x\wedge z$ and $u_{x\wedge y, y,b}=u_{x\wedge y, z,c}$. Then by $\x\Rgt \y$, $u_{x\wedge y,x,a}<u_{x\wedge y,y,b}$. Hence, $\x\Rgt \z$. 
\end{compactitem}

Toward (c), we have $\x\too\y$, and $\y\Rgt\z$ so $x, y\wedge z \in \llbracket 0, y\rrbracket$. Then we have either:
\begin{compactitem}
\item[$\bullet$ $y\wedge z\in \rrbracket x,y\rrbracket$:] So $y,z$ are connected in $\T\backslash\{x\}$. Thus $u_{x,z,c}=u_{x,y,b}$. Then by $\x\too \y$, $u_{x,y,b}=a$. Hence $\x\too \z$. 
\item[$\bullet$ $y\wedge z=x$:] $x\wedge z=x$. By $\x\too \y$, $u_{x,y,b}=a$, and by $\y\Rgt \z$, $u_{x,y,b}<u_{x,z,c} $. So $\x\Rgt \z$. 
\item[$\bullet$ $y\wedge z\in \llbracket 0,x\llbracket$:] Then by $x\in \llbracket 0,y\rrbracket$, $x\wedge z=y\wedge z$, and $x$ and $y$ are connected in $\T\backslash \{y\wedge z\}$. Thus $u_{y\wedge z, x,a}=u_{y\wedge z, y,b}$. Then by $\y\Rgt \z$, $u_{y\wedge z, y,b}<u_{y\wedge z, z,c}$. Hence, $x\Rgt z$.
\end{compactitem}

Toward (d), ($\x\too y$, $\y\too\z$), we have either:
\begin{compactitem}
\item[$\circ$ $x=y$:] By $\x\too\y$, $b=u_{x,y,b}=a$ so $\x=\y$ and $\x\too \z$. 
\item[$\circ$ $x\neq y$]: By $x\too y$ and $y\too z$, $y$ and $z$ are connected in $\T\backslash \{x\}$. So $u_{x,z,c}=u_{x,y,b}=a$, $\x\too \z$. \qedhere
\end{compactitem}
\end{proof}
Also by a simple symmetry argument, (change the angles by $u\mapsto 1-u$) by Lemma \ref{OSKOUR} (b),
\begin{lemma} \label{OSGOUR} With the same notations, if $\y\too \z$, $\y\Rgt \x$ then $\z\Rgt \x$.
\end{lemma}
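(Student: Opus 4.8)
The statement is the mirror image of Lemma~\ref{OSKOUR}~(b), so the plan is to deduce it from that lemma via the orientation-reversing symmetry of plane $\R$-trees, rather than repeat the case analysis. (If one prefers to avoid the symmetry machinery, an equally short route is to rerun the three-case split in the proof of Lemma~\ref{OSKOUR}~(b) word for word, interchanging ``left'' and ``right'' throughout; with that route there is no obstacle at all.)

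First I would set up the reflection. Keeping $\T$, $d$, $\rho$ fixed, I replace the angle function $u$ by $\bar u:=1-u$ on pairs with $x\neq y$ (and keep the triple convention $\bar u_{x,y,w}:=\bar u_{x,y}$ for $x\neq y$, $\bar u_{x,y,w}:=w$ for $x=y$, exactly as for $u$), and I simultaneously send each point $\x=(x,a)\in\T\times[0,1]$ to its reflection $\x^{*}:=(x,1-a)$. Then I would record the elementary transformation rules, checking the few sub-cases according to whether $x\wedge y$ equals $x$, $y$, or neither: for all $\x=(x,a),\y=(y,b)\in\T\times[0,1]$,
\[
\x^{*}\text{ is at the left of }\y^{*}\text{ for }\bar u \iff \y\Rgt\x ,
\qquad
\x^{*}\too\y^{*}\text{ for }\bar u \iff \x\too\y ,
\]
i.e.\ reflecting reverses ``left/right'' (the defining strict inequality $u_{x\wedge y,x,a}<u_{x\wedge y,y,b}$ flips) and preserves ``in front of'' (the condition $x\in\llbracket 0,y\rrbracket$ is untouched, and $\bar u_{x,y,1-b}=1-a\iff u_{x,y,b}=a$).

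The only point needing care --- and the sole (minor) obstacle --- is that $\bar u$ is not literally an angle function, since the normalisation $\bar u_{x,\rho}=0$ may fail. I would dispose of this by observing that the proof of Lemma~\ref{OSKOUR}~(b) never invokes that normalisation: it uses only $\R$-tree geometry of $\T$, the ``$u_{x,y}=u_{x,z}$ iff $y,z$ are connected in $\T\setminus\{x\}$'' axiom --- which $\bar u$ inherits, since $\bar u_{x,y}=\bar u_{x,z}\iff u_{x,y}=u_{x,z}$ --- and the triple conventions, which $\bar u$ also satisfies. Hence that proof applies verbatim with $\bar u$ in place of $u$. Finally I would conclude by applying Lemma~\ref{OSKOUR}~(b) in the reflected structure to the triple $(\x^{*},\y^{*},\z^{*})$: the hypotheses ``$\x^{*}$ is at the left of $\y^{*}$ for $\bar u$'' and ``$\y^{*}\too\z^{*}$ for $\bar u$'' give ``$\x^{*}$ is at the left of $\z^{*}$ for $\bar u$''; translating back through the transformation rules, $\y\Rgt\x$ and $\y\too\z$ imply $\z\Rgt\x$, which is the claim.
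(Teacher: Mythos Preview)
Your proof is correct and follows exactly the approach the paper takes: a one-line symmetry argument, replacing the angles by $u\mapsto 1-u$ and invoking Lemma~\ref{OSKOUR}~(b). Your extra care about the failure of the normalisation $\bar u_{x,\rho}=0$ is a nice point the paper glosses over, but the conclusion is the same.
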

\begin{lemma} \label{Measurable} $\nu_{\Lft}$, $\nu_{\Fnt}$, $\nu_{\Rgt}$ are well defined. 
\end{lemma}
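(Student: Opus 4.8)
The statement to prove is Lemma \ref{Measurable}: that $\nu_{\Lft}$, $\nu_{\Fnt}$, $\nu_{\Rgt}$ are well defined. Since these are defined as $\nu_\L$-masses of the sets $\{\y:\y\Rgt\x\}$, $\{\y:\x\too\y\}$, $\{\y:\x\Rgt\y\}$, ``well defined'' means precisely that each of these three subsets of $\T\times[0,1]$ is $\nu_\L$-measurable, i.e. measurable for the product $\sigma$-algebra generated by the Borel $\sigma$-algebra on $(\T,d)$ and the Borel $\sigma$-algebra on $[0,1]$. So the whole task reduces to a measurability check, carried out for a \emph{fixed} plane $\R$-tree $(\T,d,\rho,u)$ that is separable and balanced (these hypotheses are what make the argument go through, and are exactly why balancedness was imposed in Section \ref{RtreeDef}).

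\textbf{Key steps.} First I would fix $\x=(x,a)\in\T\times[0,1]$ and analyze the three sets one coordinate at a time. The natural route is to describe each set, up to a $\nu_\L$-null correction, as a countable union of ``cylinder'' sets $C\times J$ where $C$ is a connected component of $\T\setminus\{p\}$ for some point $p$ on the geodesic $\llbracket 0,x\rrbracket$ and $J$ is an interval of $[0,1]$; connected components of $\T\setminus\{p\}$ are open, hence Borel, so such cylinders are product-measurable. Concretely: for the right set $\{\y:\x\Rgt\y\}$, recall from the definition that $\y=(y,b)$ lies here iff $u_{x\wedge y,x,a}>u_{x\wedge y,y,b}$; I would split according to the position of $x\wedge y$ on $\llbracket 0,x\rrbracket$ and according to whether $x\wedge y$ equals $x$, $y$, or neither, exactly as in the case analysis of Lemma \ref{OSKOUR}. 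Away from the $\nu_\L$-null ``front'' set (which is null by the Fubini argument already used in the proof of Lemma \ref{prelimLeft}(a): for each $y$ there is at most one $v$ with $(y,v)\too\x$), the description of $\{\y:\x\Rgt\y\}$ becomes a union over the (countably many, by separability) connected components $C$ of $\T\setminus\llbracket 0,x\rrbracket^{\circ}$ together with the cylinders over points of the geodesic itself, and one checks that on each such component the condition ``$\x\Rgt\y$'' is either always true, always false, or determined by a single inequality on $b$ (using balancedness to handle degree-$2$ points on the spine where $u$ is forced into $\{0,1/2\}$). The same analysis handles $\{\y:\y\Rgt\x\}$ by the symmetry $u\mapsto 1-u$ (cf.\ Lemma \ref{OSGOUR}), and $\{\y:\x\too\y\}$ directly: $\{\y:\x\too\y\}$ is contained in $\{p_{\T,\L}^{-1}(\{z:x\in\llbracket 0,z\rrbracket\})\}$ with the fibre coordinate pinned to $u_{x,\cdot}$, and one shows $\{z: x\in\llbracket 0,z\rrbracket, u_{x,z}=a\}$ is a Borel subset of $\T$ (again a countable union of components of $\T\setminus\{x\}$, or all of that set when $a=1/2$ on a degree-$2$ point) and the graph of $z\mapsto u_{x,z}$ over it is measurable. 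Finally, Fubini/Tonelli gives that $\x\mapsto \nu_\L(\{\y:\x\Rgt\y\})$ etc.\ are themselves measurable functions of $\x$, so the three quantities are genuinely well-defined real-valued maps; for the purposes of this lemma, though, only the measurability of the sets (for each fixed $\x$) is needed.

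\textbf{Main obstacle.} The only real subtlety is the bookkeeping of the null exceptional sets and the exhaustive case split: one has to verify that the ``front'' contributions and the points where $x\wedge y$ is a degree-$2$ branch point do not spoil measurability, and this is exactly where \emph{balancedness} (forcing $u_{p,\cdot}\in\{0,1/2\}$ at degree-$2$ points $p$) is essential — without it the slicing condition on the fibre $[0,1]$ would not be a Borel condition uniformly over the component. This is routine but tedious, so I would organize it as: (i) reduce to cylinders over components of $\T\setminus\{\text{a spine point}\}$; (ii) invoke separability to get countability of components; (iii) invoke balancedness to control the fibre intervals at degree-$2$ spine points; (iv) discard the Fubini-null front set; (v) conclude by closure of the product $\sigma$-algebra under countable unions, and apply Tonelli for the measurability of the resulting mass functions. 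I expect step (iii)–(iv) — the interplay of balancedness with the null front set — to be the part most in need of care.
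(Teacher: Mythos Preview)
Your approach is essentially the paper's: decompose each set as a countable union of cylinders $C\times J$, where $C$ is a connected component attached at some point of the spine $\llbracket\rho,x\rrbracket$ and $J$ is an interval in $[0,1]$, using separability for countability and balancedness to control the fibre condition at degree-$2$ spine points. The paper executes this a bit more explicitly: it introduces the countable branch-point set $\mathcal M:=\{z\in\T:\deg(z)>2\}\cup\{\rho\}$ and the open sets $\connex_{z,a}:=\{y\in\T\setminus\{z\}:u_{z,y}=a\}$ together with their ``$<a$'' and ``$>a$'' variants (countable unions of open components, hence Borel), and then writes down each of the three sets as an \emph{exact} finite combination of countable unions over $z\in\mathcal M\cap\llbracket 0,x\rrbracket$, the segment $(\llbracket 0,x\llbracket\setminus\mathcal M)\times\{b<1/2\}$ (this is where balancedness enters), and the fibre over $x$ itself. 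This yields genuine Borel (product) measurability, not measurability in a completion.

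Your step (iv), ``discard the Fubini-null front set'', is unnecessary and slightly off target. The set $\{\y:\y\too\x\}$ is already \emph{disjoint} from $\{\y:\x\Rgt\y\}$ and from $\{\y:\y\Rgt\x\}$ by the partition used in Lemma~\ref{prelimLeft}(a), so removing it changes nothing; and the paper never needs Fubini or null-set arguments here. If you drop step (iv) and simply write the decomposition out explicitly (your steps (i)--(iii) already contain all the ingredients), you recover the paper's proof verbatim.
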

\begin{proof} First let $\mathcal M:=\{x\in \T,\deg(x)>2\}\cup \{\rho\}$. $\mathcal M$ is countable since $\T$ is separable. Then for every $x\in \T$, and $a\in [0,1]$, let 
\[ \connex_{x,a}:=\{y\in \T\backslash \{x\}, u_{x,y}=a\}. \]
This set is open so measurable. Also, for every $x\in \T$, and $a\in ]0,1[$ let $\connex_{x,\leq a}:=\bigcup_{b\leq a, b\neq 0} \connex_{x,b}$. Define similarly, $\connex_{x,<a},\connex_{x,\geq a}, \connex_{x,>a}$. Since $\T$ is separable all those sets are a countable union of open set and so are measurable. 

Then by simply rewriting the definition of $\Lft$, for every $\x=(x,u)\in \L$,
\begin{align*}\{\y\in \L, \y\Rgt \x\}
 = &\left (\bigcup_{z\in \mathcal M\cap \llbracket 0,x\llbracket} \connex_{z,< u_{z,x}}\times[0,1]\right )\cup \left (\bigcup_{z\in \mathcal M\cap \llbracket 0,x\rrbracket} \{z\} \times[0,u_{z,x})\right )
\\ & \cup \left ((\llbracket 0,x\llbracket\backslash \mathcal M)\times[0,1/2)\right )\cup ( \{x\}\times [0,u))\cup \connex_{x,<u}.\end{align*}
So $\{\y\in \L, \y\Rgt \x\}$ is measurable as a countable union of measurable set. Similarly, 
\begin{align*}\{\y\in \L, \x\Rgt \y\}
 = &\left (\bigcup_{z\in \mathcal M\cap \llbracket 0,x\llbracket} \connex_{z,> u_{z,x}}\times[0,1]\right )\cup \left (\bigcup_{z\in \mathcal M\cap \llbracket 0,x\rrbracket} \{z\} \times(u_{z,x},1]\right )
\\ & \cup \left ((\llbracket 0,x\llbracket\backslash \mathcal M)\times(1/2,1]\right )\cup ( \{x\}\times (u,1])\cup \connex_{x,>u},\end{align*}
is also measurable. Finally, $\{\y\in \L, \x\too \y\}=\connex_{x,u}\cup\{(x,u)\}$ is measurable.
\end{proof}
Also for every $\x\in \L$, 
\[ \{\z\in \L:\z \too \x \}=\left (\bigcup_{z\in \mathcal M\cap \llbracket 0,x\rrbracket} \{z,u_{z,x}\}\right )\cup  \left ((\llbracket 0,x\llbracket\backslash \mathcal M)\times\{1/2\}\right ) \]
 is measurable.
\begin{lemma} \label{Measurable2} $\{\x,\y\in \L, \x\too \y\}$, $\{\x,\y\in \L, \x\Rgt \y\}$ are measurable sets for the product topology.
\end{lemma}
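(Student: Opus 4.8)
The plan is to reduce each relation to a countable Boolean combination of sets that are manifestly Borel for the product topology on $\L\times\L=(\T\times[0,1])^2$, by splitting according to the position of the branch point. Fix $\x=(x,a)$, $\y=(y,b)$. Whether $\x\Rgt\y$ or $\x\too\y$ holds depends only on $x$, $y$, $a$, $b$, the branch point $x\wedge y$, and the values of $u$ at $x\wedge y$; the only thing that is not obviously jointly measurable is the appearance of $u_{x\wedge y,\cdot}$, because $(x,y)\mapsto u_{x,y}$ is not controlled jointly. To get around this I would use two observations. First, if $x\wedge y\notin\{x,y\}$ then $\T\setminus\{x\wedge y\}$ has distinct components containing $\rho$, $x$, $y$, so $\deg(x\wedge y)\ge 3$ and $x\wedge y$ lies in the countable set $\mathcal M=\{\deg>2\}\cup\{\rho\}$. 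Second, if $x\wedge y=x\neq y$ (so $x$ strictly separates $\rho$ from $y$) and $x\notin\mathcal M$, then $\deg(x)=2$, hence by balancedness $u_{x,y}\in\{0,1/2\}$, and $u_{x,y}\neq 0=u_{x,\rho}$ since $\rho,y$ are disconnected in $\T\setminus\{x\}$; so $u_{x,y}=1/2$. The symmetric statement holds when $x\wedge y=y\neq x$.

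Using this, I would write $\{(\x,\y):\x\Rgt\y\}$ as the union of: the diagonal piece $\{x=y\}\cap\{a<b\}$; for each $z\in\mathcal M$ the three sets $\{x=z\}\cap\{z\in\llbracket\rho,y\rrbracket\}\cap\{z\neq y\}\cap\{a<u_{z,y}\}$, $\ \{y=z\}\cap\{z\in\llbracket\rho,x\rrbracket\}\cap\{z\neq x\}\cap\{u_{z,x}<b\}$, $\ \{z\in\llbracket\rho,x\rrbracket\cap\llbracket\rho,y\rrbracket\cap\llbracket x,y\rrbracket\}\cap\{z\neq x\}\cap\{z\neq y\}\cap\{u_{z,x}<u_{z,y}\}$; and the two ``degree two'' pieces $\{x\in\T\setminus\mathcal M\}\cap\{x\in\llbracket\rho,y\rrbracket\}\cap\{x\neq y\}\cap\{a<1/2\}$ and $\{y\in\T\setminus\mathcal M\}\cap\{y\in\llbracket\rho,x\rrbracket\}\cap\{y\neq x\}\cap\{1/2<b\}$. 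The $\R$-tree identity $\llbracket\rho,p\rrbracket\cap\llbracket p,q\rrbracket=\llbracket p\wedge q,p\rrbracket$ shows that the three geometric conditions in the third family force $z=x\wedge y$, so this is just a case-by-case transcription of the definition of $\Rgt$ (the five geometric cases $x=y$, $x\wedge y=x\neq y$, $x\wedge y=y\neq x$, $x\wedge y\notin\{x,y\}$ are exhaustive). Likewise $\{(\x,\y):\x\too\y\}$ is the union of $\{x=y\}\cap\{a=b\}$, the sets $\{x=z\}\cap\{z\in\llbracket\rho,y\rrbracket\}\cap\{z\neq y\}\cap\{u_{z,y}=a\}$ for $z\in\mathcal M$, and $\{x\in\T\setminus\mathcal M\}\cap\{x\in\llbracket\rho,y\rrbracket\}\cap\{x\neq y\}\cap\{a=1/2\}$.

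It then remains to check that each building block is Borel in $\L\times\L$. The set $\{x=y\}$ is closed, being the zero set of the continuous map $(\x,\y)\mapsto d_\T(x,y)$; for fixed $z$, the conditions $\{z\in\llbracket\rho,y\rrbracket\}=\{d_\T(\rho,z)+d_\T(z,y)=d_\T(\rho,y)\}$ and $\{z\in\llbracket x,y\rrbracket\}=\{d_\T(x,z)+d_\T(z,y)=d_\T(x,y)\}$ are closed; the (in)equalities in the $[0,1]$-coordinates are open or closed; $\T\setminus\mathcal M$ is Borel since $\mathcal M$ is countable ($\T$ is separable); and for fixed $z\in\mathcal M$ the map $y\mapsto u_{z,y}$ is Borel, being locally constant, hence continuous, on the open set $\T\setminus\{z\}$, so sets such as $\{a<u_{z,y}\}$, $\{u_{z,y}=a\}$ and $\{u_{z,x}<u_{z,y}\}$ are Borel on $\L\times\L$. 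Since all unions are finite or indexed by the countable set $\mathcal M$, both relations are Borel.

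The main obstacle is exactly the one just flagged: the angle function is only ``nice'' in each variable separately, so the whole argument hinges on recognising that the relevant value $u_{x\wedge y,\cdot}$ is always either an angle at one of the countably many branch points or is pinned to $1/2$ by balancedness; getting the bookkeeping of the geometric cases right — in particular using $\llbracket\rho,p\rrbracket\cap\llbracket p,q\rrbracket=\llbracket p\wedge q,p\rrbracket$ to express ``$z=x\wedge y$'' through closed conditions on distances — is the one technical point that needs care, the rest being routine verification.
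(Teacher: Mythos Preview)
Your argument is correct, and it is somewhat cleaner than the route taken in the paper. The paper introduces, in addition to $\mathcal M=\{\deg>2\}\cup\{\rho\}$, a countable dense set $\mathcal N\subset\{\deg=2\}$, observes that $\rrbracket x,y\llbracket$ always meets $\mathcal M\cup\mathcal N$, and then decomposes both relations via an intermediate ``witness'' point $z\in\mathcal M\cup\mathcal N$ lying strictly between $x$ and $y$; this lets the paper express everything as products of the one-variable sets $\{\x:\x\Rgt(z,1/2)\}$, $\{\y:(z,1/2)\too\y\}$, etc., whose measurability was already established in the preceding lemma. You instead dispense with $\mathcal N$ entirely: you use that if $x\wedge y\notin\{x,y\}$ then automatically $x\wedge y\in\mathcal M$, and in the remaining cases $x\wedge y\in\{x,y\}\setminus\mathcal M$ the balancedness hypothesis pins the relevant angle to $1/2$. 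Your encoding of $\{z=x\wedge y\}$ through the closed distance conditions is a nice touch that makes the decomposition manifestly Borel without appealing back to the earlier lemma.

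The trade-off is modest: the paper's version recycles previously proved facts and avoids having to re-argue that $y\mapsto u_{z,y}$ is Borel for fixed $z$, while your version is more self-contained, avoids the auxiliary dense set, and makes the role of balancedness more transparent. Both yield the same conclusion with comparable effort.
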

\begin{proof}  First note that $\bigcup_{x,y\in \T}\rrbracket x,y \llbracket$ is dense on $\T$, so since $\M$ is countable, $\{x\in \T,\deg(x)=2\}$ is dense. Then since $\T$ is separable, there is a countable dense set $\mathcal N$ of $\T$, and we may assume that $\mathcal N\subset \{x\in \T,\deg(x)=2\}$. Moreover, note that for every $x\neq y \in \T$, $\rrbracket x,y\llbracket \cap (\mathcal M \cup \mathcal N)\neq \emptyset$.

Then, let for $z\in \T$, $C'_{z,0}:=\{(x,a)\in \T\times[0,1]:(x,a)\too (z,0), x\neq z\}$. The set
\[\{\x,\y\in \L, \x\too \y\}=\{ \{x,a\}\times\{x,a\},x\in \T,a\in [0,1[\}\cup \left (\bigcup_{z\in \mathcal M\cup \mathcal N} C'_{z,0}\times \connex_{z,\leq 1} \right ), \]
is  measurable as a countable union of measurable set.

Also, by considering several cases depending on the position of $x\wedge y$, we get: (The two last cases are here to deal with the cases where $x\in \llbracket 0,y\llbracket$ or $y\in \llbracket 0,x\llbracket$.)
\begin{align*} & \{\x,\y\in \L, \x\Rgt \y\} =  \{\{x,u\}\times\{x,v\}, x\in \T, u<v\}\cup \left (\bigcup_{z\in \mathcal \M,u\in \mathbb Q} \connex_{z,<u}\times \connex_{z,>u}\right )
\\ & \cup \left  (\bigcup_{z\in \mathcal N} \{\x,\x\Rgt (z,1/2)\}\times \{\y,(z,1/2)\too \y\} \right ) \cup \left (\bigcup_{z\in \mathcal N} \{\x,(z,1/2)\too\x \} \times \{\y,(z,1/2)\Rgt \y\} \right ). \end{align*}
Thus $\{\x,\y\in \L, \x\Rgt \y\}$ is measurable as a countable union of measurable set.
\end{proof}
\end{document}